\newenvironment{scprooftree}[1]%
  {\gdef\scalefactor{#1}\begin{center}\proofSkipAmount \leavevmode}%
  {\scalebox{\scalefactor}{\DisplayProof}\proofSkipAmount \end{center} }
\newcommand{\dotminus}{\mathbin{\text{\@dotminus}}}
\newcommand{\@dotminus}{%
  \ooalign{\hidewidth\raise1ex\hbox{.}\hidewidth\cr$\m@th-$\cr}%
}
\newtheorem{theorem}{Theorem}
\numberwithin{theorem}{subsection}
\newtheorem{corollary}[theorem]{Corollary}
\newtheorem{lemma}[theorem]{Lemma}
\newtheorem{proposition}[theorem]{Proposition}
\theoremstyle{definition}
\newtheorem{definition}[theorem]{Definition}
\newtheorem{remark}[theorem]{Remark}
\newtheorem{exercise}[theorem]{Exercise}
\newtheorem{convention}[theorem]{Convention}
\newcommand{\nega}{{\sim}}
\newcommand{\fv}{\operatorname{FV}}
\newcommand{\len}{\operatorname{l}}
\newcommand{\gn}[1]{\left\ulcorner#1\right\urcorner}
\newcommand{\sub}{\operatorname{sub}}
\newcommand{\prf}{\operatorname{Prf}}
\newcommand{\pr}{\operatorname{Pr}}
\newcommand{\vdashir}{\vdash^{\mathsf{IR}}}
\newcommand{\nf}[1]{=_{#1\text{-}\textsf{NF}}}
\title[Introduction to Mathematical Logic]{An Introduction to Mathematical Logic}
\author{Anton Freund}
\address{Anton Freund, University of W\"urzburg, Institute of Mathematics, Emil-Fischer-Stra{\ss}e~40, 97074~W\"urzburg, Germany}
\email{anton.freund@uni-wuerzburg.de}
\begin{document}

\begin{abstract}
This introduction begins with a section on fundamental notions of mathematical logic, including propositional logic, predicate or first-order logic, completeness, compactness, the L\"owenheim-Skolem theorem, Craig interpolation, Beth's definability theorem and Herbrand's theorem. It continues with a section on G\"odel's incompleteness theorems, which includes a discussion of first-order arithmetic and primitive recursive functions. This is followed by three sections that are devoted, respectively, to proof theory (provably total recursive functions and Goodstein sequences for~$\mathsf{I\Sigma}_1$), computability (fundamental notions and an analysis of K\H{o}nig's lemma in terms of the low basis theorem) and model theory (ultra\-products, chains and the Ax-Grothendieck theorem). We conclude with some brief introductory remarks about set theory (with more details reserved for a separate lecture). The author uses these notes for a first logic course for undergraduates in mathematics, which consists of 28 lectures and 14 exercise sessions of 90 minutes each. In such a course, it may be necessary to omit some material, which is straightforward since all sections except for the first two are independent of each other.
\end{abstract}

\keywords{Mathematical Logic, Introduction, Lecture Notes}
\subjclass[2020]{03-01}

\maketitle

\tableofcontents

\section{Introduction}\label{sect:introduction}

Mathematical logic investigates what can be proved, defined and computed in vari\-ous mathematical settings. It does so by rigorous mathematical methods (`metamathematics'). Sometimes, one is interested in a logical analysis for its own sake. For example, one might be curious whether some axiom is indispensable for the proof of a given theorem (`foundations'). At other times, the focus is on applications in different~areas of mathematics or in computer science. One can, e.\,g., use tools from logic to extract quantitative bounds from proofs in analysis or to prove results in algebra. There are close connections between foundations and \mbox{applications}. For example, one can show that all proofs from some set of axioms give rise to algorithms of a certain complexity.

These lecture notes are intended as a first introduction to mathematical logic. It is assumed that the reader is familiar with the mathematical results and methods that are typically covered in a first year of university studies. Only the last subsection of these notes assumes additional knowledge from algebra, but even here the central ideas should be transparent without many prerequisites.

The present introduction is Section~\ref{sect:introduction} of these lecture notes. Section~\ref{sect:fundamentals} is concerned with basic concepts and results that are fundamental across mathematical logic. In particular, we present propositional and first-order logic. We then discuss completeness, compactness, the downward L\"owenheim-Skolem theorem, Craig interpolation, Beth definability and Herbrand's theorem. In Section~\ref{sect:goedel}, we introduce first-order arithmetic and the concept of primitive recursion. We then prove Tarski's result on the so-called undefinability of truth as well as G\"odel's first incompleteness theorem. Assuming the Hilbert-Bernays conditions, we also derive G\"odel's second incompleteness theorem and L\"ob's theorem. We note that Sections~\ref{sect:fundamentals} and~\ref{sect:goedel} contain prerequisites that are needed in the rest of these notes.

Sections~\ref{sect:proof-theory} to~\ref{sect:set-theory} are devoted to four main subareas of mathematical logic, namely the theories of proofs, computations, models and sets. The four sections are largely independently of each other, so that the reader or lecturer can easily select according to time constraints. Our idea was to present one reasonably advanced application from each subarea, including those fundamental concepts that are relevant for it. In the section on proof theory, we determine the provably total recursive functions of the axiom system~$\mathsf{I\Sigma}_1$ and derive an independence result for non-hereditary Goodstein sequences. The section on computability theory (also known as recursion theory) provides an analysis of K\H{o}nig's lemma and its weak variant via the notion of Turing jump and the low basis theorem. In the section on model theory, we first use ultraproducts to prove the upward L\"owenheim-Skolem theorem and the uncountable case of completeness and compactness (as only the countable case is treated in Section~\ref{sect:fundamentals}). We then derive a theorem of Ax and Grothendieck, which says that any injective polynomial $\mathbb C^n\to\mathbb C^n$ is~surjective. In the case of set theory, we do not include a full section but only a brief overview~in the form of an epilogue. This is because the easiest substantial result that the author could think of (namely the relative consistency of the axiom of choice) is already too involved for the present introduction to logic. The author prefers to do set theory justice by giving a separate course on the subject.

As mentioned in the previous paragraph, our aim was to include one reasonably advanced result from each area that we present. This made it necessary to omit many important topics altogether. A particularly glaring omission concerns~in\-tu\-i\-tion\-istic logic, constructive mathematics and type theory. Computational complexity theory is missing except for the short Exercise~\ref{ex:cnf-poly}. We hope that our lecture notes provide a solid starting point from which to explore these and many other topics that make up the diverse area that mathematical logic is today.

\section{Fundamentals}\label{sect:fundamentals}

As stated in the introduction, mathematical logic investigates what one can prove, define and compute in a given setting. All three activities are concerned with symbolic representations. In particular, proofs and definitions are expressed in some language. This is usually a so-called natural language such as English, which has developed historically and connects mathematics with other fields of thought. At the same time, it becomes more and more common to implement proofs on a computer, where one uses a so-called formal language. For such a language, the notions of well-formed expression and correct proof can be defined with mathematical precision. This is crucial for the purposes of mathematical logic. Let us note that we will often be concerned with two levels of language: the so-called object language, which is the formal language that we investigate, and the so-called metalanguage, in which the investigation takes place.

Mathematical logic considers various formal languages, each of which is particularly suitable for some application. At the same time, there is one formal language of singular importance, which is known as predicate logic or first-order logic. The latter forms the basis for axiom systems such as Peano arithmetic and Zermelo-Fraenkel set theory (see Sections~\ref{subsect:PA} and~\ref{sect:set-theory}). It thus provides a framework in which essentially all results of contemporary mathematics can be accommodated (though mathematics does, of course, consist of more than formalized results).

In Subsection~\ref{subsect:PL}, which is the first part of the present section, we present propositional logic. This is a rather weak fragment of first-order logic. It plays an important role in areas such as computational complexity, but we mainly discuss it to introduce central ideas in a simplified setting. In particular, Subsection~\ref{subsect:complete-PL} presents two notions of formal proof for predicate logic, which are known as natural deduction and the sequent calculus. Both notions of proof are sound, which means that any statement that can be proved is valid in a certain precise sense. We will also show that they are complete, i.\,e., that any valid statement is provable. It follows that natural deduction and the sequent calculus are equivalent to each other and to any other system that is sound and complete. This means that there is a single canonical notion of proof in logic. In contrast, mathematical truth cannot be captured via provability in a single axiom system, as we will see when we study G\"odel's incompleteness theorems in Section~\ref{sect:goedel}.

First-order logic and a sound and complete proof system for it are presented in Subsection~\ref{subsect:FO}. Let us note that completeness is often proved by a so-called Henkin construction. We give a different proof via Sch\"utte's deduction chains. This proof has the disadvantage that it seems to cover countable languages only. Results for uncountable languages will be proved via ultraproducts in Section~\ref{subsect:ultraprod}. The advantage of Sch\"utte's approach is that it directly yields cut-free proofs. This will be exploited in Subsection~\ref{sect:cut-free-completeness}, where we derive not only compactness and the downward L\"owenheim-Skolem theorem but also Craig interpolation, Beth's definability theorem and Herbrand's theorem.

To conclude this introduction, we point out a few sources that are complementary to the ahistoric and formal approach of the present lecture notes. First, it is instructive to follow the genesis of first-order logic~\cite{fo-history}, which is less straightforward than the canonical status of first-order logic today might suggests (even though the latter has good mathematical reasons). Secondly, there are strong connections between logic and the philosophy of mathematics~\cite{phil-math-stanford}. At the same time, it is good to keep in mind that not all philosophical questions about mathematics ask for a formal treatment (cf.~\cite{phil-math-practice}).

\subsection{Propositional Logic}\label{subsect:PL}

This section presents a framework called propositional logic, which allows to analyze how statements or propositions can be built from simpler ones in a certain way. Specifically, the symbols~$P_i$ of the following definition represent propositions that are not being analyzed further (in contrast to the situation in first-order logic that we will consider later). They are combined via so-called connectives $\neg,\land$ and~$\lor$, which are to be read as `not' (negation), `and'~(conjunction) and non-exclusive `or' (disjunction). Our choice of connectives is discussed below. 

\begin{definition}\label{def:PL-formula}
The formulas of propositional logic ($\mathsf{PL}$-formulas or just formulas) are the expressions that are recursively generated by the following clauses:
\begin{enumerate}[label=(\roman*)]
\item For each $i\in\mathbb N$, we have a $\mathsf{PL}$-formula $P_i$.
\item If $\varphi$ is a $\mathsf{PL}$-formula, then so is $\neg\varphi$.
\item If $\varphi$ and $\psi$ are $\mathsf{PL}$-formulas, then so are $(\varphi\land\psi)$ and $(\varphi\lor\psi)$.
\end{enumerate}
The expressions~$P_i$ are called propositional variables or atomic formulas.
\end{definition}

We will omit the outermost parentheses in $\varphi\land\psi$ and $\varphi\lor\psi$, which only become relevant when we build more complex formulas like $(\varphi\land\psi)\lor\rho$. Let us stress that negation binds stronger than the binary connectives, which means that $\neg\varphi\lor\psi$ is the disjunction of $\neg\varphi$ and $\psi$ while the negation of $\varphi\lor\psi$ is written as $\neg(\varphi\lor\psi)$. We now define some further connectives in terms of the given ones.

\begin{definition}
We use $\varphi\to\psi$ (implication) and $\varphi\leftrightarrow\psi$ (biconditional) as abbreviations for $\neg\varphi\lor\psi$ and for $(\varphi\to\psi)\land(\psi\to\varphi)$, respectively. Let us also agree to write $\bot$~(falsity) and $\top$ (truth) for $P_0\land\neg P_0$ and $P_0\lor\neg P_0$.
\end{definition}

The intuitive meaning of the connectives is officially fixed via the following definition, in which $0$ and $1$ represent the truth values `false' and `true'.

\begin{definition}
By a valuation we mean a function from $\mathbb N$ to $\{0,1\}$. Given a valu\-ation~$v$, we use recursion over the $\mathsf{PL}$-formula~$\varphi$ to define $\llbracket\varphi\rrbracket_v\in\{0,1\}$ by
\begin{align*}
\llbracket P_i\rrbracket_v&=v(i),\quad&\llbracket\varphi\land\psi\rrbracket_v&=\min(\llbracket\varphi\rrbracket_v,\llbracket\psi\rrbracket_v),\\
\llbracket\neg\varphi\rrbracket_v&=1-\llbracket\varphi\rrbracket_v,\quad&\llbracket\varphi\lor\psi\rrbracket_v&=\max(\llbracket\varphi\rrbracket_v,\llbracket\psi\rrbracket_v).
\end{align*}
We say $\varphi$ is (logically) valid or a tautology if $\llbracket\varphi\rrbracket_v=1$ holds for every valuation~$v$. If we have $\llbracket\varphi\rrbracket_v=1$ for some~$v$, then $\varphi$ is called satisfiable. When $\llbracket\psi\rrbracket_v=1$ holds for any~$v$ with $\llbracket\varphi\rrbracket_v=1$, we say that $\psi$ is a logical consequence of~$\varphi$ and write $\varphi\Rightarrow\psi$. Formulas $\varphi$ and $\psi$ are called (logically) equivalent if both $\varphi\Rightarrow\psi$ and $\psi\Rightarrow\varphi$ holds.
\end{definition}

We stress that $\varphi\to\psi$ is a $\mathsf{PL}$-formula while $\varphi\Rightarrow\psi$ is no formula but notation for a relation between two formulas. One often says that $\to$ is part of the object language (our object of investigation) while $\Rightarrow$ belongs to the metalanguage (in which the investigation takes place). Keeping these apart is widely seen as a fundamental competence. The following shows that this does not exclude strong~connections.

\begin{exercise}\label{ex:taut-sat-reducible}
Show that we have $\varphi\Rightarrow\psi$ precisely if $\varphi\to\psi$ is logically valid. Conversely, prove that $\varphi$ is valid precisely if we have $\top\Rightarrow\varphi$, which means that validity and logical consequence can be reduced to each other. Also give reductions between validity, satisfiability and equivalence.
\end{exercise}

Let us note that $\llbracket\varphi\rrbracket_v$ does only depend on the values of~$v$ on those~$i$ for which $P_i$ occurs in~$\varphi$. Validity, satisfiability, logical consequence and equivalence can thus be algorithmically decided via truth tables of the following form:

\begin{equation*}
\begin{array}{c|c||c|c||c}
P_0 & P_1 & P_0\to P_1 & (P_0\to P_1)\to P_0 & ((P_0\to P_1)\to P_0)\to P_0\\ \hline
0 & 0 & 1 & 0 & 1\\
0 & 1 & 1 & 0 & 1\\
1 & 0 & 0 & 1 & 1\\
1 & 1 & 1 & 1 & 1 
\end{array}
\end{equation*}
\vspace*{.5\baselineskip}

Here the first two columns provide a list of all possibilities for the relevant values of a valuation $v:\mathbb N\to\{0,1\}$. In the column below each formula $\varphi$, one finds the resulting truth-values $\llbracket\varphi\rrbracket_v$. The columns between the double lines can be seen as auxiliary computations, which may be omitted. The given table shows that~the formula in the last column is a tautology (which is called Peirce's law). At least intuitively, the truth table remains meaningful if we use letters like~$P$ and $Q$ at the place of the propositional variables~$P_0$ and~$P_1$. We will sometimes do so to improve readability. Also, we can replace $P_0$ and~$P_1$ by complex formulas~$\varphi$ and~$\psi$, to see how the truth values of the latter determine the one of a formula such as~$\varphi\to\psi$.

Some readers may find it peculiar that $\varphi\to\psi$ is considered to be true when both $\varphi$ and $\psi$ are false, which has instances like the statement `if~all integrable functions are continuous then $4$ is a prime number'. While such a statement may appear non-sensical, the given truth conditions for the implication conform with its regular usage in mathematical proofs (cf.~the previous exercise).

\begin{exercise}\label{ex:de-morgan}
Use truth tables to prove the equivalences
\begin{equation*}
\neg(\varphi\land\psi)\,\Leftrightarrow\,\neg\varphi\lor\neg\psi\quad\text{and}\quad\neg(\varphi\lor\psi)\,\Leftrightarrow\,\neg\varphi\land\neg\psi,
\end{equation*}
which are known as de Morgan's laws. Also show that double negations can be~eliminated in the sense that $\neg\neg\varphi$ is logically equivalent to~$\varphi$.
\end{exercise}

We have already seen that implication can be defined in terms of negation and disjunction. In order to justify our choice of connectives, we now show that this is no coincidence: any connective with an arbitrary number of arguments can be defined in terms of the given ones. The following makes this precise.

\begin{definition}\label{def:n-formula}
By an $n$-valuation we mean a map from~$\{0,\ldots,n-1\}$ to~$\{0,1\}$. An $n$-connective is a map~$F$ that assigns a value $F(v)\in\{0,1\}$ to each $n$-valuation~$v$. An $n$-formula is a $\mathsf{PL}$-formula that involves variables~$P_i$ for~$i<n$ only.
\end{definition}

By a slight abuse of notation, we also write $\llbracket\varphi\rrbracket_v$ when $v$ is just an $n$-valuation (rather than a valuation $\mathbb N\to\{0,1\}$), provided that $\varphi$ is an $n$-formula. To understand the given notion of connective, it may help to compare the truth table for the implication with the function $v\mapsto\llbracket P_0\to P_1\rrbracket_v$ on~$2$-valuations. The following shows that all possible connectives can be defined by one of our formulas.

\begin{theorem}[Expressive completeness of the propositional connectives]\label{thm:PL-completeness}
For any $n$-connective~$F$ with $n>0$, there is an $n$-formula~$\varphi$ such that $F(v)=\llbracket\varphi\rrbracket_v$ holds for any $n$-valuation~$v$.
\end{theorem}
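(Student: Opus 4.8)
The plan is to use a \emph{disjunctive normal form} construction: we describe each $n$-valuation by a formula that is true at exactly that valuation, and then take a suitable disjunction of such formulas according to where $F$ takes the value~$1$.

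First I would associate to each $n$-valuation~$v$ and each $i<n$ the literal $L_i^v$, which is to be~$P_i$ in case $v(i)=1$ and~$\neg P_i$ in case $v(i)=0$. By construction, $\llbracket L_i^v\rrbracket_w=1$ holds precisely when $w(i)=v(i)$, for any $n$-valuation~$w$. Next I would form the conjunction $\chi_v:=L_0^v\land L_1^v\land\cdots\land L_{n-1}^v$, with parentheses grouped in some fixed way (say to the left); this is well defined since $n>0$ guarantees at least one conjunct. A straightforward induction on~$n$, using the clause $\llbracket\varphi\land\psi\rrbracket_w=\min(\llbracket\varphi\rrbracket_w,\llbracket\psi\rrbracket_w)$ together with the above property of the literals, then shows that $\llbracket\chi_v\rrbracket_w=1$ if $w=v$ and $\llbracket\chi_v\rrbracket_w=0$ otherwise.

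Now set $S:=\{v : F(v)=1\}$. If $S$ is non-empty, let~$\varphi$ be the disjunction $\bigvee_{v\in S}\chi_v$ of the corresponding formulas (again grouped in some fixed way); if $S$ is empty, put $\varphi:=P_0\land\neg P_0$, which is an $n$-formula because $n>0$. It remains to check that $\llbracket\varphi\rrbracket_v=F(v)$ for every $n$-valuation~$v$. If $F(v)=1$, then $\chi_v$ occurs among the disjuncts and evaluates to~$1$ at~$v$, so the maximum defining $\llbracket\varphi\rrbracket_v$ equals~$1$. If $F(v)=0$ and $S\neq\emptyset$, then every disjunct is of the form~$\chi_w$ with $w\neq v$, hence evaluates to~$0$ at~$v$, so $\llbracket\varphi\rrbracket_v=0$; and if $S=\emptyset$ this is immediate since $\llbracket P_0\land\neg P_0\rrbracket_v=0$.

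There is no real obstacle here; the only points that require a little care are the bookkeeping of parentheses in the iterated conjunctions and disjunctions (the official syntax provides these connectives only in binary form) and the degenerate case in which $F$ is constantly~$0$ or $n$ is small, which is exactly why the hypothesis $n>0$ is imposed. An alternative would be an induction on~$n$, splitting a given connective according to the value of its last argument, but the normal-form argument above seems the most transparent.
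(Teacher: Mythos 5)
Your proof is correct, but it takes a genuinely different route from the paper's. The paper argues by induction on~$n$: it decomposes an $(n+1)$-connective~$F$ into the two $n$-connectives $G(0,{-})$ and $G(1,{-})$ obtained by fixing the last argument, applies the induction hypothesis to get formulas $\varphi_0,\varphi_1$, and sets $\varphi:=(\neg P_n\land\varphi_0)\lor(P_n\land\varphi_1)$. You instead build $\varphi$ in one step as a disjunctive normal form: for each valuation~$v$ with $F(v)=1$, a conjunction of literals~$\chi_v$ that characterises~$v$ uniquely, and then the disjunction over all such~$v$ (with the degenerate case $S=\emptyset$ handled by $\bot$). The paper actually flags your construction as an alternative in Exercise~\ref{ex:cnf}, where it is noted that reading off a disjunctive (or conjunctive) normal form from a truth table gives another proof of this theorem. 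The trade-off is roughly this: the paper's induction is leaner in that it avoids the bookkeeping of iterated conjunctions and disjunctions (each step produces a formula with the official binary connectives, so no side remarks about associativity or grouping are needed) and it generalises without change to the recursive style used elsewhere in the text; your normal-form construction is more explicit and immediately yields the sharper statement that every connective is realised by a formula in disjunction normal form, which links this result to the material on normal forms in Theorem~\ref{thm:cnf}. Both are complete; your handling of the empty-$S$ case and of the role of the hypothesis $n>0$ is careful and correct.
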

\begin{proof}
We argue by induction on~$n$. In the base case of $n=1$, the only two valuations are given by $v_i:\{0\}\to\{0,1\}$ with $v_i(0)=i$ for~$i\leq 1$. There are four possibilities for the $1$-connective $F:\{v_0,v_1\}\to\{0,1\}$. These correspond to the \mbox{$1$-}formulas $\bot$ and $\top$ as well as $P_0$ and $\neg P_0$. For the induction step, we observe that each $(n+1)$-valuation~$v$ corresponds to the pair of its value $v(n)\in\{0,1\}$ and its restriction $v\!\restriction\!n$ to arguments~$k<n$. Given an $(n+1)$-connective~$F$, we consider the binary function~$G$ that is determined by $F(v)=G(v(n),v\!\restriction\!n)$. For each $i\in\{0,1\}$, the function $G(i,{-})$ is an $n$-connective, so that the induction hypothesis provides an $n$-formula $\varphi_i$ with $G(i,w)=\llbracket\varphi_i\rrbracket_w$. Let us now define $\varphi$ as the $(n+1)$-formula $(\neg P_n\land\varphi_0)\lor(P_n\land\varphi_1)$. To verify the claim from the theorem, we first consider an $(n+1)$-valuation $v$ with $v(n)=0$. We then have
\begin{equation*}
\llbracket\varphi\rrbracket_v=\llbracket\varphi_0\rrbracket_v=\llbracket\varphi_0\rrbracket_{v\restriction n}=G(0,v\!\restriction\!n)=F(v).
\end{equation*}
The case of $v(n)=1$ is analogous.
\end{proof}

In the presence of negation, one can define conjunction from disjunction and vice versa, due to de Morgan's laws. This may suggest that our choice of connectives is not as economical as possible. However, both conjunction and disjunction are~required for the normal forms that we present next. A more intuitive~explanation of the following definition will be given below.

\begin{definition}
Atomic formulas~$P_i$ and their negations~$\neg P_i$ are called literals. By a formula in negation normal form ($\mathsf{NNF}$-formula), we mean a $\mathsf{PL}$-formula that can be generated by the following clauses:
\begin{enumerate}[label=(\roman*)]
\item Each literal is an $\mathsf{NNF}$-formula.
\item If $\varphi_0$ and $\varphi_1$ are $\mathsf{NNF}$-formulas, then so is $\varphi_0\lor\varphi_1$.
\item If $\varphi_0$ and $\varphi_1$ are $\mathsf{NNF}$-formulas, then so is $\varphi_0\land\varphi_1$.
\end{enumerate}
An $\mathsf{NNF}$-formula generated without the use of~(iii) is called a disjunctive clause. The formulas in conjunction normal form ($\mathsf{CNF}$-formulas) are generated as follows:
\begin{enumerate}
\item Each disjunctive clause is a $\mathsf{CNF}$-formula.
\item If $\varphi_0$ and $\varphi_1$ are $\mathsf{CNF}$-formulas, then so is $\varphi_0\land\varphi_1$.
\end{enumerate}
\end{definition}

More intuitively, a formula is in negation normal form if negations do only occur directly in front of propositional variables. Given that each of $\lor$ and $\land$ is associative, we can represent $\mathsf{CNF}$-formulas by expressions of the form
\begin{equation*}
\textstyle\bigwedge_{i\leq k}\textstyle\bigvee_{j\leq l(i)} L_{i,j}
\end{equation*}
for literals~$L_{i,j}$ (where $\bigvee_{j\leq l(i)} L_{i,j}$ is the disjunctive clause $L_{i,0}\lor\ldots\lor L_{i,l(i)}$). There is also a dual notion of disjunction normal form, in which conjunction and disjunction are interchanged. Exercise~\ref{ex:cnf} below will suggest a proof of the following theorem that is more intuitive but seems harder to make precise.

\begin{theorem}\label{thm:cnf}
Each $\mathsf{PL}$-formula is equivalent to a $\mathsf{CNF}$-formula and in particular to a formula in negation normal form.
\end{theorem}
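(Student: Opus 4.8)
The plan is to derive this from the expressive completeness of the connectives (Theorem~\ref{thm:PL-completeness}), which already lets us build a formula realising any prescribed truth table; the only extra work is to check that the standard such construction can be carried out within the grammar of $\mathsf{CNF}$-formulas. First I would dispose of the second claim, which is immediate from the first: every disjunctive clause is an $\mathsf{NNF}$-formula, being generated without clause~(iii), and a conjunction of $\mathsf{NNF}$-formulas is again one by clause~(iii), so every $\mathsf{CNF}$-formula is already in negation normal form. It thus suffices to produce, for a given $\mathsf{PL}$-formula~$\varphi$, a logically equivalent $\mathsf{CNF}$-formula.

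Given~$\varphi$, I would note that it contains at least one propositional variable, since the recursion of Definition~\ref{def:PL-formula} starts from the~$P_i$; so we may pick $n\geq 1$ with $\varphi$ an $n$-formula and consider the $n$-connective~$F$ defined by $F(v)=\llbracket\varphi\rrbracket_v$. The goal is a $\mathsf{CNF}$ $n$-formula~$\psi$ with $\llbracket\psi\rrbracket_v=F(v)$ for all $n$-valuations~$v$; since $\llbracket\cdot\rrbracket_v$ depends only on the values of~$v$ below~$n$, any such~$\psi$ is automatically equivalent to~$\varphi$. For each $n$-valuation~$v$ I would form the disjunctive clause $C_v=L^v_0\lor\cdots\lor L^v_{n-1}$ with $L^v_i=P_i$ if $v(i)=0$ and $L^v_i=\neg P_i$ if $v(i)=1$; a one-line computation of the literals shows $\llbracket C_v\rrbracket_w=0$ exactly when $w=v$ as $n$-valuations. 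If $F$ is constantly~$1$, take $\psi=P_0\lor\neg P_0$. Otherwise enumerate the finitely many $n$-valuations~$v$ with $F(v)=0$ as $v_1,\dots,v_m$ and set $\psi=((\cdots(C_{v_1}\land C_{v_2})\land\cdots)\land C_{v_m})$, which is literally generated by the $\mathsf{CNF}$ clauses; then $\llbracket\psi\rrbracket_w=\min(\llbracket C_{v_1}\rrbracket_w,\dots,\llbracket C_{v_m}\rrbracket_w)$ is~$0$ precisely for $w\in\{v_1,\dots,v_m\}$, i.e.\ precisely when $F(w)=0$, so $\llbracket\psi\rrbracket_w=F(w)=\llbracket\varphi\rrbracket_w$ as desired.

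The mathematics here is entirely routine; the step I expect to require the most care is the bookkeeping that keeps the ``big conjunction'' inside the $\mathsf{CNF}$ grammar --- handled by fixing a left-associated enumeration and treating the tautological case separately, so that the conjunction is never empty --- together with the (trivial) check that $C_v$ still makes sense when $n=1$. A more structural alternative would be to push negations inward using de Morgan's laws and double-negation elimination to reach negation normal form, and then repeatedly distribute $\lor$ over $\land$; this is the route hinted at by Exercise~\ref{ex:cnf} and is arguably more intuitive, but making the nested inductions precise --- in particular the simultaneous recursion bringing $\neg\varphi$ to $\mathsf{NNF}$ from an $\mathsf{NNF}$ form of~$\varphi$ --- is more delicate, which is why I would favour the truth-table construction above.
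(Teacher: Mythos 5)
Your proof is correct, but it takes a genuinely different route from the paper's. The paper establishes the theorem by two structural inductions over the given formula: a simultaneous induction (tracking both~$\varphi$ and~$\neg\varphi$) that pushes negations inward via de Morgan's laws to reach an $\mathsf{NNF}$-formula, followed by a second induction that distributes~$\lor$ over~$\land$ to reach $\mathsf{CNF}$. You instead construct the $\mathsf{CNF}$-formula semantically, as a product of maxterms: one clause~$C_v$ per falsifying $n$-valuation~$v$, where $C_v$ is chosen so that $\llbracket C_v\rrbracket_w=0$ exactly when $w=v$. This is the route the paper itself sketches in Exercise~\ref{ex:cnf} and acknowledges as ``more intuitive,'' so you have essentially filled in the hint rather than reproduced the given proof. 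What your approach buys: it is a single uniform construction with no nested inductions, it handles the $\mathsf{NNF}$ claim as a trivial corollary, and (as the paper remarks) it simultaneously re-proves the expressive completeness of Theorem~\ref{thm:PL-completeness}. Its cost is that the output has size on the order of~$n\cdot 2^n$ independently of the size of~$\varphi$, so it gives no information about how small a $\mathsf{CNF}$ equivalent one can obtain from a small input, whereas the paper's syntactic transformation is sensitive to the structure of~$\varphi$ and is the version that adapts to the proof-theoretic setting of Exercise~\ref{ex:nat-deduct}(b). Your care in left-associating the conjunction to stay inside the $\mathsf{CNF}$ grammar, and in setting aside the tautological case with $\psi=P_0\lor\neg P_0$ (which is indeed a disjunctive clause and hence a $\mathsf{CNF}$-formula), is exactly the bookkeeping the paper flags as the part that ``seems harder to make precise,'' and you have handled it correctly.
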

\begin{proof}
We first show that any formula~$\varphi$ is equivalent to an $\mathsf{NNF}$-formula~$\varphi^+$. Let us argue by induction over the build-up of $\varphi$ according to Definition~\ref{def:PL-formula}. To make the induction go through, we simultaneously show that $\neg\varphi$ is also equivalent to an $\mathsf{NNF}$-formula, which we denote by~$\varphi^-$. In the base case, we are concerned with a formula $\varphi$ of the form~$P_i$. Here we can take $\varphi^+$ and $\varphi^-$ to be~$\varphi$ and $\neg\varphi$, respectively. For the induction step, we distinguish three cases. First assume $\varphi$ has the form~$\neg\varphi_0$. The induction hypothesis ensures that $\varphi_0$ and~$\neg\varphi_0$ are equivalent to formulas $\varphi_0^+$ and $\varphi_0^-$ in negation normal form. Given that $\neg\varphi$ is equivalent to~$\varphi_0$, we can take $\varphi^+$ and $\varphi^-$ to be $\varphi_0^-$ and $\varphi_0^+$, respectively. In the second case, we have a formula~$\varphi$ that is of the form~$\varphi_0\land\varphi_1$. Let us note that $\neg\varphi$ is equivalent to $\neg\varphi_0\lor\neg\varphi_1$, by one of de Morgan's laws from Exercise~\ref{ex:de-morgan}. Using the formulas that are provided by the induction hypothesis, we can take $\varphi^+$ and~$\varphi^-$ to be $\varphi_0^+\land\varphi_1^+$ and $\varphi_0^-\lor\varphi_1^-$. By a dual argument, where $\land$ is interchanged with $\lor$, we can cover the final case of a formula~$\varphi$ that has the form $\varphi_0\lor\varphi_1$.

To establish the theorem, it is now enough to show that each $\mathsf{NNF}$-formula~$\varphi$ is equivalent to a $\mathsf{CNF}$-formula~$\varphi'$. We argue by induction over the build-up of~$\varphi$. In the base case, where $\varphi$ is a literal $P_i$ or~$\neg P_i$, we can take $\varphi'$ to be~$\varphi$. For the induction step, we do not need to consider the case where $\varphi$ has the form~$\neg\varphi_0$ (except when~$\varphi_0$ is $P_i$), as $\varphi$ is assumed to be in negation normal form. Concerning the two remaining cases, we first assume that~$\varphi$ has the form~$\varphi_0\land\varphi_1$. Using the formulas from the induction hypothesis, we can then take $\varphi'$ to be $\varphi_0'\land\varphi_1'$. Finally, assume that $\varphi$ has the form~$\varphi_0\lor\varphi_1$. If $\varphi_0'$ and $\varphi_1'$ are both disjunctive clauses, we can take~$\varphi'$ to be $\varphi_0'\lor\varphi_1'$, which is a disjunctive clause and hence a $\mathsf{CNF}$-formula. Without loss of generality, we now assume that $\varphi_0'$ is no disjunctive clause. Given that it is a $\mathsf{CNF}$-formula, it must then be of the form $\psi_0\land\psi_1$ for $\mathsf{CNF}$-formulas~$\psi_i$, so that we get
\begin{equation*}
\varphi\,\Leftrightarrow\,(\psi_0\land\psi_1)\lor\varphi_1'\,\Leftrightarrow\,(\psi_0\lor\varphi_1')\land(\psi_1\lor\varphi_1').
\end{equation*}
Here the second equivalence can be verified via a truth table. It remains to show that the conjuncts $\psi_i\lor\varphi_1'$ are $\mathsf{CNF}$-formulas. By repeating the previous step, we can inductively reduce to the case where $\psi_i$ and then also $\varphi_1'$ is a disjunctive clause.
\end{proof}

Let us comment on the method of induction over formulas, which is used in the previous proof. Some readers may prefer to view it as an induction over~$\mathbb N$ or, more specifically, over the number of symbols in a formula. Others may argue that both the $\mathsf{PL}$-formulas and the natural numbers are recursively generated (we have an element $0$ and another element $n+1$ for each element~$n$ that is already constructed), so that both come with their own induction principle, neither of which is more fundamental than the other.

As promised, the following exercise provides a more intuitive approach to conjunction normal forms. We note that it suggests a new proof not only for Theorem~\ref{thm:cnf} but also for the completeness result from Theorem~\ref{thm:PL-completeness} (since it yields a $\mathsf{CNF}$-formula that corresponds to an arbitrary truth table).

\begin{exercise}\label{ex:cnf}
Show how a $\mathsf{CNF}$-formula equivalent to $(P_0\to P_1)\to P_0$ can be read off from the truth table above.

\emph{Hint:} It may help to consider the truth table for $\neg((P_0\to P_1)\to P_0)$ as well. How do the lines with final entry~`$1$' relate to an equivalent formula in disjunction normal form? Note that the $\mathsf{CNF}$-formula that you read off from a truth table does sometimes contain redundant clauses, which one will usually omit.
\end{exercise}

In general, $\neg\varphi$ will not be an $\mathsf{NNF}$-formula when $\varphi$ is one. For later use, we~introduce the following operation on $\mathsf{NNF}$-formulas, which behaves like negation.

\begin{definition}\label{def:nega}
For each $\mathsf{NNF}$-formula $\varphi$ we define another $\mathsf{NNF}$-formula~$\nega\varphi$ by the recursive clauses
\begin{align*}
\nega P_i&=\neg P_i,\quad&\nega(\varphi\land\psi)&=(\nega\varphi)\lor(\nega\psi)\\
\nega(\neg P_i)&=P_i,\quad&\nega(\varphi\lor\psi)&=(\nega\varphi)\land(\nega\psi).
\end{align*}
\end{definition}

Let us stress that $\nega$ is not a new connective but notation for a defined operation on formulas with connectives $\neg,\land$ and $\lor$. For example, the expression $\nega(\neg P\lor Q)$ denotes the formula $P\land\neg Q$. By induction over $\mathsf{NNF}$-formulas, one can show that $\nega\nega\varphi$ and~$\varphi$ are the same (not just equivalent). In view of de Morgan's laws, another such induction yields the following.

\begin{lemma}\label{lem:tilde-negation}
For each $\mathsf{NNF}$-formula $\varphi$, the formula $\nega\varphi$ is equivalent to~$\neg\varphi$.
\end{lemma}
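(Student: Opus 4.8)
The plan is to argue by induction over the build-up of the $\mathsf{NNF}$-formula~$\varphi$, using the recursive clauses of Definition~\ref{def:nega}. The base cases are the literals. If $\varphi$ is~$P_i$, then $\nega\varphi=\neg P_i=\neg\varphi$, so the two formulas are literally identical and in particular equivalent. If $\varphi$ is $\neg P_i$, then $\nega\varphi=P_i$, and we need $P_i$ to be equivalent to $\neg\neg P_i$; this is the elimination of double negations, which was established (for arbitrary formulas) in Exercise~\ref{ex:de-morgan}.

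For the induction step, first suppose $\varphi$ has the form $\varphi_0\land\varphi_1$. Then $\nega\varphi=(\nega\varphi_0)\lor(\nega\varphi_1)$ by definition. The induction hypothesis gives $\nega\varphi_i\Leftrightarrow\neg\varphi_i$ for $i\in\{0,1\}$, and hence $(\nega\varphi_0)\lor(\nega\varphi_1)$ is equivalent to $\neg\varphi_0\lor\neg\varphi_1$. The latter is equivalent to $\neg(\varphi_0\land\varphi_1)=\neg\varphi$ by the first of de Morgan's laws from Exercise~\ref{ex:de-morgan}. Chaining these equivalences yields $\nega\varphi\Leftrightarrow\neg\varphi$. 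The case where $\varphi$ has the form $\varphi_0\lor\varphi_1$ is entirely dual, using the clause $\nega(\varphi_0\lor\varphi_1)=(\nega\varphi_0)\land(\nega\varphi_1)$ and the second of de Morgan's laws.

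The one point that deserves to be made explicit rather than glossed over is that logical equivalence is a congruence with respect to the connectives: if $\alpha\Leftrightarrow\alpha'$ and $\beta\Leftrightarrow\beta'$, then $\alpha\lor\beta\Leftrightarrow\alpha'\lor\beta'$ and $\alpha\land\beta\Leftrightarrow\alpha'\land\beta'$. This is what licenses replacing $\nega\varphi_i$ by $\neg\varphi_i$ inside the disjunction or conjunction in the step above, and it follows immediately from the definition of $\llbracket-\rrbracket_v$ via $\min$ and $\max$ (one could also fold it into a slightly more careful statement of the induction). Beyond recording this congruence fact, I do not anticipate any real obstacle: the proof is a routine structural induction whose only inputs are de Morgan's laws and double-negation elimination, both already available from Exercise~\ref{ex:de-morgan}.
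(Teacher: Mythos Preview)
Your proof is correct and follows exactly the approach the paper indicates: the paper merely states that the lemma follows by induction over $\mathsf{NNF}$-formulas in view of de Morgan's laws, and you have spelled out precisely that induction, including the double-negation case from Exercise~\ref{ex:de-morgan}.
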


In Exercise~\ref{ex:taut-sat-reducible} we have seen that questions of validity, satisfiability and equivalence can be reduced to each other. While they are algorithmically decidable in principle, the approach via truth tables does quickly become infeasible in practice, since a formula built from $P_0,\ldots,P_{n-1}$ leads to a table with $2^n$ lines. The question whether some alternative algorithm can yield decisions in polynomial time is equivalent to the famous \mbox{$\mathsf P$-versus-$\mathsf{NP}$} problem, which is one of the seven Millennium Prize Problems that carry an award of 1\,000\,000\,\$\ by the Clay Mathematics Institute. It is investigated in a deep research area known as complexity theory, which does unfortunately go beyond the scope of the present introduction. Practical solutions are offered by so-called $\mathsf{SAT}$-solvers. We conclude this section with an exercise that treats a rather restrictive special case.

\begin{exercise}\label{ex:cnf-poly}
A $\mathsf{CNF}$-formula $\textstyle\bigwedge_{i\leq k}\textstyle\bigvee_{j\leq l(i)} L_{i,j}$ is called a 2-$\mathsf{CNF}$-formula if we have $l(i)\leq 1$ for each~$i\leq k$ (note that $j\leq 1$ allows for the two values $j\in\{0,1\}$). Show how to decide the satisfiability of 2-$\mathsf{CNF}$-formulas in polynomial time.

\emph{Hint:} When $P$ does not occur in~$\varphi$ and $\psi$, then $(P\lor\varphi)\land(\neg P\lor\psi)$ is satisfiable precisely if the same holds for~$\varphi\lor\psi$. This points to an algorithm that is known as `resolution'. Why is it polynomial for 2-$\mathsf{CNF}$-formulas but not in general? A formal definition of polynomial time algorithms can be given in terms of Turing machines. Here you can use the concept in an informal way.
\end{exercise}

\subsection{Complete Proof Systems}\label{subsect:complete-PL}

In this section, we present two systems of formal proofs, which were introduced by Gerhard Gentzen~\cite{gentzen-schliessen}. While we work in the setting of propositional logic, the aim is to lay a foundation that can be extended later. Our first proof system is called natural deduction and has the advantage that it is, arguably, fairly close to proofs as familiar from mathematical practice. Just like the $\mathsf{PL}$-formulas of the previous section, formal proofs belong to the object language and can thus be investigated by mathematical means. The second proof system, which is known as the sequent calculus, is particularly well suited for such investigations. We show that our proof systems have two desirable properties, which are known as soundness and completeness. All proof systems with these properties are equivalent and thus canonical in a certain~sense.

The following definition explains provability rather than proofs, although we will see how a notion of proof is defined implicitly. When $A$ and $B$ denote sets of formulas while $\varphi$ and $\varphi_i$ stand for single formulas, we write $A,\varphi$ for $A\cup\{\varphi\}$ and $A,B$ for $A\cup B$ as well as $\varphi_1,\ldots,\varphi_n$ for $\{\varphi_1,\ldots,\varphi_n\}$.

\begin{definition}\label{def:nat-deduct}
We recursively define a relation $A\vdash\varphi$ between a set $A$ of formulas and a formula~$\varphi$ (to be read as `$\varphi$ can be proved from the assumptions~$A$ in the system of natural deduction'):
\begin{enumerate}[label=(\roman*)]
\item For any $\varphi\in A$ we have $A\vdash\varphi$.
\item If we have $A\vdash\varphi\to\psi$ and $A\vdash\varphi$, we get $A\vdash\psi$. From $A,\varphi\vdash\psi$ we can infer $A\vdash\varphi\to\psi$.
\item Given $A\vdash\varphi$ and $A\vdash\neg\varphi$, we obtain $A\vdash\bot$. From $A,\varphi\vdash\bot$ we get~$A\vdash\neg\varphi$. If we have $A,\neg\varphi\vdash\bot$, we obtain $A\vdash\varphi$ (`proof by~contradiction').
\item Given $A\vdash\varphi$ and $A\vdash\psi$, we get $A\vdash\varphi\land\psi$. From $A\vdash\varphi\land\psi$ we can infer both $A\vdash\varphi$ and $A\vdash\psi$.
\item If we have $A\vdash\varphi$ or $A\vdash\psi$, we can conclude $A\vdash\varphi\lor\psi$. Given $A\vdash\varphi\lor\psi$ as well as $A,\varphi\vdash\theta$ and $A,\psi\vdash\theta$, we obtain $A\vdash\theta$.
\end{enumerate}
\end{definition}

Let us note that each connective comes with a rule that introduces and one that eliminates it. When given $A\vdash\varphi$ and $B\vdash\psi$, we cannot directly infer $A,B\vdash\varphi\land\psi$ by clause~(iv), as the latter requires~$A=B$. However, this is no real restriction: The following lemma will allow us to introduce additional assumptions and weaken the premises to $A,B\vdash\varphi$ and $A,B\vdash\psi$, so that we can then infer $A,B\vdash\varphi\land\psi$ after all. It also follows that $A\vdash\bot$ entails $A\vdash\varphi$ for any~$\varphi$ (`ex falso quodlibet'), as we can weaken the premise to $A,\neg\varphi\vdash\bot$ and then conclude by contradiction. As preparation for the following proof, we note that it is allowed to have $\varphi\in A$ in the premise $A,\varphi\vdash\psi$ of clause~(ii) and in similar places. In fact, when given the premise~$A,\varphi\vdash\psi$, we can always replace $A$ by $A':=A\cup\{\varphi\}$, since the sets $A,\varphi$ and $A',\varphi$ will then coincide. If we now apply clause~(ii) with $A'$ instead of~$A$, this permits us to conclude~$A,\varphi\vdash\varphi\to\psi$ rather than $A\vdash\varphi\to\psi$.

\begin{lemma}[Weakening]\label{lem:weakening}
Given $A\vdash \varphi$ and $A\subseteq B$, we get $B\vdash \varphi$.
\end{lemma}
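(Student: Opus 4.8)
The plan is to prove the Weakening Lemma by induction on the recursive definition of the relation $A\vdash\varphi$ (Definition~\ref{def:nat-deduct}). More precisely, I would prove the universally quantified statement: for all sets $A$ and formulas $\varphi$, if $A\vdash\varphi$ then $B\vdash\varphi$ for every $B$ with $A\subseteq B$. The induction is on the generation of the derivation of $A\vdash\varphi$, with one case for each clause (i)--(v).

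First I would handle the base case, clause (i): if $\varphi\in A$ and $A\subseteq B$, then $\varphi\in B$, so $B\vdash\varphi$ by clause (i) again. Then I would work through the inductive clauses. The straightforward rules are the ``premise-preserving'' ones where the set of assumptions does not change between premises and conclusion: for instance, from $A\vdash\varphi\to\psi$ and $A\vdash\varphi$ the induction hypothesis gives $B\vdash\varphi\to\psi$ and $B\vdash\varphi$, whence $B\vdash\psi$ by clause (ii); the conjunction and disjunction introduction rules, the $\bot$-introduction rule, and the elimination halves of (iv) are all of this shape and dispatched identically.

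The slightly more delicate cases are the rules that discharge an assumption, i.e.\ the second halves of (ii), (iii), and the $\lor$-elimination in (v). Consider the implication-introduction rule: we have $A\vdash\varphi\to\psi$ because $A,\varphi\vdash\psi$. Given $A\subseteq B$, I want to apply the induction hypothesis to $A,\varphi\vdash\psi$ with the enlarged set $B,\varphi$; since $A\cup\{\varphi\}\subseteq B\cup\{\varphi\}$, the hypothesis yields $B,\varphi\vdash\psi$, and then clause (ii) gives $B\vdash\varphi\to\psi$. The same bookkeeping handles $\neg$-introduction (from $A,\varphi\vdash\bot$), proof by contradiction (from $A,\neg\varphi\vdash\bot$), and $\lor$-elimination (from $A,\varphi\vdash\theta$ and $A,\psi\vdash\theta$): in each case one simply enlarges the assumption set on both sides by the same discharged formula before invoking the induction hypothesis.

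I do not expect a genuine obstacle here; the only point requiring a little care is the one flagged in the paragraph preceding the statement, namely that $A,\varphi$ already allows $\varphi\in A$, so enlarging $A$ to $B$ and correspondingly $A\cup\{\varphi\}$ to $B\cup\{\varphi\}$ is legitimate and preserves the subset relation even when $\varphi$ happens to lie in $A$ or $B$. Once that observation is in place, every case is a one-line application of the induction hypothesis followed by the matching clause of Definition~\ref{def:nat-deduct}.
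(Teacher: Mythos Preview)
Your proposal is correct and follows essentially the same approach as the paper: an induction over the derivation of $A\vdash\varphi$, with the base case handled by $\varphi\in A\subseteq B$ and the discharge rules handled via the observation that $A\subseteq B$ implies $A\cup\{\varphi\}\subseteq B\cup\{\varphi\}$. In fact you spell out more cases explicitly than the paper does; the paper treats only clause~(i) and the implication-introduction half of clause~(ii) before declaring the remaining cases similar.
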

\begin{proof}
We argue by induction over the way in which $A\vdash\varphi$ is obtained according to the previous definition (or over the number of times that clauses~(i) to~(v) were applied). In the case of clause~(i), we have $A\vdash\varphi$ due to~$\varphi\in A$. Given $A\subseteq B$, we get $\varphi\in B$ and hence $B\vdash\varphi$ by the same clause~(i). Next, we consider the case where $\varphi$ has the form $\varphi_0\to\varphi_1$ and clause~(ii) was applied with premise $A,\varphi_0\vdash\varphi_1$. As $A\subseteq B$ entails $A\cup\{\varphi_0\}\subseteq B\cup\{\varphi_0\}$, the induction hypothesis yields $B,\varphi_0\vdash\varphi_1$, so that we get $B\vdash\varphi$ by the same clause~(ii). The other cases are similar.
\end{proof}

More intuitively, proofs in natural deduction are usually written in the following form, which is explained below:

\begin{scprooftree}{.8}
\AxiomC{$u_0:\varphi\land \psi $}
\RightLabel{(iv)}
\UnaryInfC{$\varphi$}
\AxiomC{$u_1:\neg \varphi$}
\RightLabel{(iii)}
\BinaryInfC{$\bot$}
\AxiomC{$u_0:\varphi\land \psi $}
\RightLabel{(iv)}
\UnaryInfC{$\psi $}
\AxiomC{$u_2:\neg \psi $}
\RightLabel{(iii)}
\BinaryInfC{$\bot$}
\AxiomC{$u_3:\neg \varphi\lor\neg \psi $}
\RightLabel{(v); $u_1,u_2$}
\TrinaryInfC{$\bot$}
\RightLabel{(iii); $u_0$}
\UnaryInfC{$\neg(\varphi\land \psi )$}
\RightLabel{(ii); $u_3$}
\UnaryInfC{$\neg \varphi\lor\neg \psi \to\neg(\varphi\land \psi )$}
\end{scprooftree}
\vspace*{.5\baselineskip}

\noindent Such a `proof tree' is to be read from the top down (from the `leaves' to the `root'). The leaf marked $u_0:\varphi\land \psi $ should be understood as an assumption: it represents a proof of $\varphi\land \psi \vdash \varphi\land \psi $ according to clause~(i) of Definition~\ref{def:nat-deduct}. The purpose of the label~$u_0$ will become clear later. Each horizontal line corresponds to an inference, i.\,e., to an application of clauses~(ii) to~(v). For example, the inference at the top left uses clause~(iv) to transform $\varphi\land \psi \vdash \varphi\land \psi $ into $\varphi\land \psi \vdash \varphi$. In the given proof tree, this is now combined with a leaf~$u_1:\neg \varphi$ that corresponds to $\neg \varphi\vdash\neg \varphi$. Thanks to weakening, we may use clause~(iii) to combine $\varphi\land \psi \vdash \varphi$ and $\neg \varphi\vdash\neg \varphi $ into $\varphi\land \psi ,\neg \varphi\vdash\bot$. Similarly, the second premise of the ternary inference in our proof tree corresponds to $\varphi\land \psi ,\neg \psi \vdash\bot$. The third premise is an assumption that amounts to~$\neg \varphi\lor\neg \psi \vdash\neg \varphi\lor\neg \psi $. Now the ternary inference uses the second part of clause~(v) to combine its premises into $\varphi\land \psi ,\neg \varphi\land\neg \psi \vdash\bot$. Here the assumptions $\neg \varphi$ and $\neg \psi $ from the first and second premise are dropped in favour of the disjunction~$\neg \varphi\lor\neg \psi $, in accordance with Definition~\ref{def:nat-deduct}. The labels $u_1$ and~$u_2$ next to the inference line are there to indicate that the corresponding leaves do no longer yield open assumptions. When writing the proof, it may also help to tick off the leaves at this point (traditionally by adding square parentheses as in $[u_0:\varphi\land \psi ]$). The remaining inferences correspond to clauses~(iii) and~(ii) and discharge the assumptions~$u_0$ and~$u_3$. Overall, the proof tree shows $\emptyset\vdash\neg \varphi\lor\neg \psi \to\neg(\varphi\land \psi )$.

To summarize, a proof tree with root $\varphi$ and open assumptions~$A$ is a certificate that we get $A\vdash\varphi$ in a certain way. Note that there may be different proofs of the same statement. At least intuitively, it should be clear that one can mechanically check whether a proof tree is correct according to Definition~\ref{def:nat-deduct}. On the other hand, it is not obvious if there is an algorithm that decides whether a given statement $A\vdash\varphi$ holds, i.\,e., whether $\varphi$ is provable from~$A$. One can show that such an algorithm exists for propositional logic (as we will see) but not for the more expressive framework of predicate logic, which is presented in the next section.

Upon reflection, the proof tree above is not too far from an informal proof of the same statement. Indeed, to establish $\neg \varphi\lor\neg \psi \to\neg(\varphi\land \psi )$, one would certainly assume $\neg \varphi\lor\neg \psi $ to derive $\neg(\varphi\land \psi )$, which corresponds to the final inference. Also, it seems natural to prove $\neg(\varphi\land \psi )$ by showing that $\varphi\land \psi $ leads to a contradiction (the penultimate inference) and to use $\neg \varphi\lor\neg \psi $ by considering what each of $\neg \varphi$ and~$\neg \psi $ entails (the ternary inference). Nevertheless, it requires practice to see how such informal considerations translate into a formal proof.

\begin{exercise}\label{ex:nat-deduct}
(a) The proof tree above yields an implication that corresponds to one part of the de Morgan laws from Exercise~\ref{ex:de-morgan}. Give proof trees for the other three implications that make up these laws. Also show that we have $\emptyset\vdash\neg\neg\varphi\leftrightarrow\varphi$.

(b) Show that each $\mathsf{PL}$-formula~$\varphi$ admits an $\mathsf{NNF}$-formula $\varphi^+$ with $\emptyset\vdash\varphi\leftrightarrow\varphi^+$. \emph{Hint:} What can you keep from the proof of Theorem~\ref{thm:cnf} and what do you need to adapt? If you are happy to put in some extra work, you can also adapt the entire theorem and prove an equivalence with a $\mathsf{CNF}$-formula.
\end{exercise}

As our second proof system, we present a version of the so-called sequent calculus. By a (Tait-style~\cite{tait-style}) sequent we mean a finite set of formulas in negation normal~form. The sequent $\Gamma=\{\varphi_1,\ldots,\varphi_n\}$ represents the disjunction $\bigvee\Gamma=\varphi_1\lor\ldots\lor\varphi_n$, so that $\Gamma$ is true precisely if one of the $\varphi_i$ is true. In particular, the empty sequent represents falsity. The following definition uses the same symbol~$\vdash$~(turnstile) as in the context of natural deduction. There is no danger of confusion, as the turnstile in natural deduction has a set of formulas (possibly~$\emptyset$) to its left, while the one in our sequent calculus does not.

\begin{definition}\label{def:sequent-calc-PL}
Recall that a sequent is a finite set of $\mathsf{NNF}$-formulas. We recur\-sively define $\vdash\Gamma$ for a sequent~$\Gamma$ (read `$\Gamma$ can be derived in sequent calculus'):
\begin{description}[labelwidth=5.5ex,labelindent=\parindent,leftmargin=!,before={\renewcommand\makelabel[1]{(##1)}}]
\item[$\mathsf{Ax}$] If $\Gamma$ contains some atomic formula and its negation, we have $\vdash\Gamma$.
\item[$\land$] Given $\vdash\Gamma,\varphi$ and $\vdash\Gamma,\psi$, we get $\vdash\Gamma,\varphi\land\psi$. 
\item[$\lor$] From $\vdash\Gamma,\psi$ we get both $\vdash\Gamma,\psi\lor\theta$ and $\vdash\Gamma,\varphi\lor\psi$.
\item[$\mathsf{Cut}$] If we have both $\vdash\Gamma,\varphi$ and $\vdash\Gamma,\nega\varphi$ for some $\mathsf{NNF}$-formula~$\varphi$ (cf.~Definition~\ref{def:nega}), then we get $\vdash\Gamma$.
\end{description}
The last clause is known as the cut rule. If $\vdash\Gamma$ can be derived without~($\mathsf{Cut}$), then we write $\vdash_0\Gamma$ and say that $\Gamma$ has a cut-free proof in sequent calculus.
\end{definition}

Let us stress that, for example, $\varphi\lor\psi\in\Gamma$ is permitted in clause~($\lor$), so that the latter allows to deduce $\vdash\Delta,\varphi\lor\psi$ from the premise~$\vdash\Delta,\varphi\lor\psi,\psi$ (take $\Gamma=\Delta,\varphi\lor\psi$ and observe $\Gamma,\varphi\lor\psi=\Gamma$). Intuitively, the following holds because a disjunction gets weaker when we add more disjuncts. For a formal proof one argues by induction.

\begin{lemma}[Weakening]
Consider sequents $\Gamma\subseteq\Delta$. If we have $\vdash\Gamma$ or~$\vdash_0\Gamma$, we get $\vdash\Delta$ or~$\vdash_0\Delta$, respectively.
\end{lemma}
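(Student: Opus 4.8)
The plan is to argue by induction over the derivation of $\vdash\Gamma$ (respectively $\vdash_0\Gamma$), exactly as in the weakening lemma for natural deduction (Lemma~\ref{lem:weakening}). Since the cut-free system is obtained from the full system by omitting the clause~($\mathsf{Cut}$), a single induction handles both cases simultaneously: in each step we check that the relevant rule can still be applied after enlarging the sequent, and that no cut is introduced if none was present. So fix $\Gamma\subseteq\Delta$ and suppose $\vdash\Gamma$; I would show $\vdash\Delta$ by cases on the last rule used, and remark that the same argument never adds a cut.

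First I would treat the axiom case: if $\Gamma$ was obtained by~($\mathsf{Ax}$) because it contains some atomic formula~$P_i$ together with~$\neg P_i$, then $\Delta\supseteq\Gamma$ contains them as well, so $\vdash\Delta$ by~($\mathsf{Ax}$). Next the~($\land$) case: here $\Gamma$ has the form $\Gamma',\varphi\land\psi$ and the premises are $\vdash\Gamma',\varphi$ and $\vdash\Gamma',\psi$. Setting $\Delta'=\Delta\setminus\{\varphi\land\psi\}$ if necessary, we have $\Gamma'\subseteq\Delta'\cup\{\varphi\}$ and similarly with~$\psi$ — more simply, $\Gamma',\varphi\subseteq\Delta,\varphi$ and $\Gamma',\psi\subseteq\Delta,\psi$, so the induction hypothesis yields $\vdash\Delta,\varphi$ and $\vdash\Delta,\psi$, whence $\vdash\Delta,\varphi\land\psi$ by~($\land$); and since $\varphi\land\psi\in\Gamma\subseteq\Delta$ we have $\Delta,\varphi\land\psi=\Delta$, giving $\vdash\Delta$. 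The~($\lor$) case is analogous: $\Gamma=\Gamma',\varphi\lor\psi$ (or $\psi\lor\theta$) with premise $\vdash\Gamma',\psi$; by the induction hypothesis $\vdash\Delta,\psi$, and applying~($\lor$) gives $\vdash\Delta,\varphi\lor\psi$, which equals $\Delta$ since the disjunction already lies in~$\Gamma\subseteq\Delta$. Finally the~($\mathsf{Cut}$) case, which occurs only in the full system: $\Gamma$ is derived from $\vdash\Gamma,\varphi$ and $\vdash\Gamma,\nega\varphi$; the induction hypothesis applied to $\Gamma,\varphi\subseteq\Delta,\varphi$ and $\Gamma,\nega\varphi\subseteq\Delta,\nega\varphi$ yields $\vdash\Delta,\varphi$ and $\vdash\Delta,\nega\varphi$, and then~($\mathsf{Cut}$) gives $\vdash\Delta$. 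Crucially, in every case except the last the new derivation uses only the rules already used, so if the original derivation was cut-free, so is the constructed one — this establishes the $\vdash_0$ statement.

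There is no serious obstacle here; the one point that deserves a word of care is the bookkeeping around set-notation, namely that $\Gamma,\varphi$ denotes $\Gamma\cup\{\varphi\}$ and that a principal formula such as $\varphi\land\psi$ or $\varphi\lor\psi$ may already belong to~$\Gamma$ (the excerpt explicitly flags this for clause~($\lor$)). Because sequents are sets rather than multisets, the identities $\Delta,\varphi\land\psi=\Delta$ and $\Delta,\varphi\lor\psi=\Delta$ when the compound formula lies in $\Gamma\subseteq\Delta$ are exactly what makes the argument go through cleanly, and I would state this observation once at the outset so that the individual cases become routine.
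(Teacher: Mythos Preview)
Your proof is correct and follows exactly the approach the paper indicates: the paper does not give a detailed proof but simply remarks that ``for a formal proof one argues by induction,'' which is precisely the induction over the derivation that you carry out. Your case analysis is sound, including the bookkeeping observation that the principal formula already lies in~$\Delta$ so that $\Delta,\varphi\land\psi=\Delta$ (and likewise for~$\lor$), and your remark that no cut is introduced except in the (\textsf{Cut}) case correctly handles the $\vdash_0$ claim.
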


In particular, when given premises $\vdash\Delta,\varphi$ and $\vdash\Gamma,\psi$, we can use weakening to get $\vdash\Delta,\Gamma,\varphi$ and $\vdash\Delta,\Gamma,\psi$, which may then be combined into $\vdash\Delta,\Gamma,\varphi\land\psi$ according to clause~($\land$). As in the case of natural deduction, proofs in sequent calculus~will often be written in the following form (note that the final formula below is equivalent to Peirce's law from the previous section):

\begin{prooftree}
\AxiomC{$\neg P,P$}
\RightLabel{($\lor$)}
\UnaryInfC{$\neg P\lor Q,P$}
\AxiomC{$\neg P,P$}
\RightLabel{($\land$)}
\BinaryInfC{$(\neg P\lor Q)\land\neg P,P$}
\RightLabel{($\lor$)}
\UnaryInfC{$((\neg P\lor Q)\land\neg P)\lor P,P$}
\RightLabel{($\lor$)}
\UnaryInfC{$((\neg P\lor Q)\land\neg P)\lor P$}
\end{prooftree}
\vspace*{.5\baselineskip}

\noindent It is worth noting that two applications of clause~($\lor$) are needed to combine the disjuncts at the end of the previous proof.

\begin{exercise}\label{ex:nega}
Prove that we have $\varphi,\nega\varphi\vdash\bot$ in natural deducation as well as $\vdash\nega\varphi,\varphi$ in sequence calculus, for any $\mathsf{NNF}$-formula~$\varphi$. \emph{Hint:} Use induction over the build-up of the formula~$\varphi$.
\end{exercise}

The following provides a first connection between sequent calculus and natural deduction. We will later strengthen the result in two respects: First, the assumption that the proof in sequent calculus is cut-free is not actually needed, even though it makes things somewhat easier. Secondly, the implication is actually an equivalence, i.\,e., a proof in natural deduction does also yield one in sequent calculus.

\begin{proposition}\label{prop:seq-to-nat-deduct}
If there is a cut-free proof $\vdash_0\varphi$ in sequent calculus, there is also a proof $\emptyset\vdash\varphi$ in natural deduction.
\end{proposition}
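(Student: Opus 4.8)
The plan is to prove, by induction on the cut-free derivation, the more general claim that $\vdash_0\Gamma$ implies $\{\neg\psi:\psi\in\Gamma\}\vdash\bot$ in natural deduction. Generalising from the single formula to an arbitrary sequent is the crucial move: a sequent $\Gamma=\{\psi_1,\ldots,\psi_n\}$ is meant to express $\psi_1\lor\ldots\lor\psi_n$, and the natural deduction counterpart that behaves well under the sequent rules is not the disjunction itself (which would force us to choose an ordering) but the statement that the assumptions $\neg\psi_1,\ldots,\neg\psi_n$ are jointly contradictory. Granting the general claim, the proposition is immediate: $\vdash_0\varphi$ gives $\neg\varphi\vdash\bot$, and then clause~(iii) of Definition~\ref{def:nat-deduct} (proof by contradiction, with empty side assumptions) yields $\emptyset\vdash\varphi$.

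Before the induction I would record the derived rule that $A\vdash\chi$ together with $A,\chi\vdash\bot$ yields $A\vdash\bot$: indeed $A,\chi\vdash\bot$ gives $A\vdash\neg\chi$ by clause~(iii), and a further application of clause~(iii) to $A\vdash\chi$ and $A\vdash\neg\chi$ gives $A\vdash\bot$. The Weakening Lemma~\ref{lem:weakening} will be used throughout without comment.

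For the induction, consider the last rule of the derivation of $\vdash_0\Gamma$. If it is $(\mathsf{Ax})$, then $\Gamma$ contains an atomic $P$ and its negation $\neg P$, so $A:=\{\neg\psi:\psi\in\Gamma\}$ contains both $\neg P$ and $\neg\neg P$; clause~(i) gives $A\vdash\neg P$ and $A\vdash\neg\neg P$, hence $A\vdash\bot$ by clause~(iii). If it is $(\land)$, with conclusion $\vdash_0\Gamma,\varphi\land\psi$ from $\vdash_0\Gamma,\varphi$ and $\vdash_0\Gamma,\psi$, set $B:=\{\neg\rho:\rho\in\Gamma\}\cup\{\neg(\varphi\land\psi)\}$; the induction hypothesis and weakening give $B,\neg\varphi\vdash\bot$ and $B,\neg\psi\vdash\bot$, so $B\vdash\varphi$ and $B\vdash\psi$ by proof by contradiction, hence $B\vdash\varphi\land\psi$ by clause~(iv); since $B\vdash\neg(\varphi\land\psi)$ by clause~(i), clause~(iii) gives $B\vdash\bot$. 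If it is $(\lor)$ deriving the sequent $\Gamma,\psi\lor\theta$ from $\vdash_0\Gamma,\psi$, set $B:=\{\neg\rho:\rho\in\Gamma\}\cup\{\neg(\psi\lor\theta)\}$; from $B,\psi\vdash\psi$ we get $B,\psi\vdash\psi\lor\theta$ by clause~(v) and $B,\psi\vdash\neg(\psi\lor\theta)$ by clause~(i), so $B,\psi\vdash\bot$ and therefore $B\vdash\neg\psi$ by clause~(iii); combined with $B,\neg\psi\vdash\bot$ from the induction hypothesis, the derived rule above yields $B\vdash\bot$. The other form of $(\lor)$, which derives $\Gamma,\varphi\lor\psi$ from $\vdash_0\Gamma,\psi$, is handled identically, using instead $B,\psi\vdash\varphi\lor\psi$. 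There is no case for $(\mathsf{Cut})$ because the derivation is cut-free, so the induction is complete.

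The main obstacle is conceptual rather than technical: one has to hit upon the right generalisation and the right translation of a sequent into a natural deduction judgement, after which every case is a short manipulation of the discharge rules. One small point of care is that sequents are sets, so that formulas displayed in a rule may already occur in $\Gamma$; but since every step above is compatible with weakening, this is harmless. It is precisely the cut rule that this argument cannot absorb, which is why dropping the cut-freeness hypothesis (as announced after the proposition) will require extra work.
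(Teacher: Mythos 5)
Your proposal is correct and follows essentially the same strategy as the paper: generalise the claim to ``$\vdash_0\Gamma$ implies $\Gamma^-\vdash\bot$'' (your $\{\neg\psi : \psi\in\Gamma\}\vdash\bot$) and induct over the cut-free derivation, unwinding each rule into natural-deduction steps built around proof by contradiction. The only difference is cosmetic: in the $(\lor)$ case the paper passes directly from $\Delta^-,\neg\psi\vdash\bot$ to $\Delta^-\vdash\psi$ to $\Delta^-\vdash\psi\lor\theta$, whereas you introduce a small derived ``modus-ponens for $\bot$'' lemma and route through $B\vdash\neg\psi$; both are sound and equally short.
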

\begin{proof}
The idea is to use induction over the derivation of $\vdash_0\varphi$ according to the clauses from Definition~\ref{def:sequent-calc-PL}. However, the induction does not go through directly: even though the sequent $\varphi$ consists of a single formula, its proof may involve sequents with more elements, while only a single formula is allowed to the right of the turnstile in natural deduction. To circumvent this issue, let us agree to write $\Gamma^-$ for $\neg\varphi_1,\ldots,\neg\varphi_n$ when $\Gamma$ is the sequent $\varphi_1,\ldots,\varphi_n$. We use induction to show that $\vdash_0\Gamma$ entails $\Gamma^-\vdash\bot$. For the base case, we must establish $\neg P,\neg\neg P\vdash\bot$, which is straightforward. In the induction step, we consider the case where~$\vdash_0\Gamma$ was derived according to clause~($\lor$). More specifically, we assume that $\Gamma$ has the form $\Delta,\psi\lor\theta$ and was derived from $\vdash_0\Delta,\psi$. By the induction hypothesis, we get $\Delta^-,\neg\psi\vdash\bot$. This entails $\Delta^-\vdash\psi$ via proof by contradiction. We can conclude $\Delta^-\vdash\psi\lor\theta$, which readily yields $\Delta^-,\neg(\psi\lor\theta)\vdash\bot$, as required. The case where $\vdash_0\Gamma$ was derived by clause~($\land$) is treated in a similar way. Once the induction is completed, we know that $\vdash_0\varphi$ implies~$\neg\varphi\vdash\bot$. To get $\emptyset\vdash\varphi$, we again use proof by contradiction.
\end{proof}

We now come to the main result of this section, which shows that a formula is provable (in natural deduction or sequent calculus) precisely if it is logically valid. Let us begin with the easier direction.

\begin{proposition}[Soundness for propositional logic]\label{prop:soundness}\mbox{ }

(a) Assume we have $\Gamma\vdash\varphi$ in natural deduction. For any valuation~$v$ such that we have $\llbracket\psi\rrbracket_v=1$ for all~$\psi\in\Gamma$, we also have $\llbracket\varphi\rrbracket_v=1$.

(b) Assume we have $\vdash\Gamma$ in sequent calculus. For any valuation~$v$, the sequent~$\Gamma$ contains some formula $\varphi$ for which we have $\llbracket\varphi\rrbracket_v=1$.
\end{proposition}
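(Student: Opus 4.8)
The plan is to prove both parts by induction on the derivation, exactly mirroring the recursive clauses of the relevant proof system. Both statements assert that derivability transfers a semantic property along every inference rule, so the natural strategy is: for each clause in Definition~\ref{def:nat-deduct} (resp.~Definition~\ref{def:sequent-calc-PL}), assume the statement for the premises and verify it for the conclusion. Since the two parts are independent, I would treat them one after the other.

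For part~(a), fix a valuation~$v$ and argue by induction over the way $\Gamma\vdash\varphi$ was obtained. The base case is clause~(i): if $\varphi\in\Gamma$ and $\llbracket\psi\rrbracket_v=1$ for all $\psi\in\Gamma$, then $\llbracket\varphi\rrbracket_v=1$ trivially. For the induction step one goes through the remaining clauses. The rules that only add conclusions from premises with the same assumption set~$\Gamma$ (modus ponens, $\land$-introduction and elimination, the two $\lor$-rules, the contradiction rule producing $\bot$) are immediate from the semantic clauses for the connectives: e.g.\ if $\llbracket\varphi\to\psi\rrbracket_v=1$ and $\llbracket\varphi\rrbracket_v=1$ then, unfolding $\varphi\to\psi$ as $\neg\varphi\lor\psi$, we must have $\llbracket\psi\rrbracket_v=1$. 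The slightly more delicate rules are those that \emph{discharge} an assumption, namely $\to$-introduction (from $\Gamma,\varphi\vdash\psi$ infer $\Gamma\vdash\varphi\to\psi$), $\neg$-introduction, proof by contradiction, and the $\lor$-elimination rule. Here one splits on whether $v$ satisfies the discharged formula. For $\to$-introduction: assuming $\llbracket\chi\rrbracket_v=1$ for all $\chi\in\Gamma$, if $\llbracket\varphi\rrbracket_v=0$ then $\llbracket\varphi\to\psi\rrbracket_v=1$ already; if $\llbracket\varphi\rrbracket_v=1$ then $v$ satisfies all of $\Gamma,\varphi$, so the induction hypothesis gives $\llbracket\psi\rrbracket_v=1$ and hence $\llbracket\varphi\to\psi\rrbracket_v=1$. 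Proof by contradiction (from $\Gamma,\neg\varphi\vdash\bot$ infer $\Gamma\vdash\varphi$) uses that $\llbracket\bot\rrbracket_v=0$ for every~$v$: if $\llbracket\varphi\rrbracket_v=0$ then $v$ satisfies $\Gamma,\neg\varphi$, so the hypothesis would force $\llbracket\bot\rrbracket_v=1$, a contradiction; hence $\llbracket\varphi\rrbracket_v=1$. The $\lor$-elimination case is analogous, using that $v$ satisfying $\varphi\lor\psi$ means it satisfies at least one disjunct.

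For part~(b), again fix~$v$ and induct on the derivation of $\vdash\Gamma$. The base case is~($\mathsf{Ax}$): if $P_i$ and $\neg P_i$ both lie in~$\Gamma$, then whichever of them $v$ makes true is the desired formula. For~($\lor$): a premise $\vdash\Gamma,\psi$ gives, by induction hypothesis, some formula in $\Gamma\cup\{\psi\}$ true under~$v$; if it lies in~$\Gamma$ we are done, and if it is~$\psi$ then $\psi\lor\theta$ (or $\varphi\lor\psi$) is true under~$v$ by the $\max$-clause, and this formula lies in the conclusion sequent. For~($\land$): from $\vdash\Gamma,\varphi$ and $\vdash\Gamma,\psi$ the hypothesis gives true formulas $\chi_1\in\Gamma\cup\{\varphi\}$ and $\chi_2\in\Gamma\cup\{\psi\}$; if either lies in~$\Gamma$ we are done, and otherwise $\chi_1=\varphi$ and $\chi_2=\psi$ are both true, so $\varphi\land\psi$ is true by the $\min$-clause and belongs to the conclusion. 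For~($\mathsf{Cut}$): from $\vdash\Gamma,\varphi$ and $\vdash\Gamma,\nega\varphi$, the hypothesis gives a true formula in each; if both are~$\varphi$ and~$\nega\varphi$ respectively, then by Lemma~\ref{lem:tilde-negation} we would have $\llbracket\varphi\rrbracket_v=\llbracket\neg\varphi\rrbracket_v=1$, impossible, so at least one of the two true formulas lies in~$\Gamma$, as required.

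I do not expect a genuine obstacle here; the proof is a routine structural induction. The only points that require a moment's care are the assumption-discharging rules in part~(a)—where one must remember to case-split on the truth value of the discharged formula rather than blindly apply the induction hypothesis—and, in part~(b), the cut rule, where invoking Lemma~\ref{lem:tilde-negation} (the equivalence of $\nega\varphi$ with $\neg\varphi$) is exactly what rules out both selected formulas being the cut formula and its dual. One should also note at the outset that $\llbracket\bot\rrbracket_v=0$ and $\llbracket\top\rrbracket_v=1$ for every valuation~$v$, since several clauses of the natural-deduction system are phrased in terms of~$\bot$.
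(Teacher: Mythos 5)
Your proposal is correct and follows the same strategy as the paper: both parts by structural induction over proofs, with the cut case of part (b) resolved via Lemma~\ref{lem:tilde-negation} exactly as the paper does. The only difference is presentational—you spell out all the cases, whereas the paper gives just the cut case as a representative and leaves the rest to the reader.
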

\begin{proof}
Both parts of the proposition are established by induction over proofs. Since all cases are rather similar, we only consider a single one: assume that the proof of $\vdash\Gamma$ in~(b) was concluded by a cut rule with premises $\vdash\Gamma,\varphi$ and $\vdash\Gamma,\nega\varphi$. Given any valuation~$v$, the induction hypothesis tells us, first, that we have $\llbracket\psi\rrbracket_v=1$ for some~$\psi\in\Gamma\cup\{\varphi\}$. If we have $\psi\in\Gamma$, then we are done, so let us assume~$\llbracket\varphi\rrbracket_v=1$. From Lemma~\ref{lem:tilde-negation} we know that $\nega\varphi$ is equivalent to~$\neg\varphi$. Hence we get ~$\llbracket\nega\varphi\rrbracket_v=0$. But for the second premise of our cut, the induction hypotheses ensures that we have $\llbracket\psi\rrbracket_v=1$ for some~$\psi\in\Gamma\cup\{\nega\varphi\}$. We can only have $\psi\in\Gamma$, as required.
\end{proof}

To prove a result that is converse to the previous proposition, we need some preparations. Let us write $2^{<\omega}$ for the collection of finite sequences with~entries from~$\{0,1\}$. To refer to the length and entries of such sequences, we shall write them as $\sigma=\langle\sigma_0,\ldots,\sigma_{l(\sigma)-1}\rangle$. For $\sigma,\tau\in 2^{<\omega}$ we write $\sigma\sqsubset\tau$ to express that $\sigma$ is a proper initial segment of~$\tau$, i.\,e., that we have $l(\sigma)<l(\tau)$ and $\sigma_i=\tau_i$ for~$i<l(\sigma)$. For $\sigma\in 2^{<\omega}$ and $i<2$, we put $\sigma\star i:=\langle\sigma_0,\ldots,\sigma_{l(\sigma)-1},i\rangle\in 2^{<\omega}$. Also, we write~$2^\omega$ for the set of functions from~$\mathbb N$ to~$\{0,1\}$ (infinite sequences). Given $f\in 2^\omega$ and a number $n\in\mathbb N$, we put $f[n]:=\langle f(0),\ldots,f(n-1)\rangle\in 2^{<\omega}$. For $\sigma\in 2^{<\omega}$ and $f\in 2^\omega$ we write $\sigma\sqsubset f$ to express~$\sigma=f[l(\sigma)]$.

\begin{definition}\label{def:binary-tree}
A binary tree is a nonempty set $T\subseteq 2^{<\omega}$ such that $\sigma\sqsubset\tau\in T$ entails~$\sigma\in T$. We say that $f\in 2^\omega$ is a branch of~$T$ if we have $f[n]\in T$ for all~$n\in\mathbb N$. By a leaf of~$T$ we mean an element $\sigma\in T$ such that $\sigma\star i\notin T$ holds for both~$i<2$.
\end{definition}

In Section~\ref{subsect:KL} we will see that the following theorem is strictly weaker than the analogous result for trees with arbitrary finite branchings. Here we consider the weak version in order to show that it suffices for the purpose at hand.

\begin{theorem}[Weak K\H{o}nig's Lemma]\label{thm:wkl}
Any infinite binary tree has a branch.
\end{theorem}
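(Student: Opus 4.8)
The plan is to prove Weak K\H{o}nig's Lemma by the standard argument that selects a branch one bit at a time, always staying inside the infinite part of the tree. Let $T\subseteq 2^{<\omega}$ be an infinite binary tree. For $\sigma\in T$ write $T_\sigma=\{\tau\in T: \sigma\sqsubseteq\tau\text{ or }\tau\sqsubseteq\sigma\}$ for the subtree of nodes comparable with $\sigma$; the key observation is that $T_\sigma$ is infinite precisely if $\sigma$ has infinitely many extensions in $T$. I would define $f\in 2^\omega$ recursively by choosing $f(n)$ so that $f[n]\star f(n)$ still has infinitely many extensions in $T$, having arranged inductively that $f[n]$ itself does.

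The main step is the recursion itself, which rests on a simple pigeonhole fact: if $\sigma\in T$ has infinitely many extensions in $T$, then at least one of $\sigma\star 0$ and $\sigma\star 1$ lies in $T$ and has infinitely many extensions in $T$. Indeed, every extension of $\sigma$ of length $>l(\sigma)$ passes through $\sigma\star 0$ or $\sigma\star 1$, so if both had only finitely many extensions, then $\sigma$ would have only finitely many, a contradiction; and a node with infinitely many extensions is in particular nonempty of extensions, hence (being comparable with them, or by the tree-closure property) lies in $T$. I would let $f(n)$ be the least such bit $i$, so the construction is deterministic. Starting the recursion is immediate: the empty sequence $\langle\rangle$ is in $T$ (as $T$ is nonempty and downward closed) and has infinitely many extensions in $T$ (as $T$ is infinite).

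It then remains to check that $f$ is a branch, i.e.\ $f[n]\in T$ for all $n$. This follows by induction on $n$: the invariant maintained by the recursion is exactly that $f[n]\in T$ and $f[n]$ has infinitely many extensions in $T$, and the pigeonhole step propagates this from $n$ to $n+1$ while placing $f[n+1]=f[n]\star f(n)$ in $T$. Hence $f$ is a branch of $T$, as desired.

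I do not expect any serious obstacle here; the only point requiring a little care is phrasing the invariant so that the recursion and the verification that $f$ is a branch use the same inductive hypothesis, and being explicit that ``$\sigma$ has infinitely many extensions in $T$'' already forces $\sigma\in T$ (so we never step outside the tree). One could alternatively phrase the whole argument via (classical, non-constructive) dependent choice applied to the set of ``good'' nodes, but the bit-by-bit recursion with the least-bit choice is cleaner and self-contained given the notation $\sigma\star i$ already introduced.
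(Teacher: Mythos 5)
Your proof is correct and takes essentially the same route as the paper: maintain the invariant that the current node has infinitely many extensions in $T$, and use the pigeonhole fact that if both immediate successors had only finitely many extensions then so would the current node. The only cosmetic differences are that you fix the choice deterministically (least bit) and spell out the invariant a bit more explicitly, whereas the paper selects the $\sigma^i$ and then reads off $f$ from them.
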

\begin{proof}
In an infinite binary tree $T$, we can iteratively select $\sigma^0\sqsubset\sigma^1\sqsubset\ldots$ with $l(\sigma^i)=i$ such that there are infinitely many $\tau\in T$ with~$\sigma^i\sqsubset\tau$. Here the recursion step is secured by the fact that the union of two finite sets is finite, as $\sigma^i\sqsubset\tau$ yields either $\sigma^i\star 0\sqsubseteq\tau$ or $\sigma^i\star 1\sqsubseteq\tau$. To conclude, we define $f\in 2^\omega$ by $f(i):=\sigma^{i+1}_i$, which does indeed yield $f[n]=\sigma^n\in T$ for all~$n\in\mathbb N$.
\end{proof}

We will not need the result of the following exercise (or any prerequisites from topology), but it provides some explanation for the name of Theorem~\ref{thm:compactness} below. 

\begin{exercise}\label{ex:compactness}
Use K\H{on}ig's lemma to show that $2^\omega$ is compact when considered as a topological space with basic open sets~$\mathcal O_\sigma:=\{f\in 2^\omega\,|\,\sigma\sqsubset f\}$ for~$\sigma\in 2^{<\omega}$.
\end{exercise}

The following proposition is a version of the so-called completeness theorem, but we reserve this name for a stronger result that will be derived below.

\begin{proposition}\label{prop:completeness}
Consider a sequent $\Gamma=\{\varphi_1,\ldots,\varphi_n\}$ of propositional logic. If $\bigvee\Gamma=\varphi_1\lor\ldots\lor\varphi_n$ is valid, there is a cut-free proof $\vdash_0\Gamma$ in sequent calculus.
\end{proposition}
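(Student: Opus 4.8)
The plan is to prove the contrapositive via a systematic proof-search procedure: attempt to build a cut-free derivation of $\Gamma$ from the bottom up, and show that if this attempt fails, then $\bigvee\Gamma$ is not valid. Concretely, I would describe an algorithm that reduces a sequent by repeatedly decomposing its non-literal formulas. Since each sequent is a finite set of $\mathsf{NNF}$-formulas, every non-literal member has the form $\varphi_0\land\varphi_1$ or $\varphi_0\lor\varphi_1$. For a disjunction $\varphi_0\lor\varphi_1\in\Delta$, I replace $\Delta$ by the single sequent $\Delta,\varphi_0,\varphi_1$ (this is exactly invertibility of the $(\lor)$-rule, read upwards, using weakening to keep $\varphi_0\lor\varphi_1$ around if desired). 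For a conjunction $\varphi_0\land\varphi_1\in\Delta$, I branch into the two sequents $\Delta,\varphi_0$ and $\Delta,\varphi_1$, matching clause~$(\land)$. Iterating this fairly (always picking, say, the formula of maximal length, or processing in rounds) produces a finite tree whose nodes are sequents and whose leaves are sequents consisting of literals only, because each reduction step strictly decreases the multiset of formula-lengths of the non-literal formulas present — so the process terminates.

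Next I would analyse the leaves. If every leaf sequent contains some atomic $P_i$ together with its negation $\neg P_i$, then every leaf is an instance of $(\mathsf{Ax})$, and reading the tree downwards — using $(\lor)$ (twice where needed, as the excerpt's proof tree for Peirce's law illustrates) and $(\land)$ at the appropriate nodes — assembles a genuine cut-free derivation $\vdash_0\Gamma$, which is what we want. So suppose instead that some leaf $\Lambda$ is a set of literals containing no complementary pair $P_i,\neg P_i$. Then I define a valuation $v$ by setting $v(i)=0$ whenever $P_i\in\Lambda$ and $v(i)=1$ whenever $\neg P_i\in\Lambda$ (and arbitrarily elsewhere); this is well-defined precisely because $\Lambda$ has no complementary pair. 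Under $v$ every literal in $\Lambda$ evaluates to $0$, so $\llbracket\bigvee\Lambda\rrbracket_v=0$.

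The remaining point is to propagate this falsifying valuation back down to the root, i.e.\ to show $\llbracket\bigvee\Gamma\rrbracket_v=0$, contradicting validity of $\bigvee\Gamma$. This is where the soundness-style bookkeeping happens, and it is the one place needing a small argument rather than pure bureaucracy. The key observation is that each reduction step from a sequent $\Delta$ to a child $\Delta'$ satisfies: if $\llbracket\bigvee\Delta'\rrbracket_v=0$ then $\llbracket\bigvee\Delta\rrbracket_v=0$, where for the $(\land)$-branching we get to \emph{choose} the child $\Delta'\in\{\Delta,\varphi_0\},\{\Delta,\varphi_1\}\}$ along which to descend. Indeed, if $\varphi_0\land\varphi_1\in\Delta$ and $\llbracket\varphi_0\land\varphi_1\rrbracket_v=0$, then $\llbracket\varphi_k\rrbracket_v=0$ for some $k\in\{0,1\}$, and then $\llbracket\bigvee(\Delta,\varphi_k)\rrbracket_v=\max(\llbracket\bigvee\Delta\rrbracket_v,\llbracket\varphi_k\rrbracket_v)=\llbracket\bigvee\Delta\rrbracket_v$; for the $(\lor)$-step, $\llbracket\bigvee(\Delta,\varphi_0,\varphi_1)\rrbracket_v$ and $\llbracket\bigvee\Delta\rrbracket_v$ agree since $\llbracket\varphi_0\lor\varphi_1\rrbracket_v=\max(\llbracket\varphi_0\rrbracket_v,\llbracket\varphi_1\rrbracket_v)$ is already counted. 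Thus, starting from the falsified leaf $\Lambda$ and always descending along a child with value~$0$ (possible at every step by the above), I reach the root with $\llbracket\bigvee\Gamma\rrbracket_v=0$.

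Putting it together: either the search tree is built entirely from $(\mathsf{Ax})$-leaves, giving the desired cut-free proof $\vdash_0\Gamma$, or there is a literal leaf with no complementary pair, from which the above path argument constructs a valuation making $\bigvee\Gamma$ false. Hence if $\bigvee\Gamma$ is valid the first alternative must hold, proving the proposition. The main obstacle — and the part I would be most careful to state cleanly — is the termination and fairness of the reduction procedure: one must ensure the decomposition tree is finite (handled by the strictly decreasing length-multiset measure) and that it is driven all the way down so that every leaf really is a set of literals; the soundness-propagation lemma above is then routine, modulo keeping track of the fact that the $(\land)$-rule gives a genuine choice of descent branch rather than forcing both.
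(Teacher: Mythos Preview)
Your approach is sound and gives a valid alternative to the paper's argument, with one point to tidy. The paper proceeds differently: it \emph{keeps} every formula in the sequent (rotating the processed formula to the back of an ordered sequent), so the search tree need not terminate. It then invokes weak K\H{o}nig's lemma: either the tree is finite, in which case one reads off a cut-free proof, or it has an infinite branch, from whose accumulated formula set~$\mathcal F$ one defines the falsifying valuation directly. This formulation is chosen deliberately so that the same construction extends verbatim to first-order logic, where the tree is genuinely infinite (an existential formula must be revisited for each of infinitely many witness terms); your terminating search does not adapt to that setting without substantial rework. Your route, by contrast, is more elementary for propositional logic precisely because it avoids K\H{o}nig's lemma.

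The point to fix is termination. As you describe it, $\Delta$ (with $\varphi_0\lor\varphi_1\in\Delta$) is replaced by $\Delta,\varphi_0,\varphi_1$, which still contains $\varphi_0\lor\varphi_1$; the multiset of non-literal lengths then does \emph{not} decrease, and repeatedly selecting the longest formula loops forever. Drop the principal formula at each step---so the child of $\Delta',\varphi_0\lor\varphi_1$ is $\Delta',\varphi_0,\varphi_1$---and your measure works (proof reconstruction via two applications of~$(\lor)$ is unaffected). The back-propagation from a bad leaf~$\Lambda$ to the root is then simply the observation ``child falsified $\Rightarrow$ parent falsified'' (the principal formula's value is recovered from its immediate components), walked along the unique path from~$\Lambda$; no choice of child is needed, so that part of your write-up can be simplified.
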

\begin{proof}
The idea is to search for a proof tree with root~$\Gamma$ by applying the rules of sequent calculus backwards in all possible ways. This will either result in a finite proof or in an infinite tree, which must thus have a branch. From such a branch we will be able to read off a valuation that witnesses that~$\bigvee\Gamma$ is not logically valid. In the following we make this idea precise.

To define a binary tree $T$ , we will decide $\sigma\in T$ by recursion over the length of the sequence~$\sigma\in 2^{<\omega}$. Simultaneously, we shall specify a sequent~$\Gamma(\sigma)$ for each~$\sigma\in T$. In fact, we will define $\Gamma(\sigma)$ as a finite sequence (rather than a set) of formulas in negation normal form, as an order on~$\Gamma(\sigma)$ is needed for our construction. When we wish to view~$\Gamma(\sigma)$ as a sequent in the sense of Definition~\ref{def:sequent-calc-PL}, we interpret it as the set of its entries. Extending our previous notation, we write $\Gamma,\varphi$ for~$\Gamma\star\varphi$ in the context of ordered sequents.

 In the base case of the aforementioned recursion, we declare that the empty sequence~$\langle\rangle$ is an element of~$T$ and that $\Gamma(\langle\rangle)$ is the sequent~$\Gamma$ that is given in the proposition (with an arbitrary order on its elements). In the recursion step, we assume that we already know~$\sigma\in T$ and that $\Gamma(\sigma)$ has been defined (note that $\sigma\notin T$ forces $\sigma\star i\notin T$ since~$T$ is a tree). We may assume that $\Gamma$ is nonempty, since the empty disjunction is false rather than valid. A glance at the following~construction shows that this property is preserved, i.\,e., that $\Gamma(\sigma)$ is always nonempty. Write
\begin{equation*}
\Gamma(\sigma)=\langle\varphi,\varphi_1,\ldots,\varphi_n\rangle\quad\text{and}\quad\Delta=\langle\varphi_1,\ldots,\varphi_n\rangle.
\end{equation*}
If $\Gamma(\sigma)$ contains some atomic formula and the negation of the same formula, we stipulate that $\sigma$ is a leaf of~$T$. Otherwise, we~declare $\sigma\star i\in T$ for both~$i<2$ (even though we could confine ourselves to $i=0$ in all but the second of the following cases). We then put
\begin{equation*}
\Gamma(\sigma\star i):=\begin{cases}
\Delta,\varphi & \text{if $\varphi$ is a negated or unnegated atomic formula},\\[.5ex]
\Delta,\varphi,\psi_i & \text{if $\varphi$ is the conjunctive formula~$\psi_0\land\psi_1$},\\[.5ex]
\Delta,\varphi,\psi_0 & \parbox[t]{.6\textwidth}{if $\varphi$ is the disjunctive formula~$\psi_0\lor\psi_1$ and $\psi_0$ does not occur in~$\Gamma(\sigma)$,}\\[3.2ex]
\Delta,\varphi,\psi_1 & \parbox[t]{.6\textwidth}{if $\varphi$ is the disjunctive formula~$\psi_0\lor\psi_1$ and $\psi_0$ does occur in~$\Gamma(\sigma)$.}
\end{cases}
\end{equation*}
Note that $\Gamma(\sigma)$ is merely permuted in the first case. In the last two cases, we could have defined $\Gamma(\sigma\star i)$ to be $\Delta,{\psi_0},\psi_1$, but the given definition is more instructive in the context of a generalization that we present later (think of $\exists x\,\varphi(x)$ as an infinite disjunction $\varphi(t_0)\lor\varphi(t_1)\lor\ldots$ in the proof of Proposition~\ref{prop:completeness-FO}).

Due to K\H{o}nig's lemma, the tree~$T$ will either be finite or have a branch. Let us first assume that it is finite. Then $T$ with the attached sequents~$\Gamma(\sigma)$ is essentially a proof in the sense of Definition~\ref{def:sequent-calc-PL}. To make this more precise, we consider the height function~$h:T\to\mathbb N$ that is given by $h(\sigma)=n$ when $n$ is the maximal length of a chain~$\sigma=\sigma_0\sqsubset\ldots\sqsubset\sigma_n$ in~$T$. We use induction over~$h(\sigma)$ to show~$\vdash_0\Gamma(\sigma)$. Note that we have $h(\sigma)=0$ precisely when $\sigma$ is a leaf of~$T$. By construction, this is the case precisely if $\Gamma(\sigma)$ contains an atomic formula and its negation. We then have $\vdash_0\Gamma(\sigma)$ according to clause~($\mathsf{Ax}$) of Definition~\ref{def:sequent-calc-PL}. In the remaining case, we write $\varphi$ for the first entry and $\Delta$ for the rest of~$\Gamma(\sigma)$, as above. First assume that $\varphi$ is a negated or unnegated atomic formula. Let us note that we have~$h(\sigma\star i)<h(\sigma)$, as any chain starting with $\sigma\star i$ can be extended into a longer one starting with~$\sigma$. The induction hypothesis will thus provide $\vdash_0\Delta,\varphi$. To complete the induction step in this case, we need only observe that $\Delta,\varphi$ and~$\Gamma(\sigma)$ are equal as sequents (sets rather than sequences). Let us now assume that $\varphi$ is a conjunction~$\psi_0\land\psi_1$. As in the previous case, we inductively obtain $\vdash_0\Gamma(\sigma),\psi_i$ for both~$i<2$. Due to clause~($\land$) of Definition~\ref{def:sequent-calc-PL}, we can conclude $\vdash_0\Gamma(\sigma),\varphi$. Since $\varphi$ occurs in~$\Gamma(\sigma)$, the sequents $\Gamma(\sigma),\varphi$ and $\Gamma(\sigma)$ coincide (as sets). So we get~$\vdash_0\Gamma(\sigma)$, as needed for the induction step. In the case where $\varphi$ is disjunctive, one argues similarly. Now that the induction is completed, we can apply the result to~$\sigma=\langle\rangle$. Since $\Gamma(\langle\rangle)$ was defined to be~$\Gamma$, we obtain $\vdash_0\Gamma$, which is the conclusion of the proposition.

To complete the proof, we consider the case where $T$ has a branch~$f:\mathbb N\to\{0,1\}$. Let $\mathcal F$ be the collection of all formulas that occur in~$\Gamma(f[n])$ for some~$n\in\mathbb N$. We show the following crucial facts:
\begin{enumerate}[label=(\roman*)]
\item For any atomic formula~$\theta\in\mathcal F$, we have $\neg\theta\notin\mathcal F$.
\item If $\psi_0\lor\psi_1$ lies in $\mathcal F$, so do both $\psi_0$ and $\psi_1$.
\item If $\psi_0\land\psi_1$ lies in~$\mathcal F$, then so does~$\psi_0$ or~$\psi_1$.
\end{enumerate}
To establish~(i) by contradiction, we assume that $\mathcal F$ contains both $\theta$ and $\neg\theta$. Consider $m,n\in\mathbb N$ such that $\theta$ occurs in $\Gamma(f[m])$ while~$\neg\theta$ lies in~$\Gamma(f[n])$. Without loss of generality, we may assume~$m\leq n$. By construction, formulas in $\Gamma(\sigma)$ are always contained in~$\Gamma(\sigma\star i)$ as well. It follows that $\theta$ does also lie in~$\Gamma(f[n])$, which thus contains an atomic formula and its negation. Again by construction, we can conclude that $f[n]\star i\notin T$ holds for both~$i<2$. In view of $f[n]\star f(n)=f[n+1]$, this contradicts the assumption that~$f$ is a branch. In order to establish~(ii), we assume that $\psi_0\lor\psi_1$ lies in~$\Gamma(f[k])$. Whenever we pass from $\Gamma(\sigma)$ to $\Gamma(\sigma\star i)$ in the construction above, each formula (except for the first) moves one position to the front. Hence $\varphi$ will be the first formula of~$\Gamma(f[m])$ for some~$m\geq k$. It now follows that $\Gamma(f[m+1])$ contains both $\varphi$ and~$\psi_0$. As before, we find $n>m$ such that $\varphi$ is the first formula in~$\Gamma(f[n])$. As the latter will still contain~$\psi_0$, the construction ensures that~$\Gamma(f[n+1])$ contains~$\psi_1$. The proof of~(iii) is similar but easier.

Now consider the valuation~$v$ that satisfies $v(i)=1$ precisely if we have~$P_i\notin\mathcal F$. We use induction over ~$\mathsf{NNF}$-formulas to show that $\varphi\in\mathcal F$ entails~$\llbracket\varphi\rrbracket_v=0$. This is immediate when~$\varphi$ has the form~$P_i$. In the case where~$\varphi$ has the form~$\neg P_i$, the claim holds since we get $P_i\notin\mathcal F$ due to~(i) above. Now assume that $\varphi$ is a disjunction~$\psi_0\lor\psi_1$. For each~$i<2$ we get $\psi_i\in\mathcal F$ by~(ii) above, so that the induction hypothesis yields~$\llbracket\psi_i\rrbracket_v=0$. We can thus conclude $\llbracket\varphi\rrbracket_v=\max_{i<2}\llbracket\psi_i\rrbracket_v=0$. When~$\varphi$ is conjunctive, one argues similarly. Finally, note that~$f[0]$ is the empty sequence, so that $\Gamma(f[0])$ coincides with~$\Gamma$. For any $\varphi\in\Gamma$ we thus have~$\llbracket\varphi\rrbracket_v=0$. So $\bigvee\Gamma$ is not logically valid, against the assumption of the proposition.
\end{proof}

The following exercise shows that we can extract additional information from the fact that we get a proof that is cut-free. More intricate information of this type will be obtained in Section~\ref{sect:cut-free-completeness} below.

\begin{exercise}
Show that any valid $n$-formula (cf.~Definition~\ref{def:n-formula}) has a sequent calculus proof in which only $n$-formulas occur. \emph{Hint:} Use induction over cut-free proofs, i.\,e., over the number of times a clause from Definition~\ref{def:sequent-calc-PL} has been applied.
\end{exercise}

In view of the following result, the reader may wonder why the cut rule has been introduced at all. A first answer can be found in the paragraph after Corollary~\ref{cor:sequent-nat-deduct} and in the proof of Theorem~\ref{thm:completeness-prop}. In Section~\ref{sect:cut-free-completeness}, we will see further evidence for the importance of the cut rule. We note that the word `semantic' refers to concepts such as satisfaction in a model, which are concerned with the \emph{meaning} of linguistic expressions. In contrast, `syntactic' refers to methods that investigate the structure of proofs and other linguistic objects as sequences of symbols, without taking their meaning into account (e.\,g., the proof of Theorem~\ref{thm:herbrand} can be called syntactic).

\begin{corollary}[Semantic Cut-Elimination]\label{cor:cut-elim-PL}
From $\vdash\Gamma$ we can infer $\vdash_0\Gamma$.
\end{corollary}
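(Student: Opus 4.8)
The plan is to chain together the two propositions just proved, using soundness to bridge the gap. Concretely, suppose $\vdash\Gamma$ holds (possibly using the cut rule). By the soundness result of Proposition~\ref{prop:soundness}(b), for every valuation $v$ the sequent $\Gamma$ contains some formula $\varphi$ with $\llbracket\varphi\rrbracket_v=1$; since $\llbracket\bigvee\Gamma\rrbracket_v=\max_{\psi\in\Gamma}\llbracket\psi\rrbracket_v$, this means $\llbracket\bigvee\Gamma\rrbracket_v=1$. As $v$ was arbitrary, $\bigvee\Gamma$ is logically valid.

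Now I would invoke Proposition~\ref{prop:completeness}: since $\bigvee\Gamma$ is valid, there is a cut-free proof $\vdash_0\Gamma$ in sequent calculus. That is exactly the conclusion. So the corollary is essentially immediate once one has both soundness and the completeness proposition in hand; no new combinatorics is needed.

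One small point of bookkeeping: Proposition~\ref{prop:completeness} is stated for a sequent written as $\{\varphi_1,\ldots,\varphi_n\}$, i.e.\ a \emph{finite} set of $\mathsf{NNF}$-formulas, which is precisely what a sequent is by definition, so there is nothing to reconcile there. The only case that deserves a word is the empty sequent $\Gamma=\emptyset$: here $\bigvee\Gamma$ is the empty disjunction, which is false, so $\vdash\Gamma$ should be unprovable; indeed soundness rules it out, since for any $v$ there is no $\varphi\in\emptyset$ with $\llbracket\varphi\rrbracket_v=1$, so the hypothesis $\vdash\Gamma$ cannot hold and the implication is vacuously true.

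I do not anticipate a genuine obstacle — the whole content has been front-loaded into Propositions~\ref{prop:soundness} and~\ref{prop:completeness}. If anything, the ``main'' step is simply recognising that validity of $\bigvee\Gamma$ is the common hinge: soundness of the (full, cut-ful) calculus delivers it, and the completeness proposition consumes it to produce a cut-free derivation. This is also the conceptual reason the result is called \emph{semantic} cut-elimination: the cut is removed not by a direct syntactic transformation of the proof tree but by a detour through the semantics.
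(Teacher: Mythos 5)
Your argument is exactly the paper's: apply soundness (Proposition~\ref{prop:soundness}(b)) to deduce that $\bigvee\Gamma$ is valid, then apply Proposition~\ref{prop:completeness} to obtain the cut-free derivation $\vdash_0\Gamma$. The remark on the empty sequent is a nice sanity check but is not needed; the argument is correct and matches the paper.
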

\begin{proof}
Given the premise, we can invoke Proposition~\ref{prop:soundness} to learn that $\bigvee\Gamma$ is valid. By the previous theorem, this yields the conclusion.
\end{proof}

Let us point out that cut-elimination can also be proved by syntactic methods, which provide an explicit procedure that transforms a given proof with cuts into one that is cut-free (see the paragraph before Exercise~\ref{ex:Herbrand-supexp}). Using the proposition above, we can also complete our proof that natural deduction and sequent calculus are equivalent proof systems.

\begin{corollary}\label{cor:sequent-nat-deduct}
For any formulas $\psi_1,\ldots,\psi_n$ and~$\varphi$ in negation normal form, the following are equivalent:
\begin{enumerate}[label=(\roman*)]
\item We have $\psi_1,\ldots,\psi_n\vdash\varphi$ in natural deducation.
\item We have $\vdash\nega\psi_1,\ldots,\nega\psi_n,\varphi$ in sequent calculus.
\end{enumerate}
\end{corollary}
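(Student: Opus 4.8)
The plan is to route both implications through semantics, leaning on the soundness and completeness results already available and using derived rules of natural deduction only where the asymmetry of the two formalisms forces it.

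\emph{From (i) to (ii).} Suppose $\psi_1,\ldots,\psi_n\vdash\varphi$ in natural deduction. By soundness (Proposition~\ref{prop:soundness}(a)), every valuation that makes all of $\psi_1,\ldots,\psi_n$ true also makes $\varphi$ true. I would then argue that $\bigvee\{\nega\psi_1,\ldots,\nega\psi_n,\varphi\}$ is logically valid: given a valuation $v$, either $\llbracket\psi_i\rrbracket_v=0$ for some $i$, and then $\llbracket\nega\psi_i\rrbracket_v=1$ by Lemma~\ref{lem:tilde-negation}, or $\llbracket\psi_i\rrbracket_v=1$ for all $i$, and then $\llbracket\varphi\rrbracket_v=1$; in both cases some disjunct is true. (For $n=0$ this just says $\varphi$ is valid, which follows from soundness applied to $\emptyset\vdash\varphi$.) Since each $\nega\psi_i$ is again in negation normal form, $\{\nega\psi_1,\ldots,\nega\psi_n,\varphi\}$ is a genuine sequent, so Proposition~\ref{prop:completeness} provides a cut-free proof $\vdash_0\nega\psi_1,\ldots,\nega\psi_n,\varphi$ and hence in particular $\vdash\nega\psi_1,\ldots,\nega\psi_n,\varphi$.

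\emph{From (ii) to (i).} First I would record that natural deduction admits a cut: if $B\vdash\chi$ and $A,\chi\vdash\rho$, then $A,B\vdash\rho$, which one gets by forming $A\vdash\chi\to\rho$ via clause~(ii) of Definition~\ref{def:nat-deduct}, weakening both $A\vdash\chi\to\rho$ and $B\vdash\chi$ to $A\cup B$ via Lemma~\ref{lem:weakening}, and applying modus ponens. Now assume $\vdash\nega\psi_1,\ldots,\nega\psi_n,\varphi$ in sequent calculus; by Corollary~\ref{cor:cut-elim-PL} we may take the proof cut-free. The induction carried out in the proof of Proposition~\ref{prop:seq-to-nat-deduct} actually shows that $\vdash_0\Gamma$ entails $\Gamma^-\vdash\bot$ for an arbitrary sequent $\Gamma$ (with $\Gamma^-$ negating every formula of $\Gamma$), and applied to $\Gamma=\{\nega\psi_1,\ldots,\nega\psi_n,\varphi\}$ this gives $\neg\nega\psi_1,\ldots,\neg\nega\psi_n,\neg\varphi\vdash\bot$. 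Exercise~\ref{ex:nega} supplies $\psi_i,\nega\psi_i\vdash\bot$, whence clause~(iii) of Definition~\ref{def:nat-deduct} yields $\psi_i\vdash\neg\nega\psi_i$ for each $i$; using the derived cut $n$ times to replace each $\neg\nega\psi_i$ by $\psi_i$ gives $\psi_1,\ldots,\psi_n,\neg\varphi\vdash\bot$, and a final proof by contradiction (again clause~(iii)) produces $\psi_1,\ldots,\psi_n\vdash\varphi$.

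The delicate direction is (ii)$\Rightarrow$(i): because natural deduction allows only a single formula to the right of the turnstile, one cannot literally transcribe a multi-formula sequent proof, and one must take care that the translation sends the disjuncts $\nega\psi_i$ to hypotheses $\psi_i$ rather than $\neg\nega\psi_i$. This is precisely what makes the small detour unavoidable — cut-elimination to reach the cut-free case, the generalized form of Proposition~\ref{prop:seq-to-nat-deduct}, the $\nega$-facts from Exercise~\ref{ex:nega}, and the admissibility of cut in natural deduction. Each ingredient is easy in isolation, but assembling them is where the real content of the corollary lies.
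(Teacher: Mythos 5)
Your proof is correct and follows essentially the same route as the paper's: soundness plus Proposition~\ref{prop:completeness} for (i)$\Rightarrow$(ii), and cut-elimination, the generalized form of Proposition~\ref{prop:seq-to-nat-deduct}, Exercise~\ref{ex:nega}, and successive discharging of the $\neg\nega\psi_i$ for (ii)$\Rightarrow$(i). The only cosmetic difference is that you isolate cut-admissibility in natural deduction as an explicit lemma and derive $\psi_i\vdash\neg\nega\psi_i$, whereas the paper interleaves proof by contradiction with $\psi_i\vdash\nega\psi_i\to\bot$ and modus ponens; the two bookkeeping schemes are interchangeable.
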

\begin{proof}
Given~(i), soundness (Proposition~\ref{prop:soundness}) ensures that $\nega\psi_1\lor\ldots\lor\nega\psi_n\lor\varphi$ is logically valid (see~also Lemma~\ref{lem:tilde-negation}). We then obtain~(ii) by Proposition~\ref{prop:completeness}. Given~(ii), we invoke the previous corollary to get $\vdash_0\nega\psi_1,\ldots,\nega\psi_n,\varphi$ without cuts. By the proof of  Proposition~\ref{prop:seq-to-nat-deduct}, we obtain
\begin{equation*}
\neg\nega\psi_1,\ldots,\neg\nega\psi_n,\neg\varphi\vdash\bot
\end{equation*}
in natural deduction. Invoking proof by contradiction, we get
\begin{equation*}
\neg\nega\psi_2,\ldots,\neg\nega\psi_n,\neg\varphi\vdash\nega\psi_1.
\end{equation*}
Due to Exercise~\ref{ex:nega}, we also have $\psi_1\vdash\nega\psi_1\to\bot$. So we can conclude
\begin{equation*}
\psi_1,\neg\nega\psi_2,\ldots,\neg\nega\psi_n,\neg\varphi\vdash\bot.
\end{equation*}
By repeating the argument, we obtain $\psi_1,\ldots,\psi_n,\neg\varphi\vdash\bot$. To reach~(i), we apply proof by contradiction once again.
\end{proof}

The proof that~(i) implies~(ii) shows that all sound and complete proof systems are equivalent. At the same time, it provides no efficient procedure that would transform a given proof in natural deduction into a proof in sequent calculus. An obvious way to define such a procedure is by recursion over proofs. It is worth considering the case where $\emptyset\vdash\psi$ has been derived from $\emptyset\vdash\varphi\to\psi$ and~$\emptyset\vdash\varphi$. Given that we treat $\varphi\to\psi$ as an abbreviation of~$\neg\varphi\lor\psi$, it makes sense to assume that we get~$\vdash\nega\varphi\lor\psi$ and $\vdash\varphi$ in sequent calculus. To define a recursive procedure, we should show how to derive~$\vdash\psi$. Note that $\nega(\nega\varphi\lor\psi)$ coincides with $\varphi\land\nega\psi$, since $\nega\nega\varphi$ is the same as~$\varphi$ (see the paragraph after Definition~\ref{def:nega}). To conclude by an application of the cut rule, it is enough to have~$\vdash\varphi\land\nega\psi,\psi$. The latter can be obtained from $\vdash\varphi$ as given and $\vdash\nega\psi,\psi$ from Exercise~\ref{ex:nega}. The point is that we seem to need the cut rule in order to define an efficient procedure that witnesses the implication from~(i) to~(ii) in the previous corollary. Of course, Corollary~\ref{cor:cut-elim-PL} entails that the implication remains valid when we demand a cut-free proof in~(ii).

\begin{exercise}
Prove the converse of Proposition~\ref{prop:soundness}(a), i.\,e., show that the second sentence from that result implies the first. \emph{Remark:} This yields a natural deduction version of Proposition~\ref{prop:completeness}. In contrast to the latter, your result should apply to arbitrary $\mathsf{PL}$-formulas and not just to~$\mathsf{NNF}$-formulas. Exercise~\ref{ex:nat-deduct}(b) is helpful in that respect.
\end{exercise}

It will later be crucial to have the following generalization of Proposition~\ref{prop:completeness} with potentially infinite sets of assumptions. The fact that we have a proof from assumptions may be more transparent if one uses Corollary~\ref{cor:sequent-nat-deduct} to write the conclusion as $\theta_1,\ldots,\theta_n\vdash\varphi$.

\begin{theorem}[Completeness for propositional logic]\label{thm:completeness-prop}
Let $\varphi$ and $\Theta$ be an $\mathsf{NNF}$-formula and a set  of such formulas, respectively. Assume that~$\varphi$ is a logical consequence of~$\Theta$, in the sense that $\llbracket\varphi\rrbracket_v=1$ holds for any valuation~$v$ with $\llbracket\theta\rrbracket_v=1$ for all~$\theta\in\Theta$. Then we have $\vdash\nega\theta_1,\ldots,\nega\theta_n,\varphi$ for some finite collection of~$\theta_i\in\Theta$.
\end{theorem}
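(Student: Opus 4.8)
The plan is to re-run the proof-search construction from Proposition~\ref{prop:completeness}, now starting from the one-element ordered sequent $\langle\varphi\rangle$ and interleaving steps that gradually feed the assumptions into the sequent. First I would fix an enumeration $\theta_1,\theta_2,\dots$ of $\Theta$ (the case $\Theta=\emptyset$ being exactly Proposition~\ref{prop:completeness} for $\Gamma=\{\varphi\}$, a finite $\Theta$ being handled by letting the enumeration run out). Each $\nega\theta_k$ is again an $\mathsf{NNF}$-formula, so it is allowed to occur in a sequent. I would then build a binary tree $T\subseteq 2^{<\omega}$ with attached ordered sequents $\Gamma(\sigma)$ exactly as in Proposition~\ref{prop:completeness}, with one change: at a node $\sigma$ of odd length $l(\sigma)=2k-1$ whose sequent $\Gamma(\sigma)$ does not already contain an atom together with its negation, and provided $\theta_k$ exists, I would declare $\sigma\star 0\in T$ only and set $\Gamma(\sigma\star 0):=\Gamma(\sigma),\nega\theta_k$, without processing the first formula; all other non-closed nodes are treated as before, decomposing the first formula. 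As in Proposition~\ref{prop:completeness}, the sequents stay nonempty and no formula is ever deleted as one passes to a successor node.

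By Weak K\H{o}nig's Lemma (Theorem~\ref{thm:wkl}), $T$ is finite or has a branch. If $T$ is finite, only finitely many assumptions $\theta_1,\dots,\theta_m$ are introduced in $T$ (the introductions sitting at depths $1,3,\dots,2m-1$). I would then uniformly adjoin these assumptions to every node, setting $\Gamma'(\sigma):=\Gamma(\sigma)\cup\{\nega\theta_1,\dots,\nega\theta_m\}$ (as a set): the axiom leaves are unaffected, the $(\land)$- and $(\lor)$-steps remain instances of the same rules for the enlarged sequents (as in the corresponding part of the proof of Proposition~\ref{prop:completeness}), and each introduction step collapses to the identity, since then $\Gamma'(\sigma)=\Gamma'(\sigma\star 0)$. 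Induction over the height function as in Proposition~\ref{prop:completeness} then yields $\vdash_0\Gamma'(\langle\rangle)$, i.e.\ a cut-free proof of $\vdash\nega\theta_1,\dots,\nega\theta_m,\varphi$, which is the desired conclusion (with room to spare, since the cut rule is not even needed).

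To rule out the other case, suppose $T$ has a branch $f$ and let $\mathcal F$ collect all formulas occurring in the sequents $\Gamma(f[n])$. Each node $f[2k-1]$ has the successor $f[2k]$ in $T$, hence is not a leaf, hence by construction its sequent contains no atom together with its negation, so the scheduled step introduces $\nega\theta_k$; thus $\nega\theta_k\in\mathcal F$ for all $k$, and $\varphi\in\Gamma(f[0])\subseteq\mathcal F$. Since introduction steps occur only at the sparse depths $2k-1$, infinitely many decomposition steps remain along $f$, so the first formula of $\Gamma(f[n])$ still rotates fairly; hence the three closure properties of $\mathcal F$ from the proof of Proposition~\ref{prop:completeness} (no atom and its negation both in $\mathcal F$; $\psi_0\lor\psi_1\in\mathcal F$ implies $\psi_0,\psi_1\in\mathcal F$; $\psi_0\land\psi_1\in\mathcal F$ implies $\psi_0\in\mathcal F$ or $\psi_1\in\mathcal F$) go through verbatim, and the induction over $\mathsf{NNF}$-formulas gives $\llbracket\psi\rrbracket_v=0$ for all $\psi\in\mathcal F$, where $v(i)=1$ iff $P_i\notin\mathcal F$. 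Then $\llbracket\varphi\rrbracket_v=0$, while $\llbracket\nega\theta_k\rrbracket_v=0$ yields $\llbracket\theta_k\rrbracket_v=1$ for every $k$ by Lemma~\ref{lem:tilde-negation}; so $v$ satisfies all of $\Theta$ but falsifies $\varphi$, contradicting the hypothesis.

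I expect the only genuinely new points beyond Proposition~\ref{prop:completeness} to be the two indicated above: scheduling the introductions so that every one of them is encountered along every infinite branch (placing the $k$-th one at depth $2k-1$, which also keeps the rotation fair), and explaining how a finite search tree with assumptions scattered through it becomes a proof of a single sequent (adjoining all introduced assumptions everywhere, which trivialises the introduction steps). Neither is deep; the most delicate point is the fairness of the scheduling in the branch case. A cleaner-looking but less self-contained alternative would be to prove first, by a direct application of Weak K\H{o}nig's Lemma to the tree of partial valuations, that every finitely satisfiable set of $\mathsf{PL}$-formulas is satisfiable, and then deduce the theorem from Proposition~\ref{prop:completeness} by contraposition.
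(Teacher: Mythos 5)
Your proof is correct, but it takes a genuinely different route from the paper's. The paper's construction at introduction depths branches into \emph{two} successors: one appends $\theta_k$ and is immediately declared a leaf, the other appends $\nega\theta_k$ and continues. In the finite-tree case this forces the paper to justify the $\theta_k$-leaves via Exercise~\ref{ex:nega} and to combine the two branches by an application of the \emph{cut rule} at each even-length node. You instead append only $\nega\theta_k$ at odd depths, and in the finite-tree case the uniform adjoining of $\nega\theta_1,\dots,\nega\theta_m$ turns each introduction step into an identity, so the induction over the height function produces a \emph{cut-free} derivation. This means you in fact prove the stronger conclusion $\vdash_0\nega\theta_1,\dots,\nega\theta_m,\varphi$, whereas the paper only claims (and only gets) $\vdash$. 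Your infinite-branch case is essentially identical to the paper's (the paper also notes that formulas ``are still rotated to the front'' when introductions are interleaved, which is exactly your fairness point). It is worth noting that the paper deliberately uses the cut rule here; the text immediately before the statement points to this proof as ``a first answer'' to why the cut rule was introduced at all. Your variant shows that the cut is not actually needed at this spot, which slightly undercuts that particular motivation but is a clean simplification of the argument. Your alternative sketch at the end (semantic compactness via a tree of partial valuations, then contraposition with Proposition~\ref{prop:completeness}) is also correct and again different from the paper, which sticks to a single proof-search construction throughout.
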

\begin{proof}
Let us fix an enumeration $\theta_1,\theta_2,\ldots$ of the set~$\Theta$, which is countable in view of Definition~\ref{def:PL-formula} (see also Exercise~\ref{ex:Cantor-pairing} below). As in the proof of Proposition~\ref{prop:completeness}, we recursively construct a binary tree~$T$ and assign an ordered sequent~$\Gamma(\sigma)$ to each~$\sigma\in T$. In the base case we declare $\langle\rangle\in T$ and $\Gamma(\langle\rangle)=\langle\varphi\rangle$. When $\tau\in T$ has even length $l(\tau)=2(k-1)$, we stipulate that $\tau\star i\in T$ holds for both $i<2$ and that we have
\begin{equation*}
\Gamma(\tau\star 0)=\Gamma(\tau),\theta_k\quad\text{and}\quad\Gamma(\tau\star 1)=\Gamma(\tau),\nega\theta_k.
\end{equation*}
Furthermore, we declare that $\tau\star 0$ is a leaf of~$T$. On the other hand, the recursion step for $\sigma=\tau\star 1$ (odd length) is defined exactly as in the proof of~Proposition~\ref{prop:completeness}.

Let us first assume that $T$ has a branch~$f$. Statements~(i) to~(iii) from the proof of Proposition~\ref{prop:completeness} remain valid with essentially the same proof (simply skip even stages and note that formulas are still rotated to the front). As before, we thus get a valuation~$v$ such that $\llbracket\psi\rrbracket_v=0$ holds for any~$\psi$ that occurs on~$f$. In particular we have $\llbracket\varphi\rrbracket_v=0$. Since $f[2(k-1)]\star 0$ was declared to be a leaf, we must have $f(2(k-1))=1$. This means that~$\nega\theta_k$ occurs on~$f$, which allows us to conclude~$\llbracket\neg\theta_k\rrbracket_v=\llbracket\nega\theta_k\rrbracket_v=0$ (cf.~Lemma~\ref{lem:tilde-negation}) and hence~$\llbracket\theta_k\rrbracket_v=1$. Since $\theta_k$ can be an arbitrary element of~$\Theta$, this is against the assumption of the theorem.

Now assume that~$T$ is finite. Consider the height function~$h:T\to\mathbb N$ from the proof of Proposition~\ref{prop:completeness} and choose~$n\in\mathbb N$ so large that we have $h(\langle\rangle)\leq 2n$. We show $\vdash\nega\theta_1,\ldots,\nega\theta_n,\Gamma(\sigma)$ by induction over~$h(\sigma)$. Note that the conclusion of the theorem is obtained for~$\sigma=\langle\rangle$. Compared with the proof of Proposition~\ref{prop:completeness}, there are two new cases. First, assume that $\sigma$ is a leaf~$\tau\star 0$ for a sequence~$\tau$ that has even length~$2(k-1)$. We get
\begin{equation*}
2(k-1)=l(\tau)<l(\sigma)\leq h(\langle\rangle)\leq 2n
\end{equation*}
and hence~$k\leq n$. In view of Exercise~\ref{ex:nega}, we thus obtain $\vdash\nega\theta_1,\ldots,\nega\theta_n,\Gamma(\tau),\theta_k$. This is as required, since $\Gamma(\tau),\theta_k$ coincides with~$\Gamma(\sigma)$. In the second new case, the sequence $\sigma$ itself has even length~$2(k-1)$. Here the induction hypothesis (applied to the sequences $\sigma\star 0$ and $\sigma\star 1$) yields $\vdash\Gamma(\sigma),\theta_k$ as well as $\vdash\Gamma(\sigma),\nega\theta_k$. We can conclude by an application of the cut rule.
\end{proof}

Proof systems for propositional logic are connected to deep questions in computational complexity. With this notable exception, however, their~importance is diminuished by the fact that validity can be decided via truth tables. In the following section, we prove completeness in a more general setting, where no decision method such as truth tables is available. Once this is accomplished, we explain how the completeness result relates to G\"odel's famous incompleteness~theorems.

\subsection{First-Order Logic}\label{subsect:FO}

In this subsection, we present a far-reaching extension of propositional logic, which is known as predicate logic or first-order logic. The latter is of singular importance in several areas of mathematical logic, not least because it provides a framework for axiom systems such as Zermelo-Fraenkel set theory, which allow for a formal development of most if not all contemporary mathematics.

The terms of the following definition stand for mathematical objects (such as the vertices of a graph) that are related in certain ways (e.\,g., by an edge between~them). We can form conjunctions and disjunctions as in propositional logic (which amounts to the special case where we have $0$-ary relation symbols only). A new kind of~formulas can be built thanks to the so-called quantifiers $\forall$ (`for all') and~$\exists$ (`exists').

\begin{definition}\label{def:signature}
A signature consists of a set of so-called function symbols and a set of relation symbols, each of which is assigned a \mbox{non-negative} integer called its arity. The pre-terms over a signature are generated as follows:
\begin{enumerate}[label=(\roman*)]
\item We have a countably infinite set of pre-terms $v_0,v_1,\ldots$ called variables.
\item When $f$ is an $n$-ary function symbol from our signature (i.\,e., one of arity~$n$) and we already have pre-terms $t_1,\ldots,t_n$, then $ft_1\ldots t_n$ is a pre-term.
\end{enumerate}
Note that~(ii) includes the case of $0$-ary function symbols, which we call constants. For a given signature, the pre-formulas are generated by the following clauses:
\begin{enumerate}[label=(\roman*')]
\item If $R$ is an $n$-ary relation symbol and $t_1,\ldots,t_n$ are pre-terms of our signature, then both $Rt_1\ldots t_n$ and $\neg Rt_1\ldots t_n$ are pre-formulas.
\item When $\varphi$ and $\psi$ are pre-formulas, then so are $(\varphi\land\psi)$ and $(\varphi\lor\psi)$.
\item If $\varphi$ is a pre-formula, then so are $\forall v_{2i}\,\varphi$ and $\exists v_{2i}\,\varphi$ for any $i\in\mathbb N$.
\end{enumerate}
Let us stress that only variables with even index are allowed in clause~(iii').
\end{definition}

There are several ways in which our definition differs from other presentations: First and most notably, the special role of even indices and the very notion of pre-formula relates to a technical issue around substitution, which will be clarified by the introduction of formulas in Definition~\ref{def_fv-formula} and the paragraph that follows the latter. Second, some authors demand that any signature contains a binary relation symbol~$=$ for equality, which will then receive a special treatment. The latter can be incorporated into our approach, as we will see at the end of this subsection. Third, our definition does only produce formulas in negation normal form. This is motivated by our previous treatment of propositional logic. It is no real restriction, since the following provides a negation for arbitrary formulas.

\begin{definition}\label{def:nega-pred-log}
The negation~$\nega\varphi$ of a pre-formula~$\varphi$ is recursively explained by
\begin{align*}
\nega Rt_1\ldots t_n\,&=\,\neg Rt_1\ldots t_n,&\enspace \nega(\varphi\land\psi)\,&=\,(\nega\varphi)\lor(\nega\psi),&\enspace\nega(\forall v_{2i}\,\varphi)\,&=\,\exists v_{2i}\,\nega\varphi,\\ 
\nega\neg Rt_1\ldots t_n\,&=\,Rt_1\ldots t_n,&\enspace \nega(\varphi\lor\psi)\,&=\,(\nega\varphi)\land(\nega\psi),&\enspace\nega(\exists v_{2i}\,\varphi)\,&=\,\forall v_{2i}\,\nega\varphi.
\end{align*}
We will usually write $\neg\varphi$ at the place of~$\nega\varphi$. Also, we will write $\varphi\to\psi$ for $\neg\varphi\lor\psi$ and use further abbreviations that are familiar from propositional logic.
\end{definition}

Concerning notation, we point out that quantifiers in pre-formulas bind stronger than propositional connectives, so that $\forall v_{2i}\,\varphi\land\psi$ is the conjunction of~$\forall v_{2i}\,\varphi$ and~$\psi$, which differs from~$\forall v_{2i}(\varphi\land\psi)$. The variables $v_i$ (with even or odd index $i$ depending on the context) will often be represented by other lower case letters, where $x,y,z$ and $u,w$ are typical choices. We shall sometimes contract like quantifiers and write $\forall x,y\,\varphi$ rather than $\forall x\forall y\,\varphi$. The following is an opportunity to see a few examples.

\begin{exercise}
Use pre-formulas over the signature $\{E\}$ with a binary relation symbol to state that $E$ represents an equivalence relation. \emph{Hint:} As a different but related example, note that $\forall x\forall y\,\neg(x<y\land y<x)$ expresses that the binary relation symbol~$<$ represents a relation that is antisymmetric. Note that we have used so-called infix notation by writing $x<y$ rather than ${<}xy$.
\end{exercise}

The intuitive meaning of formulas is made official by the following.

\begin{definition}\label{def:models-pred-log}
When $\sigma$ is a signature, a $\sigma$-structure~$\mathcal M$ consists of a non-empty set~$M$ (called universe), a subset $R^{\mathcal M}\subseteq M^n$ for each~$n$-ary relation symbol~$R$ in~$\sigma$ and a function $f^{\mathcal M}:M^n\to M$ for each $n$-ary function symbol~$f$ in~$\sigma$. By a variable assignment (for~$\mathcal M$), we mean a function~$\eta:\mathbb N\to M$. Given such an assignment, the interpretation~$t^{\mathcal M,\eta}$ of a pre-term over~$\sigma$ is recursively defined by
\begin{equation*}
v_i^{\mathcal M,\eta}=\eta(i)\quad\text{and}\quad (ft_1\ldots t_n)^{\mathcal M,\eta}=f^{\mathcal M}(t_1^{\mathcal M,\eta},\ldots,t_n^{\mathcal M,\eta}).
\end{equation*}
Given a variable assignment~$\eta$ and an element~$a\in M$, we define $\eta_i^a$ by $\eta_i^a(i)=a$ and $\eta_i^a(j)=\eta(j)$ for~$i\neq j$. For a pre-formula $\varphi$ over~$\sigma$, we determine $\mathcal M,\eta\vDash\varphi$ (say that $\mathcal M$ is a model of or satisfies~$\varphi$ under the assignment~$\eta$) by the following clauses:
\begin{alignat*}{3}
\mathcal M,\eta&\vDash Rt_1\ldots t_n\quad&&\Leftrightarrow\quad(t_1^{\mathcal M,\eta},\ldots,t_n^{\mathcal M,\eta})\in R^{\mathcal M},\\
\mathcal M,\eta&\vDash\neg Rt_1\ldots t_n\quad&&\Leftrightarrow\quad(t_1^{\mathcal M,\eta},\ldots,t_n^{\mathcal M,\eta})\notin R^{\mathcal M},\\
\mathcal M,\eta&\vDash\varphi_0\land\varphi_1\quad&&\Leftrightarrow\quad\mathcal M,\eta\vDash\varphi_i\text{ for both }i<2,\\
\mathcal M,\eta&\vDash\varphi_0\lor\varphi_1\quad&&\Leftrightarrow\quad\mathcal M,\eta\vDash\varphi_i\text{ for some }i<2,\\
\mathcal M,\eta&\vDash\forall v_i\,\varphi\quad&&\Leftrightarrow\quad\mathcal M,\eta_i^a\vDash\varphi\text{ for all }a\in M,\\
\mathcal M,\eta&\vDash\exists v_i\,\varphi\quad&&\Leftrightarrow\quad\mathcal M,\eta_i^a\vDash\varphi\text{ for some }a\in M.
\end{alignat*}
We write $\mathcal M\vDash\varphi$ when we have $\mathcal M,\eta\vDash\varphi$ for every~$\eta:\mathbb N\to M$, and we write $\vDash\varphi$ when $\mathcal M\vDash\varphi$ holds for every~$\mathcal M$. In the latter case, we say that~$\varphi$ is logically valid. If we have $\vDash\varphi\leftrightarrow\psi$, then $\varphi$ and $\psi$ are called logically equivalent. When $\Psi$ is a set of pre-formulas, we write $\Psi\vDash\varphi$ (say $\varphi$ is a logical consequence of~$\Psi$) when we have $\mathcal M,\eta\vDash\varphi$ for all~$\mathcal M$ and $\eta$ such that $\mathcal M,\eta\vDash\psi$ holds for every~$\psi\in\Psi$. If there is some~$\mathcal M$ and $\eta$ with $\mathcal M,\eta\vDash\psi$ for all~$\psi\in\Psi$, then we say that $\Psi$ is satisfiable.
\end{definition}

We will use the words `language' and `model' as synonyms for `signature' and `structure' (for some signature), respectively. This is common practice in some areas of logic, even though one can make meaningful distinctions between these terms. To justify Definition~\ref{def:nega-pred-log}, one can use induction over pre-formulas to check
\begin{equation*}
\mathcal M,\eta\vDash\neg\varphi\quad\Leftrightarrow\quad\mathcal M,\eta\nvDash\varphi,
\end{equation*}
where the right side expresses that $\mathcal M,\eta\vDash\varphi$ does not hold. Also note that $\neg\neg\varphi$ is literally the same pre-formula as~$\varphi$, like in the case of propositional logic.

\begin{definition}\label{def_fv-formula}
The free variables of pre-terms and pre-formulas are recursively defined by $\fv(v_i)=\{v_i\}$ and $\fv(ft_0\ldots t_{n-1})=\bigcup_{i<n}\fv(t_i)$ as well as
\begin{gather*}
\fv(Rt_0\ldots t_{n-1})=\fv(\neg Rt_0\ldots t_{n-1})=\textstyle\bigcup_{i<n}\fv(t_i),\\
\fv(\varphi_0\land\varphi_1)=\fv(\varphi_0\lor\varphi_1)=\textstyle\bigcup_{i<2}\fv(\varphi_i),\\
\fv(\forall v_{2i}\,\varphi)=\fv(\exists v_{2i}\,\varphi)=\fv(\varphi)\backslash\{v_{2i}\}.
\end{gather*}
Terms and formulas are defined as pre-terms and pre-formulas, respectively, in which no variables~$v_{2i}$ with even index are free. By a literal we mean a formula that has the form $Rt_1\ldots t_n$ or $\neg Rt_1\ldots t_n$. A formula without any free variables is called closed or a sentence. By a theory we mean a set of sentences.
\end{definition}

We now explain the point of pre-formulas and the special role of even indices. The variable~$y$ in formulas such as $\forall y\,\varphi$ and $\forall x\exists y\,\psi$ is called bound. At least~intuitively, the names of bound variables do not matter, since formulas such as $\forall x\, Px$ and $\forall y\, Py$ are equivalent. We distinguish between even and odd indices to ensure that the bound and free variables in terms and formulas are always distinct. This has the advantage that it simplifies substitution. Indeed, substituting~$y$ by~$fx$ in $\exists x\,Rxy$ should not yield $\exists x\,Rxfx$, since the intended interpretation would change if we did allow that the free variable~$x$ in~$fx$ is captured by a bound variable. One common solution is to rename the bound variable, so that the indicated substitution results in $\exists z\,Rzfx$ for some new~$z$. While this is a valid approach, it has the effect that substitution is no longer defined by recursion over formulas (as we substitute into $Rzy$ and not into the subformula $Rxy$ of $\exists x\,Rxy$), which creates some complications (in particular for the proof of G\"odel's incompleteness theorems in Section~\ref{subsect:goedel}).

\begin{definition}\label{def:substitution}
For pre-terms $s,t$ and a variable~$v_i$ with even or odd~$i\in\mathbb N$, the substitution $s[v_i/t]$ is recursively given by
\begin{equation*}
v_j[v_i/t]=\begin{cases}
t & \text{if $i=j$},\\
v_j & \text{otherwise},
\end{cases}\qquad
(fs_1\ldots s_n)[v_i/t]=fs_1[v_i/t]\ldots s_n[v_i/t].
\end{equation*}
When~$\varphi$ is a pre-formula and~$t$ is a term, we let $\varphi[v_i/t]$ be defined by
\begin{gather*}
\begin{aligned}
(Rs_1\ldots s_n)[v_i/t]&=Rs_1[v_i/t]\ldots s_n[v_i/t],\\
(\neg Rs_1\ldots s_n)[v_i/t]&=\neg Rs_1[v_i/t]\ldots s_n[v_i/t],\\
(\varphi_0\circ\varphi_1)[v_i/t]&=\varphi_0[v_i/t]\circ\varphi_1[v_i/t],
\end{aligned}\\
(\mathsf Q v_{2j}\,\varphi)[v_i/t]=\begin{cases}
\mathsf Q v_{2j}\,\varphi & \text{if $i=2j$},\\
\mathsf Q v_{2j}\,\varphi[v_i/t] & \text{otherwise},
\end{cases}
\end{gather*}
where $\circ$ can be $\land$ or~$\lor$ and $\mathsf Q$ can be $\forall$ or~$\exists$.
\end{definition}

Let us stress that we have left $\varphi[v_i/t]$ undefined when~$t$ is a pre-term but no~term. This makes sense as we could otherwise capture free variables once again. Arguing by induction over pre-formulas, one can show
\begin{equation*}
\fv(\varphi[x/t])\subseteq(\fv(\varphi)\backslash\{x\})\cup\fv(t),
\end{equation*}
so that $\varphi[x/t]$ is a formula (and not just a pre-formula) when the same holds~for~$\varphi$. Also note that $\neg\varphi$ will be a formula in this case, as we have $\fv(\neg\varphi)=\fv(\varphi)$. Another induction shows that $(\neg\varphi)[x/t]$ coincides with $\neg(\varphi[x/t])$, so that we may omit the parentheses. The following justifies our definition of substitution.

\begin{exercise}\label{ex:substitution}
Use induction over pre-formulas to show that we have
\begin{equation*}
\mathcal M,\eta\vDash\varphi[v_i/t]\quad\Leftrightarrow\quad\mathcal M,\eta_i^a\vDash\varphi\text{ with }a=t^{\mathcal M,\eta}.
\end{equation*}
\emph{Hint:} First use induction over pre-terms to prove $s[v_i/t]^{\mathcal M,\eta}=s^{\mathcal M,\xi}$ for $\xi=\eta_i^a$.
\end{exercise}

It is worth pointing out that $\mathcal M,\eta\vDash\varphi$ does only depend on the values of~$\eta$ on variables that are free in~$\varphi$. We will sometimes write $\varphi(v_{i(1)},\ldots,v_{i(n)})$ to refer to the free variables of~$\varphi$ (in other contexts also when not all $v_{i(j)}$ occur in~$\varphi$ and not all its free~variables are among the $v_{i(j)}$). In this case, we also write $\mathcal M\vDash\varphi(a_1,\ldots,a_n)$ in order to express that $\mathcal M,\eta\vDash\varphi$ holds for the assignment~$\eta$ given by $\eta(i(j))=a_j$. Analogous notation will be used for terms. The universal closure of a formula~$\varphi$ is defined as $\forall x_1,\ldots,x_n\,\varphi$ where $x_1,\ldots,x_n$ are the free variables of~$\varphi$ (in some given order). We note that $\varphi$ is logically valid precisely when the same holds for its universal closure.

Let us say that a formula is quantifier-free if its construction uses clauses~(i') and~(ii') of Definition~\ref{def:signature} only. We declare that any quantifier-free formula is in prenex normal form and that $\forall x\,\varphi$ and $\exists x\,\varphi$ are in prenex normal form whenever the same holds for~$\varphi$. In other words, a formula in prenex normal form begins with a block of quantifiers that is followed by a quantifier-free formula. For example, the formula $\exists x\forall y(Px\to Qy)$ is in prenex normal form while $\forall x\, Px\to\forall y\,Qy$ is not.

\begin{exercise}
Show that $x\notin\fv(\psi)$ entails
\begin{alignat*}{4}
&\vDash(\psi\to\forall x\,\rho)&&\leftrightarrow\forall x(\psi\to\rho),\qquad&&\vDash(\psi\to\exists x\,\rho)&&\leftrightarrow\exists x(\psi\to\rho),\\
&\vDash(\forall x\,\varphi\to\psi)&&\leftrightarrow\exists x(\varphi\to\psi),\qquad&&\vDash(\exists x\,\varphi\to\psi)&&\leftrightarrow\forall x(\varphi\to\psi).
\end{alignat*}
Provide examples to show that one cannot drop the assumption~$x\notin\fv(\psi)$. Derive that any formula is logically equivalent to one in prenex normal form.
\end{exercise}

We now extend our sequent calculus from propositional to predicate logic. Natural deduction could also be extended, but we will not do so in the present lecture notes. By a sequent we mean a finite set of formulas (which are in negation normal form by the comment before Definition~\ref{def:nega-pred-log}). We adopt notational conventions such as $\Gamma,\varphi=\Gamma\cup\{\varphi\}$ from the case of propositional logic. The free variables of a sequent are given by $\fv(\Gamma)=\bigcup_{i<n}\fv(\varphi_i)$ for $\Gamma=\varphi_0,\ldots,\varphi_{n-1}$.

\begin{definition}\label{def:sequent-calc-FO}
For a sequent~$\Gamma$ we define~$\vdash\Gamma$ by the following recursive clauses:
\begin{description}[labelwidth=5.5ex,labelindent=\parindent,leftmargin=!,before={\renewcommand\makelabel[1]{(##1)}}]
\item[$\mathsf{Ax}$] When $\Gamma$ contains some literal and its negation, we have $\vdash\Gamma$.
\item[$\land$] Given $\vdash\Gamma,\varphi$ and $\vdash\Gamma,\psi$, we get $\vdash\Gamma,\varphi\land\psi$.
\item[$\lor$] From $\vdash\Gamma,\psi$ we get both $\vdash\Gamma,\psi\lor\theta$ and $\vdash\Gamma,\varphi\lor\psi$.
\item[$\forall$] If we have $\vdash\Gamma',\varphi[x/y]$ for some $\Gamma'\subseteq\Gamma$ and some~$y$ that satisfies the so-called variable condition~$y\notin\fv(\Gamma',\forall x\,\varphi)$, then we get $\vdash\Gamma,\forall x\,\varphi$.
\item[$\exists$] Given $\vdash\Gamma,\varphi[x/t]$ for some term~$t$, we get $\vdash\Gamma,\exists x\,\varphi$.
\item[$\mathsf{Cut}$] When have $\vdash\Gamma,\varphi$ and $\vdash\Gamma,\neg\varphi$ for some formula~$\varphi$, we get $\vdash\Gamma$.
\end{description}
We write $\vdash_0\Gamma$ when $\vdash\Gamma$ can be derived without~($\mathsf{Cut}$). When $\Theta$ is a set of formulas and~$\varphi$ is a formula, we write $\Theta\vdash\varphi$ to express that $\vdash\neg\theta_1,\ldots,\neg\theta_n,\varphi$ holds for some finite collection of formulas~$\theta_i\in\Theta$. If we have $\Theta\nvdash\bot$, we say that $\Theta$ is consistent.
\end{definition}

In~($\forall$), the variable condition corresponds to the intuitive requirement that $y$ is arbitrary. Without it, we would get $\vdash\forall x\,Px,\forall x\,\neg Px$ and hence $\vdash\exists x\,Px\to\forall x\,Px$, which is clearly not valid (recall that $\psi\to\theta$ is notation for $\neg\psi\lor\theta$ and that $\neg\exists x\,Px$ coincides with $\forall x\,\neg Px$). By considering $\Gamma'\subseteq\Gamma$, we allow that $y$ is free in some additional formulas in~$\Gamma$. This is convenient (though not really necessary) in the context of weakening (cf.~Lemma~\ref{lem:weakening}): for $\Gamma\subseteq\Delta$, a straightforward induction shows that $\vdash\Gamma$ entails $\vdash\Delta$ (analogous for~$\vdash_0$). Still in~($\forall$), the variable~$y$ must have the form $v_{2i+1}$, so that $\varphi[x/y]$ is a formula. Note that we get to choose the name of the bound variable~$x$ when we infer $\vdash \forall x\,Px,\exists z\,\neg Pz$ from $\vdash Py,\exists z\,\neg Pz$. More generally, if $\varphi'$ results from~$\varphi$ by any permissible renaming of bound variables, then $\varphi'\lor\neg\varphi$ is logically valid, so that we will be able to conclude $\vdash_0\varphi',\neg\varphi$ once completeness is established.

\begin{exercise}\label{ex:nega-pred}
(a) Show that we have $\vdash_0\varphi,\neg\varphi$ for any formula of predicate logic (cf.~Exercise~\ref{ex:nega}).

(b) Derive $\vdash_0\exists y(Py\to\forall x\,Px)$. \emph{Remark:} The given conclusion is sometimes called the drinker formula: upon one reading, it asserts the existence of a person~$y$ such that everybody drinks when~$y$ does. It may be instructive to first consider why the formula is logically valid.
\end{exercise}

Let us now extend the soundness result (cf.~Proposition~\ref{prop:soundness}). We recall that~$\bigvee\Gamma$ denotes the disjunction over the formulas in the sequent~$\Gamma$.

\begin{proposition}[Soundness for predicate logic]\label{prop:soundness-FO}
From~$\vdash\Gamma$ we get $\vDash\bigvee\Gamma$.
\end{proposition}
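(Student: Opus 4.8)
The plan is to prove the statement by induction over the derivation of $\vdash\Gamma$ according to the clauses of Definition~\ref{def:sequent-calc-FO}, in direct analogy with the proof of Proposition~\ref{prop:soundness}(b). We must show that $\vDash\bigvee\Gamma$, i.e.\ that for every $\sigma$-structure $\mathcal M$ and every variable assignment $\eta:\mathbb N\to M$, there is some $\varphi\in\Gamma$ with $\mathcal M,\eta\vDash\varphi$. The key preliminary observation (already noted in the text) is that $\mathcal M,\eta\vDash\neg\varphi$ is equivalent to $\mathcal M,\eta\nvDash\varphi$, which we will use freely, in particular for the cut rule.

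First I would dispatch the propositional cases. For~($\mathsf{Ax}$): if $\Gamma$ contains a literal $\lambda$ and its negation $\neg\lambda$, then for any $\mathcal M,\eta$ exactly one of them is satisfied, so $\bigvee\Gamma$ holds. For~($\land$), ($\lor$), and~($\mathsf{Cut}$) the arguments are essentially identical to those in Proposition~\ref{prop:soundness}: fix $\mathcal M,\eta$, apply the induction hypothesis to each premise, and do a short case distinction on whether the witnessing formula already lies in $\Gamma$. For~($\mathsf{Cut}$), from $\vdash\Gamma,\varphi$ we get a witness in $\Gamma\cup\{\varphi\}$; if it is in $\Gamma$ we are done, otherwise $\mathcal M,\eta\vDash\varphi$, hence $\mathcal M,\eta\nvDash\neg\varphi$, so the witness for the premise $\vdash\Gamma,\neg\varphi$ must lie in $\Gamma$.

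The genuinely new cases are the quantifier rules, and the ($\forall$)-rule is the one I expect to be the main obstacle, because of the variable condition. For~($\exists$): suppose $\vdash\Gamma,\exists x\,\varphi$ was inferred from $\vdash\Gamma,\varphi[x/t]$. Fix $\mathcal M,\eta$; by the induction hypothesis there is a witness in $\Gamma\cup\{\varphi[x/t]\}$. If it lies in $\Gamma$ we are done. Otherwise $\mathcal M,\eta\vDash\varphi[x/t]$, so by Exercise~\ref{ex:substitution} we have $\mathcal M,\eta^a_j\vDash\varphi$ where $a=t^{\mathcal M,\eta}$ and $x=v_j$; hence $\mathcal M,\eta\vDash\exists x\,\varphi$, which is a witness in $\Gamma$. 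For~($\forall$): suppose $\vdash\Gamma,\forall x\,\varphi$ was inferred from $\vdash\Gamma',\varphi[x/y]$ with $\Gamma'\subseteq\Gamma$ and $y\notin\fv(\Gamma',\forall x\,\varphi)$. Fix $\mathcal M,\eta$; it suffices to show that either some formula of $\Gamma$ is satisfied under $\eta$, or $\mathcal M,\eta\vDash\forall x\,\varphi$, i.e.\ $\mathcal M,\eta^a_j\vDash\varphi$ for every $a\in M$ (with $x=v_j$). Fix such an $a$ and let $k$ be the index of $y$. Apply the induction hypothesis to $\vdash\Gamma',\varphi[x/y]$ with the modified assignment $\eta^a_k$: we obtain a witness in $\Gamma'\cup\{\varphi[x/y]\}$. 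In the first subcase the witness $\psi\in\Gamma'\subseteq\Gamma$ satisfies $\mathcal M,\eta^a_k\vDash\psi$; since $y=v_k\notin\fv(\psi)$, satisfaction does not depend on the value at $k$, so $\mathcal M,\eta\vDash\psi$ and we have a witness in $\Gamma$ independent of $a$ --- so this subcase, once it occurs for one $a$, discharges the whole goal. In the second subcase $\mathcal M,\eta^a_k\vDash\varphi[x/y]$, so by Exercise~\ref{ex:substitution} (noting $y^{\mathcal M,\eta^a_k}=a$) we get $\mathcal M,(\eta^a_k)^a_j\vDash\varphi$; and since $y=v_k\notin\fv(\forall x\,\varphi)=\fv(\varphi)\setminus\{x\}$, the assignments $(\eta^a_k)^a_j$ and $\eta^a_j$ agree on all free variables of $\varphi$, whence $\mathcal M,\eta^a_j\vDash\varphi$, as required. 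Combining the two subcases over all $a\in M$ yields the claim. The main care needed is exactly the bookkeeping with the variable condition: it is what guarantees both that a $\Gamma'$-witness survives the change of assignment at $k$, and that we may read off $\forall x\,\varphi$ at the original $\eta$.
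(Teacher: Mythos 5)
Your proof is correct and follows essentially the same route as the paper: for the ($\forall$)-rule, you apply the induction hypothesis at the shifted assignment $\eta^a_k$ (the paper's $\eta^a_j$), then use the variable condition $y\notin\fv(\Gamma',\forall x\,\varphi)$ in exactly the two places the paper does --- to transport a $\Gamma'$-witness back to $\eta$ and to identify $(\eta^a_k)^a_j$ with $\eta^a_j$ on $\fv(\varphi)$. The only cosmetic difference is that you fix $a$ first and split into subcases, whereas the paper first assumes all of $\Gamma'$ fails under $\eta$ and derives $\mathcal M,\eta^a_j\vDash\varphi[v_i/v_j]$ by contradiction; these are the same argument rearranged.
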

\begin{proof}
As in the case of predicate logic, one argues by induction over proofs, i.\,e., over the number of times that some clause from Definition~\ref{def:sequent-calc-FO} has been applied in the derivation of~$\vdash\Gamma$. In the first new case, the sequent~$\Gamma$ contains a formula~$\forall v_i\,\varphi$ that has been deduced with premise~$\vdash\Gamma',\varphi[v_i/v_j]$ for $\Gamma'\subseteq\Gamma$ and $v_j\notin\fv(\Gamma',\forall v_i\,\varphi)$. Consider an arbitrary model~$\mathcal M$ and variable assignment~$\eta$. We are done if we have $\mathcal M,\eta\vDash\psi$ for some~$\psi\in\Gamma'$, so we assume that the latter fails. To conclude that we have~$\mathcal M,\eta\vDash\forall v_i\,\varphi$, we show $\mathcal M,\eta_i^a\vDash\varphi$ for an arbitrary~$a$ from the universe of~$\mathcal M$. Let us note that $i$ and~$j$ are different, since they are even and odd, respectively. In view of $v_j\notin\fv(\forall v_i\,\varphi)=\fv(\varphi)\backslash\{v_i\}$, we get $v_j\notin\fv(\varphi)$. Thus the open claim is equivalent to~$\mathcal M,(\eta_j^a)_i^a\vDash\varphi$. The latter is itself equivalent to $\mathcal M,\eta_j^a\vDash\varphi[v_i/v_j]$ by Exercise~\ref{ex:substitution}, as we have $a=v_j^{\mathcal M,\xi}$ with $\xi=\eta_j^a$. Now if we had $\mathcal M,\eta_j^a\nvDash\varphi[v_i/v_j]$, the induction hypothesis would yield $\mathcal M,\eta_j^a\vDash\psi$ for some~$\psi\in\Gamma'$. By $v_j\notin\fv(\Gamma')$ we would obtain $\mathcal M,\eta\vDash\psi$, against the assumption above. In the second new case, $\Gamma$~contains a formula~$\exists v_i\,\varphi$ that was derived with premise~$\vdash\Gamma,\varphi[v_i/t]$. For any~$\mathcal M$ and~$\eta$, we may inductively assume~$\mathcal M,\eta\vDash\varphi[v_i/t]$ as above. Now Exercise~\ref{ex:substitution} yields $\mathcal M,\eta_i^a\vDash\varphi$ for a suitable~$a$, so that we get $\mathcal M,\eta\vDash\exists v_i\,\varphi$ as desired.
\end{proof}

In order to extend Theorem~\ref{thm:completeness-prop} from propositional to first-order logic, we adopt the following convention. By countable we will always mean finite or countably infinite, so that a set is countable precisely if it admits an injection into~$\mathbb N$.

\begin{convention}\label{conv:countable}
From this point until the end of Section~\ref{sect:cut-free-completeness}, we tacitly assume that any signature is countable.
\end{convention}

Let us note that results on the uncountable case will be proved in Section~\ref{subsect:ultraprod}.

\begin{proposition}\label{prop:completeness-FO}
From $\vDash\bigvee\Gamma$ we get~$\vdash_0\Gamma$, for any sequent~$\Gamma$ of predicate logic.
\end{proposition}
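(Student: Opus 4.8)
The plan is to imitate the proof of Proposition~\ref{prop:completeness} essentially line by line, using Sch\"utte's deduction chains to deal with the quantifiers on the way; as there, we may assume $\Gamma\neq\emptyset$, since the empty disjunction is not valid. First I would fix an enumeration $t_0,t_1,\ldots$ of \emph{all} terms over the given signature, which is possible by Convention~\ref{conv:countable}; the list is nonempty because the variables $v_1,v_3,\ldots$ of odd index are already terms, so one need not assume the presence of a constant symbol. Then, just as in Proposition~\ref{prop:completeness}, I would recursively build a binary tree $T$ together with an ordered sequent $\Gamma(\sigma)$ for each $\sigma\in T$, with $\Gamma(\langle\rangle)=\Gamma$ (in some order) and always processing the first formula $\varphi$ of $\Gamma(\sigma)=\langle\varphi,\varphi_1,\ldots,\varphi_n\rangle$; write $\Delta=\langle\varphi_1,\ldots,\varphi_n\rangle$. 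If $\Gamma(\sigma)$ contains a literal together with its negation, we declare $\sigma$ a leaf; otherwise we declare $\sigma\star0,\sigma\star1\in T$ and define $\Gamma(\sigma\star i)$ according to the shape of $\varphi$: it is $\Delta,\varphi$ if $\varphi$ is a literal; it is $\Delta,\varphi,\psi_i$ if $\varphi=\psi_0\land\psi_1$; it is $\Delta,\varphi,\psi_0$ or $\Delta,\varphi,\psi_1$ if $\varphi=\psi_0\lor\psi_1$, with the choice made as in Proposition~\ref{prop:completeness}; it is $\Delta,\varphi,\psi[x/t]$ if $\varphi=\exists x\,\psi$, with $t$ the first term in our enumeration such that $\psi[x/t]$ does not occur in $\Gamma(\sigma)$; and it is $\Delta,\varphi,\psi[x/y]$ if $\varphi=\forall x\,\psi$, with $y$ a variable $v_{2m+1}$ not occurring in $\Gamma(\sigma)$. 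The crucial feature of this design is that \emph{no formula is ever removed} from the attached sequents: the first formula is only rotated to the back (so that $\exists$- and $\forall$-formulas stay available), and new formulas are only appended at the back. One checks that $\Gamma(\sigma)$ stays nonempty and that $\psi[x/t]$ and $\psi[x/y]$ are again formulas (no even-indexed variable becomes free), so that $T$ is well-defined.

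By Weak K\H{o}nig's Lemma (Theorem~\ref{thm:wkl}), the tree $T$ is either finite or has a branch. If it is finite, then---exactly as in Proposition~\ref{prop:completeness}, via the height function $h:T\to\mathbb N$---an induction on $h(\sigma)$ shows $\vdash_0\Gamma(\sigma)$ for all $\sigma\in T$, and $\sigma=\langle\rangle$ yields $\vdash_0\Gamma$. The only cases not already present in Proposition~\ref{prop:completeness} are the quantifier ones: if $\varphi=\exists x\,\psi$ one applies clause~($\exists$) of Definition~\ref{def:sequent-calc-FO} with the same term $t$ as in the construction, and if $\varphi=\forall x\,\psi$ one applies clause~($\forall$) with $\Gamma'$ the set of entries of $\Delta,\varphi$ and with the variable $y$ from the construction; here the variable condition $y\notin\fv(\Gamma',\forall x\,\psi)$ holds because $y$ does not occur in $\Gamma(\sigma)$.

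It remains to exclude a branch $f:\mathbb N\to\{0,1\}$. Let $\mathcal F$ be the set of formulas occurring in $\Gamma(f[n])$ for some $n$. Since no formula is ever removed and every formula in a sequent is rotated back to the front infinitely often along the branch, one obtains---as in Proposition~\ref{prop:completeness}, but now also using how $\exists$- and $\forall$-formulas are handled---the closure facts: (i) $\mathcal F$ contains no literal together with its negation (else some $f[n]$ would be a leaf); (ii) if $\psi_0\lor\psi_1\in\mathcal F$ then $\psi_0,\psi_1\in\mathcal F$; (iii) if $\psi_0\land\psi_1\in\mathcal F$ then $\psi_0\in\mathcal F$ or $\psi_1\in\mathcal F$; (iv) if $\exists x\,\psi\in\mathcal F$ then $\psi[x/t]\in\mathcal F$ for \emph{every} term $t$; and (v) if $\forall x\,\psi\in\mathcal F$ then $\psi[x/y]\in\mathcal F$ for the fresh variable $y$ used when $\forall x\,\psi$ is first processed. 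Fact~(iv) is where the bookkeeping is used: a short induction shows that after $\exists x\,\psi$ has been processed $j$ times the instances $\psi[x/t_0],\ldots,\psi[x/t_{j-1}]$ all occur in $\mathcal F$, and there are infinitely many such steps because $\exists x\,\psi$ is never removed. Next I would build the term model $\mathcal M$ with universe the set of all terms, with $f^{\mathcal M}(s_1,\ldots,s_n)=fs_1\ldots s_n$ and $R^{\mathcal M}=\{(s_1,\ldots,s_n):Rs_1\ldots s_n\notin\mathcal F\}$, together with the assignment $\eta$ given by $\eta(i)=v_i$ for odd $i$ and, say, $\eta(i)=v_1$ for even $i$, so that $t^{\mathcal M,\eta}=t$ for every term $t$. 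An induction on the number of connectives and quantifiers of a formula $\chi$ (a quantity invariant under substitution) then shows that $\chi\in\mathcal F$ implies $\mathcal M,\eta\nvDash\chi$: literals and negated literals are settled by the definition of $R^{\mathcal M}$ and fact~(i); disjunctions and conjunctions by~(ii), (iii) and the induction hypothesis; the case $\chi=\exists x\,\psi$ uses~(iv) and the case $\chi=\forall x\,\psi$ uses~(v), each combined with the substitution lemma (Exercise~\ref{ex:substitution}) and the induction hypothesis. Since $\Gamma=\Gamma(f[0])\subseteq\mathcal F$, this gives $\mathcal M,\eta\nvDash\bigvee\Gamma$, contradicting $\vDash\bigvee\Gamma$; hence $T$ is finite and $\vdash_0\Gamma$.

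The step I expect to be the main obstacle is making the search fair enough that fact~(iv) holds, that is, guaranteeing that every term is eventually substituted into every existential formula that survives on an infinite branch; this is exactly what forces the ``once in, never out'' discipline on the sequents. A secondary point is to set up the term model so that its universe is nonempty without assuming a constant symbol and so that the substitution lemma genuinely translates the semantic clauses for $\exists$ and $\forall$ into the syntactic instances collected in $\mathcal F$.
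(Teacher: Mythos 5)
Your proposal follows the paper's proof essentially line by line: the same deduction-chain construction with ordered sequents, the same two new quantifier cases, weak K\H{o}nig's lemma to split into the finite case (which yields the cut-free proof) and the branch case (which yields the term countermodel), and the same facts (i)--(v) about the set~$\mathcal F$ of formulas on the branch. I would add only that two of your side remarks are in fact small \emph{improvements} over the paper's wording: the paper sets~$\eta(i):=v_i$ for all~$i$, but $v_{2i}$ is a pre-term and not a term, so it does not belong to the universe~$M$; your convention $\eta(2i):=v_1$ repairs this, and nothing is lost because terms and formulas contain no free even-indexed variables. Likewise, the paper's ``induction on the length of formulas'' is imprecise because $\psi[v_i/t_j]$ may well be longer than~$\exists v_i\,\psi$; your measure, the number of connectives and quantifiers, is invariant under substitution and makes the induction go through as intended. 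One marginal point you share with the paper: in the $\exists$-step of the construction, if $x\notin\fv(\psi)$ then all instances $\psi[x/t_j]$ coincide with~$\psi$, so after one processing there is no ``minimal~$j$ with $\psi[x/t_j]$ not in $\Gamma(\sigma)$''; one should agree that in this degenerate case the step merely rotates (or re-appends~$\psi$), and then fact~(iv) holds trivially since every instance already lies in~$\mathcal F$.
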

\begin{proof}
We explain how the proof of Proposition~\ref{prop:completeness} can be adapted. As in that proof, we define a tree $T\subseteq 2^{<\omega}$ and an ordered sequent~$\Gamma(\sigma)$ for each~$\sigma\in T$. For the recursion step, we again write
\begin{equation*}
\Gamma(\sigma)=\langle\varphi,\varphi_1,\ldots,\varphi_n\rangle\quad\text{and}\quad\Delta=\langle\varphi_1,\ldots,\varphi_n\rangle.
\end{equation*}
In the case distinction that defines~$\Gamma(\sigma\star i)$, we must consider two new cases. First assume that~$\varphi$ is of the form~$\forall x\,\psi$. We then put $\Gamma(\sigma\star i):=\Delta,\varphi,\psi[x/y]$ for some variable $y\notin\fv(\Delta,\varphi)$. As preparation for the other case, we fix an enumeration that lists all terms $t_0,t_1,\ldots$ over the given signature (cf.~Convention~\ref{conv:countable}). Secondly, we now assume that $\varphi$ has the form~$\exists x\,\psi$. We then put $\Gamma(\sigma\star i):=\Delta,\varphi,\psi[x/t_j]$ for the minimal~$j$ such that $\psi[x/t_j]$ does not occur in~$\Gamma(\sigma)$.

When the resulting tree is finite, it witnesses that we have $\vdash_0\Gamma$, as in the proof of Proposition~\ref{prop:completeness}. We now assume that~$T$ has a branch~$f:\mathbb N\to\{0,1\}$. As before, we let $\mathcal F$ be the collection of formulas that occur in~$\Gamma(f[n])$ for some~$n\in\mathbb N$. Our task is to find a model~$\mathcal M$ and a variable assignement~$\eta$ such that we have~$\mathcal M,\eta\nvDash\bigvee\Gamma$. For the universe, we take the set~$M=\{t_i\,|\,i\in\mathbb N\}$ of terms. The interpretation of an $n$-ary function symbol~$f$ is given by
\begin{equation*}
f^{\mathcal M}:M^n\to M\quad\text{with}\quad f^{\mathcal M}(t_1,\ldots,t_n):=ft_1\ldots t_n.
\end{equation*}
To define our variable assignment~$\eta$, we put $\eta(i):=v_i$. A straightforward induction yields $t^{\mathcal M,\eta}=t$ for any term~$t$.
For an $n$-ary relation symbol~$R$, we define
\begin{equation*}
R^{\mathcal M}:=\{(t_1,\ldots,t_n)\in M^n\,|\,\neg Rt_1\ldots t_n\in\mathcal F\}.
\end{equation*}
To see that this yields the desired countermodel, we note that~$\mathcal F$ validates~(i) to~(iii) from the proof of Proposition~\ref{prop:completeness} as well as the following:
\begin{enumerate}[label=(\roman*)]\setcounter{enumi}{3}
\item If $\forall x\,\psi$ lies in~$\mathcal F$, then so does~$\psi[x/y]$ for some variable~$y$.
\item If $\exists x\,\psi$ lies in~$\mathcal F$, then so does~$\psi[x/t_j]$ for every~$j\in\mathbb N$.
\end{enumerate}
To establish~(v) by contradiction, we assume that we have $\exists x\,\psi\in\mathcal F$ and that $j$ is minimal with~$\psi[x/t_j]\notin\mathcal F$. Since we always have~$\Gamma(f[n])\subseteq\Gamma(f[n+1])$, we may pick a large $N\in\mathbb N$ such that $\Gamma(f[N])$ contains $\exists x\,\psi$ as well as $\psi[x/t_i]$ for all~$i<j$. As before, we may also assume that $\exists x\,\psi$ is the first formula in~$\Gamma(f[N])$. By construction, it follows that $\psi[x/t_j]$ occurs in~$\Gamma(f[N]\star f(N))=\Gamma(f[N+1])$ and thus in~$\mathcal F$, against our assumption. The argument for~(iv) is similar but easier.

We now use induction on the length of formulas to show that $\mathcal M,\eta\nvDash\varphi$ holds for any~$\varphi\in\mathcal F$. In view of $\Gamma\subseteq\mathcal F$ this will yield~$\nvDash\bigvee\Gamma$, as needed to complete the proof. First assume that $\varphi$ has the form $Rt_1\ldots t_n$. By clause~(i) from the proof of Proposition~\ref{prop:completeness}, we obtain $\neg Rt_1\ldots t_n\notin\mathcal F$, so that the definition of~$\mathcal M$ yields $(t_1,\ldots,t_n)\notin R^{\mathcal M}$. In view of $t^{\mathcal M,\eta}=t$ we can conclude~$\mathcal M,\eta\nvDash Rt_1\ldots t_n$, as desired. The case of a formula~$\neg Rt_1\dots t_n$ is similar, and conjunctions and disjunctions are treated like in the proof of Proposition~\ref{prop:completeness}. Now assume that $\varphi\in\mathcal F$ has the form~$\forall v_i\,\psi$. In view of clause~(iv) above, the induction hypothesis yields $\mathcal M,\eta\nvDash\psi[v_i/y]$ for some~$y$. Due to Exercise~\ref{ex:substitution} we get $\mathcal M,\eta_i^y\nvDash\psi$, which entails $\mathcal M,\eta\nvDash\varphi$. Finally, we assume that $\varphi$ has the form~$\exists v_i\,\psi$. Our task is to establish that $\mathcal M,\eta_i^a\nvDash\psi$ holds for any~$a\in M$. Consider $j\in\mathbb N$ with $a=t_j=t_j^{\mathcal M,\eta}$. Once again by Exercise~\ref{ex:substitution}, the open claim amounts to~$\mathcal M,\eta\nvDash\psi[v_i/t_j]$. We conclude by~(v) and the induction hypothesis.
\end{proof}

Soundness and cut-free completeness yield a semantic proof of the following result on cut elimination, just like for propositional logic.

\begin{corollary}
If we have $\vdash\Gamma$, we even get~$\vdash_0\Gamma$.
\end{corollary}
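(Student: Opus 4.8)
The plan is to copy the proof of Corollary~\ref{cor:cut-elim-PL} almost word for word, now using the first-order analogues of its two ingredients. Suppose we are given a derivation witnessing $\vdash\Gamma$, possibly containing applications of the cut rule. The first step is to invoke soundness in the form of Proposition~\ref{prop:soundness-FO}, which yields $\vDash\bigvee\Gamma$, i.e., the disjunction of the formulas in the sequent $\Gamma$ is logically valid. The second step is to feed this validity statement into cut-free completeness, Proposition~\ref{prop:completeness-FO}, which directly produces a derivation $\vdash_0\Gamma$ with no cuts at all. Composing the two propositions therefore converts an arbitrary sequent-calculus proof into a cut-free one, which is exactly the claim.

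There is essentially no obstacle here: all of the mathematical work has already been carried out in establishing Propositions~\ref{prop:soundness-FO} and~\ref{prop:completeness-FO}. The only thing one has to check is that the hypotheses and conclusions of the two results match up in the right way -- soundness is stated for the full calculus with $\vdash$, whereas completeness concludes the cut-free relation $\vdash_0$ -- so that chaining them really does remove cuts, and with no side conditions on $\Gamma$. As remarked before Corollary~\ref{cor:cut-elim-PL}, the resulting argument is \emph{semantic}: it detours through the notion of satisfaction in a model and yields no explicit procedure that transforms a proof with cuts into a cut-free one.

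As a sanity check, one can compare with the propositional case, where the same two-line argument appears as Corollary~\ref{cor:cut-elim-PL}; the only change in the first-order setting is that Proposition~\ref{prop:completeness} is replaced by Proposition~\ref{prop:completeness-FO}, whose proof additionally treats the quantifier clauses and builds a term model. Since the corollary only uses these propositions as black boxes, nothing about its proof is affected, and no new ideas are required.
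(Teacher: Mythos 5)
Your proof is correct and matches the paper's own argument exactly: the paper derives this corollary by chaining Proposition~\ref{prop:soundness-FO} (soundness) with Proposition~\ref{prop:completeness-FO} (cut-free completeness), noting explicitly that this is the semantic cut-elimination argument "just like for propositional logic." No further comment is needed.
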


We now come to the main result of this subsection.

\begin{theorem}[Completeness for first-order logic~\cite{goedel-completeness}]\label{thm:completeness-AL}
From $\Theta\vDash\varphi$ we get $\Theta\vdash\varphi$, for any set~$\Theta$ of formulas and any formula~$\varphi$.
\end{theorem}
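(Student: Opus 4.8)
The plan is to repeat the passage from Proposition~\ref{prop:completeness} to Theorem~\ref{thm:completeness-prop}, but now built on the tree construction of Proposition~\ref{prop:completeness-FO}, which also decomposes quantifiers. First I would reduce to the case where $\Theta$ is a theory and $\varphi$ a sentence. Let $x_1,x_2,\ldots$ enumerate the countably many variables that are free in $\varphi$ or in some member of $\Theta$, add fresh constants $c_1,c_2,\ldots$ to the signature (which stays countable, cf.~Convention~\ref{conv:countable}), and replace each $\chi\in\Theta\cup\{\varphi\}$ by $\chi^\ast:=\chi[x_1/c_1,\ldots,x_m/c_m]$ with $m$ chosen so that $\fv(\chi)\subseteq\{x_1,\ldots,x_m\}$. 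Since the $c_i$ occur in none of the original formulas, one checks $\Theta\vDash\varphi$ iff $\Theta^\ast\vDash\varphi^\ast$ by passing between a variable assignment for the old signature and the interpretations of the $c_i$. Moreover, a proof witnessing $\Theta^\ast\vdash\varphi^\ast$ yields one witnessing $\Theta\vdash\varphi$: first rename the eigenvariables in it so that none of them is an $x_i$ (possible since the end-sequent contains no $x_i$), then substitute $x_i$ for $c_i$ throughout, which is harmless because a constant is never constrained by the variable condition of the rule~($\forall$). So from now on I may assume $\Theta$ and $\varphi$ are closed.

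Now fix enumerations $\theta_1,\theta_2,\ldots$ of $\Theta$ and $t_0,t_1,\ldots$ of all terms, and build a binary tree $T\subseteq 2^{<\omega}$ with ordered sequents $\Gamma(\sigma)$ exactly as in the proof of Theorem~\ref{thm:completeness-prop}: put $\Gamma(\langle\rangle)=\langle\varphi\rangle$; at a node $\tau$ of even length $2(k-1)$ put $\tau\star 0,\tau\star 1\in T$, declare $\tau\star 0$ a leaf, and set $\Gamma(\tau\star 0)=\Gamma(\tau),\theta_k$ and $\Gamma(\tau\star 1)=\Gamma(\tau),\nega\theta_k$; at a node $\sigma$ of odd length decompose the leading formula of $\Gamma(\sigma)$ exactly as in the proof of Proposition~\ref{prop:completeness-FO}, so that besides the propositional and axiom clauses there are the clause for $\forall x\,\psi$ (append $\psi[x/y]$ for some $y\notin\fv(\Gamma(\sigma))$, which exists because $\Gamma(\sigma)$ is finite) and the clause for $\exists x\,\psi$ (append $\psi[x/t_j]$ for the least $j$ with $\psi[x/t_j]$ not yet present). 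By Weak K\H{o}nig's Lemma (Theorem~\ref{thm:wkl}), $T$ is finite or has a branch.

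If $T$ has a branch $f$, let $\mathcal F$ be the set of formulas occurring in some $\Gamma(f[n])$. Since every $\tau\star 0$ is a leaf, $f$ takes the successor labelled $1$ at each even level, so $\nega\theta_k\in\mathcal F$ for all $k$, and also $\varphi\in\mathcal F$. As in the proof of Proposition~\ref{prop:completeness-FO}, $\mathcal F$ satisfies the closure conditions~(i)--(v) used there, so the term model and identity assignment constructed there falsify every formula in $\mathcal F$; in particular they falsify $\varphi$ but satisfy every $\theta_k$ (because they falsify $\nega\theta_k$), contradicting $\Theta\vDash\varphi$. Hence $T$ is finite. Let $h$ be its height function and pick $n$ with $h(\langle\rangle)\le 2n$; by induction on $h(\sigma)$ one proves $\vdash\nega\theta_1,\ldots,\nega\theta_n,\Gamma(\sigma)$. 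At an axiom-leaf use~($\mathsf{Ax}$) and weakening; at a leaf $\tau\star 0$ (where $\theta_k$ was added, so $2k-1\le h(\langle\rangle)\le 2n$ and hence $k\le n$) use Exercise~\ref{ex:nega-pred}(a) on $\nega\theta_k,\theta_k$ together with weakening; at an even-length non-leaf node apply~($\mathsf{Cut}$) on $\theta_k$ to the two induction hypotheses; at an odd-length node apply the matching rule among~($\land$),~($\lor$),~($\forall$),~($\exists$) as in Proposition~\ref{prop:completeness-FO}, again using weakening where needed. Taking $\sigma=\langle\rangle$ gives $\vdash\nega\theta_1,\ldots,\nega\theta_n,\varphi$, i.e.\ $\Theta\vdash\varphi$, which together with the first paragraph finishes the proof.

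The only genuinely new point beyond the propositional argument is the variable condition in~($\forall$): the finite proof drags $\nega\theta_1,\ldots,\nega\theta_n$ through every node, so an eigenvariable chosen at a $\forall$-step before $\theta_k$ has entered the branch could otherwise be free in $\nega\theta_k$ and block the inference. The passage to sentences in the first paragraph is exactly what removes the free variables of the $\theta_k$, so that afterwards the eigenvariable only has to avoid the finitely many free variables of $\Gamma(\sigma)$, just as in Proposition~\ref{prop:completeness-FO}. I expect this interplay between assumptions and eigenvariables to be the main obstacle; the rest is a routine merge of the two cited proofs.
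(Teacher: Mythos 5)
Your proposal is correct and follows the same overall strategy the paper intends: build a binary tree that interleaves insertion of $\nega\theta_k$ at even levels (as in Theorem~\ref{thm:completeness-prop}) with the quantifier decompositions of Proposition~\ref{prop:completeness-FO}, and in the finite case read off a proof of $\vdash\nega\theta_1,\ldots,\nega\theta_n,\Gamma(\sigma)$ by induction on height. The genuinely valuable addition is your preliminary reduction to sentences, and your diagnosis of why it is needed is on the mark. The paper's one-sentence proof refers only to ``the modifications that we have made in the proof of Proposition~\ref{prop:completeness-FO}'' and does not mention the interaction between eigenvariables and the free variables of the $\theta_k$: the eigenvariable $y$ chosen at a $\forall$-node $\sigma$ is required only to avoid $\fv(\Gamma(\sigma))$, but the context in the final derivation also contains $\nega\theta_k$ for $k$ with $2(k-1)\geq l(\sigma)$, which have not yet been inserted at $\sigma$ and whose free variables the construction has not yet seen. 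For such $k$ the variable condition of~($\forall$) can fail. Your passage to a theory of sentences removes exactly these offending free variables, after which the eigenvariable need only avoid the finitely many free variables of $\Gamma(\sigma)$, as in Proposition~\ref{prop:completeness-FO}. The semantic transfer $\Theta\vDash\varphi\Leftrightarrow\Theta^\ast\vDash\varphi^\ast$ and the syntactic back-translation (rename eigenvariables off the $x_i$, then replace each $c_i$ by $x_i$, which is sound since constants are never subject to the eigenvariable condition) are standard and correct as sketched. So your argument is not a different route, but a careful expansion that makes an implicit step of the paper explicit and resolves a real, if small, obstacle.
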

\begin{proof}
Given that~$\Theta$ is countable by the convention above, we can conclude as in the proof of Theorem~\ref{thm:completeness-prop}, with the modifications that we have made in the proof of Proposition~\ref{prop:completeness-FO}.
\end{proof}

The following provides an important reformulation of completeness.

\begin{exercise}\label{ex:completeness}
Show that completeness implies that any consistent set of formulas is satisfiable. Also give a direct proof of the converse implication.
\end{exercise}

Finally, we discuss the treatment of equality in predicate logic.

\begin{definition}\label{def:equality}
A signature with equality is a signature~$\sigma$ together with a choice of a binary relation symbol in~$\sigma$, which we always denote by~$=$ and write infix ($x=y$ rather than ${=}xy$). By the associated equality axioms, we mean the universal closures of the following formulas, where $f$ and~$R$ range over all $n$-ary function and relation symbols in the signature:
\begin{gather*}
x=x,\qquad x=y\to y=x,\qquad x=y\land y=z\to x=z,\\
\begin{aligned}
x_1=y_1\land\ldots\land x_n=y_n&\to fx_1\ldots x_n=fy_1\ldots y_n,\\
x_1=y_1\land\ldots\land x_n=y_n&\to(Rx_1\ldots x_n\leftrightarrow Ry_1\ldots y_n).
\end{aligned}
\end{gather*}
When $\sigma$ is a signature with equality, a strict $\sigma$-structure is a $\sigma$-structure~$\mathcal M$ such that $=^{\mathcal M}$ is the actual equality relation $\{(a,a)\,|\,a\in M\}$ on the universe~$M$.
\end{definition}

Some authors will only consider signatures with equality and demand that all structures are strict. By the following result and its proof, this is no real restriction.

\begin{theorem}\label{thm:strict-model}
Consider a set of formulas~$\Theta$ that includes the equality axioms for a signature~$\sigma$ with equality. If $\Theta$ can be satisfied by some $\sigma$-structure, it can be satisfied by a strict $\sigma$-structure.
\end{theorem}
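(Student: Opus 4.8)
The plan is to form the quotient of the given structure by the equivalence relation that the symbol $=$ denotes. Suppose $\mathcal M$ is a $\sigma$-structure and $\eta$ a variable assignment with $\mathcal M,\eta\vDash\psi$ for every $\psi\in\Theta$. Since $\Theta$ contains (the universal closures of) the reflexivity, symmetry and transitivity axioms, which are sentences and hence hold in $\mathcal M$ outright, the relation $a\sim b:\Leftrightarrow(a,b)\in{=}^{\mathcal M}$ is an equivalence relation on the universe $M$. Write $[a]$ for the $\sim$-class of $a$ and $M/{\sim}$ for the set of classes, which is nonempty because $M$ is.

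Next I would define the candidate strict structure $\mathcal N$ with universe $M/{\sim}$. For an $n$-ary function symbol $f$, set $f^{\mathcal N}([a_1],\ldots,[a_n]):=[f^{\mathcal M}(a_1,\ldots,a_n)]$; for an $n$-ary relation symbol $R$ (including $=$ itself), set $([a_1],\ldots,[a_n])\in R^{\mathcal N}:\Leftrightarrow(a_1,\ldots,a_n)\in R^{\mathcal M}$. The congruence axioms $x_1=y_1\land\ldots\land x_n=y_n\to fx_1\ldots x_n=fy_1\ldots y_n$ and the analogous ones for relation symbols, which again hold in $\mathcal M$, are precisely what is needed to see that these definitions do not depend on the chosen representatives. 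The special case $R={=}$ yields $([a],[b])\in{=}^{\mathcal N}$ iff $(a,b)\in{=}^{\mathcal M}$ iff $a\sim b$ iff $[a]=[b]$, so ${=}^{\mathcal N}$ is the actual equality relation on $M/{\sim}$ and $\mathcal N$ is strict.

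It then remains to verify that $\mathcal N$ satisfies $\Theta$. Let $[\eta]$ be the variable assignment for $\mathcal N$ given by $[\eta](i):=[\eta(i)]$. A straightforward induction over pre-terms gives $t^{\mathcal N,[\eta]}=[t^{\mathcal M,\eta}]$, and then an induction over pre-formulas gives
\[
\mathcal M,\eta\vDash\varphi\quad\Leftrightarrow\quad\mathcal N,[\eta]\vDash\varphi
\]
for every pre-formula $\varphi$ and every assignment $\eta$ (working with pre-formulas, rather than only formulas, is what makes the induction hypothesis available for the immediate subformula of a quantified formula, exactly as in Exercise~\ref{ex:substitution}). Applying this to the members of $\Theta$ yields $\mathcal N,[\eta]\vDash\psi$ for all $\psi\in\Theta$, which finishes the argument.

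The main obstacle is the quantifier case of that last induction. For $\varphi=\exists v_i\,\psi$: from $\mathcal M,\eta\vDash\varphi$ pick $a\in M$ with $\mathcal M,\eta_i^a\vDash\psi$; since $[\eta_i^a]=[\eta]_i^{[a]}$, the induction hypothesis gives $\mathcal N,[\eta]_i^{[a]}\vDash\psi$, hence $\mathcal N,[\eta]\vDash\varphi$. Conversely, if $\mathcal N,[\eta]\vDash\exists v_i\,\psi$, the witnessing element of $M/{\sim}$ has the form $[a]$ for some $a\in M$ (here one uses surjectivity of $a\mapsto[a]$), and one runs the implication backwards; the universal case is dual. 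The atomic and propositional cases are immediate from the definitions of $\mathcal N$ and from $t^{\mathcal N,[\eta]}=[t^{\mathcal M,\eta}]$, so this is really the only place where care is needed.
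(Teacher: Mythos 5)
Your proof is correct and follows the same quotient construction as the paper's argument: pass to $M/{=^{\mathcal M}}$, use the congruence axioms for well-definedness, and transfer satisfaction via $[\eta]$ and inductions over (pre-)terms and (pre-)formulas. The only addition is that you spell out the quantifier case and explicitly note that the induction should run over pre-formulas so that the hypothesis applies to the immediate subformula of a quantified formula, a point the paper glosses over.
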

\begin{proof}
Consider any $\sigma$-structure~$\mathcal M$ such that we have $\mathcal M,\xi\vDash\Theta$ for some variable assignment~$\xi$. Since $\mathcal M$ satisfies the equality axioms (which are closed formulas), $=^\mathcal M$ is an equivalence relation on the universe~$M$. Let~$N$ be the quotient $M/{=^{\mathcal M}}$, i.\,e., the set of equivalence classes $[a]=\{b\in M\,|\,a=^{\mathcal M}b\}$. For an $n$-ary function symbol~$f$, the corresponding equality axiom ensures that we get a well-defined function $f^{\mathcal N}:N^n\to N$ by setting
\begin{equation*}
f^{\mathcal N}([a_1],\ldots,[a_n])=[f^{\mathcal M}(a_1,\ldots,a_n)].
\end{equation*}
When~$R$ is an $n$-ary relation symbol, we may similarly put
\begin{equation*}
\mathcal R^{\mathcal N}=\{([a_1],\ldots,[a_n])\,|\,(a_1,\ldots,a_n)\in R^{\mathcal M}\}.
\end{equation*}
This yields a strict $\sigma$-structure~$\mathcal N$, as $[a]=^{\mathcal N}[b]$ is equivalent to $a=^{\mathcal M}b$, which means that $[a]$ and~$[b]$ coincide. Given a variable assignment~$\eta:\mathbb N\to M$, we define an assignment $[\eta]:\mathbb N\to\mathcal N$ by $[\eta](i):=[\eta(i)]$. A straightforward induction over terms shows that we have $t^{\mathcal N,[\eta]}=[t^{\mathcal M,\eta}]$. By induction over formulas, one can derive that $\mathcal M,\eta\vDash\varphi$ is equivalent to $\mathcal N,[\eta]\vDash\varphi$. Given that we have $\mathcal M,\xi\vDash\Theta$, we thus get $\mathcal N,[\xi]\vDash\Theta$, so that $\Theta$ is satisfied by a strict $\sigma$-structure.
\end{proof}

Let us also record the following fact.

\begin{exercise}\label{ex:eq-arb-forms}
For any formula~$\varphi(x)$ in a signature with equality, the associated equality axioms entail $x=y\land\varphi(x)\to\varphi(y)$. \emph{Hint:} Either use induction over terms and formulas or combine Exercise~\ref{ex:substitution} with completeness for strict models.
\end{exercise}

\subsection{Consequences of Cut-Free Completeness}\label{sect:cut-free-completeness}

In the present section, we use the completeness theorem to establish several classical results about first-order logic. The~fact that our version of completeness yields cut-free proofs will be relevant in the second part of the section.

The name of the following result is at least partially explained by Exercise~\ref{ex:compactness} and the way in which K\H{o}nig's lemma was used in the proof of completeness.

\begin{theorem}[Compactness]\label{thm:compactness}
Consider a set~$\Theta$ of formulas in some first-order language. If any finite subset of~$\Theta$ is satisfiable, then so is the entire set~$\Theta$.
\end{theorem}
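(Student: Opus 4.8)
The plan is to derive compactness directly from the completeness theorem together with the finitary nature of derivations. First I would argue by contraposition: suppose $\Theta$ is not satisfiable. By Exercise~\ref{ex:completeness}, which states that completeness implies every consistent set is satisfiable, the contrapositive tells us that an unsatisfiable $\Theta$ must be inconsistent, i.e. $\Theta\vdash\bot$. Concretely this means $\vdash\neg\theta_1,\ldots,\neg\theta_n,\bot$ for some finite collection $\theta_1,\ldots,\theta_n\in\Theta$, by Definition~\ref{def:sequent-calc-FO}.

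Next I would observe that this finite derivation only ``sees'' the finitely many formulas $\theta_1,\ldots,\theta_n$, so the finite subset $\Theta_0:=\{\theta_1,\ldots,\theta_n\}$ is itself inconsistent: we have $\Theta_0\vdash\bot$ by the very same derivation. Now I apply soundness (Proposition~\ref{prop:soundness-FO}) in the form given by Exercise~\ref{ex:completeness} again — or directly — to conclude that $\Theta_0$ is not satisfiable. This contradicts the hypothesis that every finite subset of $\Theta$ is satisfiable. Hence $\Theta$ must be satisfiable after all.

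Let me also indicate the slightly more careful packaging, to avoid any circularity worry. The cleanest route is: assume every finite subset of $\Theta$ is satisfiable; we want $\Theta$ satisfiable. By Exercise~\ref{ex:completeness} it suffices to show $\Theta$ is consistent, i.e. $\Theta\nvdash\bot$. Suppose toward a contradiction that $\Theta\vdash\bot$. By the definition of $\Theta\vdash\varphi$ this means $\vdash\neg\theta_1,\ldots,\neg\theta_n,\bot$ for some finite $\theta_1,\ldots,\theta_n\in\Theta$, so the finite set $\Theta_0=\{\theta_1,\ldots,\theta_n\}$ satisfies $\Theta_0\vdash\bot$, i.e. $\Theta_0$ is inconsistent. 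Soundness (Proposition~\ref{prop:soundness-FO}) then shows $\Theta_0$ is not satisfiable, contradicting the hypothesis. Therefore $\Theta$ is consistent and hence satisfiable.

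The proof has essentially no obstacle: the entire content was already packed into the completeness theorem, and the only genuine ingredient is the \emph{finiteness} of the witnessing list $\theta_1,\ldots,\theta_n$ in the definition of $\Theta\vdash\varphi$, which is exactly where ``compactness'' comes from. The main thing to be careful about is simply not to conflate the semantic notion (every finite subset satisfiable) with the syntactic one (every finite subset consistent) without invoking soundness and completeness in the right directions; once those are kept straight, the argument is three lines. An alternative, more self-contained proof would re-run the deduction-chain construction from the proof of Proposition~\ref{prop:completeness-FO} directly on $\Theta$, using K\H{o}nig's lemma to extract a branch and hence a model whenever no finite subproof closes; but since completeness is already available, reducing to it is the economical choice.
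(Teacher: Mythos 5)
Your argument is correct and is essentially the paper's proof: the paper passes from unsatisfiability to $\Theta\vDash\bot$, invokes completeness (Theorem~\ref{thm:completeness-AL}) to get $\vdash\neg\theta_1,\ldots,\neg\theta_n,\bot$ for finitely many $\theta_i\in\Theta$, and then applies soundness to conclude that $\{\theta_1,\ldots,\theta_n\}$ is unsatisfiable. You route the first step through Exercise~\ref{ex:completeness} rather than quoting $\Theta\vDash\bot$ directly, but that is the same reduction, and you correctly identify the finiteness of the witnessing list in the definition of $\Theta\vdash\varphi$ as the heart of the matter.
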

\begin{proof}
If the set $\Theta$ is not satisfiable, then we have~$\Theta\vDash\bot$, so that Theorem~\ref{thm:completeness-AL} yields a proof $\vdash\neg\theta_1,\ldots,\neg\theta_n,\bot$ for a finite collection of formulas~$\theta_i\in\Theta$. But then soundness (see Theorem~\ref{prop:soundness-FO}) entails that $\{\theta_1,\ldots,\theta_n\}$ cannot be satisfiable.
\end{proof}

As an application of compactness, we show the following undefinability result. The latter should not be over-interpreted: one can still use first-order logic to express mathematical statements about finite and infinite objects (cf.~the broadly related discussion after Theorem~\ref{thm:loewenheim-skolem}). We note that the result would be trivial for non-strict models. To make these infinite, it suffices to replace some element of the universe by an infinite equivalence class (conversely to the proof of Theorem~\ref{thm:strict-model}).

\begin{corollary}
Consider a first-order language with equality. There is no set of formulas~$\Theta$ such that, for any strict model~$\mathcal M$, we have $\mathcal M\vDash\Theta$ precisely when the universe of~$\mathcal M$ is finite.
\end{corollary}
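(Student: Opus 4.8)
The plan is to argue by contradiction using compactness, in the spirit of how K\H{o}nig's lemma entered the proof of completeness. Let $\sigma$ be the signature of the given language (with its chosen equality symbol~$=$), and suppose towards a contradiction that some set of formulas~$\Theta$ has the stated property. For each $n\geq 1$ let $\lambda_n$ be the sentence
\begin{equation*}
\exists x_1\ldots\exists x_n\bigwedge_{1\leq i<j\leq n}\neg(x_i=x_j),
\end{equation*}
which asserts that there are at least~$n$ distinct elements; here the~$x_i$ are chosen to be distinct variables of even index, so that $\lambda_n$ is genuinely a sentence of~$\sigma$ in the sense of Definition~\ref{def_fv-formula}. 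Let~$E$ be the set of equality axioms for~$\sigma$ and put $\Theta^\star=\Theta\cup E\cup\{\lambda_n\mid n\geq 1\}$.

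Next I would verify that every finite subset $F\subseteq\Theta^\star$ is satisfiable. Such an~$F$ is contained in $\Theta\cup E\cup\{\lambda_1,\ldots,\lambda_N\}$ for some~$N\geq 1$. Take any strict $\sigma$-structure~$\mathcal M$ whose universe has exactly~$N$ elements, with the function and relation symbols other than~$=$ interpreted arbitrarily. Then $\mathcal M\vDash E$, since $=^{\mathcal M}$ is genuine equality on a strict structure; moreover $\mathcal M\vDash\lambda_n$ for all $n\leq N$ because $\mathcal M$ has~$N$ elements; and $\mathcal M\vDash\Theta$ by the assumed property of~$\Theta$, the universe of~$\mathcal M$ being finite. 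Hence $\mathcal M\vDash F$. By compactness (Theorem~\ref{thm:compactness}), the whole set~$\Theta^\star$ is therefore satisfiable, say by some $\sigma$-structure~$\mathcal M_0$.

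Finally, since $\Theta^\star$ includes the equality axioms, Theorem~\ref{thm:strict-model} provides a \emph{strict} $\sigma$-structure~$\mathcal N$ with $\mathcal N\vDash\Theta^\star$. From $\mathcal N\vDash\lambda_n$ for every~$n\geq 1$ we read off that the universe of~$\mathcal N$ has at least~$n$ elements for each~$n$, hence is infinite. On the other hand $\mathcal N\vDash\Theta$ and $\mathcal N$ is strict, so the assumed property of~$\Theta$ forces the universe of~$\mathcal N$ to be finite --- a contradiction. This establishes the corollary.

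The argument is routine once the right set~$\Theta^\star$ has been formed; the only point requiring care is that compactness, applied directly, yields a possibly non-strict model, whereas the hypothesis speaks about strict models. This is exactly why the equality axioms are included in~$\Theta^\star$: they allow us to invoke Theorem~\ref{thm:strict-model} and pass to a strict model, which is precisely the step where the statement would fail for non-strict structures (cf.\ the remark preceding the corollary).
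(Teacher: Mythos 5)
Your proof is correct and follows essentially the same route as the paper: add the sentences ``there exist at least $n$ distinct elements'' to~$\Theta$, verify finite satisfiability by strict finite models of~$\Theta$, apply compactness, and pass to a strict model via Theorem~\ref{thm:strict-model}. The one small difference is that you explicitly adjoin the equality axioms~$E$ before invoking Theorem~\ref{thm:strict-model}, which the paper leaves tacit (harmless there since the finite models in the compactness step are strict and so already validate~$E$); your version is slightly more careful on this point.
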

\begin{proof}
For each integer~$n\geq 2$, we let $\varphi_n$ be the formula
\begin{equation*}
\exists x_1\ldots\exists x_n\,\textstyle\bigwedge_{1\leq i<j\leq n}\neg x_i=x_j,
\end{equation*}
where the part after the quantifiers abbreviates $\neg x_1=x_2\land\neg x_1=x_3\land\neg x_2=x_3\ldots$. Let us assume that $\Theta$ is satisfied by all finite structures (or at least by arbitrarily large ones). We deduce that $\Theta\cup\{\varphi_n\,|\,n\geq 2\}$ has a strict model. By compactness and Theorem~\ref{thm:strict-model}, it is enough to show that $\Theta\cup\{\varphi_n\,|\,2\leq n\leq N\}$ is satisfiable for an arbitrary~$N\in\mathbb N$. Let us pick a finite~$\mathcal M\vDash\Theta$ with at least~$N$ elements. In~order to get $\mathcal M\vDash\varphi_n$ for~$n\leq N$, it suffices to witness the $x_i$ by $n$ different elements of the universe. Now let $\mathcal N$ be a strict model of $\Theta\cup\{\varphi_n\,|\,n\geq 2\}$. For arbitrary~$n\geq 2$, we can invoke $\mathcal N\vDash\varphi_n$ to conclude that the universe of~$\mathcal N$ has at least~$n$ elements. So $\mathcal N$ is infinite and $\Theta$ does not just have finite models.
\end{proof}

Part~(b) of the following exercise is another classical application of compactness.

\begin{exercise}
(a) Find a first-order formula~$\varphi$ in the language~$\{=\}$ such that we have $\mathcal M\vDash\varphi$ precisely when the universe of $\mathcal M$ has exactly three elements, for any strict model~$\mathcal M$. \emph{Remark:} More generally, you can prove that the same holds when three is replaced by any fixed positive integer.

(b) By a graph (directed with loops) we mean a strict model for the language with equality and another binary relation~$E$ (which determines which `points' from the universe are related by an edge). A graph~$G$ is called connected if any two points~$x,y$ admit a path between them, i.\,e., a sequence of points $x=z_0,\ldots,z_n=y$ (possibly with~$n=0$) such that $Ez_iz_{i+1}$ holds in $G$ for all~$i<n$. Show that there is no set of formulas that is satisfied precisely by the connected graphs. What happens if you focus on finite graphs?
\end{exercise}

The following is a preparation for our next result.

\begin{exercise}\label{ex:Cantor-pairing}
Show that the so-called Cantor pairing function $\pi:\mathbb N^2\to\mathbb N$ with
\begin{equation*}
\pi(\langle m,n\rangle)=n+\sum_{i=0}^{m+n} i=n+\frac{(m+n)\cdot(m+n+1)}{2}
\end{equation*}
is bijective. Use induction to derive that there is a bijection between $\mathbb N^n$ and $\mathbb N$ for each integer~$n\geq 1$. Infer that there is a bijection between $\mathbb N$ and the set $\bigcup_{n\in\mathbb N}\mathbb N^n$ of finite sequences of natural numbers. \emph{Hint:} The inverse of $\pi$ is an enumeration~of the pairs in $\mathbb N^2$ along diagonals in the plane, which starts with
\begin{equation*}
\langle 0,0\rangle,\quad\langle 0,1\rangle,\quad\langle 1,0\rangle,\quad\langle 2,0\rangle,\quad\langle 2,1\rangle,\quad\langle 2,2\rangle.
\end{equation*}
Consider how the lengths of the diagonals will add up.
\end{exercise}

The following result is complemented by an upward version that is proved in the last section of these lecture notes (see Theorem~\ref{thm:up-LS}). In that section we will also prove a downward version for uncountable signatures (see Theorem~\ref{thm:downward-LS-uncount}).

\begin{theorem}[Downward L\"owenheim-Skolem Theorem~\cite{loewenheim,skolem}]\label{thm:loewenheim-skolem}
Let~$\mathsf T$ be a theory in a first-order language that is countable (as ensured by Convention~\ref{conv:countable}). If $\mathsf T$ has some model, then it has a model with countable universe.
\end{theorem}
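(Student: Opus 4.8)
The plan is to exploit the cut-free completeness theorem that was just established, together with the explicit construction of the countermodel used in its proof. Recall that in the proof of Proposition~\ref{prop:completeness-FO} (and hence in Theorem~\ref{thm:completeness-AL}), when the search tree~$T$ has a branch~$f$, the countermodel~$\mathcal M$ is built out of the \emph{terms} of the language: its universe is the set $M=\{t_i\mid i\in\mathbb N\}$ of all terms over the given signature. Since the signature is countable by Convention~\ref{conv:countable}, there are only countably many terms (this uses Exercise~\ref{ex:Cantor-pairing}), so the universe~$M$ is automatically countable. The key observation is that the completeness proof already produces a countable model whenever it produces a model at all; we only need to arrange to be in the ``branch'' case rather than the ``finite tree'' case.

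So first I would reduce the statement to a satisfiability question about a suitable sequent. Suppose $\mathsf T$ has a model; then $\mathsf T$ is consistent, i.e.\ $\mathsf T\nvdash\bot$. By Exercise~\ref{ex:completeness} (completeness implies that consistent sets are satisfiable) — or more carefully, by tracing through completeness directly — we want to run the deduction-chain search on~$\mathsf T$ and extract a model. Concretely, fix an enumeration $\theta_1,\theta_2,\ldots$ of~$\mathsf T$ and run the tree construction exactly as in the proof of Theorem~\ref{thm:completeness-prop}, modified as in Proposition~\ref{prop:completeness-FO} to handle quantifiers (at even stages we branch on $\theta_k$ versus $\nega\theta_k$, declaring the $\theta_k$-side a leaf; at odd stages we decompose the leading formula). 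If the resulting tree~$T$ were finite, the argument of Theorem~\ref{thm:completeness-prop} would give $\vdash\nega\theta_1,\ldots,\nega\theta_n$, i.e.\ $\mathsf T\vdash\bot$, contradicting consistency. Hence~$T$ is infinite, so by Weak K\H{o}nig's Lemma (Theorem~\ref{thm:wkl}) it has a branch~$f$.

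From the branch~$f$, the construction in Proposition~\ref{prop:completeness-FO} yields a model~$\mathcal M$ whose universe is the set of terms and a variable assignment~$\eta$ with $\eta(i)=v_i$, such that $\mathcal M,\eta\nvDash\varphi$ for every~$\varphi$ occurring on the branch — in particular $\mathcal M,\eta\nvDash\nega\theta_k$ for each~$k$ (since the $\theta_k$-branch is always a leaf, the branch~$f$ must take the $\nega\theta_k$-direction at stage $2(k-1)$, so $\nega\theta_k$ occurs on~$f$). By Lemma~\ref{lem:tilde-negation} (valid verbatim for predicate logic, cf.~the remark after Definition~\ref{def:nega-pred-log}) this gives $\mathcal M,\eta\vDash\theta_k$ for all~$k$, and since the~$\theta_k$ are sentences, $\mathcal M\vDash\mathsf T$. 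Finally, the universe of~$\mathcal M$ is $\{t_i\mid i\in\mathbb N\}$, which is countable because the signature is countable and each term is a finite string over a countable alphabet (Exercise~\ref{ex:Cantor-pairing}). This~$\mathcal M$ is the desired countable model.

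The main obstacle I anticipate is not any deep new idea but rather bookkeeping: one must be slightly careful that the version of the completeness proof being invoked really does apply to an infinite set of assumptions~$\mathsf T$ (it does — that is precisely what Theorem~\ref{thm:completeness-prop} and its first-order analogue via Theorem~\ref{thm:completeness-AL} were set up to handle), and one must note that the ``term model'' in Proposition~\ref{prop:completeness-FO} could a priori be finite only if the signature has no function symbols and finitely many constants, but in all cases it is countable, which is all we need. A cosmetic alternative that avoids re-running the construction would be: add to~$\mathsf T$ nothing, apply Theorem~\ref{thm:completeness-AL} together with Exercise~\ref{ex:completeness} to get \emph{a} model, but then one still has to inspect the proof to see the model is countable — so it is cleaner simply to point at the term model built inside the completeness proof. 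I would phrase the final writeup around that pointer.
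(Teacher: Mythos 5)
Your proposal is essentially the paper's own proof: the paper also observes that the completeness argument of Proposition~\ref{prop:completeness-FO} and Theorem~\ref{thm:completeness-AL} produces a model whose universe is the set of terms, and then notes that terms over a countable signature form a countable set via Exercise~\ref{ex:Cantor-pairing}. Your writeup unpacks more of the mechanics (the tree construction, the branch, Lemma~\ref{lem:tilde-negation}) and adds a slightly misplaced worry about the term universe being finite (it always contains all variables~$v_i$ and so is countably infinite), but the route is the same; the one item the paper mentions that you omit is that for signatures with equality one passes to the quotient of Theorem~\ref{thm:strict-model}, which preserves countability but may yield a finite strict model.
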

\begin{proof}
By the proofs of Proposition~\ref{prop:completeness-FO} and Theorem~\ref{thm:completeness-AL}, we get a model of~$\mathsf T$ where the universe consists of terms. The latter are built from variables $x_0,x_1,\ldots$ as well as function symbols $f_0,f_1,\ldots$, which we combine into a single set $S:=\{x_n\,|\,n\in\mathbb N\}\cup\{f_n\,|\,n\in\mathbb N\}$ that is countable due to $S\to\mathbb N$ with $x_n\mapsto 2n$ and $f_n\mapsto 2n+1$. Terms can be seen as finite sequences over~$S$, which means that the universe of our model may be identified with a subset of $\bigcup_{n\in\mathbb N}\mathbb N^n$. We can conclude by the previous exercise. Let us note that, for theories with equality, the proof of Theorem~\ref{thm:strict-model} yields a strict model that is still countable (but may now be finite).
\end{proof}

The previous theorem is sometimes referred to as the L\"owenheim-Skolem paradox. This~is because there are consistent axiom systems -- e.\,g.~in set theory -- that prove the existence of uncountable objects. By the theorem, these must occur inside some countable model~$\mathcal M$. This is possible because being countable is not an `absolute' property: It may be that an object is uncountable from the viewpoint of~$\mathcal M$ -- which means that $\mathcal M$ contains no injection from that object into the integers -- while it is actually countable because such an injection exists outside of~$\mathcal M$.

To prove the remaining results of this section, we exploit the fact that our proof of completeness entails cut elimination.

\begin{theorem}[Craig interpolation~\cite{craig-interpolation}]\label{thm:craig-interpolation}
Given an implication $\varphi_0\to\varphi_1$ that is logically valid, one can find a formula $\psi$ such that $\varphi_0\to\psi$ and $\psi\to\varphi_1$ are valid and any relation symbol in~$\psi$ occurs in both~$\varphi_0$ and~$\varphi_1$ (where we allow $\psi\in\{\bot,\top\}$ even when $\varphi_0$ and $\varphi_1$ share no relation symbols).
\end{theorem}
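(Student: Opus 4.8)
The plan is to extract the interpolant directly from a cut-free sequent proof, exploiting the fact that our version of completeness (Proposition~\ref{prop:completeness-FO}) yields proofs without cuts. First I would translate the hypothesis into sequent-calculus form: since $\varphi_0\to\varphi_1$ abbreviates $\nega\varphi_0\lor\varphi_1$, and this formula is logically valid, Proposition~\ref{prop:completeness-FO} gives a cut-free proof $\vdash_0\nega\varphi_0,\varphi_1$ (after one application of the ($\lor$) rules to split the disjunction, or simply by noting $\vDash\bigvee\{\nega\varphi_0,\varphi_1\}$). The key move is to generalise: I would prove by induction on cut-free derivations that whenever $\vdash_0\Gamma_0,\Gamma_1$, there is a formula $\psi$ such that $\vdash_0\nega\Gamma_0,\psi$ and $\vdash_0\nega\psi,\Gamma_1$ (equivalently, $\bigvee\Gamma_0$ implies $\psi$ and $\psi$ implies $\bigvee\Gamma_1$, in sequent form), and every relation symbol of $\psi$ occurs in both $\bigvee\Gamma_0$ and $\bigvee\Gamma_1$. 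Here $\nega\Gamma_i$ denotes the sequent of $\nega$-negations of the members of $\Gamma_i$, and one allows $\psi\in\{\bot,\top\}$ in the degenerate cases. Applying this to the partition $\Gamma_0=\{\nega\varphi_0\}$, $\Gamma_1=\{\varphi_1\}$ yields the theorem, using soundness (Proposition~\ref{prop:soundness-FO}) to pass back from $\vdash_0$ to logical validity of $\varphi_0\to\psi$ and $\psi\to\varphi_1$.

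For the induction, I would fix a way of assigning, to each step of the cut-free derivation of $\vdash_0\Gamma$ and each splitting $\Gamma=\Gamma_0\cup\Gamma_1$ into two disjoint parts, an interpolant $\psi=\psi(\Gamma_0,\Gamma_1)$. In the axiom case, $\Gamma$ contains a literal $L$ and its negation $\nega L$; depending on whether both, one, or neither lie in $\Gamma_0$, the interpolant is $\bot$, $L$ or $\nega L$, or $\top$ — and in the mixed case the relation symbol of $L$ occurs in both parts, as required. For the ($\land$) and ($\lor$) rules the principal formula $\chi$ belongs to exactly one of $\Gamma_0,\Gamma_1$; the interpolant of the conclusion is built from those of the premise(s) by a conjunction or disjunction, according to which side $\chi$ is on — this is the standard bookkeeping, and one checks the relation-symbol condition is preserved because no new relation symbols are introduced. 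The genuinely first-order steps are the quantifier rules: for ($\exists$), if the principal formula $\exists x\,\chi$ lies in $\Gamma_1$, the interpolant is unchanged; if it lies in $\Gamma_0$, the witnessing term $t$ may involve function symbols foreign to $\Gamma_1$, so one must existentially quantify them away, replacing the premise interpolant $\psi'$ by $\exists \vec z\,\psi'[\,\vec t/\vec z\,]$ where the $\vec z$ are fresh variables substituted for the offending subterms. Symmetrically for ($\forall$): on the $\Gamma_0$ side the eigenvariable $y$ (satisfying the variable condition) may occur in $\psi'$, and one universally quantifies it away on the appropriate side.

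The main obstacle I anticipate is precisely this treatment of terms and eigenvariables in the quantifier rules — ensuring that, after quantifying away the problematic function symbols or eigenvariables, the two derived sequents $\vdash_0\nega\Gamma_0,\psi$ and $\vdash_0\nega\psi,\Gamma_1$ really are still cut-free provable (one needs the ($\exists$)/($\forall$) rules and weakening of Definition~\ref{def:sequent-calc-FO} to push the new quantifiers through the premise derivations), and that the free variables of $\psi$ remain confined to those shared by $\varphi_0$ and $\varphi_1$ so that $\psi$ is a genuine formula (no even-indexed free variables). One must also be careful that the variable condition on ($\forall$) is not violated when the eigenvariable is reused as a bound variable in the new interpolant. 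Handling the equality symbol, if present, requires no special care here: $=$ is just another binary relation symbol as far as the syntactic argument is concerned, so it may legitimately appear in $\psi$ whenever it is shared. Once the inductive invariant is correctly stated, each case is a routine but slightly fiddly verification; the art is in choosing the invariant and the fresh-variable conventions so that all cases go through uniformly.
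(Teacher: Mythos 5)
Your overall plan---induction over cut-free sequent derivations, producing an interpolant for each splitting of the proved sequent---is indeed the paper's approach, but the inductive invariant you state is wrong. The claim "$\vdash_0\Gamma_0,\Gamma_1$ yields $\psi$ with $\vdash_0\nega\Gamma_0,\psi$ and $\vdash_0\nega\psi,\Gamma_1$ (equivalently $\bigvee\Gamma_0$ implies $\psi$ implies $\bigvee\Gamma_1$)" cannot hold: with $\Gamma_0=\{P\}$ and $\Gamma_1=\{\neg P\}$ the sequent is an axiom, yet no $\psi$ makes both $P\to\psi$ and $\psi\to\neg P$ valid. The spurious $\nega$ on $\Gamma_0$ must be dropped; the correct invariant asserts a $\psi$ with $\vdash_0\Gamma_0,\psi$ and $\vdash_0\Gamma_1,\nega\psi$, so that $\psi$ interpolates between $\neg\bigvee\Gamma_0$ and $\bigvee\Gamma_1$. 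Applied with $\Gamma_0=\{\nega\varphi_0\}$ and $\Gamma_1=\{\varphi_1\}$, this gives $\varphi_0\to\psi\to\varphi_1$ as required. Your own axiom case already uses the corrected form implicitly: the mixed-case interpolant $\nega L$ with $L\in\Gamma_0$ satisfies $\vdash_0\Gamma_0,\nega L$, not $\vdash_0\nega\Gamma_0,\nega L$.

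In the ($\exists$) case, replacing foreign subterms of the interpolant by fresh variables and prefixing $\exists$ is both unnecessary and unsound. It is unnecessary because the theorem only asks that the \emph{relation} symbols of $\psi$ occur on both sides, and the relation symbols of the witness $\rho[x/t]$ are just those of $\exists x\,\rho$, which is already in $\Gamma_0$; the unchanged $\psi$ from the induction hypothesis therefore still has the required property, and one simply reapplies ($\exists$) to $\Gamma_0,\rho[x/t],\psi$ with the same term $t$. It would also break the other half of the invariant: to obtain $\vdash_0\Gamma_1,\forall z\,\chi$ one would need to infer it from $\vdash_0\Gamma_1,\chi[z/t]$, but the ($\forall$) rule of Definition~\ref{def:sequent-calc-FO} only permits a fresh variable in the premise, not an arbitrary term $t$, and such an inference is not even sound. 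For ($\forall$), be careful about the quantifier direction: when $\forall x\,\rho\in\Gamma_0$ the interpolant is $\exists y\,\psi'$ (and dually $\forall y\,\psi'$ when $\forall x\,\rho\in\Gamma_1$); prefixing $\forall y$ on the $\Gamma_0$ side would violate the variable condition because $y$ is still free in $\rho[x/y]$.
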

\begin{proof}
We will use induction over cut-free proofs to establish the following claim: Given $\vdash_0\Gamma,\Delta$, we get $\vdash_0\Gamma,\psi$ and $\vdash_0\Delta,\neg\psi$ for some formula~$\psi$ such that any relation symbol in~$\psi$ occurs in both~$\Gamma$ and~$\Delta$ (still allowing $\psi\in\{\bot,\top\})$. Before we carry out the induction, we show how to conclude: Under the assumptions from the theorem, cut-free completeness yields~$\vdash_0\neg\varphi_0,\varphi_1$. Once the claim above has been proved, we obtain $\vdash_0\neg\varphi_0,\psi$ and $\vdash_0\varphi_1,\neg\psi$ for a suitable formula~$\psi$. Soundness yields $\neg\varphi_0\lor\psi$ and $\varphi_1\lor\neg\psi$, which amounts to~$\varphi_0\to\psi$ and $\psi\to\varphi_1$.

In the base case of the indicated proof by induction, the sequent~$\Gamma,\Delta$ contains an atomic formula~$\theta$ and its negation. First assume that both these formulas are contained in~$\Gamma$. We can then take~$\psi$ to be~$\bot$, since we get $\vdash_0\Gamma,\bot$ by weakening while $\vdash_0\Delta,\top$ holds because $\top$ is logically valid. Similarly, we can take $\psi$ to be~$\top$ when both $\theta$ and $\neg\theta$ are contained in~$\Delta$. Now assume that $\theta$ occurs in~$\Gamma$ while $\neg\theta$~occurs in~$\Delta$. We then have $\vdash_0\Gamma,\neg\theta$ as well as $\vdash_0\Delta,\theta$. Also, the relation symbols in~$\theta$ occur in both $\Gamma$ and~$\Delta$. Hence we can take $\psi$ to be~$\neg\theta$ (recall that $\neg\neg\theta$ and $\theta$ coincide due to our treatment of formulas in negation normal form). When~$\neg\theta$ occurs in~$\Gamma$ while~$\theta$ occurs in~$\Delta$, we can take~$\psi$ to be~$\theta$.

Next, we assume that $\Gamma$ contains a formula $\rho_0\land\rho_1$ and that $\vdash_0\Gamma,\Delta$ was derived from the premises $\vdash_0\Gamma,\rho_i,\Delta$ for~$i\in\{0,1\}$. The induction hypothesis provides
\begin{equation*}
\vdash_0\Gamma,\rho_i,\psi_i\quad\text{and}\quad\vdash_0\Delta,\neg\psi_i,
\end{equation*}
where the relation symbols in $\psi_i$ occur in both~$\Gamma,\rho_i$ and~$\Delta$. We infer $\vdash_0\Delta,\neg\psi_0\land\neg\psi_1$ and $\vdash_0\Gamma,\rho_i,\psi_0\lor\psi_1$ for each~$i<2$. The latter yields $\vdash_0\Gamma,\psi_0\lor\psi_1$, since $\rho_0\land\rho_1$ occurs in~$\Gamma$. This does also ensure that $\Gamma$ contains any relation symbol in~$\rho_i$. We can thus take $\psi$ to be $\psi_0\lor\psi_1$ (recall that $\neg\psi_0\land\neg\psi_1$ and $\neg(\psi_0\lor\psi_1)$ denote the same formula in negation normal form). In~case the last rule has introduced a conjunction $\psi_0\land\psi_1$ in the sequent~$\Delta$, we can similarly choose $\psi$ of the form $\psi_0\land\psi_1$.

We continue with the case where $\Gamma$ contains a formula~$\rho_0\lor\rho_1$ that was introduced with premise $\vdash_0\Gamma,\rho_i,\Delta$ for some~$i\in\{0,1\}$. The induction hypothesis provides~$\psi$ with $\vdash_0\Gamma,\rho_i,\psi$ and $\vdash_0\Delta,\neg\psi$. We use the same rule to infer $\vdash_0\Gamma,\psi$. In this case, we can thus take the same $\psi$ to complete the induction step. The same applies when the last rule introduces a disjunction $\rho_0\lor\rho_1$ in~$\Delta$.

In the penultimate case, the sequent $\Gamma$ contains a formula $\forall x\,\rho$ that has been introduced with premise $\vdash_0\Gamma,\rho[x/y],\Delta$, where the variable $y$ is not free in~$\Gamma,\Delta$. Consider $\vdash_0\Gamma,\rho[x/y],\psi'$ and $\vdash_0\Delta,\neg\psi'$ as provided by the induction hypothesis. We cannot re-introduce $\forall x\,\rho$ by the same rule immediately, as $y$ may be free in~$\psi'$. Since $y$ is not free in~$\Delta$, however, we may infer $\vdash_0\Delta,\forall y\,\neg\psi'$ and also $\vdash_0\Gamma,\rho[x/y],\exists y\,\psi'$. As $\forall x\,\rho$ occurs in~$\Gamma$ and $y$ is not free in $\Gamma,\exists y\,\psi'$, we can now conclude $\vdash_0\Gamma,\exists y\,\psi'$. So in this case, we have completed the induction step with $\exists y\,\psi'$ at the place of~$\psi$. When $\forall x\,\rho$ is contained in~$\Delta$, we can conclude similarly, with $\psi$ of the form $\forall y\,\psi'$.

Finally, assume that the last rule has introduced a formula~$\exists x\,\rho$ that occurs in~$\Gamma$. This rule has premise $\vdash_0\Gamma,\rho[x/t],\Delta$ for some term~$t$. We note that $\Gamma$ contains all relation symbols (but not necessarily all function symbols) that occur in~$\rho[x/t]$. The induction hypothesis yields a suitable $\psi$ with $\vdash_0\Delta,\neg\psi$ and $\vdash_0\Gamma,\rho[x/t],\psi$, from which we get $\vdash\Gamma,\psi$. When $\exists x\,\rho$ lies in~$\Delta$, one concludes in the same way.
\end{proof}

The previous result is the first that relies on proofs being cut-free. To explain why the argument would break down in the presence of cuts, we note that the cut rule is the only one where the premises contain a formula that is unrelated to any formula in the conclusion. We have no control over the relation symbols in this formula in the premise. The following is an important consequence of Craig~interpolation.

\begin{theorem}[Beth definability~\cite{beth-definability}]
Let~$\mathsf T$ be a theory in a language $\mathcal L_P=\mathcal L\cup\{P\}$ with an $n$-ary predicate symbol~$P$ that does not occur in~$\mathcal L$. The following notions of explicit and implicit definability are equivalent:
\begin{enumerate}[label=(\roman*)]
\item There is an $\mathcal L$-formula~$\psi$ such that $\mathsf T$ entails $\forall x_1\ldots\forall x_n(Px_1\ldots x_n\leftrightarrow\psi)$.
\item For any $\mathcal L$-structure, there is at most one interpretation of~$P$ that extends it into an $\mathcal L_P$-structure that validates~$\mathsf T$.
\end{enumerate}
\end{theorem}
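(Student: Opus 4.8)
The plan is to prove the two directions separately. The implication (i)$\Rightarrow$(ii) is routine: if $\mathsf T$ entails $\forall x_1\ldots\forall x_n(Px_1\ldots x_n\leftrightarrow\psi)$ for an $\mathcal L$-formula~$\psi$, then in any $\mathcal L_P$-structure~$\mathcal M$ validating $\mathsf T$ one must have $P^{\mathcal M}=\{(a_1,\ldots,a_n)\in M^n\mid\mathcal M\vDash\psi(a_1,\ldots,a_n)\}$, and this set depends only on the $\mathcal L$-reduct of~$\mathcal M$; so over a fixed $\mathcal L$-structure there is at most one admissible interpretation of~$P$. The substance lies in (ii)$\Rightarrow$(i), which I would derive from Craig interpolation (Theorem~\ref{thm:craig-interpolation}) and compactness (Theorem~\ref{thm:compactness}) via the standard ``disjoint copy'' device.

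Concretely: introduce a fresh $n$-ary relation symbol~$P'$ and let $\mathsf T'$ be obtained from~$\mathsf T$ by replacing every occurrence of~$P$ by~$P'$, working henceforth in the language $\mathcal L\cup\{P,P'\}$. A model of $\mathsf T\cup\mathsf T'$ is precisely an $\mathcal L$-structure carrying two interpretations of the predicate, each of which turns it into a model of~$\mathsf T$; so implicit definability~(ii) forces these interpretations to agree, i.e.\ $\mathsf T\cup\mathsf T'\vDash\forall\bar x(P\bar x\to P'\bar x)$, writing $\bar x$ for $x_1,\ldots,x_n$. Passing to the union of the two finite subsets supplied by compactness, we obtain a single sentence~$\theta$ that is a conjunction of members of~$\mathsf T$ such that, with $\theta'$ denoting the result of replacing $P$ by $P'$ in~$\theta$, the implication $(\theta\land P\bar x)\to(\theta'\to P'\bar x)$ is logically valid. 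Now apply Craig interpolation to this implication: the only relation symbols occurring on both sides are those of~$\mathcal L$, because $P$ occurs only on the left, $P'$ only on the right, and neither lies in~$\mathcal L$. We thus get an $\mathcal L$-formula~$\psi$ with $\vDash(\theta\land P\bar x)\to\psi$ and $\vDash\psi\to(\theta'\to P'\bar x)$. Since $\psi$ mentions neither $P$ nor~$P'$, substituting~$P$ for~$P'$ throughout the second validity (a relation-symbol renaming, which preserves validity) yields $\vDash\psi\to(\theta\to P\bar x)$. Combining, every model of~$\theta$ --- in particular every model of~$\mathsf T$, since $\mathsf T\vDash\theta$ --- satisfies both $P\bar x\to\psi$ and $\psi\to P\bar x$ for all~$\bar x$, so $\mathsf T\vDash\forall\bar x(P\bar x\leftrightarrow\psi)$, which is~(i).

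One bookkeeping point: the interpolation theorem as stated controls only the relation symbols of~$\psi$, so $\psi$ might carry free variables~$\bar z$ beyond~$\bar x$. This is harmless: replacing $\psi$ by $\forall\bar z\,\psi$ keeps it an $\mathcal L$-formula and preserves both required implications --- the first because $\theta\land P\bar x$ does not involve~$\bar z$, the second because $\forall\bar z\,\psi$ entails $\exists\bar z\,\psi$ over the nonempty universe --- so we may assume $\fv(\psi)\subseteq\{x_1,\ldots,x_n\}$ from the start. I do not expect a genuine obstacle, since both interpolation and compactness are already available; the only delicate step is tracking which relation symbols the interpolant is allowed to contain, so as to genuinely exploit that $P$ and $P'$ are distinct symbols absent from~$\mathcal L$.
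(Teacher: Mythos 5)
Your proof is correct and follows essentially the same route as the paper: introduce a disjoint copy of the predicate, use compactness to compress the premise to a single sentence, apply Craig interpolation to extract an $\mathcal L$-formula, and rename the copy back. The only difference is a small cosmetic one (the paper renames $P$ to two fresh symbols $P_0,P_1$ rather than keeping $P$ and adding $P'$), together with your explicit handling of possible spurious free variables in the interpolant, which the paper glosses over.
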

\begin{proof}
When statement~(i) holds, the only option in~(ii) is to interpret~$P$ as the set of tuples~$(a_1,\ldots,a_n)$ such that $\psi$ is validated when we assign~$a_i$ to $x_i$. The crucial task is to prove the converse implication from~(ii) to~(i). Let us consider the languages $\mathcal L_i=\mathcal L\cup\{P_i\}$ for~$i\in\{0,1\}$, where $P_0$ and~$P_1$ are different \mbox{$n$-ary} predicate symbols that do not occur on~$\mathcal L$. We write $\mathsf T_i$ for the~$\mathcal L_i$-theory that~results from~$\mathsf T$ when we replace each occurrence of~$P$ by~$P_i$. Given~(ii), we see that $P_0x_1\ldots x_n\leftrightarrow P_1x_1\ldots x_n$ is a logical consequence of the theory~$\mathsf T_0\cup\mathsf T_1$ (over the language $\mathcal L_0\cup\mathcal L_1$). Here we may replace $\mathsf T_0\cup\mathsf T_1$ by $\varphi_0\land\varphi_1$ for $\mathcal L_i$-sentences~$\varphi_i$ that are entailed by the~$\mathsf T_i$, due to compactness (cf.~Exercise~\ref{ex:completeness}). It follows that
\begin{equation*}
\varphi_0\land P_0x_1\ldots x_n\to(\varphi_1\to P_1x_1\ldots x_n)
\end{equation*}
is logically valid. The point is that $P_0$ and $P_1$ only occur in the premise and conclusion, respectively. We can thus invoke Craig interpolation to get an $\mathcal L$-formula~$\psi$ such that $\varphi_0\land P_0x_1\ldots x_n\to\psi$ and $\psi\to(\varphi_1\to P_1x_1\ldots x_n)$ is valid. It follows that $\mathsf T_0$ entails $P_0x_1\ldots x_n\to\psi$ while $\mathsf T_1$ entails $\psi\to P_1x_1\ldots x_n$. In each case, we may change~$P_i$ back into~$P$. We then learn that $\mathsf T$ entails $Px_1\ldots x_n\leftrightarrow\psi$, as in~(i).
\end{proof}

For the final result of this section, we introduce the following notion.

\begin{definition}
A formula in negation normal form is called a $\Pi$-formula if $\exists$ does not occur in it. It is called a $\Sigma$-formula if it does not contain~$\forall$ (but possibly~$\exists$).
\end{definition}

The formula $\theta(t_0)\lor\ldots\lor\theta(t_n)$ in the conclusion of the following theorem is known as a Herbrand disjunction.

\begin{theorem}[Herbrand's theorem]\label{thm:herbrand}
Assume that we have $\mathsf T\vDash\exists x\,\theta(x)$ where~$\mathsf T$ is a set of $\Pi$-formulas and $\theta$ is a $\Sigma$-formula, all in some language~$\mathcal L$. Then there is a finite collection of~$\mathcal L$-terms~$t_0,\ldots,t_n$ such that we have $\mathsf T\vDash\theta(t_0)\lor\ldots\lor\theta(t_n)$.
\end{theorem}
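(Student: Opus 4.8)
The plan is to read the Herbrand disjunction off a cut-free sequent calculus proof. The key point is that a cut-free derivation of a sequent all of whose formulas are $\Sigma$-formulas never uses the rule $(\forall)$ — its principal formula would have to be a $\forall$-formula occurring in the (all-$\Sigma$) end-sequent — so no eigenvariable condition can obstruct us, and the only quantifier rule in play is $(\exists)$, whose witness terms we can simply collect. First I would unpack the hypothesis. Since $\mathsf T\vDash\exists x\,\theta(x)$, completeness (Theorem~\ref{thm:completeness-AL}) gives finitely many $\sigma_1,\dots,\sigma_m\in\mathsf T$ with $\vdash\neg\sigma_1,\dots,\neg\sigma_m,\exists x\,\theta(x)$; each $\sigma_i$ is a $\Pi$-formula, so each $\neg\sigma_i=\nega\sigma_i$ is a $\Sigma$-formula (the operation $\nega$ of Definition~\ref{def:nega-pred-log} turns every $\forall$ into an $\exists$), and $\exists x\,\theta(x)$ is a $\Sigma$-formula as well. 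Thus the whole sequent is $\forall$-free, and by soundness (Proposition~\ref{prop:soundness-FO}) together with Proposition~\ref{prop:completeness-FO} — equivalently, by the cut-elimination corollary above — we may fix a cut-free derivation $\vdash_0\neg\sigma_1,\dots,\neg\sigma_m,\exists x\,\theta(x)$.

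The heart of the matter is the following lemma, to be proved by induction on cut-free derivations: \emph{if $\Gamma$ is a sequent whose members are all $\Sigma$-formulas, $\beta(x)$ is a $\Sigma$-formula, and $\vdash_0\Gamma,\exists x\,\beta(x)$, then $\vdash_0\Gamma,\beta(t_0),\dots,\beta(t_n)$ for some terms $t_0,\dots,t_n$.} In the base case $(\mathsf{Ax})$ the clashing literals lie in $\Gamma$, so $\vdash_0\Gamma$ by $(\mathsf{Ax})$ and weakening supplies $\vdash_0\Gamma,\beta(t)$ for any term $t$. The last rule of the derivation is never $(\mathsf{Cut})$ and, as noted, never $(\forall)$. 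If it is $(\land)$ or $(\lor)$, its principal formula lies in $\Gamma$ (since $\exists x\,\beta(x)$ is neither a conjunction nor a disjunction); one feeds the premises to the induction hypothesis, merges the resulting lists of witness terms using weakening, and reapplies the rule. If it is $(\exists)$ with principal formula an existential formula of $\Gamma$, one argues likewise. The one substantive case is $(\exists)$ with principal formula $\exists x\,\beta(x)$ itself: here the premise yields (after weakening) $\vdash_0\Gamma,\beta(s),\exists x\,\beta(x)$ for some term $s$, the induction hypothesis applied with $\Gamma\cup\{\beta(s)\}$ in the role of $\Gamma$ produces $\vdash_0\Gamma,\beta(s),\beta(s_1),\dots,\beta(s_n)$, and we conclude with $s,s_1,\dots,s_n$ as the list of witnesses.

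Applying the lemma to $\Gamma=\{\neg\sigma_1,\dots,\neg\sigma_m\}$ and $\beta=\theta$ produces terms $t_0,\dots,t_n$ with $\vdash_0\neg\sigma_1,\dots,\neg\sigma_m,\theta(t_0),\dots,\theta(t_n)$. Repeated applications of $(\lor)$ — which collapse the several instances of $\theta$ into a single disjunct, using that sequents are sets, cf.\ the remark after Definition~\ref{def:sequent-calc-PL} — then give $\vdash\neg\sigma_1,\dots,\neg\sigma_m,\theta(t_0)\lor\dots\lor\theta(t_n)$, i.e.\ $\mathsf T\vdash\theta(t_0)\lor\dots\lor\theta(t_n)$, whence $\mathsf T\vDash\theta(t_0)\lor\dots\lor\theta(t_n)$ by soundness. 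The terms $t_i$ are $\mathcal L$-terms because only $\mathcal L$-terms appear in an $\mathcal L$-proof.

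I expect the genuine obstacle to be purely one of bookkeeping: because sequents are sets, some care is needed about whether $\exists x\,\beta(x)$ (or the principal formula of a rule) already occurs elsewhere in the sequent and about how weakening interacts with reapplying a rule, and one should check that weakening does not disturb the induction measure — but all of this is routine, and $\forall$-freeness of the proof keeps eigenvariable conditions from complicating it. The one idea that does real work is the observation that a cut-free proof of an all-$\Sigma$ sequent is $(\forall)$-free, which is precisely what makes the $(\exists)$-witnesses safe to harvest.
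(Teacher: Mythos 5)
Your proof is correct and takes essentially the same route as the paper: reduce to a cut-free derivation of $\vdash_0\neg\sigma_1,\dots,\neg\sigma_m,\exists x\,\theta(x)$ via cut-free completeness, observe that the all-$\Sigma$ end-sequent forces the derivation to be $(\forall)$-free (so no eigenvariable condition can interfere), and then perform an induction that harvests the witnesses from the $(\exists)$-rule applications. The one observation you single out as doing the real work is exactly the one the paper highlights (``the assumption that $\Gamma$ consists of $\Sigma$-formulas ensures that this formula cannot be of the form $\forall y\,\rho$''), and the bookkeeping concerns you flag are likewise acknowledged and left routine in the paper's proof.
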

\begin{proof}
We will use induction over cut-free proofs to show the following claim: When we have $\vdash_0\Gamma,\exists x\,\theta(x)$ for a set $\Gamma,\exists x\,\theta(x)$ that consists of~$\Sigma$-formulas, then we obtain $\vdash_0\Gamma,\theta(t_1),\ldots,\theta(t_n)$ for suitable terms~$t_1,\ldots,t_n$. To see that this yields the theorem, we note that $\mathsf T\vDash\exists x\,\theta(x)$ entails $\vdash_0\neg\varphi_1,\ldots,\neg\varphi_m,\exists x\,\theta(x)$ for some formulas $\varphi_i$ in~$\mathsf T$, due to cut-free completeness. Here $\neg\varphi_i$ is a $\Sigma$-formula since $\varphi_i$ is a $\Pi$-formula (cf.~Definition~\ref{def:nega-pred-log}). In view of soundness, the claim above will now yield the desired result.

For the induction step, we distinguish cases according to the last rule that was used to derive $\vdash_0\Gamma,\exists x\,\theta(x)$, similarly to the proof of Theorem~\ref{thm:craig-interpolation}. We will only provide details for some representative cases and leave the remaining ones to the reader. In the crucial case, the last rule was used to introduced the formula~$\exists x\,\theta(x)$ with premise~$\vdash_0\Gamma,\exists x\,\theta(x),\theta(t)$. Given that $\theta(t)$ is a $\Sigma$-formula, we can use the induction hypothesis to get~$\vdash_0\Gamma,\theta(t_1),\ldots,\theta(t_n),\theta(t)$. This completes the induction step with one additional term~$t$ added to our list.

Next, consider the case where we have $\vdash_0\Gamma,\exists x\,\theta(x)$ since the sequent $\Gamma,\exists x\,\theta(x)$ contains some atomic formula and its negation. These two formulas must lie in~$\Gamma$ (because $\exists x\,\theta(x)$ is no literal). Hence we obtain $\vdash_0\Gamma$, which means that the claim holds for the empty list of terms.

In any other case, the last rule was used to introduce some formula in~$\Gamma$. Crucially, the assumption that $\Gamma$ consists of $\Sigma$-formulas ensures that this formula cannot be of the form $\forall y\,\rho$, where we would face a conflict with the variable condition if the relevant variable occurs in one of the terms~$t_i$. As an example, we consider the case where $\Gamma$ contains a formula $\rho_0\lor\rho_1$ that was introduced with premise~$\vdash_0\Gamma,\rho_i,\exists x\,\theta(x)$ for some~$i<2$. Given that $\rho_0\lor\rho_1$ is a $\Sigma$-formula, the same holds for~$\rho_i$. We can thus use the induction hypothesis to obtain~$\vdash_0\Gamma,\rho_i,\theta(t_1),\ldots,\theta(t_n)$. An application of the same rule will now yield $\vdash_0\Gamma,\theta(t_1),\ldots,\theta(t_n)$.
\end{proof}

Note that it is, once again, crucial to work with proofs that are cut-free, since the premise of a cut might not consist of~$\Sigma$-formulas.

\begin{exercise}
(a) Give an example to show that we cannot always take~$n=0$ in Theorem~\ref{thm:herbrand}, i.\,e., that it is necessary to admit Herbrand disjunctions that involve more than one term.

(b) Provide details for the remaining cases in the proof of Herbrand's theorem.
\end{exercise}

To extend Herbrand's theorem beyond the realm of $\Pi$- and $\Sigma$-formulas, one can exploit the fact that any prenex formula~$\forall x_1\exists y_1\ldots\forall x_n\exists y_n\,\theta(x_1,y_1,\ldots,x_n,y_n)$ is equivalent to a so-called second-order formula
\begin{equation*}
\exists f_1\ldots\exists f_n\forall x_1\ldots\forall x_n\,\theta\big(x_1,f_1(x_1),\ldots,x_n,f_n(x_1,\ldots,x_n)\big).
\end{equation*}
The latter is called the Skolem normal form of the given first-order formula and the~$f_i$ are called Skolem functions. We can avoid the quantification over Skolem functions if we introduce function symbols that represent them. In this way, an arbitrary theory can be replaced by a collection of $\Pi$-formulas in an extended language. Also, a prenex formula~$\varphi$ of the form~$\exists x_1\forall y_1\ldots\exists x_n\forall y_n\,\theta(x_1,y_1,\ldots,x_n,y_n)$ is equivalent to
\begin{equation*}
\forall f_1\ldots\forall f_n\exists x_1\ldots\exists x_n\,\theta\big(x_1,f_1(x_1),\ldots,x_n,f_n(x_1,\ldots,x_n)\big),
\end{equation*}
which is a so-called Herbrand normal form. When $\varphi$ appears as a conclusion~$\mathsf T\vDash\varphi$ like in Herbrand's theorem, we can omit the universally quantified Herbrand functions in favour of free variables or fresh function symbols of an extended language.

\begin{exercise}
(a) Consider functions $f,g:M\to M$ such that $f\circ g\circ f$ is bijective. Show that $g$ must be surjective (in fact bijective).

(b) Conclude that the formula $\forall x\,g(x)\neq 0\to\exists x\, f(g(f(x)))\neq x$ is logically valid (where $s\neq t$ abbreviates $\neg s=t$ in the language with equality and a constant symbol~$0$ as well as unary function symbols~$f$ and~$g$).

(c) Find terms~$s_0,\ldots,s_m$ and $t_0,\ldots,t_n$ (in the language~$\{0,f,g\}$) such that
\begin{equation*}
\textstyle\bigwedge_{i\leq m} g(s_i)\neq 0\to\textstyle\bigvee_{j\leq n}f(g(f(t_j)))\neq t_j
\end{equation*}
is logically valid. Note that the existence of such terms is guaranteed by Herbrand's theorem. Try to make it transparent how the terms $s_i$ and~$t_j$ that you have found can be extracted from the proof given in~(a) and~(b).

\emph{Remark:} The example in this exercise is due to Ulrich Berger (as noted in~\cite{kohlenbach-gazette}).
\end{exercise}

Our semantic proof of cut-elimination (see Corollary~\ref{cor:cut-elim-PL}) provides no information on complexity. More syntactic arguments (see e.\,g.~\cite{buss-introduction-98}) show that one can obtain a cut-free proof that is at most super-exponentially longer than a proof with cuts, where super-exponentiation is the function $n\mapsto 2_n$ with $2_0=1$ and $2_{n+1}=2^{2_n}$. In the proof of Theorem~\ref{thm:herbrand}, the length of the Herbrand disjunction is bounded by the size of the cut-free proof, so that it also admits a super-exponential upper bound. The following exercise shows that these extremely large bounds are essentially best possible. This means that cuts are crucial to get feasible proofs in practice, even though they can be eliminated in principle. In part~(a) of the following exercise, the reader will see that cuts are connected to auxiliary notions or lemmata (the formulas~$\varphi_n$). One might say that a creative idea is needed to find the appropriate lemmata, since these can contain notions that do not appear in the theorem that one aims to prove. This is related to the fact that the cut rule involves a premise that has no particular connection with the conclusion.

\begin{exercise}\label{ex:Herbrand-supexp}
Consider the signature with equality that includes constant symbols~$0$ and $1$, a unary function symbol~$2^{-}$, a binary function symbol~$+$ (written infix) and a unary relation symbol~$I$. Let $T_0$ be the theory that consists of the equality axioms and the universal closures of the following formulas:
\begin{equation*}
x+(y+z)=(x+y)+z,\quad y+0=y,\quad 2^0=1,\quad 2^x+2^x=2^{x+1},\quad I(0).
\end{equation*}
Let $T$ be the extension of~$T_0$ by the formula~$\forall x(I(x)\to I(x+1))$. We write $2_n$ for the terms defined by $2_0=1$ and $2_{n+1}=2^{2_n}$.

(a) Show that we have $T\vdash I(2_n)$ with proofs of size linear in~$n$. \emph{Hint:} Consider the formulas $\varphi_n$ given by $\varphi_0(x)=I(x)$ and $\varphi_{n+1}(x)=\forall y(\varphi_n(y)\to\varphi_n(y+2^x))$. Use recursion on~$m$ to construct proofs of
\begin{equation*}
\varphi_m(0)\land\forall x\big(\varphi_m(x)\to\varphi_m(x+1)\big).
\end{equation*}
Then derive $I(2_n)$ from the formulas~$\varphi_m(0)$ for~$m\leq n+1$. It suffices if you give a rough description of the formal proofs, based on an intuitive idea of proof size (but do consider where the cut rule is used).

(b) Since the result of (a) is equivalent to $T_0\vdash\exists x\big((I(x)\to I(x+1))\to I(2_n)\big)$, Herbrand's theorem guarantees the existence of terms~$t_0,\ldots,t_{N-1}$ with
\begin{equation*}
T_0\cup\{I(t_i)\to I(t_i+1)\,|\,i<N\}\vdash I(2_n).
\end{equation*}
Show that the latter can only hold if we have $N\geq 2_n$.

\emph{Remark:} Our formulation of this exercise is adopted from a course by Ulrich~Kohlenbach, while the original result is due to Vladimir~Orevkov~\cite{orevkov} and Richard Statman~\cite{statman}.
\end{exercise}

\section{G\"odel's Incompleteness Theorems}\label{sect:goedel}

In this section, we present the famous incompleteness theorems of Kurt G\"odel. These theorems are concerned with axiom systems that are supposed to provide a foundation for all of mathematics or at least for a substantial part of it. One example for such an axiom system is Zermelo-Fraenkel set theory, in which most of contemporary mathematics can be formalized (see Section~\ref{sect:set-theory}). We choose a~different example for our proof of G\"odel's theorems, namely Peano arithmetic. This axiom system captures essential properties of the natural numbers, with induction as the central proof principle. Since other finite objects such as graphs can be encoded by natural numbers (as we shall see), one could say that Peano arithmetic provides a foundation for finite mathematics. It now seems natural to ask: can any question of finite mathematics be decided in Peano arithmetic, i.\,e., does the latter prove~$\varphi$ or~$\neg\varphi$ for any sentence over its signature? By G\"odel's first incompleteness theorem, the answer is negative. The proof of this theorem is quite general and transfers to other axiom systems in which the natural numbers can be represented. In particular, Zermelo-Fraenkel set theory cannot decide all questions of mathematics and -- which may be more surprising -- not even all questions about the natural numbers.

Next, one might ask whether there are statements of finite mathematics that can be proved by infinitary but not by finitary means, or more specifically, whether some concrete statement about the natural numbers is provable in Zermelo-Fraenkel set theory but not in Peano arithmetic. G\"odel's second incompleteness theorem entails that the answer to this question is positive. More specifically, it asserts that foundational theories such as Peano arithmetic cannot prove their own consistency. Given that proofs are finite objects, a consistency statement can indeed be seen as an assertion about the natural numbers. The significance of G\"odel's second incompleteness theorem becomes much clearer when it is seen in the context of Hilbert's program, which we discuss at the very end of the present section.

In the following Subsection~\ref{subsect:PA}, we introduce Peano arithmetic and related axiom systems. We place particular emphasis on a result that is known as $\Sigma$-completeness. Subsection~\ref{sect:prim-rec} is devoted to the notion of primitive recursive function. The latter plays a crucial role in Subsection~\ref{subsect:goedel}, where we prove Tarski's result on the so-called undefinability of truth as well as G\"odel's incompleteness theorems. In the case of the second incompleteness theorem, we show that the result holds for any axiom system that satisfies some reasonable assumptions known as the Hilbert-Bernays conditions. We will try to make it plausible that the latter hold for Peano arithmetic but do not give a detailed proof of this fact.

As in the previous section, we conclude the introduction with a pointer to complementary reading. G\"odel's theorems were proved in a historical context where concrete computations were more and more replaced by abstract methods. The reader can follow this process in a fascinating case study on Dirichlet's work about primes in arithmetic progressions, due to Jeremy Avigad and Rebecca Morris~\cite{avigad-morris}.

\subsection{First-Order Arithmetic}\label{subsect:PA}

In this subsection, we present the language and some important theories of first-order arithmetic.

\begin{definition}\label{def:signature-fa}
The signature of first-order arithmetic consists of a constant~$0$, a function symbol~$S$ (successor) of arity one (unary), two function symbols~$+$ and~$\times$ of arity two (binary) as well as binary relation symbols~$=$ and~$\leq$ (all written infix). The numerals are the terms~$\overline n$ that are recursively given by $\overline 0:=0$ and $\overline{n+1}:=S\overline n$. We write $\mathbb N$ for the so-called standard model with the natural numbers as universe and the expected interpretations of the given signature (where $\times^{\mathbb N}$ is multiplication and we have $S^{\mathbb N}(n)=n+1$). A formula~$\varphi$ is called true if we have $\mathbb N\vDash\varphi$.
\end{definition}

The following restrictions on the quantifier complexity will play a crucial role.

\begin{definition}\label{def:Sigma-formula}
We say that a formula is~$\Sigma$ or a $\Sigma$-formula if it can be generated as follows (recall that we only consider formulas in negation normal form):
\begin{enumerate}[label=(\roman*)]
\item Any literal is a $\Sigma$-formula.
\item If $\varphi$ and $\psi$ are $\Sigma$-formulas, then so are $\varphi\land\psi$ and $\varphi\lor\psi$.
\item Given a $\Sigma$-formula $\varphi$, a term~$t$ and a variable~$x\notin\fv(t)$, we obtain further $\Sigma$-formulas $\forall x(x\leq t\to\varphi)$ and $\exists x(x\leq t\land\varphi)$, which will be abbreviated as $\forall x\leq t\,\varphi$ and $\exists x\leq t\,\varphi$, respectively (bounded quantification).
\item If $\varphi$ is a $\Sigma$-formula, then so is~$\exists x\,\varphi$.
\end{enumerate}
By a $\Delta_0$-formula or bounded formula, we mean a $\Sigma$-formula that can be generated without the use of clause~(iv). A formula $\exists x\,\theta$ with bounded~$\theta$ is called $\Sigma_1$. We say that $\psi$ is $\Pi$ or $\Pi_1$, respectively, if $\neg\psi$ is $\Sigma$ or $\Sigma_1$ (see Definition~\ref{def:nega-pred-log} for our treatment of negation). By a $\Delta$-formula (or $\Delta_1$-formula), we mean a formula that is equivalent to both a $\Sigma$-formula and a $\Pi$-formula (or a $\Sigma_1$-formula and a \mbox{$\Pi_1$-formula}, respectively). We say that a formula is $\Delta$ or $\Delta_1$ in some theory~$\mathsf T$ if the indicated equivalences can be proved in~$\mathsf T$. Finally, we speak of, e.\,g., a $\Sigma$-sentence in order to refer to a $\Sigma$-formula without free variables.
\end{definition}

More intuitively, a formula is $\Sigma$ or~$\Pi$ if it only contains bounded occurrences of universal or existential quantifiers, respectively. The terminology can be extended to formulas with alternating occurrences of quantifiers (e.\,g., one says that $\forall x\exists y\,\theta$ is $\Pi_2$ when $\theta$ is bounded), but this is not needed for the following. Intuitively, a $\Sigma$-formula is one that can be verified (but not falsified) by a computation that searches for witnesses to the existential quantifiers (and splits into finitely many subcomputations when a bounded universal quantifier is encountered). This viewpoint will be made official in Section~\ref{sect:computability}. Since such a computation is finite, one should be able to check it in a rather weak object theory. We make this idea precise in Theorem~\ref{thm:Sigma-completeness} below, for which we now introduce the required ingredients.

\begin{definition}\label{def:Q-PA}
Robinson arithmetic, denoted~$\mathsf Q$, is the theory in the signature of first-order arithmetic that consists of the equality axioms (see Definition~\ref{def:equality}) and the universal closures of the following formulas:
\begin{gather*}
Sx=Sy\to x=y,\qquad x\neq 0\leftrightarrow\exists y\,x=Sy,\qquad x\leq y\leftrightarrow\exists z\,z+x=y,\\
x+0=x,\qquad x+Sy=S(x+y),\qquad x\times 0=0,\qquad x\times Sy=(x\times y)+x.
\end{gather*}
The induction axiom~$\mathsf I\varphi$ for a formula~$\varphi(x)$, which may contain further free variables (as so-called parameters), is the universal closure of
\begin{equation*}
\varphi(0)\land\forall x\big(\varphi(x)\to\varphi(Sx)\big)\to\forall x\,\varphi(x).
\end{equation*}
Peano arithmetic, denoted by~$\mathsf{PA}$, is the extension of~$\mathsf Q$ by the sentences~$\mathsf I\varphi$ for all formulas~$\varphi(x)$ in the signature of first-order arithmetic (axiom schema of induction). The theory~$\mathsf{I\Sigma}_1$ consists of~$\mathsf Q$ and the axioms $\mathsf I\varphi$ for all $\Sigma_1$-formulas~$\varphi(x)$.
\end{definition}

In the following exercise, part~(a) confirms that $\mathsf Q$ is rather weak in certain~respects. This is a desirable feature, as a weaker base theory will yield a more general version of Theorem~\ref{thm:Sigma-completeness} and later of the first incompleteness theorem. Part~(b) of the exercise is crucial in connection with the computational verification mentioned above: it will allow us to view a bounded quantifier as a finite case distinction.

\begin{exercise}\label{ex:Q}
(a) Show that $\mathsf{I\Sigma}_1$ but not~$\mathsf Q$ proves the formula~$x+y=y+x$, which asserts that addition is commutative. \emph{Hint:} For the unprovability, show that $\mathsf Q$ has a model with universe $\mathbb N\cup\{\infty_0,\infty_1\}$ and $\infty_0+\infty_1=\infty_0\neq\infty_1=\infty_1+\infty_0$.

(b) Prove that, for an arbitrary $n\in\mathbb N$, we have
\begin{equation*}
\mathsf Q\vdash x\leq\overline n\rightarrow x=\overline 0\lor\ldots\lor x=\overline n.
\end{equation*}
\emph{Hint:} First show that we have $\mathsf Q\vdash Sx\leq Sy\to x\leq y$. Then prove the claim by induction on $n$. Note that the induction takes place in the meta-theory and not within the theory~$\mathsf Q$ (just like the quantification over~$n$), so that no induction axiom is required. In other words, the task is to find a separate $\mathsf Q$-proof for each~$n$, not a single proof that would cover all numerals.

(c) Show that we have $\mathsf Q\vdash x\leq\overline n\lor\overline n\leq x$ for any~$n\in\mathbb N$. \emph{Hint:} First prove that $\overline n\leq x$ entails the disjunction of $x=\overline n$ and $\overline{n+1}\leq x$. To establish the latter, show that we have $\mathsf Q\vdash Sx+\overline n=x+\overline{n+1}$. 
\end{exercise}

The following reveals a characteristic feature of the theory~$\mathsf{I\Sigma}_1$. Let us note that the stronger induction principles that are available in Peano arithmetic will not play a role in the present section. In Section~\ref{sect:goodstein} we will see a theorem that is provable in~$\mathsf{PA}$ but not in~$\mathsf{I\Sigma}_1$. 

\begin{exercise}\label{ex:sigma-sigma_1}
(a) Show that, for any $\Delta_0$-formula~$\theta$, we have
\begin{equation*}
\mathsf{I\Sigma}_1\vdash\forall x\leq v\exists y\,\theta\to\exists w\forall x\leq v\exists y\leq w\,\theta.
\end{equation*}

(b) Derive that any $\Sigma$-formula~$\varphi$ admits a $\Sigma_1$-formula~$\varphi'$ with $\mathsf{I\Sigma}_1\vdash\varphi\leftrightarrow\varphi'$. \emph{Remark:} You can now infer that~(a) remains valid when~$\theta$ is a $\Sigma$-formula.
\end{exercise}

For a term~$t$ and a variable assignment~$\eta$, the value of $t^{\mathbb N,\eta}$ does only depend on values of~$\eta$ for variables that are free in~$t$. When~$t$ is closed (i.\,e., has no free variables), we define $t^{\mathbb N}$ by stipulating that~$t^{\mathbb N}=t^{\mathbb N,\eta}$ holds for any assignment~$\eta$.

\begin{lemma}\label{lem:term-value-Q}
We have $\mathsf Q\vdash t=\overline{t^\mathbb N}$ for any closed term~$t$.
\end{lemma}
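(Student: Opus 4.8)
The plan is to argue by induction on the build-up of the closed term $t$ according to Definition~\ref{def:signature-fa}. Since $t$ is closed, it contains no variables, so the only clauses that can apply are: $t$ is the constant $0$, or $t$ has one of the forms $St'$, $t_1+t_2$, or $t_1\times t_2$ for closed terms $t', t_1, t_2$. In each case the induction hypothesis gives $\mathsf Q\vdash t_i=\overline{t_i^{\mathbb N}}$, and I need to promote this to a statement about $t$ itself, using the relevant defining axiom of $\mathsf Q$ together with the equality axioms (in particular the congruence axioms for the function symbols, which let me substitute equals for equals inside $S$, $+$ and $\times$).

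The base case $t=0$ is immediate: $0^{\mathbb N}=0$, so $\overline{t^{\mathbb N}}=\overline 0=0$ is literally $t$, and $\mathsf Q\vdash t=t$ by the reflexivity axiom. For the successor case $t=St'$, the induction hypothesis yields $\mathsf Q\vdash t'=\overline{t'^{\mathbb N}}$; applying the congruence axiom for $S$ gives $\mathsf Q\vdash St'=S\overline{t'^{\mathbb N}}$, and by definition of the numerals $S\overline{t'^{\mathbb N}}=\overline{t'^{\mathbb N}+1}=\overline{(St')^{\mathbb N}}$, which is exactly $\overline{t^{\mathbb N}}$. For $t=t_1+t_2$, the idea is first to reduce, via the congruence axioms for $+$ and the induction hypotheses, to proving $\mathsf Q\vdash\overline m+\overline n=\overline{m+n}$ where $m=t_1^{\mathbb N}$ and $n=t_2^{\mathbb N}$; this auxiliary fact is itself proved by a (meta-level) induction on $n$, using the axioms $x+0=x$ and $x+Sy=S(x+y)$ of $\mathsf Q$ — note that no induction axiom is needed, just one $\mathsf Q$-proof per value of $n$, exactly as in Exercise~\ref{ex:Q}(b). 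The product case $t=t_1\times t_2$ is analogous: one reduces to $\mathsf Q\vdash\overline m\times\overline n=\overline{m\cdot n}$, proved by meta-induction on $n$ using $x\times 0=0$ and $x\times Sy=(x\times y)+x$, invoking the already-established addition fact at the successor step.

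The only mild subtlety — and the place where a little care is required rather than genuine difficulty — is the bookkeeping around the equality axioms: to pass from $\mathsf Q\vdash t_1=\overline m$ and $\mathsf Q\vdash t_2=\overline n$ to $\mathsf Q\vdash t_1+t_2=\overline m+\overline n$ one uses the congruence axiom $x_1=y_1\land x_2=y_2\to x_1+x_2=y_1+y_2$ together with transitivity of equality, and similarly for $\times$ and $S$. None of this is deep, but it is what makes the lemma a statement about $\mathsf Q$ rather than a triviality about $\mathbb N$. I expect the whole argument to be short; the main thing to get right is simply to keep the two levels of induction (the outer one over the structure of $t$, and the inner meta-level ones over $n$ in the $+$ and $\times$ cases) clearly separated, and to remember that all inductions here live in the metatheory so that the weak theory $\mathsf Q$, which lacks an induction schema, nevertheless suffices.
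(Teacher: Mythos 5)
Your proof is correct and is essentially the same argument as the paper's: an outer induction on the build-up of the closed term~$t$, with the $+$ and $\times$ cases reduced (via the congruence equality axioms) to the auxiliary facts $\mathsf Q\vdash\overline m+\overline n=\overline{m+n}$ and $\mathsf Q\vdash\overline m\times\overline n=\overline{m\times n}$, each established by a meta-level induction on~$n$ using the recursion axioms of~$\mathsf Q$. You spell out the role of the congruence axioms a bit more explicitly than the paper does, but the decomposition and the key steps are identical.
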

\begin{proof}
We argue by induction on the build-up of~$t$. When~$t$ is the constant~$0$, the numeral $\overline{t^{\mathbb N}}$ coincides with~$t$, so that we can conclude by an equality axiom. Next, we consider the case where $t$ has the form~$St_0$. Here we have $\overline{t^{\mathbb N}}=\overline{t_0^{\mathbb N}+1}=S\overline{t_0^{\mathbb N}}$. As the induction~hypothesis yields~$\mathsf Q\vdash t_0=\overline{t_0^\mathbb N}$, we can again use an equality axiom to conclude. Similarly, the cases where $t$ has the form~$t_0+t_1$ and $t_0\times t_1$ reduce to
\begin{equation*}
\mathsf Q\vdash\overline{m+n}=\overline m+\overline n\qquad\text{and}\qquad\mathsf Q\vdash\overline{m\times n}=\overline m\times\overline n.
\end{equation*}
We prove the former by induction on~$n$ (again in the meta-theory). The base case and induction step are covered by the following equalities in~$\mathsf Q$:
\begin{gather*}
\overline{m+0}=\overline m=\overline m+0=\overline m+\overline 0,\\
\overline{m+(n+1)}=\overline{(m+n)+1}=S\overline{m+n}=S(\overline m+\overline n)=\overline m+S\overline n=\overline m+\overline{n+1}.
\end{gather*}
Here the penultimate equation in each line relies on an axiom of~$\mathsf Q$, while the other equations are covered by the definitions, the induction hypothesis and the equality axioms. The claim about multiplication can now be derived in a similar way.
\end{proof}

As a final ingredient for the aforementioned verification of~$\Sigma$-formulas, we consider the special case of a literal.

\begin{lemma}
For any~$m,n\in\mathbb N$ we have
\begin{align*}
m=n\,&\Rightarrow\,\mathsf Q\vdash\overline m=\overline n,\,& m\neq n\,&\Rightarrow\,\mathsf Q\vdash\overline m\neq\overline n,\\
m\leq n\,&\Rightarrow\,\mathsf Q\vdash\overline m\leq\overline n,\,& m\not\leq n\,&\Rightarrow\,\mathsf Q\vdash\overline m\not\leq\overline n.
\end{align*}
\end{lemma}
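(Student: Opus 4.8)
The plan is to prove all four implications simultaneously by induction on the pair $(m,n)$, using the arithmetic facts about numerals already at our disposal, namely Lemma~\ref{lem:term-value-Q} (which gives $\mathsf Q\vdash\overline{m+n}=\overline m+\overline n$ as an ingredient of its proof) and the consequences of Exercise~\ref{ex:Q}. The first implication is immediate: if $m=n$ then $\overline m$ and $\overline n$ are literally the same term, so $\mathsf Q\vdash\overline m=\overline n$ by the reflexivity axiom. For the third implication, if $m\leq n$ then $n=k+m$ for some $k\in\mathbb N$; since $\mathsf Q\vdash\overline{k+m}=\overline k+\overline m$ (as in Lemma~\ref{lem:term-value-Q}) and $\mathsf Q\vdash\overline n=\overline{k+m}$, we get $\mathsf Q\vdash\overline k+\overline m=\overline n$, and then the axiom $x\leq y\leftrightarrow\exists z\,z+x=y$ (instantiated and combined with $\exists$-introduction, i.e.\ witnessing $z$ by $\overline k$) yields $\mathsf Q\vdash\overline m\leq\overline n$.

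The two negative implications are the substantive ones. For $m\neq n$, I would argue that $\mathsf Q$ proves $\overline m\neq\overline n$ by induction on $\min(m,n)$. If $\min(m,n)=0$, say $m=0$ and $n>0$, then $\overline n$ has the form $S\overline{n-1}$, and the axiom $x\neq 0\leftrightarrow\exists y\,x=Sy$ gives $\mathsf Q\vdash\overline n\neq 0$, i.e.\ $\mathsf Q\vdash\overline n\neq\overline 0$ (the case $n=0$, $m>0$ is symmetric, using that equality is symmetric in $\mathsf Q$). If $\min(m,n)=k+1>0$, then both $m$ and $n$ are positive, say $m=m'+1$ and $n=n'+1$ with $m'\neq n'$; the induction hypothesis gives $\mathsf Q\vdash\overline{m'}\neq\overline{n'}$, and then the injectivity axiom $Sx=Sy\to x=y$, instantiated at $x:=\overline{m'}$ and $y:=\overline{n'}$, contraposed, yields $\mathsf Q\vdash S\overline{m'}\neq S\overline{n'}$, which is $\mathsf Q\vdash\overline m\neq\overline n$.

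For $m\not\leq n$, i.e.\ $n<m$, I would use Exercise~\ref{ex:Q}(b), which gives $\mathsf Q\vdash x\leq\overline n\to x=\overline 0\lor\dots\lor x=\overline n$. Instantiating $x:=\overline m$, it suffices to show $\mathsf Q\vdash\overline m\neq\overline i$ for each $i\leq n$; but each such $i$ satisfies $i\neq m$ (since $i\leq n<m$), so the second implication, already established, provides exactly this. Hence $\mathsf Q$ refutes each disjunct of the consequent of the instantiated Exercise~\ref{ex:Q}(b) formula, so $\mathsf Q\vdash\neg(\overline m\leq\overline n)$, i.e.\ $\mathsf Q\vdash\overline m\not\leq\overline n$.

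The main obstacle is essentially bookkeeping: one must be careful that the four claims are genuinely proved in the right logical order (reflexivity $\to$ the $\leq$ case $\to$ the $\neq$ case via $S$-injectivity $\to$ the $\not\leq$ case via Exercise~\ref{ex:Q}(b)), so that no claim is invoked before it is available, and that all appeals to $\mathsf Q$-provability only use the listed axioms together with the purely first-order logical rules (instantiation, $\exists$-introduction, contraposition, case analysis on a provable disjunction). None of the individual steps is deep; the only mild subtlety is that the negative implications cannot be handled by a naive single induction on $(m,n)$ but are cleanly organized as sketched above.
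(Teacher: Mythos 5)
Your proof is correct and follows essentially the same route as the paper's: the positive $=$ case by reflexivity (identical terms), the positive $\leq$ case by witnessing the existential in the ordering axiom with $\overline k$ and invoking $\mathsf Q\vdash\overline k+\overline m=\overline n$ from (the proof of) Lemma~\ref{lem:term-value-Q}, the negative $=$ case by induction using the injectivity axiom $Sx=Sy\to x=y$ and the successor/zero axiom, and the negative $\leq$ case by combining the $\neq$ case with Exercise~\ref{ex:Q}(b). The only cosmetic difference is that you organize the induction for the $\neq$ case on $\min(m,n)$ with a symmetry remark, whereas the paper uses symmetry up front to reduce to $m>n$ and then inducts on $n$ — the content is the same.
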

\begin{proof}
The first claim is covered by the equality axioms. Since the latter include symmetry, we may assume $m>n$ rather than $m\neq n$ in the second claim. Let us write $m=m_0+1$, so that $\overline m$ is the same term as $S\overline{m_0}$. We get $\mathsf Q\vdash\exists y\,\overline m=Sy$ and hence $\mathsf Q\vdash\overline m\neq 0$, due to the second axiom in Definition~\ref{def:Q-PA}. This covers the case where~$n$ is zero. In the remaining case, we write $n=n_0+1$ to get $\overline n=S\overline{n_0}$. Arguing by induction, we may assume $\mathsf Q\vdash\overline{m_0}\neq\overline{n_0}$. We can now conclude by the first axiom listed in Definition~\ref{def:Q-PA}. For the third claim, we note that $m\leq n$ entails $k+m=n$ for some~$k\in\mathbb N$. Using the previous lemma, we can infer $\mathsf Q\vdash\overline k+\overline m=\overline n$. Now the third axiom from Definition~\ref{def:Q-PA} yields $\mathsf Q\vdash\overline m\leq\overline n$. For the final claim, we note that $m\not\leq n$ entails $\mathsf Q\vdash\overline m\neq\overline 0\land\ldots\land\overline m\neq\overline n$, thanks to the second claim. We now get $\mathsf Q\vdash\overline m\not\leq\overline n$ by part~(b) of Exercise~\ref{ex:Q}.
\end{proof}

Let us now prove the result that was promised after Definition~\ref{def:Sigma-formula}.

\begin{theorem}[$\Sigma$-completeness]\label{thm:Sigma-completeness}
We have $\mathsf Q\vdash\varphi$ for any true $\Sigma$-sentence~$\varphi$.
\end{theorem}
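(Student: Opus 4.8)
The plan is to prove the theorem by induction on the generation of $\Sigma$-formulas according to Definition~\ref{def:Sigma-formula}. Since proper subformulas of a $\Sigma$-sentence need not be sentences, the statement has to be strengthened so that the induction can carry free variables: I would show that for every $\Sigma$-formula $\varphi$ with free variables among $x_1,\dots,x_k$ and all $n_1,\dots,n_k\in\mathbb N$,
\[
\mathbb N\vDash\varphi(\overline{n_1},\dots,\overline{n_k})\quad\Longrightarrow\quad\mathsf Q\vdash\varphi(\overline{n_1},\dots,\overline{n_k}),
\]
where $\varphi(\overline{n_1},\dots,\overline{n_k})$ abbreviates the result of substituting the numeral $\overline{n_i}$ for $x_i$. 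One first observes that this substitution takes $\Sigma$-formulas to $\Sigma$-formulas (literals stay literals, the side condition $x\notin\fv(t)$ in the bounded-quantifier clause survives substitution of a closed term, and the other clauses are obviously preserved), so that the inductive hypothesis always applies to the instances on which it is invoked. The theorem is the case $k=0$.

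For the base case, $\varphi$ is a literal, so after substitution it is a closed literal --- namely $s=s'$ or $s\leq s'$, or the negation of such a formula, for closed terms $s,s'$ (in negation normal form $\neg$ occurs only before atoms). By Lemma~\ref{lem:term-value-Q} we have $\mathsf Q\vdash s=\overline{s^{\mathbb N}}$ and $\mathsf Q\vdash s'=\overline{s'^{\mathbb N}}$, so using the equality axioms it remains to decide the corresponding literal between numerals, which is exactly the content of the lemma stated just before the theorem. The propositional cases are immediate: if $\psi_0\land\psi_1$ is true then both $\psi_i$ are, and two $\mathsf Q$-proofs combine into one by weakening and the rule~$(\land)$; if $\psi_0\lor\psi_1$ is true then some $\psi_i$ is true, and a $\mathsf Q$-proof of it yields one of $\psi_0\lor\psi_1$ by the rule~$(\lor)$. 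For an unbounded $\exists x\,\psi$ that is true, choose $m\in\mathbb N$ witnessing the quantifier; the inductive hypothesis (with $x$ as an additional parameter, using Exercise~\ref{ex:substitution} to pass between variable assignments and substitution) gives $\mathsf Q\vdash\psi(\overline m)$, and the rule~$(\exists)$ of Definition~\ref{def:sequent-calc-FO} turns this into $\mathsf Q\vdash\exists x\,\psi$.

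The substance lies in the bounded quantifiers, and I expect the bounded universal case to be the main obstacle. For $\exists x\leq t\,\psi$, i.e.\ $\exists x(x\leq t\land\psi)$: truth supplies $m\leq t^{\mathbb N}$ with $\psi$ true at $x\mapsto m$; the inductive hypothesis gives $\mathsf Q\vdash\psi(\overline m)$, the lemma before the theorem gives $\mathsf Q\vdash\overline m\leq\overline{t^{\mathbb N}}$, and Lemma~\ref{lem:term-value-Q} gives $\mathsf Q\vdash t=\overline{t^{\mathbb N}}$; combining, $\mathsf Q\vdash\overline m\leq t\land\psi(\overline m)$, whence $\mathsf Q\vdash\exists x(x\leq t\land\psi)$ by~$(\exists)$. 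For $\forall x\leq t\,\psi$, i.e.\ $\forall x(x\leq t\to\psi)$, set $n=t^{\mathbb N}$; truth of the formula gives $\mathbb N\vDash\psi(\overline i)$ for every $i\leq n$, hence $\mathsf Q\vdash\psi(\overline i)$ for all such $i$ by the inductive hypothesis. Now Exercise~\ref{ex:Q}(b) provides $\mathsf Q\vdash x\leq\overline n\to(x=\overline 0\lor\dots\lor x=\overline n)$, which Lemma~\ref{lem:term-value-Q} converts to $\mathsf Q\vdash x\leq t\to(x=\overline 0\lor\dots\lor x=\overline n)$; moreover, for each $i\leq n$, the equality axioms of $\mathsf Q$ --- concretely Exercise~\ref{ex:eq-arb-forms} together with symmetry of $=$ --- turn $\mathsf Q\vdash\psi(\overline i)$ into $\mathsf Q\vdash x=\overline i\to\psi$. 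A routine propositional combination of these $n+1$ implications with the displayed case distinction gives $\mathsf Q\vdash x\leq t\to\psi$, and since $t$ is closed and the finitely many open assumptions coming from $\mathsf Q$ are sentences (hence have no free variables), the rule~$(\forall)$ of Definition~\ref{def:sequent-calc-FO} yields $\mathsf Q\vdash\forall x(x\leq t\to\psi)$.

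The only genuine difficulty is therefore the bounded universal quantifier, and this is exactly where the axioms of $\mathsf Q$ about $0$, $S$ and $\leq$ are used (packaged in Exercise~\ref{ex:Q}(b)); everything else needs only the defining equations for $+$ and $\times$ and the equality axioms, which is why the base theory can be taken as weak as $\mathsf Q$. The remaining issues are bookkeeping: that substitution of numerals commutes suitably with the connectives and quantifiers (immediate from the definition of substitution and Exercise~\ref{ex:substitution}), and that one must respect the even/odd convention on variable indices from Definition~\ref{def_fv-formula} when applying $(\forall)$ and $(\exists)$, renaming the bound variable to a fresh one as needed; neither affects the structure of the argument.
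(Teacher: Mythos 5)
Your proof is correct and follows essentially the same route as the paper's: induction on formula complexity, literals handled by the two preceding lemmas, bounded universal quantification as the one substantial case (via Exercise~\ref{ex:Q}(b) and Exercise~\ref{ex:eq-arb-forms}), and existentials witnessed by numerals. The only difference is cosmetic: you strengthen the induction hypothesis to formulas with free parameters, whereas the paper stays with sentences throughout by measuring complexity via the number of connectives and quantifiers and substituting numerals before invoking the induction hypothesis.
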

\begin{proof}
We use induction over the number of connectives and quantifiers in~$\varphi$. When the latter is a literal, the claim holds by the two previous lemmata. Now assume that~$\varphi$ has the form~$\varphi_0\land\varphi_1$. Given that~$\varphi$ is true, the same holds for each~$\varphi_i$. We inductively get $\mathsf Q\vdash\varphi_i$ and thus $\mathsf Q\vdash\varphi_0\land\varphi_1$. The case of a disjunction is treated in a similar way. Now assume that~$\varphi$ has the form~$\forall x\leq t\,\theta(x)$. Working in~$\mathsf Q$, we consider an arbitrary~$x\leq t$. By Lemma~\ref{lem:term-value-Q}, we may assume that~$t$ is a numeral~$\overline n$. Due to part~(b) of Exercise~\ref{ex:Q}, we get $x=\overline m$ for some~$m\leq n$. To conclude~$\theta(x)$, we use the induction hypothesis and Exercise~\ref{ex:eq-arb-forms}. Finally, let us assume that $\varphi$ has the form~$\exists x\,\theta(x)$, where the quantifier can be bounded or unbounded. Using the induction hypothesis, we find an $n\in\mathbb N$ with $\mathsf Q\vdash\theta(\overline n)$, which yields~$\mathsf Q\vdash\varphi$.
\end{proof}

From G\"odel's incompleteness theorems (see Section~\ref{subsect:goedel}), it will follow that the theorem above cannot be extended to~$\Pi$-sentences. The notion of consistency will play a central role in the second incompleteness theorem. In the following, we show that it is not just a minimal condition but has substantial positive implications for the truth of $\Pi$-formulas.

\begin{definition}
A theory $\mathsf T$ in the signature of first-order arithmetic is $\Sigma$-sound or $\Pi$-sound, respectively, if $T\vdash\varphi$ entails $\mathbb N\vDash\varphi$ for any $\Sigma$-sentence or~$\Pi$-sentence.
\end{definition}

As a consequence of $\Sigma$-completeness, we get the following.

\begin{corollary}\label{cor:con-pi-sound}
A theory~$\mathsf T\supseteq\mathsf Q$ in the signature of first-order arithmetic is~consistent precisely if it is~$\Pi$-sound.
\end{corollary}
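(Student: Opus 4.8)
The plan is to derive the corollary directly from $\Sigma$-completeness (Theorem~\ref{thm:Sigma-completeness}), which carries the content of the nontrivial direction.

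First I would dispose of the implication from $\Pi$-soundness to consistency, in contrapositive form. If $\mathsf T$ is inconsistent, then it proves every sentence by ex falso quodlibet; in particular $\mathsf T\vdash\overline 0=\overline 1$. Now $\overline 0=\overline 1$ is a literal, hence a $\Delta_0$-sentence and in particular a $\Pi$-sentence, and it is false because $\mathbb N\vDash\overline 0\neq\overline 1$. So $\mathsf T$ is not $\Pi$-sound. (This half does not use the hypothesis $\mathsf Q\subseteq\mathsf T$.)

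For the converse --- consistency implies $\Pi$-soundness --- suppose $\mathsf T\vdash\psi$ for a $\Pi$-sentence~$\psi$ and assume, towards a contradiction, that $\mathbb N\nvDash\psi$. Then $\mathbb N\vDash\neg\psi$, and $\neg\psi$ is a $\Sigma$-sentence: this is precisely the definition of $\psi$ being~$\Pi$ (Definition~\ref{def:Sigma-formula}), keeping in mind that $\neg\psi$ refers to the syntactic negation of Definition~\ref{def:nega-pred-log}, which is again in negation normal form. Since $\neg\psi$ is a true $\Sigma$-sentence, Theorem~\ref{thm:Sigma-completeness} yields $\mathsf Q\vdash\neg\psi$, hence $\mathsf T\vdash\neg\psi$ because $\mathsf Q\subseteq\mathsf T$. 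Together with $\mathsf T\vdash\psi$, an application of the cut rule (after suitable weakening to a common context) gives $\mathsf T\vdash\bot$, contradicting consistency. Hence $\mathbb N\vDash\psi$, so $\mathsf T$ is $\Pi$-sound.

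I do not expect a real obstacle here: the only points that need care are the bookkeeping around the normal-form treatment of negation --- so that ``$\neg\psi$ is a $\Sigma$-sentence'' is literally the hypothesis required by $\Sigma$-completeness rather than merely ``equivalent to one'' --- and the trivial remark that an inconsistent theory already refutes $\Pi$-soundness via a false quantifier-free sentence. Everything else is a direct invocation of Theorem~\ref{thm:Sigma-completeness}.
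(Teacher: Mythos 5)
Your proof is correct and follows the paper's argument exactly: inconsistency yields a proof of the false $\Pi$-sentence $\overline 0=\overline 1$ by \emph{ex falso}, and conversely a provable false $\Pi$-sentence $\psi$ would have a true $\Sigma$-negation $\neg\psi$, which $\Sigma$-completeness makes provable in $\mathsf Q\subseteq\mathsf T$, contradicting consistency. The extra care you take about $\neg\psi$ being literally a $\Sigma$-sentence under the negation-normal-form convention is a nice touch but does not change the route.
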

\begin{proof}
If $\mathsf T$ is inconsistent, it proves any false $\Pi$-formula (such as $\overline 0=\overline 1$) by \emph{ex falso}. Conversely, assume that we have $\mathsf T\vdash\psi$ for a $\Pi$-sentence~$\psi$. If $\psi$ was false, $\neg\psi$ would be a true~$\Sigma$-sentence. By the previous theorem we would get~$\mathsf T\vdash\neg\psi$, against the consistency of~$\mathsf T$.
\end{proof}

As another consequence of $\Sigma$-completeness, we show that Robinson arihmetic `knows' about fixed values of certain functions.

\begin{definition}\label{def:sigma-definable}
We say that $f:\mathbb N^n\to\mathbb N$ is defined by a formula $\varphi(x_1,\ldots,x_n,y)$ if all natural numbers $a_i$ and $b$ validate
\begin{equation*}
f(a_1,\ldots,a_n)=b\quad\Leftrightarrow\quad\mathbb N\vDash\varphi(a_1,\ldots,a_n,b).
\end{equation*}
When $f$ can be defined by a $\Sigma$-formula, it is called $\Sigma$-definable. A relation $R\subseteq\mathbb N^n$ is called $\Delta$-definable if its characteristic function $\chi_R:\mathbb N^n\to\mathbb N$ is $\Sigma$-definable, where we have $\chi_R(\mathbf a)=1$ for $\mathbf a\in R$ and $\chi_R(\mathbf a)=0$ otherwise.
\end{definition}

The terminology for relations is justified as the $\Sigma$-condition~$\chi_R(\mathbf a)=1$ is equivalent to the $\Pi$-condition $\chi_R(\mathbf a)\neq 0$. More generally, we note that $f(\mathbf a)=b$ is equivalent to $\forall y(y\neq b\to f(\mathbf a)\neq y)$ when $f$ is total (i.\,e., defined on all of~$\mathbb N^n)$. However, this observation breaks down when one generalizes to functions that are partial (defined on a subset of~$\mathbb N^n$ that may not be $\Pi$-definable). With this generalization in mind, we speak of $\Sigma$-definable rather than $\Delta$-definable functions. Let us point out that there are $\Sigma$-formulas~$\varphi$ such that the relation $\{\mathbf a\,|\,\mathbb N\vDash\varphi(\mathbf a)\}$ is not $\Delta$-definable, as we will see in Exercise~\ref{ex:Sigma-Delta-diff}.

When~$f$ is defined by a $\Sigma$-formula~$\varphi$, the $\Sigma$-completeness and soundness of Robinson arithmetic entail that $f(\mathbf a)=b$ is equivalent to $\mathsf Q\vdash\varphi(\overline{\mathbf a},\overline b)$. The following yields an even stronger form of representability.

\begin{proposition}\label{prop:sigma-def-representable}
For any $\Sigma$-definable $f:\mathbb N^n\to\mathbb N$ there is a $\Sigma$-formula~$\varphi$ that represents $f$ in the sense that any $a_i\in\mathbb N$ validate
\begin{equation*}
\mathsf Q\vdash\forall y\left(\varphi\left(\overline{a_1},\ldots,\overline{a_n},y\right)\leftrightarrow y=\overline{f(a_1,\ldots,a_n)}\right).
\end{equation*}
\end{proposition}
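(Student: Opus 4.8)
The plan is to start from an arbitrary $\Sigma$-formula $\psi(x_1,\ldots,x_n,y)$ that \emph{defines} $f$ in the sense of Definition~\ref{def:sigma-definable}, and to build from it the stronger $\Sigma$-formula
\begin{equation*}
\varphi(x_1,\ldots,x_n,y):\equiv\psi(x_1,\ldots,x_n,y)\land\forall z\leq y\,\bigl(z=y\lor\neg\psi(x_1,\ldots,x_n,z)\bigr),
\end{equation*}
which picks out the \emph{least} witness. One checks first that $\varphi$ is again a $\Sigma$-formula: the new bounded universal quantifier is permitted by clause~(iii) of Definition~\ref{def:Sigma-formula}, and $\neg\psi$ need not be $\Sigma$ in general, so a small preliminary manoeuvre is needed — either invoke Exercise~\ref{ex:sigma-sigma_1}(b) to work with a $\Sigma_1$-representative and note that, in $\mathsf Q$ already, a bounded formula like ``$\psi$ holds for no $z\leq y$ other than $y$'' can be rewritten by pushing the bound inside, or (cleaner) observe that it suffices to prove representability over $\mathsf Q$ for \emph{some} $\Sigma$-formula equivalent to the least-witness formula. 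I would phrase the construction so that $\varphi$ is visibly $\Sigma$ by keeping everything under one outer existential after prenexing, but since the target is only $\mathsf Q$-provability of an equivalence, minor reshuffling of this kind is harmless.

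The heart of the argument is the $\mathsf Q$-proof of the displayed equivalence for fixed numerals $\overline{a_1},\ldots,\overline{a_n}$. Write $b=f(a_1,\ldots,a_n)$. For the direction $y=\overline b\to\varphi(\overline{\mathbf a},y)$: since $\mathbb N\vDash\psi(\mathbf a,b)$, the formula $\psi(\overline{\mathbf a},\overline b)$ is a true $\Sigma$-sentence, so $\mathsf Q\vdash\psi(\overline{\mathbf a},\overline b)$ by Theorem~\ref{thm:Sigma-completeness} ($\Sigma$-completeness). For the conjunct $\forall z\leq\overline b(z=\overline b\lor\neg\psi(\overline{\mathbf a},z))$, argue in $\mathsf Q$: an arbitrary $z\leq\overline b$ equals some numeral $\overline m$ with $m\leq b$ by Exercise~\ref{ex:Q}(b); if $m=b$ we are in the left disjunct, and if $m<b$ then $f(\mathbf a)\neq m$, so $\neg\psi(\overline{\mathbf a},\overline m)$ is again a true $\Sigma$-sentence — here I use that $\neg\psi$, while perhaps not syntactically $\Sigma$, can be taken $\Sigma$ by the reshuffling above, or one directly notes $\mathbb N\vDash\neg\psi(\mathbf a,m)$ and that this particular negated literal-like statement is $\Sigma$-provable — and $\Sigma$-completeness gives $\mathsf Q\vdash\neg\psi(\overline{\mathbf a},\overline m)$. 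Finally use Exercise~\ref{ex:eq-arb-forms} to transport along $z=\overline m$. This yields $\mathsf Q\vdash\varphi(\overline{\mathbf a},\overline b)$, and then $\mathsf Q\vdash y=\overline b\to\varphi(\overline{\mathbf a},y)$ by Exercise~\ref{ex:eq-arb-forms} once more.

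For the converse direction $\varphi(\overline{\mathbf a},y)\to y=\overline b$, argue in $\mathsf Q$ assuming $\varphi(\overline{\mathbf a},y)$, i.e.\ $\psi(\overline{\mathbf a},y)$ together with the minimality clause. By Exercise~\ref{ex:Q}(c) we have $y\leq\overline b\lor\overline b\leq y$. In the first case, the minimality clause applied with $z:=\overline b\leq y$ would be the wrong direction; instead I use the conjunct of $\varphi$ as follows. From the already-established $\mathsf Q\vdash\varphi(\overline{\mathbf a},\overline b)$ we in particular have $\mathsf Q\vdash\forall z\leq\overline b(z=\overline b\lor\neg\psi(\overline{\mathbf a},z))$; combined with $y\leq\overline b$ and $\psi(\overline{\mathbf a},y)$ this forces $y=\overline b$. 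In the second case $\overline b\leq y$, the minimality conjunct of our \emph{assumption} $\varphi(\overline{\mathbf a},y)$, instantiated at $z:=\overline b\leq y$, gives $\overline b=y\lor\neg\psi(\overline{\mathbf a},\overline b)$; but $\mathsf Q\vdash\psi(\overline{\mathbf a},\overline b)$, so again $y=\overline b$. Either way $\mathsf Q\vdash\varphi(\overline{\mathbf a},y)\to y=\overline b$, and combining the two directions and generalising on $y$ gives the proposition.

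The main obstacle I anticipate is not any single step but the bookkeeping around $\Sigma$-ness of $\neg\psi$: one must be careful that the least-witness formula is genuinely $\Sigma$ (or produce a $\Sigma$-equivalent of it), and that every negated instance $\neg\psi(\overline{\mathbf a},\overline m)$ to which $\Sigma$-completeness is applied is itself handed over as a $\Sigma$-sentence. The cleanest fix is to replace $\psi$ at the outset by a $\Sigma_1$-formula $\exists u\,\theta(\mathbf x,y,u)$ with $\theta$ bounded (Exercise~\ref{ex:sigma_1}—i.e.\ Exercise~\ref{ex:sigma-sigma_1}(b) and the remark that it suffices), so that $\neg\psi$ is $\Pi_1$ but the relevant \emph{instances} at numerals become, after bounding the search, plain $\Delta_0$ and hence $\Sigma$; then all applications of Theorem~\ref{thm:Sigma-completeness} are legitimate. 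Everything else is routine use of $\Sigma$-completeness, Exercise~\ref{ex:Q}(b)(c) and Exercise~\ref{ex:eq-arb-forms}.
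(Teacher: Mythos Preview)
There is a genuine gap. Your least-witness formula
\[
\varphi(\mathbf x,y)\equiv\psi(\mathbf x,y)\land\forall z\leq y\bigl(z=y\lor\neg\psi(\mathbf x,z)\bigr)
\]
is not $\Sigma$, and none of the fixes you sketch actually repair this. Passing to a $\Sigma_1$-form $\psi\equiv\exists u\,\theta(\mathbf x,y,u)$ with $\theta$ bounded turns the minimality clause into $\forall z\leq y\bigl(z=y\lor\forall u\,\neg\theta(\mathbf x,z,u)\bigr)$, which still contains an \emph{unbounded} universal quantifier over~$u$. Your claim that ``the relevant instances at numerals become, after bounding the search, plain~$\Delta_0$'' is simply false: for $m<b$ the sentence $\neg\psi(\overline{\mathbf a},\overline m)$ is $\forall u\,\neg\theta(\overline{\mathbf a},\overline m,u)$, a genuine $\Pi_1$-sentence with no bound anywhere in sight, and $\Sigma$-completeness does not apply to it. Since $\mathsf Q$ does not prove all true $\Pi_1$-sentences, you have no way to establish $\mathsf Q\vdash\neg\psi(\overline{\mathbf a},\overline m)$, and both your construction of~$\varphi$ and your right-to-left direction collapse.

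The missing idea is to minimise the \emph{pair} $(y,u)$ rather than $y$ alone. The paper sets
\[
\varphi_0(\mathbf x,y,z)\equiv\theta(\mathbf x,y,z)\land\forall y',z'\leq y\bigl(y'\neq y\to\neg\theta(\mathbf x,y',z')\bigr)\land\forall y',z'\leq z\bigl(y'\neq y\to\neg\theta(\mathbf x,y',z')\bigr),
\]
which is $\Delta_0$ (the two conjuncts jointly quantify over $y',z'\leq\max(y,z)$), and takes $\varphi(\mathbf x,y)\equiv\exists z\,\varphi_0(\mathbf x,y,z)$, which is genuinely $\Sigma_1$. Now for a concrete witness~$c$ with $\mathbb N\vDash\varphi_0(\mathbf a,b,c)$, the sentence $\varphi_0(\overline{\mathbf a},\overline b,\overline c)$ is bounded and true, hence $\mathsf Q$-provable; and the converse direction is a case split on $\overline m\leq y\lor\overline m\leq z$ versus $y,z\leq\overline m$ (with $m=\max(b,c)$) via Exercise~\ref{ex:Q}(c), using the minimality conjuncts of $\varphi_0(\overline{\mathbf a},y,z)$ and of $\varphi_0(\overline{\mathbf a},\overline b,\overline c)$ respectively. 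Your overall architecture (use $\Sigma$-completeness for one direction, a dichotomy from Exercise~\ref{ex:Q}(c) for the other) is right; what is wrong is the choice of what to minimise.
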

\begin{proof}
Due to Exercise~\ref{ex:sigma-sigma_1}, we get a bounded formula~$\theta(\mathbf x,y,z)$ with the property that $f(\mathbf a)=b$ is equivalent to $\mathbb N\vDash\exists z\,\theta(\overline{\mathbf a},b,z)$. Let $\varphi_0(\mathbf x,y,z)$ be the formula
\begin{equation*}
\theta(\mathbf x,y,z)\land\forall y',z'\leq y\big(y'\neq y\to\neg\theta(\mathbf x,y',z')\big)\land\forall y',z'\leq z\big(y'\neq y\to\neg\theta(\mathbf x,y',z')\big).
\end{equation*}
Note that, intuituvely, the second and third conjunct together quantify over all~$y',z'$ below~$\max(y,z)$. We show that the conclusion of the proposition holds when~$\varphi(\mathbf x,y)$ is the formula $\exists z\,\varphi_0(\mathbf x,y,z)$. First observe that $\varphi$ does still define~$f$. For arbitrary~$\mathbf a$ and $b=f(\mathbf a)$, we can thus invoke $\Sigma$-completeness to get $\mathsf Q\vdash\varphi\left(\overline{\mathbf a},\overline b\right)$. This yields the direction from right to left in the desired equivalence.

For the other direction, we first set $m:=\max(b,c)$ for some $c$ with $\mathbb N\vDash\varphi_0(\mathbf a,b,c)$. Observe that $\Sigma$-completeness yields $\mathsf Q\vdash\varphi_0(\overline{\mathbf a},\overline b,\overline c)$  and in particular $\mathsf Q\vdash\theta(\overline{\mathbf a},\overline b,\overline c)$. We now argue in~$\mathsf Q$ and assume, towards a contradiction, that we have $\varphi(\overline{\mathbf a},y)$~for some $y\neq\overline b$. Consider some~$z$ with $\varphi_0(\overline{\mathbf a},y,z)$. In view of Exercise~\ref{ex:Q}, we may distinguish the following two cases: First assume that we have $\overline m\leq y$ or $\overline m\leq z$. By the second or third disjunct of $\varphi_0(\overline{\mathbf a},y,z)$, we then get $\neg\theta(\overline a,\overline b,\overline c)$, against the above. Now assume that we have $y,z\leq\overline m$ and hence $y,z\leq\overline b$ or $y,z\leq\overline c$. In this case, the second or third disjunct of $\varphi_0(\overline{\mathbf a},\overline b,\overline c)$ yields $\neg\theta(\overline a,x,y)$, against the assumption that we have $\varphi_0(\overline{\mathbf a},y,z)$.
\end{proof}

In the case of relations, we obtain the following.

\begin{corollary}
For any $\Delta$-definable relation~$R\subseteq\mathbb N^n$ there is a $\Sigma$-formula~$\varphi$ that represents~$R$ in the sense that any $a_i\in\mathbb N$ validate
\begin{align*}
(a_1,\ldots,a_n)\in R\quad&\Leftrightarrow\quad\mathsf Q\vdash\varphi\left(\overline{a_1},\ldots,\overline{a_n}\right),\\
(a_1,\ldots,a_n)\notin R\quad&\Leftrightarrow\quad\mathsf Q\vdash\neg\varphi\left(\overline{a_1},\ldots,\overline{a_n}\right).
\end{align*}
\end{corollary}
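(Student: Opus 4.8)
The plan is to reduce the statement to the representability result for functions, Proposition~\ref{prop:sigma-def-representable}, applied to the characteristic function of~$R$. By the definition of $\Delta$-definability, the function $\chi_R\colon\mathbb N^n\to\mathbb N$ is $\Sigma$-definable, so Proposition~\ref{prop:sigma-def-representable} provides a $\Sigma$-formula $\psi(x_1,\ldots,x_n,y)$ with
\begin{equation*}
\mathsf Q\vdash\forall y\bigl(\psi\left(\overline{a_1},\ldots,\overline{a_n},y\right)\leftrightarrow y=\overline{\chi_R(a_1,\ldots,a_n)}\bigr)
\end{equation*}
for all $a_i\in\mathbb N$. I would then take $\varphi(x_1,\ldots,x_n)$ to be $\psi(x_1,\ldots,x_n,\overline 1)$, where $\overline 1$ is the numeral $S0$. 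Since $\overline 1$ is a closed term, substituting it for~$y$ sends a $\Sigma$-formula to a $\Sigma$-formula: literals, conjunctions, disjunctions and unbounded existential quantifiers are immediately preserved, and the side condition $x\notin\fv(t)$ on a bounded quantifier $\forall x\leq t$ (or $\exists x\leq t$) survives because the substituted term contributes no free variables. Thus $\varphi$ is a $\Sigma$-formula whose free variables are among $x_1,\ldots,x_n$.

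Next I would prove the two left-to-right implications. Write $\mathbf a=(a_1,\ldots,a_n)$ and $\overline{\mathbf a}=\overline{a_1},\ldots,\overline{a_n}$. If $\mathbf a\in R$, then $\chi_R(\mathbf a)=1$, so instantiating the displayed equivalence at $y=\overline 1$ gives $\mathsf Q\vdash\psi(\overline{\mathbf a},\overline 1)\leftrightarrow\overline 1=\overline 1$; as $\mathsf Q$ proves $\overline 1=\overline 1$ by an equality axiom, we obtain $\mathsf Q\vdash\psi(\overline{\mathbf a},\overline 1)$, i.\,e., $\mathsf Q\vdash\varphi(\overline{\mathbf a})$. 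If $\mathbf a\notin R$, then $\chi_R(\mathbf a)=0$, so the same instantiation yields $\mathsf Q\vdash\psi(\overline{\mathbf a},\overline 1)\leftrightarrow\overline 1=\overline 0$; since $1\neq 0$ implies $\mathsf Q\vdash\overline 1\neq\overline 0$ by the lemma relating literals to $\mathsf Q$-provability, we get $\mathsf Q\vdash\neg\psi(\overline{\mathbf a},\overline 1)$, i.\,e., $\mathsf Q\vdash\neg\varphi(\overline{\mathbf a})$.

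For the two right-to-left implications I would invoke the consistency of~$\mathsf Q$, which holds because $\mathbb N\vDash\mathsf Q$ and a satisfiable theory is consistent (Exercise~\ref{ex:completeness}, or soundness). If $\mathsf Q\vdash\varphi(\overline{\mathbf a})$ but $\mathbf a\notin R$, the second implication above would also give $\mathsf Q\vdash\neg\varphi(\overline{\mathbf a})$, making $\mathsf Q$ inconsistent; hence $\mathbf a\in R$. Symmetrically, $\mathsf Q\vdash\neg\varphi(\overline{\mathbf a})$ together with $\mathbf a\in R$ would contradict consistency via the first implication, so $\mathsf Q\vdash\neg\varphi(\overline{\mathbf a})$ forces $\mathbf a\notin R$. (Alternatively, one may observe that $\varphi$ defines~$R$ in the sense of Definition~\ref{def:sigma-definable} and use soundness of~$\mathsf Q$ in place of its consistency.) I do not expect a genuine obstacle here: the only points needing a moment's attention are that the substitution $y\mapsto\overline 1$ keeps the formula $\Sigma$ and that $\mathsf Q$ settles the closed literal $\overline 1=\overline 0$, both of which follow directly from the definitions and the basic lemmata about~$\mathsf Q$ established above.
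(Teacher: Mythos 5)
Your proposal is correct and follows essentially the same route as the paper: both set $\varphi(\mathbf x):=\psi(\mathbf x,\overline 1)$ with $\psi$ from Proposition~\ref{prop:sigma-def-representable} applied to $\chi_R$, and both obtain the non-trivial direction $(\mathbf a\notin R\Rightarrow\mathsf Q\vdash\neg\varphi(\overline{\mathbf a}))$ by instantiating the representation at $y=\overline 1$ and invoking $\mathsf Q\vdash\overline 1\neq\overline 0$. The only cosmetic difference is that the paper disposes of the three remaining directions by appealing to $\Sigma$-completeness and soundness, whereas you derive them all from the representation plus the consistency of $\mathsf Q$ (and you note the soundness alternative yourself), so the content is the same.
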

\begin{proof}
Assume that $\psi$ represents the characteristic function $\chi_R$ in the sense of the previous proposition. We declare that $\varphi(\mathbf x)$ is the formula $\psi(\mathbf x,\overline 1)$. The only part of the present corollary that does not directly follow from $\Sigma$-completeness and soundness is the direction from left to right of the second equivalence. To establish the latter, we note that $\mathbf a\notin R$ amounts to $\chi_R(\mathbf a)=0$. Given that $\mathsf Q$ proves~$\overline 0\neq\overline 1$, we get $\mathsf Q\vdash\neg\varphi(\overline{\mathbf a})$ due to the proposition.
\end{proof}

To conclude this section, we consider some variants of $\Sigma_1$-induction.

\begin{exercise}
(a) Show that $\mathsf{I\Sigma}_1$ proves~$\mathsf I\psi$ for any~$\Pi$-formula~$\psi$. \emph{Hint:} To establish $\psi(x)$ for an arbitrary~$x$, assume $\neg\psi(x)$ and derive~$\neg\psi(x-y)$ by induction on~$y\leq x$. The same argument proves the converse result that $\Pi_1$-induction entails $\Sigma_1$-induction.

(b) Conclude that, for any formula~$\varphi(x)$ that is $\Pi$ or $\Sigma$, we have
\begin{equation*}
\mathsf{I\Sigma}_1\vdash\forall x\big(\forall y<x\,\varphi(y)\to\varphi(x)\big)\to\forall x\,\varphi(x).
\end{equation*}
Here $\forall y<x\,\varphi(y)$ abbreviates $\forall y(y\leq x\land y\neq x\to\varphi(y))$. \emph{Remark:} The converse of this form of induction is a least element principle. Given $\exists x\,\psi$, it allows us to infer that there is an~$x$ with $\forall y<x\,\neg\psi(y)$ but $\psi(x)$.
\end{exercise}

\subsection{Primitive Recursion}\label{sect:prim-rec}

In this section, we introduce the primitive recursive functions, prove some of their closure properties and show that they are $\Sigma$-definable.

\begin{definition}\label{def:prim-rec}
A function $\mathbb N^n\to\mathbb N$ for some~$n\in\mathbb N$ is primitive recursive if it can be generated by the following recursive clauses:
\begin{enumerate}[label=(\roman*)]
\item The constant function~$C^n_k:\mathbb N^n\to\mathbb N$ with value~$C^n_k(a_1,\ldots,a_n)=k$ is primitive recursive for all~$k,n\in\mathbb N$.
\item The projection $I^n_i:\mathbb N^n\to\mathbb N$ with $I^n_i(a_1,\ldots,a_n)=a_i$ is primitive recursive for all intergers $1\leq i\leq n$.
\item The successor function $S:\mathbb N\to\mathbb N$ with $S(a)=a+1$ is primitive recursive.
\item Given $m,n\geq 1$ and primitive recursive $h:\mathbb N^n\to\mathbb N$ and $g_i:\mathbb N^m\to\mathbb N$ for all $i=1,\ldots,n$, the composition $f:\mathbb N^m\to\mathbb N$ with
\begin{equation*}
f(a_1,\ldots,a_m)=h\big(g_1(a_1,\ldots,a_n),\ldots,g_m(a_1,\ldots,a_m)\big)
\end{equation*}
is primitive recursive.
\item If $g:\mathbb N^n\to\mathbb N$ and $h:\mathbb N^{n+2}\to\mathbb N$ are primitive recursive, then so is the result $f:\mathbb N^{n+1}\to\mathbb N$ of the recursion with clauses
\begin{align*}
f(a_1,\ldots,a_n,0)&=g(a_1,\ldots,a_n),\\
f(a_1,\ldots,a_n,b+1)&=h\big(a_1,\ldots,a_n,b,f(a_1,\ldots,a_n,b)\big).
\end{align*}
\end{enumerate}
A relation $R\subseteq\mathbb N^n$ is called primitive recursive if the characteristic function~$\chi_R$ is a primitive recursive function.
\end{definition}

Let us consider the truncated subtraction function that is defined by
\begin{equation*}
a\dotminus b:=\begin{cases}
a-b & \text{if $a\geq b$},\\
0 & \text{otherwise}.
\end{cases}
\end{equation*}
To see that $a\mapsto a\dotminus 1$ is primitive recursive, use clause~(v) with $C^0_0$ and $I^2_1$ at the place of $g$ and~$h$, respectively. In view of $a\dotminus 0=a$ and $a\dotminus(b+1)=(a\dotminus b)\dotminus 1$, another application of~(v) shows that $(a,b)\mapsto a\dotminus b$ is primitive recursive. 

\begin{exercise}\label{ex:add-prim-rec}
Show that addition and multiplication are primitive recursive. Prove that $(\mathbf a,b)\mapsto\sum_{i<b}f(\mathbf a,i)$ is primitive recursive when the same holds~for~$f$.
\end{exercise}

Let us note that $\{\mathbf a\in\mathbb N^n\,|\,f(\mathbf a)=0\}$ is primitive recursive when the same holds for~$f$, since the characteristic function can be given as $\mathbf a\mapsto 1\dotminus f(\mathbf a)$. More generally, $\{\mathbf a\in\mathbb N^n\,|\,f(\mathbf a)\leq g(\mathbf a)\}$ is primitive recursive when this holds for~$f$ and~$g$, since $f(\mathbf a)\leq g(\mathbf a)$ is equivalent to $f(\mathbf a)\dotminus g(\mathbf a)=0$. The next proposition will also allow us to replace $\leq$ by equality (as $a=b$ is the conjunction of $a\leq b$ and $b\leq a$).

Given that the primitive recursive functions are closed under composition, the relation $\{\mathbf a\in\mathbb N^n\,|\,(g_1(\mathbf a),\ldots,g_m(\mathbf a))\in R\}$ is primitive recursive if the same holds for the relation~$R\subseteq\mathbb N^m$ and the functions $g_i:\mathbb N^n\to\mathbb N$. The case where each~$g_i$ is a projection~$I^n_j$ is of particular interest, as it shows that the arguments in a primitive recursive relation may be permuted. Together with the following, we see that the primitive recursive relations are closed under definitions by bounded formulas.

\begin{proposition}\label{prop:bounded-form-prim-rec}
We have the following closure properties:
\begin{enumerate}[label=(\alph*)]
\item If $R\subseteq\mathbb N^n$ is primitive recursive, then so is $\overline R:=\mathbb N^n\backslash R$.
\item If $R_i\subseteq\mathbb N^n$ is primitive recursive for each $i<2$, so are $R_0\cap R_1$ and $R_0\cup R_1$.
\item If $R\subseteq\mathbb N^{n+1}$ is primitive recursive, then the relations~$R_i\subseteq\mathbb N^{n+1}$ with
\begin{align*}
(\mathbf a,b)\in R_0\quad&\Leftrightarrow\quad(\mathbf a,i)\in R\text{ for all }i<b,\\
(\mathbf a,b)\in R_1\quad&\Leftrightarrow\quad(\mathbf a,i)\in R\text{ for some }i<b
\end{align*}
are primitive recursive as well.
\end{enumerate}
\end{proposition}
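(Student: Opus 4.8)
The plan is to express each characteristic function in question as a primitive recursive combination of the given characteristic functions $\chi_R$ (respectively $\chi_{R_0},\chi_{R_1}$), invoking only the closure properties already available: composition, the truncated subtraction $\dotminus$, multiplication, and bounded summation (Exercise~\ref{ex:add-prim-rec}). Throughout I would use that these characteristic functions take values in $\{0,1\}$ only.

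For part~(a), I would simply observe that $\chi_{\overline R}(\mathbf a)=1\dotminus\chi_R(\mathbf a)$; since $\dotminus$ and the constant function with value~$1$ are primitive recursive, so is this composition. For part~(b), the intersection is handled by $\chi_{R_0\cap R_1}(\mathbf a)=\chi_{R_0}(\mathbf a)\times\chi_{R_1}(\mathbf a)$, which works precisely because both factors lie in $\{0,1\}$, and multiplication is primitive recursive by Exercise~\ref{ex:add-prim-rec}. The union then comes for free via $R_0\cup R_1=\overline{\overline{R_0}\cap\overline{R_1}}$ together with part~(a).

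For part~(c), I would introduce the auxiliary function $g\colon\mathbb N^{n+1}\to\mathbb N$ with $g(\mathbf a,b)=\sum_{i<b}\chi_R(\mathbf a,i)$, which is primitive recursive by Exercise~\ref{ex:add-prim-rec} (with $\mathbf a$ as the parameter block and $b$ as the summation bound). Since $\chi_R$ is $\{0,1\}$-valued, $(\mathbf a,b)\in R_1$ holds precisely when $g(\mathbf a,b)\geq 1$, i.e.\ when $g(\mathbf a,b)\neq 0$; hence $\chi_{R_1}(\mathbf a,b)=1\dotminus(1\dotminus g(\mathbf a,b))$, which is primitive recursive as a composition. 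Finally $R_0$ is obtained from the bounded existential case applied to $\overline R$ together with part~(a), since $(\mathbf a,b)\in R_0$ fails exactly when $(\mathbf a,i)\in\overline R$ holds for some $i<b$ (alternatively one could imitate Exercise~\ref{ex:add-prim-rec} with a bounded product in place of a bounded sum, but the De Morgan route avoids even that).

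The computations here are entirely routine, so there is no genuine obstacle; the only points requiring a little care are the arity bookkeeping — checking that forming $g$ from $\chi_R$ is indeed an instance of the closure property in Exercise~\ref{ex:add-prim-rec}, and that the permutation-of-arguments remark preceding the proposition lets us arrange for the bound to occupy the last coordinate whenever the quantifier does not already range over it.
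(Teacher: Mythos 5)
Your proof is correct and follows essentially the same route as the paper: $1\dotminus\chi_R$ for complement, the product $\chi_{R_0}\cdot\chi_{R_1}$ for intersection with union via De Morgan, and the bounded sum $\sum_{i<b}\chi_R(\mathbf a,i)$ tested for nonzeroness for the bounded existential, with the bounded universal again obtained by duality. The only cosmetic difference is that you spell out $\chi_{R_1}=1\dotminus(1\dotminus g)$ where the paper instead cites the remark preceding Exercise~\ref{ex:add-prim-rec} together with part~(a); the underlying computation is identical.
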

\begin{proof}
(a) In view of the remark before Exercise~\ref{ex:add-prim-rec}, it suffices to note that we have $\overline R=\{\mathbf a\in\mathbb N^n\,|\,\chi_R(\mathbf a)=0\}$.

(b) If $\chi_i$ is the characteristic function of~$R_i$, the characteristic function of~$R_0\cap R_1$ can be given as $\mathbf a\mapsto\chi_0(\mathbf a)\cdot\chi_1(\mathbf a)$. The remaining claim follows by duality, i.\,e., since $R_0\cup R_1$ is the complement of $\overline{R_0}\cap\overline{R_1}$.

(c) We have $\mathbf a\in R_1$ precisely when $\sum_{i<b}\chi_R(\mathbf a,i)$ is non-zero. By part~(a) and (the remark before) Exercise~\ref{ex:add-prim-rec}, this shows that $R_1$ is primitive recursive. The claim about~$R_1$ follows by duality once again.
\end{proof}

Since the primitive recursive functions are closed under composition, part~(c) of the previous proposition entails that we get primitive recursive relations $R_i'\subseteq\mathbb N^n$ if we stipulate that $\mathbf a\in R_i'$ holds when we have $(\mathbf a,i)\in R$ for some or all~$i<h(\mathbf a)$, provided that $h:\mathbb N^n\to\mathbb N$ is primitive recursive. Next, we note that the primitive recursive functions are closed under definitions by case distinction. In order to accommodate a definition with more than two cases, it suffices to apply the following result in an iterative fashion. 

\begin{lemma}
If $f,g:\mathbb N^n\to\mathbb N$ and $R\subseteq\mathbb N^n$ are primitive recursive, then the function $h:\mathbb N^n\to\mathbb N$ that is given by
\begin{equation*}
h(\mathbf a)=\begin{cases}
f(\mathbf a) & \text{if $\mathbf a\in R$},\\
g(\mathbf a) & \text{otherwise}
\end{cases}
\end{equation*}
is primitive recursive as well.
\end{lemma}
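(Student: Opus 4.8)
The plan is to give an explicit arithmetic expression for $h$ in terms of primitive recursive building blocks and then invoke closure under composition. Since $R$ is primitive recursive, its characteristic function $\chi_R:\mathbb N^n\to\mathbb N$ is primitive recursive, and $\chi_R(\mathbf a)$ equals $1$ when $\mathbf a\in R$ and $0$ otherwise. The key observation is that
\begin{equation*}
h(\mathbf a)=\chi_R(\mathbf a)\cdot f(\mathbf a)+\big(1\dotminus\chi_R(\mathbf a)\big)\cdot g(\mathbf a)
\end{equation*}
holds for all $\mathbf a\in\mathbb N^n$: if $\mathbf a\in R$ the right-hand side reduces to $1\cdot f(\mathbf a)+0\cdot g(\mathbf a)=f(\mathbf a)$, while if $\mathbf a\notin R$ it reduces to $0\cdot f(\mathbf a)+1\cdot g(\mathbf a)=g(\mathbf a)$.

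It then remains to check that the right-hand side defines a primitive recursive function. We already know that addition, multiplication and truncated subtraction are primitive recursive (see Exercise~\ref{ex:add-prim-rec} and the paragraph preceding it), and the constant function $C^n_1$ is primitive recursive by clause~(i) of Definition~\ref{def:prim-rec}. The map $\mathbf a\mapsto 1\dotminus\chi_R(\mathbf a)$ is the composition of truncated subtraction with $C^n_1$ and $\chi_R$, hence primitive recursive by clause~(iv) of Definition~\ref{def:prim-rec}; alternatively, it is the characteristic function of $\overline R=\mathbb N^n\setminus R$, which is primitive recursive by Proposition~\ref{prop:bounded-form-prim-rec}(a). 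Composing multiplication with the pairs of functions $\chi_R,f$ and $1\dotminus\chi_R,g$, and then composing addition with the two results, we obtain $h$ as a primitive recursive function by repeated use of clause~(iv). Where needed, one inserts projections $I^n_j$ to match the arities required by the composition clause.

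There is no genuine obstacle here: the statement is essentially a packaging result. The only point that requires a little care is the bookkeeping of arities in the composition clause, and — if one wishes to iterate the lemma to handle definitions by more than two cases, as announced in the paragraph before the statement — verifying that the resulting nested expression is still assembled solely from primitive recursive components, which it visibly is.
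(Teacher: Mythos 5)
Your proof is correct and uses exactly the same identity as the paper, namely $h(\mathbf a)=f(\mathbf a)\cdot\chi_R(\mathbf a)+g(\mathbf a)\cdot(1\dotminus\chi_R(\mathbf a))$ (you simply write the factors in the opposite order). The paper states this one-line formula without elaboration, while you additionally spell out the composition and arity bookkeeping, which is harmless but not required.
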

\begin{proof}
We have $h(\mathbf a)=f(\mathbf a)\cdot\chi_R(\mathbf a)+g(\mathbf a)\cdot(1\dotminus\chi_R(\mathbf a))$.
\end{proof}

As the following exercise shows, the assumption that $R$ is primitive recursive cannot be dropped in the previous lemma. More informative results on the limitations of primitive recursion will be proved in Section~\ref{sect:goodstein}.

\begin{exercise}
Show that not all subsets of~$\mathbb N$ are primitive recursive. \emph{Hint:}~Note that there are only countably many primitive recursive functions.
\end{exercise}

We now establish closure under an operator that is known as bounded minimization or bounded search.

\begin{proposition}\label{prop:bounded-min}
If the function $f:\mathbb N^{n+1}\to\mathbb N$ is primitive recursive, then the function $\mu_0f:\mathbb N^{n+1}\to\mathbb N$ that is given by
\begin{equation*}
\mu_0f(\mathbf a,b)=\min\big(\{i<b\,|\,f(\mathbf a,i)=0\}\cup\{b\}\big)
\end{equation*}
is primitive recursive as well.
\end{proposition}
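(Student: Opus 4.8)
The plan is to realize $\mu_0 f$ by primitive recursion on its last argument, using the closure properties already established. First I would define an auxiliary function that records, for each bound, whether a zero of $f$ has been found yet and, if so, the least witness. Concretely, set $g(\mathbf a,0)=0$ and
\begin{equation*}
g(\mathbf a,b+1)=\begin{cases} g(\mathbf a,b) & \text{if } g(\mathbf a,b)<b \text{ or } f(\mathbf a,b)=0,\\ b+1 & \text{otherwise}.\end{cases}
\end{equation*}
The intended meaning is $g(\mathbf a,b)=\min(\{i<b\,|\,f(\mathbf a,i)=0\}\cup\{b\})=\mu_0 f(\mathbf a,b)$, so once this is verified we are done. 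Note that the case distinction here is governed by the relation $\{(\mathbf a,b)\,|\,g(\mathbf a,b)<b\text{ or }f(\mathbf a,b)=0\}$, which is primitive recursive: it is a union (Proposition~\ref{prop:bounded-form-prim-rec}(b)) of the set where $f(\mathbf a,b)=0$ (primitive recursive by the remark before Exercise~\ref{ex:add-prim-rec}, after composing with projections) and the set where the pair $(g(\mathbf a,b),b)$ satisfies $u<v$ (primitive recursive, then composed with the primitive recursive functions $(\mathbf a,b)\mapsto g(\mathbf a,b)$ and $(\mathbf a,b)\mapsto b$). Hence $g$ falls under clause~(v) of Definition~\ref{def:prim-rec} combined with the case-distinction Lemma, so $g$ is primitive recursive, and therefore so is $\mu_0 f$.

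The remaining work is the verification that $g(\mathbf a,b)=\mu_0 f(\mathbf a,b)$, which I would carry out by induction on $b$ (in the metatheory). The base case $b=0$ is immediate since the set $\{i<0\,|\,\dots\}\cup\{0\}$ is $\{0\}$. For the step, fix $\mathbf a$ and assume $g(\mathbf a,b)=\mu_0 f(\mathbf a,b)$. I would split into two subcases. If $g(\mathbf a,b)<b$, then by the induction hypothesis there is already some $i<b$ with $f(\mathbf a,i)=0$, so the least such $i$ is unchanged by allowing the extra value $b$; the defining clause gives $g(\mathbf a,b+1)=g(\mathbf a,b)$, which matches. If $g(\mathbf a,b)=b$, then by the induction hypothesis there is no $i<b$ with $f(\mathbf a,i)=0$; now either $f(\mathbf a,b)=0$, in which case $b$ is the least witness below $b+1$ and the clause correctly gives $g(\mathbf a,b+1)=g(\mathbf a,b)=b$, or $f(\mathbf a,b)\neq 0$, in which case there is still no witness below $b+1$ and the clause correctly gives $g(\mathbf a,b+1)=b+1$. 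Finally, setting $b$ equal to the second argument throughout yields $\mu_0 f(\mathbf a,b)=g(\mathbf a,b)$, completing the proof.

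I do not expect any genuine obstacle here; the only mildly delicate point is making the case distinction in the recursion clearly primitive recursive, i.e.\ observing that the predicate ``$g(\mathbf a,b)<b$ or $f(\mathbf a,b)=0$'' can be expressed using $g$ itself (already built by clause~(v)) together with the primitive recursive relations $<$ and $\{f=0\}$, so that one genuine application of primitive recursion suffices rather than an apparent circularity. An alternative, if one prefers to avoid the self-reference inside the case distinction, is to first define by primitive recursion the ``zero-found-so-far'' indicator $z(\mathbf a,b)$ with $z(\mathbf a,0)=0$ and $z(\mathbf a,b+1)=\max(z(\mathbf a,b),1\dotminus f(\mathbf a,b)\cdot f(\mathbf a,b)+\dots)$ — more simply, $z(\mathbf a,b+1)=1$ if $z(\mathbf a,b)=1$ or $f(\mathbf a,b)=0$ and $0$ otherwise — and then set $g$ using $z$; but the direct definition above is cleanest and I would present that.
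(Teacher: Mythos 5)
Your proof is correct and takes a genuinely different route from the paper's. The paper defines $g(\mathbf a,i)$ to be the characteristic function of the relation ``$f(\mathbf a,j)\neq 0$ for all $j<i$'' (primitive recursive by Proposition~\ref{prop:bounded-form-prim-rec}) and then simply observes that $\mu_0 f(\mathbf a,b)=\sum_{i<b}g(\mathbf a,i)$, invoking closure under bounded sums from Exercise~\ref{ex:add-prim-rec}; in effect it counts how many initial segments have survived without encountering a zero, which locates the first zero in a single stroke. Your version instead builds $\mu_0 f$ directly by primitive recursion on the bound, maintaining the running minimum, and verifies by induction that this is what the recursion computes. Both are valid; the paper's argument is shorter because it reduces entirely to closure properties already in hand, while yours is more explicit about the step-by-step mechanism and arguably more self-explanatory.

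One point to tighten in the exposition: when you argue that the relation $\{(\mathbf a,b)\mid g(\mathbf a,b)<b\text{ or }f(\mathbf a,b)=0\}$ is primitive recursive, you are tacitly assuming that $g$ itself is primitive recursive, which is precisely what you are in the process of establishing. You do flag this at the end and explain that there is no real circularity, but the cleanest fix is to exhibit the step function required by clause~(v) of Definition~\ref{def:prim-rec} explicitly: define $h(\mathbf a,b,c)$ to equal $c$ if $c<b$ or $f(\mathbf a,b)=0$, and $b+1$ otherwise. Here $c$ is a genuine free variable standing in for the previous value, not a reference to $g$, so $h$ is primitive recursive by the case-distinction lemma together with closure of primitive recursive relations under composition with projections. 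Then $g(\mathbf a,0)=0$ and $g(\mathbf a,b+1)=h(\mathbf a,b,g(\mathbf a,b))$ is a direct instance of clause~(v) with no self-reference at all, which makes the detour through the auxiliary indicator $z$ unnecessary.
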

\begin{proof}
Proposition~\ref{prop:bounded-form-prim-rec} yields a primitive recursive function $g:\mathbb N^{n+1}\to\mathbb N$ with
\begin{equation*}
g(\mathbf a,i)=\begin{cases}
1 & \text{if $f(\mathbf a,j)\neq 0$ for all~$j<i$},\\
0 & \text{otherwise}.
\end{cases}
\end{equation*}
Now it suffices to observe that we have $\mu_0f(\mathbf a,b)=\sum_{i<b}g(\mathbf a,i)$.
\end{proof}

Given a primitive recursive relation $R\subseteq\mathbb N^{n+1}$ and function $b:\mathbb N^n\to\mathbb N$ such that each tuple $\mathbf a\in\mathbb N^n$ admits an~$i<b(\mathbf a)$ with $(\mathbf a,i)\in R$, we can conclude that the function $\mathbf a\mapsto\min\{i\in\mathbb N\,|\,(\mathbf a,i)\in R\}$ is primitive recursive. Here the bounding function~$b$ is essential: in Section~\ref{sect:computability} we will see that unbounded search leads out of the class of primitive recursive functions.

In a recursive definition according to clause~(v) of Definition~\ref{def:prim-rec}, the successor value $f(\mathbf a,b+1)$ may depend on the previous value~$f(\mathbf a,b)$. Our next goal is to show that the primitive recursive functions are closed under a more liberal recursion principle, in which $f(\mathbf a,b)$ may depend on the sequence $f(\mathbf a,0),\ldots,f(\mathbf a,b-1)$ of all previous values. For this purpose, we now discuss how finite sequences can be coded by single numbers. Let us first recall the Cantor pairing function $\pi:\mathbb N^2\to\mathbb N$ from Exercise~\ref{ex:Cantor-pairing}. It is straightforward to see that we have $a,b\leq\pi(a,b)$ and that $\pi$ is increasing in each argument. Based on Exercise~\ref{ex:add-prim-rec}, we see that $\pi$ is primitive recursive. As $\pi$ is bijective, we have functions $\pi_i:\mathbb N\to\mathbb N$ for~$i<2$ that validate
\begin{equation*}
\pi_i\big(\pi(a_0,a_1)\big)=a_i\quad\text{and}\quad\pi\big(\pi_0(c),\pi_1(c)\big)=c.
\end{equation*}
We note that the functions $\pi_i$ are primitive recursive due to
\begin{equation*}
\pi_0(c)=\min\{a\leq c\,|\,\pi(a,b)=c\text{ for some }b\leq c\}.
\end{equation*}
To code finite sequences of arbitrary length, we iterate the Cantor pairing function.

\begin{definition}\label{def:seq-code}
With each finite sequence $a_0,\ldots,a_{n-1}$ of natural numbers, we associate a code $\langle a_0,\ldots,a_{n-1}\rangle\in\mathbb N$ that is recursively given by
\begin{equation*}
\langle\rangle:=0\quad\text{and}\quad\langle a_0,\ldots,a_n\rangle:=\pi(\langle a_0,\ldots,a_{n-1}\rangle,a_n)+1.
\end{equation*}
\end{definition}

Note that one should not define $\langle a_0,\ldots,a_n\rangle$ to be $\pi(\langle a_0,\ldots,a_{n-1}\rangle,a_n)$, for otherwise $\langle 0\rangle$ would be the same $\pi(\langle\rangle,0)=\pi(0,0)=0=\langle\rangle$.

\begin{exercise}
Show that each $c\in\mathbb N$ admits unique natural numbers $n$ and $a_i$ such that we have $c=\langle a_0,\ldots,a_{n-1}\rangle$. \emph{Hint:} For uniqueness, use induction on~$n$ to prove that $\langle a_0,\ldots,a_{m-1}\rangle=\langle b_0,\ldots,b_{n-1}\rangle$ entails $m=n$ and $a_i=b_i$ for all~$i<n$.
\end{exercise}

Due to the exercise, we have well-defined functions that provide the lenght and the entries of coded sequences.

\begin{definition}\label{def:sequence-encoding}
For $c=\langle a_0,...,a_{n-1}\rangle$ we declare~$\len(c)=n$ as well as $(c)_i=a_i$ for $i<n$ and $(c)_i=0$ for~$i\geq n$.
\end{definition}

Let us note that $a,b\leq\pi(a,b)$ entails $\len(c)\leq c$ as well as $(c)_i<c$ for $i<\len(c)$.

\begin{lemma}\label{lem:seq-code}
The functions $c\mapsto\len(c)$ and $(c,i)\mapsto(c)_i$ are primitive recursive.
\end{lemma}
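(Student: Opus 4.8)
The plan is to express both $\len$ and $(c,i)\mapsto(c)_i$ using only closure properties already in hand --- primitive recursion in the sense of clause~(v), definition by case distinction (the lemma above), composition, and bounded minimisation (Proposition~\ref{prop:bounded-min}) --- without invoking any course-of-values recursion. The key auxiliary object is the function $D\colon\mathbb N\times\mathbb N\to\mathbb N$ that strips trailing entries, defined by the recursion $D(c,0)=c$ and
\begin{equation*}
D(c,k+1)=\begin{cases} 0 & \text{if } D(c,k)=0,\\[.3ex] \pi_0\big(D(c,k)\dotminus 1\big) & \text{otherwise.}\end{cases}
\end{equation*}
Since $c\mapsto\pi_0(c\dotminus 1)$ is primitive recursive (as $\pi_0$ and truncated subtraction are) and $\{d\,|\,d=0\}$ is primitive recursive, the case-distinction lemma makes the step function primitive recursive, so $D$ is primitive recursive by clause~(v).

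Next I would check, by induction on $k$, that $D(c,k)=\langle a_0,\ldots,a_{n-1-k}\rangle$ whenever $c=\langle a_0,\ldots,a_{n-1}\rangle$ and $k\leq n$, reading the right-hand side as $\langle\rangle=0$ when $k=n$. The step uses only that $\langle a_0,\ldots,a_m\rangle\dotminus 1=\pi(\langle a_0,\ldots,a_{m-1}\rangle,a_m)$ together with $\pi_0(\pi(a,b))=a$. It follows that $D(c,k)=0$ holds exactly when $k\geq\len(c)$, since a coded sequence with at least one entry is nonzero. Using the bound $\len(c)\leq c$ recorded above, this gives
\begin{equation*}
\len(c)=\mu_0 D(c,c),
\end{equation*}
so $\len$ is primitive recursive by Proposition~\ref{prop:bounded-min} and composition with the diagonal.

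For the entry function I would read off the last entry of the stripped sequence: for $k<\len(c)$ the code $D(c,k)=\langle a_0,\ldots,a_{n-1-k}\rangle$ is a sequence with last entry $a_{n-1-k}=\pi_1\big(D(c,k)\dotminus 1\big)$. Taking $k=(\len(c)\dotminus 1)\dotminus i$, so that $n-1-k=i$ when $i<\len(c)=n$, we obtain
\begin{equation*}
(c)_i=\begin{cases} \pi_1\big(D\big(c,(\len(c)\dotminus 1)\dotminus i\big)\dotminus 1\big) & \text{if } i<\len(c),\\[.3ex] 0 & \text{otherwise.}\end{cases}
\end{equation*}
The relation $\{(c,i)\,|\,i<\len(c)\}$ is primitive recursive (as $\len$ is primitive recursive and by Proposition~\ref{prop:bounded-form-prim-rec}), and both branches are primitive recursive functions of $(c,i)$ built by composition; the case-distinction lemma then finishes the proof.

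The one genuine obstacle is the first step. The naive recursion that would compute $\len$ directly decreases its argument from $c$ to $\pi_0(c\dotminus 1)$ by an amount we cannot predict, so it is not an instance of clause~(v); the remedy is to iterate the unpairing step a \emph{fixed} number of times through the total function $D$ and then locate the length by bounded search, which is legitimate precisely because $\len(c)\leq c$ supplies an explicit bound. Everything after that is routine bookkeeping with the closure properties established in this section.
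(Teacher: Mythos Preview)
Your proposal is correct and follows essentially the same route as the paper: your stripping function $D(c,k)$ is exactly the paper's iterate $f^{(k)}(c)$ with $f(c)=\pi_0(c\dotminus 1)$ (your case distinction at $D(c,k)=0$ is redundant since $\pi_0(0\dotminus 1)=\pi_0(0)=0$ anyway, but harmless), and both arguments recover $\len$ by bounded search using $\len(c)\leq c$ and then read off $(c)_i$ as $\pi_1$ of the appropriate stripped code minus one.
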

\begin{proof}
Consider the function defined by~$f(c)=\pi_0(c\dotminus 1)$. Given that $f$ is primitive recursive, the same holds for the iterated version $(c,i)\mapsto f^{(i)}(c)$ with $f^{(0)}(c)=c$ and $f^{(i+1)}(c)=f(f^{(i)}(c))$. For $c=\langle a_0,\ldots,a_n\rangle$, an induction on~$i\leq n$ shows that we have $f^{(i)}(c)=\langle a_0,\ldots,a_{n-i}\rangle$. This yields
\begin{align*}
\len(c)&=\min\left\{i\leq c\,\left|\,f^{(i)}(c)=0\right.\right\},\\
(c)_i&=\pi_1\left(f^{(\len(c)+1-i)}(c)-1\right)\quad\text{for }i<\len(c),
\end{align*}
as needed to establish the lemma. 
\end{proof}

We can now justify the recursion principle that was mentioned above.

\begin{proposition}[Course-of-values recursion]\label{prop:course-of-values}
If $h:\mathbb N^{n+1}\to\mathbb N$ is primitive recursive, then the function $f:\mathbb N^{n+1}\to\mathbb N$ that is determined by
\begin{equation*}
f(\mathbf a,b)=h(\mathbf a,\langle f(\mathbf a,0),\ldots,f(\mathbf a,b-1)\rangle)
\end{equation*}
is primitive recursive as well.
\end{proposition}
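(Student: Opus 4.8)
The plan is to reduce course-of-values recursion to an ordinary primitive recursion by introducing the \emph{history function} $\bar f(\mathbf a,b):=\langle f(\mathbf a,0),\ldots,f(\mathbf a,b-1)\rangle$, which packs all values of~$f$ below~$b$ into a single code. The point is that passing from $\bar f(\mathbf a,b)$ to $\bar f(\mathbf a,b+1)$ only requires appending one further entry, namely $h(\mathbf a,\bar f(\mathbf a,b))$, and appending a single number to a coded sequence is a trivial primitive recursive operation in view of Definition~\ref{def:seq-code}.

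Concretely, I would first note that $\pi:\mathbb N^2\to\mathbb N$ is primitive recursive (as observed after Exercise~\ref{ex:Cantor-pairing}), so that the map $(c,x)\mapsto\pi(c,x)+1$ is primitive recursive by clause~(iv) of Definition~\ref{def:prim-rec}; moreover this map sends $\langle a_0,\ldots,a_{n-1}\rangle$ and~$x$ to $\langle a_0,\ldots,a_{n-1},x\rangle$, directly by Definition~\ref{def:seq-code}. Hence the function $g:\mathbb N^{n+2}\to\mathbb N$ given by $g(\mathbf a,b,c):=\pi\big(c,h(\mathbf a,c)\big)+1$ is primitive recursive (composition again), and I can \emph{define} a function $\bar f:\mathbb N^{n+1}\to\mathbb N$ by the primitive recursion
\begin{align*}
\bar f(\mathbf a,0)&=0,\\
\bar f(\mathbf a,b+1)&=g\big(\mathbf a,b,\bar f(\mathbf a,b)\big).
\end{align*}
By clause~(v) of Definition~\ref{def:prim-rec}, $\bar f$ is primitive recursive. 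Crucially, this recursion makes no reference to~$f$, so $\bar f$ is well-defined outright; I will recover~$f$ and verify its defining equation afterwards, rather than arguing circularly.

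It then remains to set $f(\mathbf a,b):=h(\mathbf a,\bar f(\mathbf a,b))$, which is primitive recursive by composition, and to check that this is the function described in the statement. By induction on~$b$ one shows $\bar f(\mathbf a,b)=\langle f(\mathbf a,0),\ldots,f(\mathbf a,b-1)\rangle$: the base case is $\bar f(\mathbf a,0)=0=\langle\rangle$, and in the step one computes $\bar f(\mathbf a,b+1)=\pi\big(\bar f(\mathbf a,b),h(\mathbf a,\bar f(\mathbf a,b))\big)+1$, which by the induction hypothesis and the definition of~$f$ equals $\pi\big(\langle f(\mathbf a,0),\ldots,f(\mathbf a,b-1)\rangle,f(\mathbf a,b)\big)+1=\langle f(\mathbf a,0),\ldots,f(\mathbf a,b)\rangle$. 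Substituting this identity into the definition of~$f$ yields exactly $f(\mathbf a,b)=h\big(\mathbf a,\langle f(\mathbf a,0),\ldots,f(\mathbf a,b-1)\rangle\big)$, as required. There is no real obstacle in this argument; the only subtlety worth stating explicitly is the order of the construction, i.e.\ that $\bar f$ must be introduced by a recursion not presupposing~$f$, and that the equation defining~$f$ is established only at the end.
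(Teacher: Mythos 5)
Your proof is correct and takes essentially the same approach as the paper: introduce the history function~$\bar f$, observe that it satisfies an ordinary primitive recursion, and recover~$f$ from it. The only cosmetic difference is that you recover~$f$ as $h(\mathbf a,\bar f(\mathbf a,b))$, whereas the paper reads it off as $(\bar f(\mathbf a,b+1))_b$ via Lemma~\ref{lem:seq-code}; both are primitive recursive compositions, and your explicit remark about defining~$\bar f$ first (so as to avoid any appearance of circularity) is a sensible piece of bookkeeping that the paper leaves implicit.
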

\begin{proof}
Let $\overline f:\mathbb N^{n+1}\to\mathbb N$ be given by $\overline f(\mathbf a,b)=\langle f(\mathbf a,0),\ldots,f(\mathbf a,b-1)\rangle$. We have
\begin{equation*}
\overline f(\mathbf a,b+1)=\pi\left(\overline f(\mathbf a,b),h\left(\mathbf a,\overline f(\mathbf a,b)\right)\right)+1,
\end{equation*}
so that $\overline f$ is primitive recursive by clause~(v) of Definition~\ref{def:prim-rec}. We can now~conclude by the previous lemma, as we have $f(\mathbf a,b)=(\overline f(\mathbf a,b+1))_b$.
\end{proof}

Note that the function~$h$ in the previous proposition does also have access to~$b$, since the latter is determined as the length of the argument~$\overline f(\mathbf a,b)$. In the case of relations, course-of-values recursion allows us to define $(\mathbf a,b)\in R$ by a condition that depends on $(\mathbf a,i)\in R$ for~$i<b$. More precisely, any relation $Q\subseteq\mathbb N^{n+2}$~determines another relation~$R\subseteq\mathbb N^{n+1}$ with
\begin{equation*}
(\mathbf a,b)\in R\quad\Leftrightarrow\quad(\mathbf a,b,\langle\chi_R(\mathbf a,0),\ldots,\chi_R(\mathbf a,b-1)\rangle)\in Q,
\end{equation*}
where $R$ is primitive recursive when the same holds for~$Q$.

\begin{exercise}
Show that the concatenation function $*:\mathbb N\to\mathbb N$ given by
\begin{equation*}
\langle a_0,\ldots,a_{m-1}\rangle*\langle b_0,\ldots,b_{n-1}\rangle=\langle a_0,\ldots,a_{m-1},b_0,\ldots,b_{n-1}\rangle
\end{equation*}
is primitive recursive. \emph{Remark:} It is instructive to give one solution that uses course-of-values recursion and one that does not. 
\end{exercise}

Together with Proposition~\ref{prop:sigma-def-representable} and its corollary, the following shows that primitive recursive functions and relations are representable in Robinson arithmetic.

\begin{theorem}\label{thm:prim-rec-sigma}
Each primitive recursive function is $\Sigma$-definable.
\end{theorem}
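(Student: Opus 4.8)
The plan is to prove the theorem by induction on the build-up of a primitive recursive function according to the five clauses of Definition~\ref{def:prim-rec}. For each clause, I need to exhibit a $\Sigma$-formula $\varphi(x_1,\ldots,x_n,y)$ that defines the function in the sense of Definition~\ref{def:sigma-definable}. The base cases are immediate: the constant function $C^n_k$ is defined by the formula $y=\overline k$ (more precisely $y=\overline k\land x_1=x_1\land\ldots$, or simply $y=\overline k$ with the understanding that free variables may be listed that do not occur), the projection $I^n_i$ is defined by $y=x_i$, and the successor $S$ is defined by $y=Sx_1$; all of these are literals and hence $\Sigma$. For composition (clause~(iv)), if $h$ is defined by $\chi(z_1,\ldots,z_n,y)$ and each $g_i$ is defined by $\gamma_i(\mathbf x,z_i)$, then $f$ is defined by
\begin{equation*}
\exists z_1\ldots\exists z_n\big(\gamma_1(\mathbf x,z_1)\land\ldots\land\gamma_n(\mathbf x,z_n)\land\chi(z_1,\ldots,z_n,y)\big),
\end{equation*}
which is $\Sigma$ by clauses~(ii) and~(iv) of Definition~\ref{def:Sigma-formula}, using that $f$ is total so the witnesses $z_i$ exist uniquely.

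The interesting case is primitive recursion (clause~(v) of Definition~\ref{def:prim-rec}). Here the naive idea of ``iterating'' the $\Sigma$-formula for $h$ does not immediately work, because a $\Sigma$-formula cannot directly express ``apply $h$ exactly $b$ times''. The standard remedy is to encode the entire computation history as a single number using the sequence coding from Definitions~\ref{def:seq-code} and~\ref{def:sequence-encoding}: the function $f$ with $f(\mathbf a,0)=g(\mathbf a)$ and $f(\mathbf a,b+1)=h(\mathbf a,b,f(\mathbf a,b))$ satisfies $f(\mathbf a,b)=(c)_b$ where $c$ is the unique code of the sequence $\langle f(\mathbf a,0),\ldots,f(\mathbf a,b)\rangle$, i.e.\ the unique $c$ with $\len(c)=b+1$, $(c)_0=g(\mathbf a)$, and $(c)_{i+1}=h(\mathbf a,i,(c)_i)$ for all $i<b$. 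Assuming inductively that $g$ is defined by a $\Sigma$-formula $\gamma(\mathbf x,y)$ and $h$ by a $\Sigma$-formula $\chi(\mathbf x,u,v,y)$, and using that the length function $c\mapsto\len(c)$ and the decoding function $(c,i)\mapsto(c)_i$ are primitive recursive by Lemma~\ref{lem:seq-code} --- so by a \emph{prior} application of the induction (on functions of smaller complexity, which we must arrange by first handling coding functions, or by invoking their $\Sigma$-definability as already established) they are $\Sigma$-defined by some $\lambda(c,y)$ and $\delta(c,i,y)$ --- we can write $f(\mathbf a,b)=y$ as
\begin{equation*}
\exists c\Big(\lambda(c,\overline{S}b)\,\land\,\exists y_0\!\le\! c\,\big(\delta(c,\overline 0,y_0)\land\gamma(\mathbf x,y_0)\big)\land\forall i\!\le\! b\,\psi_{\mathrm{step}}(c,\mathbf x,i)\land\delta(c,b,y)\Big),
\end{equation*}
where $\psi_{\mathrm{step}}(c,\mathbf x,i)$ is the bounded formula expressing ``if $i<b$ then $(c)_{i+1}=h(\mathbf a,i,(c)_i)$'', built from $\delta$ and $\chi$ with all quantifiers bounded by $c$ (legitimate since $(c)_j<c$). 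The key point is that $i$ ranges only up to $b$, so the universal quantifier is bounded, keeping the whole formula $\Sigma$ via clause~(iii) of Definition~\ref{def:Sigma-formula}; the existential quantifier over the code $c$ is unbounded but permitted by clause~(iv).

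The main obstacle --- and the part requiring the most care --- is the bookkeeping around the coding functions: to define $f$ by primitive recursion I need $\len$ and $(\cdot)_\cdot$ already to be $\Sigma$-definable, and these are themselves primitive recursive functions whose definitions (via Lemma~\ref{lem:seq-code}) use course-of-values recursion, which in turn is reduced to ordinary primitive recursion by Proposition~\ref{prop:course-of-values}. So one must either (a) note that the induction is on the generation tree of the function, so that by the time we treat a particular instance of clause~(v) the functions $\len$ and $(\cdot)_\cdot$ (being generated by a fixed finite derivation) are available with their own $\Sigma$-definitions already established, or (b) more cleanly, first prove by a direct and explicit construction (not using Lemma~\ref{lem:seq-code} as a black box) that there is a single $\Delta_0$-formula expressing ``$y$ is the $i$-th entry of the sequence coded by $c$'' --- this is feasible because the Cantor pairing function $\pi$ and its inverses $\pi_0,\pi_1$ are given by explicit polynomial/bounded-minimization formulas, all $\Delta_0$-definable, and the iteration $f^{(i)}$ in the proof of Lemma~\ref{lem:seq-code} can be unwound into a bounded formula ``there is a code $w$ with $\len(w)=i+1$, $(w)_0=c$, and each $(w)_{j+1}=\pi_0((w)_j\dotminus 1)$'', using the \emph{already-constructed} $\Delta_0$ decoding formula for the inner $\len$ and $(\cdot)$. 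I would opt for approach~(b), building a small ``toolbox'' lemma stating that $\pi,\pi_0,\pi_1,\len,(\cdot)_\cdot,{*}$ are all $\Sigma$-definable (indeed $\Delta_0$-definable with a bit more work, but $\Sigma$ suffices), proved before the main induction, and then the induction on Definition~\ref{def:prim-rec} goes through smoothly as sketched. Finally, I should record the immediate corollary that every primitive recursive relation is $\Delta$-definable, since its characteristic function is $\Sigma$-definable and the relation $\mathbf a\in R$ is then equivalent both to $\chi_R(\mathbf a)=1$ ($\Sigma$) and to $\chi_R(\mathbf a)\ne 0$ ($\Pi$).
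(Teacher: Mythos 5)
You correctly identify the overall shape of the argument --- induction on the build-up of a primitive recursive function, with the base cases and composition straightforward and the primitive-recursion clause handled by encoding the computation history as a single number --- and you also correctly spot the one real difficulty: the sequence-coding functions $\len$ and $(c,i)\mapsto(c)_i$ from Lemma~\ref{lem:seq-code} are themselves primitive recursive, so invoking their $\Sigma$-definability inside the proof of the theorem is circular. Unfortunately, neither of your two proposed workarounds actually breaks the circle. Approach~(a) does not help because every function generated via clause~(v) of Definition~\ref{def:prim-rec} --- including $\len$ and $(c,i)\mapsto(c)_i$ themselves, which are built by primitive recursion (via Proposition~\ref{prop:course-of-values}) --- needs a $\Sigma$-definable sequence coding in the induction step, and there is no stage of the main induction at which the coding functions have been handled without already presupposing a coding. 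Approach~(b) reproduces the same circle in concrete form: your proposed formula for unwinding the iteration reads ``there is a code $w$ with $\len(w)=i+1$, $(w)_0=c$, and each $(w)_{j+1}=\pi_0((w)_j\dotminus 1)$'', which already contains $\len$ and $(w)_j$. A $\Sigma$-formula is a single fixed syntactic object, so it cannot call an ``already-constructed'' version of itself; the Cantor-pairing-based encoding from Definition~\ref{def:seq-code} requires iterating $\pi_0$ a variable number of times, and there is no way to arithmetize such an iteration without first having in hand some \emph{other} sequence coding that is directly $\Sigma$-definable.

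The missing ingredient is precisely the device the paper introduces to resolve this: G\"odel's $\beta$-function, defined by
\begin{equation*}
\beta(b,c,i)=\min\big(\{d\in\mathbb N\,\mid\,1+\pi(d,1+i)\cdot b\text{ divides }c\}\cup\{b\}\big),
\end{equation*}
together with the Chinese Remainder Theorem argument that every finite sequence $a_0,\ldots,a_{n-1}$ admits $b,c$ with $\beta(b,c,i)=a_i$ for all $i<n$. Because this definition is a bounded minimization built from divisibility and the $\Delta_0$-definable Cantor pairing, it has an explicit $\Sigma$-definition that does \emph{not} presuppose the theorem being proved; that is what makes it a legitimate ``toolbox'' lemma where your proposed one is not. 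Once $\beta$ is available, the clause~(v) case goes through exactly as you sketched, with $\exists x\exists x'$ over the $\beta$-parameters in place of your $\exists c$ over a sequence code. So your high-level plan is right and the fix is local, but as written the proof does not terminate: you need a sequence coding with a non-recursive, directly arithmetical definition, and that is the substance that has to be supplied.
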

\begin{proof}
To specify a value $f(\mathbf a,b)$ of a function as in clause~(v) of Definition~\ref{def:prim-rec}, we will want to refer to the sequence $f(\mathbf a,0),\ldots,f(\mathbf a,b-1)$ of previous values. For this reason, we first present a $\Sigma$-definable coding of finite sequences. Note that the sequence encoding that we have used above is primitive recursive and will thus be $\Sigma$-definable by the present theorem. Since the latter is not yet established, however, we need to find a coding for which we can give a direct proof that it is $\Sigma$-definable, which is a more challenging task.

We employ a coding function $\beta:\mathbb N^3\to\mathbb N$ that was already considered by G\"odel and is based on the Chinese remainder theorem. As we will see, it has the property that any finite sequence $a_0,\ldots,a_{n-1}$ can be coded by numbers~$b$ and $c$ such that we have $\beta(b,c,i)=a_i$ for all~$i<n$. Using the Cantor pairing function, we set
\begin{equation*}
\beta(b,c,i)=\min\big(\{d\in\mathbb N\,|\,1+\pi(d,1+i)\cdot b\text{ divides }c\}\cup\{b\}\big).
\end{equation*}
To verify the indicated property, we consider an arbitrary sequence~$a_0,\ldots,a_{n-1}$. Pick an~$a$ that is strictly larger than all~$a_i$. Then set
\begin{equation*}
b:=\pi(a,n)!\quad\text{and}\quad c:=\prod_{i=0}^{n-1}1+\pi(a_i,1+i)\cdot b.
\end{equation*}
For each~$i<n$, we see that $a_i$ is one of the $d$ over which the minimum above is taken, so that we get $\beta(b,c,i)\leq a_i<a\leq\pi(a,n)\leq b$. For $d:=\beta(b,c,i)$ we can conclude that $1+\pi(d,1+i)\cdot b$ divides~$c$. Let us note that the numbers $1+k\cdot\pi(a,n)!$ for $0<k\leq\pi(a,n)$ are pairwise co-prime (since any prime factor of one of these numbers is greater than $\pi(a,n)$ while the prime factors of a difference $(k-l)\cdot\pi(a,n)!$ are smaller). Given that $a'<a$ and $i<n$ entail $0<\pi(a',1+i)<\pi(a,n)$, there must thus be a $j<n$ with $\pi(d,1+i)=\pi(a_j,1+j)$. The latter yields $i=j$ and then $d=a_i$, as desired.

We now show that $\beta$ is $\Sigma$-definable. First observe that $\pi(a,b)=c$ is equivalent to $\mathbb N\vDash\varphi_\pi(a,b,c)$ where the latter is
\begin{equation*}
\exists z\leq (a+b)\times S(a+b)\,\big(z+z=(a+b)\times S(a+b)\land b+z=c).
\end{equation*}
It follows that $\beta(b,c,i)$ is the minimal~$d$ such that $\mathbb N$ satisfied $\theta(b,c,i,d)$, where the latter is given by
\begin{equation*}
\exists x\leq(d+Si)\times S(d+Si)\,\exists y\leq c\,\big(\varphi_\pi(d,Si,x)\land(1+x\times b)\times y=c\big)\lor b=d.
\end{equation*}
In other words, $\beta(b,c,i)=d$ holds precisely if $\mathbb N$ satisfies $\varphi_\beta(b,c,i,d)$, where the latter is
\begin{equation*}
\theta(b,c,i,d)\land\forall d'<d\,\neg\theta(b,c,i,d').
\end{equation*}
Note that $\varphi_\beta$ is a $\Sigma$-formula since $\theta$ and thus also~$\neg\theta$ is~$\Delta_0$ (actually $\theta$ being~$\Sigma$ would suffice by the paragraph after Definition~\ref{def:sigma-definable}).

To establish the theorem, we now argue by induction over the recursive clauses from Definition~\ref{def:prim-rec}. For the first three clauses we need only observe
\begin{align*}
C^n_k(a_1,\ldots,a_n)=b\quad&\Leftrightarrow\quad\mathbb N\vDash b=\overline k,\\
I^n_i(a_1,\ldots,a_n)=b\quad&\Leftrightarrow\quad\mathbb N\vDash b=a_i,\\
S(a)=b\quad&\Leftrightarrow\quad\mathbb N\vDash b=Sa.
\end{align*}
Now assume that~$f$ is given as in clause~(iv). We inductively have $\Sigma$-formulas with
\begin{align*}
h(b_1,\ldots,b_n)=c\quad&\Leftrightarrow\quad\mathbb N\vDash\psi(b_1,\ldots,b_n,c),\\
g_i(a_1,\ldots,a_m)=b\quad&\Leftrightarrow\quad\mathbb N\vDash\varphi_i(a_1,\ldots,a_m,b).
\end{align*}
It follows that $f(a_1,\ldots,a_m)=c$ holds precisely if we have
\begin{equation*}
\mathbb N\vDash\exists y_1,\ldots,y_n\left(\psi(y_1,\ldots,y_n,c)\land\textstyle\bigwedge_{i=1,\ldots,n}\varphi_i(a_1,\ldots,a_m,y_i)\right).
\end{equation*}
Finally, assume that $f$ is given as in clause~(v). Consider the formulas $\varphi_g$ and $\varphi_h$ that the induction hypothesis provides for~$g$ and $h$ as well as the formula~$\varphi_\beta$ from the sequence encoding above. We then see that $f(\mathbf a,b)=c$ is equivalent to
\begin{multline*}
\mathbb N\vDash\exists x,x'\big(\varphi_\beta(x,x',b,c)\land\exists z\left(\varphi_\beta(x,x',0,z)\land\varphi_g(\mathbf a,z)\right)\land{}\\*
\forall y<b\exists z,z'\big(\varphi_\beta(x,x',y,z)\land\varphi_\beta(x,x',Sy,z')\land\varphi_h(\mathbf a,y,z,z')\big)\big).
\end{multline*}
Here $x$ and $x'$ witness the existential quantifier precisely if~$\beta(x,x',y)=f(\mathbf a,y)$ holds for all~$y\leq b$, as one can check by induction (note that $z$ and $z'$ are uniquely determined as values of~$\beta$).
\end{proof}

The following is immediate in view of Definition~\ref{def:sigma-definable}.

\begin{corollary}
Any primitive recursive relation is $\Delta$-definable.
\end{corollary}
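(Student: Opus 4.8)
The plan is to simply unwind the relevant definitions and invoke the theorem that was just proved. Recall that a relation $R\subseteq\mathbb N^n$ was declared primitive recursive (Definition~\ref{def:prim-rec}) exactly when its characteristic function $\chi_R\colon\mathbb N^n\to\mathbb N$ is a primitive recursive function. On the other side, $R$ is called $\Delta$-definable (Definition~\ref{def:sigma-definable}) precisely when $\chi_R$ is $\Sigma$-definable. So the two notions differ only by the adjective attached to $\chi_R$, and the bridge between them is Theorem~\ref{thm:prim-rec-sigma}.

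Concretely, I would argue as follows. Let $R\subseteq\mathbb N^n$ be primitive recursive. By Definition~\ref{def:prim-rec}, this means $\chi_R$ is a primitive recursive function. Applying Theorem~\ref{thm:prim-rec-sigma} to $\chi_R$, we conclude that $\chi_R$ is $\Sigma$-definable: there is a $\Sigma$-formula $\varphi(x_1,\ldots,x_n,y)$ such that $\chi_R(a_1,\ldots,a_n)=b$ is equivalent to $\mathbb N\vDash\varphi(a_1,\ldots,a_n,b)$ for all natural numbers $a_i$ and $b$. But this is exactly the statement that $R$ is $\Delta$-definable, by Definition~\ref{def:sigma-definable}. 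Hence every primitive recursive relation is $\Delta$-definable.

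There is no real obstacle here; the content of the corollary is entirely carried by Theorem~\ref{thm:prim-rec-sigma}, and the only thing to be careful about is that one should not confuse $\Delta$-definability of a relation with the separate notion of a formula being $\Delta$ (or $\Delta_1$) in a theory. Since the definition of $\Delta$-definable for relations is phrased directly in terms of $\Sigma$-definability of the characteristic function, no further work — in particular no appeal to $\Sigma$-completeness or to representability in $\mathsf Q$ — is needed for this statement. (It is of course the combination of this corollary with Proposition~\ref{prop:sigma-def-representable} and its corollary that yields representability of primitive recursive relations in $\mathsf Q$, but that is a separate observation and not part of what is being claimed.)
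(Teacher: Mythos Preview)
Your proposal is correct and matches the paper's approach exactly: the paper simply notes that the corollary is immediate in view of Definition~\ref{def:sigma-definable}, which is precisely the unwinding you carry out. There is nothing to add.
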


Representability in the sense of Proposition~\ref{prop:sigma-def-representable} is a pointwise property, as we get a separate proof in $\mathsf Q$ for each tuple of externally fixed arguments. By the following exercise, the stronger theory~$\mathsf{I\Sigma}_1$ proves a result in which the quantification over the arguments is internal. Let us note that $\exists!$ denotes unique existence, so that $\exists!x\,\varphi(x)$ abbreviates the conjunction of $\exists x\,\varphi(x)$ and $\forall x,x'(\varphi(x)\land\varphi(x')\to x=x')$.

\begin{exercise}\label{ex:ISigma_1-to-primrec}
Give a proof that any primitive recursive $f$ admits a $\Sigma$-formula~$\varphi$ such that $f(\mathbf a)=b$ is equivalent to $\mathbb N\vDash\varphi(\mathbf a,b)$ and we have $\mathsf{I\Sigma}_1\vdash\forall\mathbf x\exists! y\,\varphi(\mathbf x,y)$. \emph{Remark:}~The formulas constructed in the proof of Theorem~\ref{thm:prim-rec-sigma} are as desired. You may assume that $\mathsf{I\Sigma}_1$ proves the following: for any $a,b,d,n$ there are $b',d'$ such that we have $\beta(b',d',n)=a$ and $\beta(b',d',i)=\beta(b,d,i)$ for all~$i<n$ (which inductively shows that $\beta$ can code any finite sequence). While this is true, it seems hard to establish for~$\beta$ directly. A very diligent proof that uses two different sequence~encodings can be found in~\cite{hajek91} (see in particular Theorem~I.1.43 and~Section~I.1(c)). 
\end{exercise}

Theorem~\ref{thm:prim-rec-sigma} is not sharp in the sense that there are $\Sigma$-definable functions that are not primitive recursive, as we will see in Section~\ref{sect:computability}. On the other hand, we will show in Section~\ref{subsect:prov-tot-ISigma_1}~that the previous exercise is sharp in a suitable sense, i.\,e., that primitive recursion~corresponds precisely to the strength of $\mathsf{I\Sigma}_1$.

\subsection{Diagonalization and Incompleteness}\label{subsect:goedel}

In the present section, we prove the so-called diagonal lemma and derive Tarski's theorem on the undefinability of truth as well as G\"odel's first incompleteness theorem. We also consider~the second incompleteness theorem and L\"ob's theorem.

A crucial tool is the so-called arithmetization of notions like formula and proof. While the following clearly leaves room for variation, the paragraph after the definition provides some motivation for our choices.

\begin{definition}\label{def:Goedel-numbers}
To each pre-term~$t$ over the signature of first-order arithmetic we assign a so-called G\"odel number~$\gn{t}\in\mathbb N$, which is given by
\begin{gather*}
\gn{v_i}=\langle 0,0,i\rangle,\qquad\gn{0}=\langle 0,1\rangle,\qquad\gn{St}=\langle 0,2,\gn{t}\rangle,\\
\gn{s+t}=\langle 0,3,\gn{s},\gn{t}\rangle,\qquad\gn{s\times t}=\langle 0,4,\gn{s},\gn{t}\rangle.
\end{gather*}
We also assign a G\"odel number $\gn{\varphi}\in\mathbb N$ to each pre-formula~$\varphi$ over the signature of first-order arithmetic, by stipulating
\begin{align*}
\gn{s=t}&=\langle 1,0,\gn{s},\gn{t}\rangle,& \gn{\neg\,s=t}&=\langle 1,1,\gn{s},\gn{t}\rangle,\\
\gn{s\leq t}&=\langle 1,2,\gn{s},\gn{t}\rangle,& \gn{\neg\,s\leq t}&=\langle 1,3,\gn{s},\gn{t}\rangle,\\
\gn{\varphi_0\land\varphi_1}&=\langle 1,4,\gn{\varphi_0},\gn{\varphi_1}\rangle,&\gn{\varphi_0\lor\varphi_1}&=\langle 1,5,\gn{\varphi_0},\gn{\varphi_1}\rangle,\\
\gn{\forall v_{2i}\,\varphi}&=\langle 1,6,2i,\gn{\varphi}\rangle,&\gn{\exists v_{2i}\,\varphi}&=\langle 1,7,2i,\gn{\varphi}\rangle.
\end{align*}
\end{definition}

Note that the first entry of a G\"odel number~$\gn{\sigma}$ tells us whether $\sigma$ is a term or a formula. The second entry determines the outermost constructor or, in other words, the last clause of Definition~\ref{def:signature} that was used in the construction of~$\sigma$. A~straightforward induction shows that $\sigma$ and $\tau$ are the same term or formula when we have $\gn{\sigma}=\gn{\tau}$ (recall $a_i<\langle a_0,\ldots,a_n\rangle$). In the rest of this section it is tacitly~understood that we work over the signature of first-order arithmetic. 

\begin{lemma}\label{lem:formula-pr}
The relation $\{\gn{\sigma}\,|\,\sigma\in\Sigma\}\subseteq\mathbb N$ is primitive recursive when $\Sigma$ is the collection of all pre-terms, pre-formulas, terms or formulas, respectively.
\end{lemma}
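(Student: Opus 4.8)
The plan is to exploit the fact, noted after Definition~\ref{def:sequence-encoding}, that the entries of a coded sequence are strictly smaller than the code itself, so that the defining clauses of Definition~\ref{def:Goedel-numbers} turn into instances of course-of-values recursion (Proposition~\ref{prop:course-of-values}). Throughout I would freely use the closure properties collected in Section~\ref{sect:prim-rec}: Boolean combinations and bounded quantification (Proposition~\ref{prop:bounded-form-prim-rec}), definition by cases, and the fact that $\len$ and $(c,i)\mapsto(c)_i$ are primitive recursive (Lemma~\ref{lem:seq-code}).

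First I would treat pre-terms. Define a relation $T\subseteq\mathbb N$ by course-of-values recursion: $c\in T$ should hold exactly if $(c)_0=0$ and one of the following holds --- $\len(c)=3$ and $(c)_1=0$; or $\len(c)=2$ and $(c)_1=1$; or $\len(c)=3$, $(c)_1=2$ and $(c)_2\in T$; or $\len(c)=4$, $(c)_1\in\{3,4\}$ and both $(c)_2,(c)_3\in T$. Since $(c)_2,(c)_3<c$, this is a legitimate course-of-values recursion on a relation, so $T$ is primitive recursive, and a routine double induction (over the build-up of pre-terms and over $c$, using that distinct pre-terms have distinct Gödel numbers) shows $T=\{\gn t: t\text{ a pre-term}\}$. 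Next, a relation $F\subseteq\mathbb N$ for pre-formulas is defined analogously, again by course-of-values recursion: $c\in F$ iff $(c)_0=1$, $\len(c)=4$, and either $(c)_1\in\{0,1,2,3\}$ with $(c)_2,(c)_3\in T$ (here we only invoke the already-constructed $T$, no recursion needed); or $(c)_1\in\{4,5\}$ with $(c)_2,(c)_3\in F$; or $(c)_1\in\{6,7\}$ with $(c)_2$ even and $(c)_3\in F$. The condition ``$(c)_2$ even'' is primitive recursive (it is $\exists i\le (c)_2\,((c)_2=i+i)$), and $(c)_2,(c)_3<c$, so $F$ is primitive recursive and equals $\{\gn\varphi:\varphi\text{ a pre-formula}\}$.

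For terms and formulas I would add one more course-of-values recursion computing a relation $\mathrm{FreeOcc}(j,c)$ meaning ``$v_j$ occurs (freely, in the pre-formula case) in the expression coded by $c$'': it is false unless $c\in T\cup F$, and otherwise propagates through the clauses of Definition~\ref{def:Goedel-numbers} --- for a variable code $\langle 0,0,i\rangle$ it holds iff $j=i$; for $St$, $s+t$, $s\times t$ and the atomic and negated atomic formulas it is the disjunction of $\mathrm{FreeOcc}(j,\cdot)$ over the term-entries; for $\varphi_0\land\varphi_1$ and $\varphi_0\lor\varphi_1$ the disjunction over the two formula-entries; and for $\forall v_{2i}\,\varphi$, $\exists v_{2i}\,\varphi$ it is $j\neq 2i\wedge\mathrm{FreeOcc}(j,(c)_3)$. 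Every referenced entry is $<c$, so $\mathrm{FreeOcc}$ is primitive recursive. Moreover $\mathrm{FreeOcc}(j,c)$ forces $j<c$, since tracing a free occurrence down the entries reaches a variable code $\langle 0,0,j\rangle\le c$ and $j<\langle 0,0,j\rangle$. Hence
\[
c\in\{\gn t: t\text{ a term}\}\iff c\in T\wedge\forall j\le c\,\big(j\text{ even}\to\neg\mathrm{FreeOcc}(j,c)\big),
\]
and likewise for formulas with $F$ in place of $T$; the right-hand sides are primitive recursive by Proposition~\ref{prop:bounded-form-prim-rec}.

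The main obstacle is organisational rather than conceptual: one must lay out the several course-of-values recursions so that each recursive call is genuinely to a smaller argument (which is exactly $(c)_i<c$), keep the case analysis on $(c)_0$, $(c)_1$ and $\len(c)$ exhaustive and mutually exclusive, and --- the one spot where a quantitative estimate is needed --- justify that the index of a free variable is bounded by the code, so that ``no even-indexed variable is free'' can be expressed by a bounded quantifier. Checking that the recursively defined relations coincide with the intended sets of Gödel numbers is then a routine induction.
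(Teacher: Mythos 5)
Your proposal is correct and follows essentially the same route as the paper's proof: course-of-values recursion for pre-terms and pre-formulas (exploiting $(c)_i<c$), a separate course-of-values recursion for the free-variable relation (your $\mathrm{FreeOcc}$ is the paper's $\fv$), and the observation that a free variable's index is bounded by the code, which makes the "no even-indexed free variable" condition expressible by a bounded quantifier. The only difference is presentational: the paper works out the analogous but simpler case of numeral codes in full and declares the remaining case analysis "more tedious but not more difficult", whereas you spell it out.
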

\begin{proof}
To convey the main idea, we show that $R:=\{\gn{\sigma}\,|\,\sigma\in\Sigma\}\subseteq\mathbb N$ is primitive recursive when $\Sigma$ is the collection of numerals. This can be accomplished by course-of-values recursion, given that we have $b\in R$ precisely if $b$ is equal to $\langle 0,1\rangle$ or of the form $\langle 0,2,c\rangle$, where we necessarily have~$c<b$. To make this more formal, we need to define a primitive recursive relation~$Q\subseteq\mathbb N^2$ as in the paragraph after Proposition~\ref{prop:course-of-values}. First note that $Q_0:=\{(\gn{0},s)\,|\,s\in\mathbb N\}$ is primitive recursive by the paragraph after Exercise~\ref{ex:add-prim-rec} (and since constant functions and projections are primitive recursive). Let us also define $Q_1$ as the set of all pairs $(b,c)$ such that we have $b=\langle 0,2,i\rangle$ for some $i<b=\len(c)$ with $(c)_i=1$ (cf.~Lemma~\ref{lem:seq-code}). This set is primitive recursive in view of Proposition~\ref{prop:bounded-form-prim-rec}. An induction on~$b$ confirms
\begin{equation*}
b\in R\quad\Leftrightarrow\quad\left(b,\langle\chi_R(0),\ldots,\chi_R(b-1)\rangle\right)\in Q_0\cup Q_1=:Q.
\end{equation*}
The proofs of the claims about pre-terms and pre-formulas are more tedious to write out (because there are more cases) but not more difficult on a conceptual level. Let us now consider the set
\begin{equation*}
\fv:=\{(i,\gn{\sigma})\,|\,\text{$\sigma$ is a pre-term or pre-formula and $v_i\in\fv(\sigma)$}\},
\end{equation*}
which is primitive recursive due to course-of-values recursion (cf.~Definition~\ref{def_fv-formula}). To get the claim about terms and formulas, note that $(i,\ulcorner\sigma\urcorner)\in\fv$ entails $i<\sigma$. Thus $b$ is the G\"odel number of a term or formula, respectively, if it is the G\"odel number of a pre-term or pre-formula and we have $(2i,b)\notin\fv$ for all $i<b$. We can conclude by closure under bounded quantification (see Proposition~\ref{prop:bounded-form-prim-rec}).
\end{proof}

It is clear that functions such as $(\gn{\varphi},\gn{\psi})\mapsto\gn{\varphi\land\psi}$ are primitive recursive. Part~(a) of the following exercise covers the case of negation, which is somewhat less immediate in view of~Definition~\ref{def:nega-pred-log}. Part~(b) of the exercise is facilitated by our treatment of substitution, in which it is not necessary to rename bound variables (see Definition~\ref{def:substitution} and the paragraph that precedes it).

\begin{exercise}\label{ex:neg-prim-rec}
(a) Show that $\gn{\varphi}\mapsto\gn{\neg\varphi}$ is a primitive recursive function on G\"odel numbers of pre-formulas. \emph{Remark:}~Provide full technical details for at least part of your solution (analogous to the case of numerals in the previous proof).

(b) Prove that there is a primitive recursive function $\sub:\mathbb N^3\to\mathbb N$ such that we have $\sub(\gn{\sigma},i,\gn{t})=\gn{\sigma[v_i/t]}$ whenever $\sigma$ is a term or formula and~$t$ is a term.
\end{exercise}

We continue with a main technical result of the present section.

\begin{proposition}[Diagonal lemma]
For any formula $\varphi(x)$ with no further free variables, there is a sentence~$\psi$ such that we have~$\mathsf Q\vdash\psi\leftrightarrow\varphi(\overline{\gn{\psi}})$.
\end{proposition}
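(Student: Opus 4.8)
The plan is to follow the classical fixed-point construction, exploiting the machinery that has already been set up. The key insight is that substitution of a numeral into a formula is primitive recursive, hence $\Sigma$-definable and representable in $\mathsf Q$ by the results of Sections~\ref{sect:prim-rec} and~\ref{subsect:PA}. First I would introduce the diagonalization function $\operatorname{diag}:\mathbb N\to\mathbb N$ defined by $\operatorname{diag}(n)=\sub(n,0,\gn{\overline n})$ when $n$ is the G\"odel number of a formula (and, say, $\operatorname{diag}(n)=0$ otherwise); the point is that if $n=\gn{\theta}$ for a formula $\theta$ with at most $v_0$ free, then $\operatorname{diag}(\gn{\theta})=\gn{\theta(\overline{\gn{\theta}})}$. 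Since $n\mapsto\gn{\overline n}$ is primitive recursive (an easy course-of-values recursion using $\gn{\overline{n+1}}=\langle 0,2,\gn{\overline n}\rangle$) and $\sub$ is primitive recursive by Exercise~\ref{ex:neg-prim-rec}(b), the function $\operatorname{diag}$ is primitive recursive, hence $\Sigma$-definable by Theorem~\ref{thm:prim-rec-sigma}, and by Proposition~\ref{prop:sigma-def-representable} there is a $\Sigma$-formula $\delta(x,y)$ representing it, so that for every $n$ we have $\mathsf Q\vdash\forall y(\delta(\overline n,y)\leftrightarrow y=\overline{\operatorname{diag}(n)})$.

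Next I would perform the self-referential trick. Given $\varphi(x)$, consider the formula
\begin{equation*}
\theta(v_0):=\exists y\,(\delta(v_0,y)\land\varphi(y)),
\end{equation*}
which has $v_0$ as its only free variable. Let $m:=\gn{\theta}$ and set $\psi:=\theta(\overline m)$, which is a sentence. By the choice of $m$ we have $\operatorname{diag}(m)=\gn{\theta(\overline m)}=\gn{\psi}$. Now I would verify $\mathsf Q\vdash\psi\leftrightarrow\varphi(\overline{\gn{\psi}})$ by unfolding: $\psi$ is $\exists y\,(\delta(\overline m,y)\land\varphi(y))$, and since $\mathsf Q$ proves $\delta(\overline m,y)\leftrightarrow y=\overline{\operatorname{diag}(m)}=\overline{\gn{\psi}}$, it proves that the existential quantifier can only be witnessed by $\overline{\gn{\psi}}$; more precisely $\mathsf Q\vdash\exists y\,(\delta(\overline m,y)\land\varphi(y))\leftrightarrow\varphi(\overline{\gn{\psi}})$. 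For the left-to-right direction one uses the representability equivalence to pull the witness down to $\overline{\gn{\psi}}$; for right-to-left one simply instantiates $y:=\overline{\gn{\psi}}$ and uses that $\mathsf Q\vdash\delta(\overline m,\overline{\gn{\psi}})$, which follows from $\operatorname{diag}(m)=\gn{\psi}$ and the representability clause (or directly from $\Sigma$-completeness, Theorem~\ref{thm:Sigma-completeness}, since $\delta(\overline m,\overline{\gn{\psi}})$ is a true $\Sigma$-sentence). Chaining these gives $\mathsf Q\vdash\psi\leftrightarrow\varphi(\overline{\gn{\psi}})$, as required.

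I expect the only genuine subtlety to be bookkeeping around the distinction between pre-formulas and formulas: one must check that $\theta(v_0)$ is a genuine formula (no even-indexed variable free), which holds because $\delta$ and $\varphi$ are formulas and we only bind a fresh even-indexed $y$, and that substituting the numeral $\overline m$ (a closed term) for $v_0$ is legitimate in the sense of Definition~\ref{def:substitution}, yielding the sentence $\psi$. A second, equally routine point is confirming that $\operatorname{diag}$ really is primitive recursive --- this needs the auxiliary fact that the G\"odel-numbering of numerals is primitive recursive, together with Exercise~\ref{ex:neg-prim-rec}(b). Beyond those, everything is mechanical: the hard conceptual work has already been done in establishing representability of primitive recursive functions, so the diagonal lemma is essentially a corollary of Proposition~\ref{prop:sigma-def-representable} plus the fixed-point substitution.
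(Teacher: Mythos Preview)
Your approach is identical to the paper's proof: define the diagonal substitution function, represent it in~$\mathsf Q$ via Proposition~\ref{prop:sigma-def-representable} and Theorem~\ref{thm:prim-rec-sigma}, form the auxiliary formula $\exists y\,(\text{representation}(v,y)\land\varphi(y))$, and plug in its own G\"odel number. The paper names the representing formula~$\theta$ and the auxiliary formula~$\psi_0$, but otherwise the argument is line-for-line the same, including the two-direction verification at the end.

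There is one bookkeeping slip that actually matters under this paper's conventions. You use $v_0$ as the distinguished free variable and define $\operatorname{diag}(n)=\sub(n,0,\gn{\overline n})$, then claim that $\theta(v_0)$ is ``a genuine formula (no even-indexed variable free)''. But $v_0$ has even index, so by Definition~\ref{def_fv-formula} your $\theta(v_0)$ is only a pre-formula, not a formula, and Definition~\ref{def:substitution} does not let you substitute the term~$\overline m$ for a bound-reserved variable in the way you need. The paper avoids this by working with~$v_1$ throughout (its diagonal function is $\sigma\mapsto\sub(\sigma,1,\gn{\overline\sigma})$ and the auxiliary formula is written as $\psi_0(v_1)$). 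Replace $v_0$ by~$v_1$ and the index~$0$ by~$1$ in your $\operatorname{diag}$, and everything goes through exactly as you wrote.
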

\begin{proof}
The previous exercise ensures that $\sigma\mapsto\sub(\sigma,1,\gn{\overline\sigma})$ is primitive recursive (as the same holds for~$n\mapsto\gn{\overline n}$). Due to Proposition~\ref{prop:sigma-def-representable} and Theorem~\ref{thm:prim-rec-sigma}, we find a $\Sigma$-formula~$\theta(x,y)$ such that
\begin{equation*}
\mathsf Q\vdash\forall y\,\left(\theta\left(\overline{\gn{\sigma}},y\right)\leftrightarrow y=\overline{\gn{\sigma\left(\overline{\gn{\sigma}}\right)}}\right)
\end{equation*}
holds for any formula~$\sigma(v_1)$. Let $\psi_0(v_1)$ be the formula $\exists y\,(\theta(v_1,y)\land\varphi(y))$. We~show that the proposition holds when~$\psi$ is given as $\psi_0(\overline{\gn{\psi_0}})$. Arguing in~$\mathsf Q$, we first assume $\psi$, so that we have a~$y$ with $\theta(\overline{\gn{\psi_0}},y)\land\varphi(y)$. Here the first conjunct forces $y=\overline{\gn{\psi}}$, which turns the second conjunct into~$\varphi(\overline{\gn{\psi}})$, as required. For the converse direction, we assume $\varphi(\overline{\gn{\psi}})$. By the above, we also have $\theta(\overline{\gn{\psi_0}},\overline{\gn{\psi}})$. Together we get $\exists y\,(\theta(\overline{\gn{\psi_0}},y)\land\varphi(y))$, which coincides with~$\psi$.
\end{proof}

The following result says that no formula of first-order arithmetic can define truth for all such formulas. To avoid confusion, we note that truth for one language can be defined in a richer one. Indeed, Definitions~\ref{def:models-pred-log} and~\ref{def:signature-fa} provide a precise characterization of truth for first-order arithmetic.

\begin{theorem}[Tarski's undefinability of truth~\cite{tarski36}]
There is no formula $\theta(x)$ of first-order arithmetic such that a sentence~$\psi$ of first-order arithmetic is true precisely if we have $\mathbb N\vDash\theta(\gn{\psi})$.
\end{theorem}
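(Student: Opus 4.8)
The plan is to derive a contradiction from the existence of such a truth-defining formula~$\theta(x)$ by a direct application of the diagonal lemma. First I would consider the formula~$\neg\theta(x)$, which has~$x$ as its only free variable. Applying the diagonal lemma to~$\neg\theta(x)$ yields a sentence~$\psi$ with $\mathsf Q\vdash\psi\leftrightarrow\neg\theta(\overline{\gn\psi})$. Since this equivalence is provable in~$\mathsf Q$ and the standard model~$\mathbb N$ satisfies~$\mathsf Q$ (by inspection of Definition~\ref{def:Q-PA}, together with the fact that~$\mathbb N$ validates the equality axioms), soundness (Proposition~\ref{prop:soundness-FO}) gives $\mathbb N\vDash\psi\leftrightarrow\neg\theta(\overline{\gn\psi})$.

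Next I would unwind what the assumed property of~$\theta$ says when applied to the sentence~$\psi$ itself: $\psi$ is true precisely if $\mathbb N\vDash\theta(\overline{\gn\psi})$, i.e.~$\mathbb N\vDash\psi$ holds if and only if $\mathbb N\vDash\theta(\overline{\gn\psi})$. Combining this biconditional with the one from the previous paragraph gives $\mathbb N\vDash\theta(\overline{\gn\psi})$ if and only if $\mathbb N\vDash\neg\theta(\overline{\gn\psi})$, which is absurd since exactly one of $\mathbb N\vDash\chi$ and $\mathbb N\vDash\neg\chi$ holds for any sentence~$\chi$ (recall the remark after Definition~\ref{def:nega-pred-log} that $\mathcal M,\eta\vDash\neg\varphi$ iff $\mathcal M,\eta\nvDash\varphi$). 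This contradiction shows no such~$\theta$ can exist.

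There is essentially no hard part here: the argument is the classical liar-style diagonalization, and all the machinery — the diagonal lemma, the representability of primitive recursive substitution, and $\Sigma$-completeness — has already been assembled. The one point deserving a word of care is the tacit use of the fact that $\mathbb N$ is a model of~$\mathsf Q$, so that a $\mathsf Q$-provable equivalence transfers to a true equivalence; I would state this explicitly when invoking soundness. A secondary subtlety is purely notational: the diagonal lemma as stated uses the variable~$x$ (which is some~$v_{2i}$), so to match the substitution machinery one should read~$\varphi(x)$ as~$\varphi(v_1)$ in the relevant place, exactly as in the proof of the diagonal lemma itself; this is a cosmetic matching of variable names and creates no real obstacle.

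\begin{proof}
Suppose, for a contradiction, that $\theta(x)$ is a formula of first-order arithmetic such that a sentence~$\psi$ is true precisely if $\mathbb N\vDash\theta(\overline{\gn\psi})$. Applying the diagonal lemma to the formula~$\neg\theta(x)$, we obtain a sentence~$\psi$ with
\begin{equation*}
\mathsf Q\vdash\psi\leftrightarrow\neg\theta\bigl(\overline{\gn\psi}\bigr).
\end{equation*}
Since $\mathbb N$ validates the axioms of~$\mathsf Q$ (including the equality axioms, see Definitions~\ref{def:equality} and~\ref{def:Q-PA}), soundness (Proposition~\ref{prop:soundness-FO}) yields $\mathbb N\vDash\psi\leftrightarrow\neg\theta(\overline{\gn\psi})$, that is, $\mathbb N\vDash\psi$ holds if and only if $\mathbb N\nvDash\theta(\overline{\gn\psi})$. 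On the other hand, the assumed property of~$\theta$ applied to the sentence~$\psi$ gives that $\mathbb N\vDash\psi$ holds if and only if $\mathbb N\vDash\theta(\overline{\gn\psi})$. Combining the two equivalences, we find that $\mathbb N\vDash\theta(\overline{\gn\psi})$ holds if and only if $\mathbb N\nvDash\theta(\overline{\gn\psi})$, which is impossible. Hence no such formula~$\theta(x)$ exists.
\end{proof}
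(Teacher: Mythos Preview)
Your proof is correct and follows essentially the same route as the paper: apply the diagonal lemma to~$\neg\theta$, then combine the resulting equivalence with the assumed truth-definition to obtain the liar-style contradiction $\mathbb N\vDash\theta(\overline{\gn\psi})\leftrightarrow\neg\theta(\overline{\gn\psi})$. You are slightly more explicit than the paper in spelling out that the passage from $\mathsf Q\vdash\psi\leftrightarrow\neg\theta(\overline{\gn\psi})$ to $\mathbb N\vDash\psi\leftrightarrow\neg\theta(\overline{\gn\psi})$ uses soundness together with $\mathbb N\vDash\mathsf Q$, which is a welcome clarification.
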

\begin{proof}
Aiming at a contradiction, we assume that there is a $\theta$ with the indicated property, which amounts to $\mathbb N\vDash\psi\leftrightarrow\theta(\gn{\psi}))$ for arbitrary~$\psi$. By the diagonal lemma with $\neg\theta$ at the place of~$\varphi$, we find a sentence $\psi$ with $\mathbb N\vDash\psi\leftrightarrow\neg\theta(\gn{\psi}))$. But then we get $\mathbb N\vDash\theta(\gn{\psi}))\leftrightarrow\neg\theta(\gn{\psi}))$, which cannot be the case.
\end{proof}

The following is a crucial ingredient for G\"odel's first incompleteness theorem.

\begin{proposition}
There is a primitive recursive relation~$\prf\subseteq\mathbb N^2$ such that a formula~$\varphi$ of first-order arithmetic is provable in predicate logic precisely if there is a number $p\in\mathbb N$ with $(p,\gn{\varphi})\in\prf$.
\end{proposition}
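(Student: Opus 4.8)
The plan is to arithmetise the sequent calculus of Definition~\ref{def:sequent-calc-FO} and take $\prf(p,q)$ to say ``$p$ is the code of a sequent-calculus derivation whose end-sequent is the one-element sequent $\{\varphi\}$ with $\gn{\varphi}=q$''. I code a \emph{sequent} $\Gamma=\{\varphi_1,\dots,\varphi_n\}$ by any sequence code $\langle\gn{\varphi_1},\dots,\gn{\varphi_n}\rangle$, which I read as the set $\operatorname{set}(s):=\{(s)_0,\dots,(s)_{\len(s)-1}\}$ of its entries. The relation ``$s$ codes a sequent'' is primitive recursive, since ``$(s)_i$ is the G\"odel number of a formula'' is primitive recursive by Lemma~\ref{lem:formula-pr} and we close under the bounded quantifier $\forall i<\len(s)$ (Proposition~\ref{prop:bounded-form-prim-rec}); and ``$\operatorname{set}(s)=\operatorname{set}(s')$'' is primitive recursive for the same reason. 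A \emph{derivation} I code as a finite tree, each node being a number $p=\langle s,r,d,p_1,\dots,p_k\rangle$ with $k\le 2$: here $s$ is the code of the conclusion sequent, $r$ names the last rule applied, $p_1,\dots,p_k$ are the codes of the immediate subderivations, and $d$ encodes all the auxiliary data that rule $r$ requires — its principal formula(s), a code of the side sequent $\Gamma$, and, depending on the rule, the substituted term or the eigenvariable. Recording this data inside $p$ rather than searching for it guarantees that every auxiliary object has a code below $p$, which is what will let the correctness check run with bounded quantification only.

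The next step is to write down, for each rule of Definition~\ref{def:sequent-calc-FO}, a primitive recursive predicate $\operatorname{Ok}_r(s,d,s_1,\dots,s_k)$ expressing that rule $r$ with auxiliary data $d$ correctly infers the sequent coded by $s$ from those coded by $s_1,\dots,s_k$. For $(\mathsf{Ax})$ one checks that some entry of $s$ is the G\"odel number of a literal (primitive recursive by Lemma~\ref{lem:formula-pr} and the shape of our G\"odel numbers) whose negation is again an entry of $s$ — here the map $\gn{\psi}\mapsto\gn{\neg\psi}$ is primitive recursive by Exercise~\ref{ex:neg-prim-rec}(a). For $(\land)$ and $(\lor)$ one decodes from $d$ a side-sequent code $g$ and the relevant subformula numbers $u=\gn{\varphi}$, $v=\gn{\psi}$ and checks set equalities such as $\operatorname{set}(s)=\operatorname{set}(g)\cup\{\gn{\varphi\land\psi}\}$ and $\operatorname{set}(s_i)=\operatorname{set}(g)\cup\{u\}$, using that the connective-forming maps $(\gn{\varphi},\gn{\psi})\mapsto\gn{\varphi\land\psi},\gn{\varphi\lor\psi}$ are plainly primitive recursive. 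For $(\exists)$ one decodes from $d$ the matrix number $\gn{\psi}$, the quantified variable $v_i$ and a term code $\tau$, checks that $\tau$ codes a term, and verifies $\operatorname{set}(s)=\operatorname{set}(g)\cup\{\gn{\exists v_i\,\psi}\}$ and $\operatorname{set}(s_1)=\operatorname{set}(g)\cup\{\sub(\gn{\psi},i,\tau)\}$, where $\sub$ is the primitive recursive substitution function of Exercise~\ref{ex:neg-prim-rec}(b). The rule $(\forall)$ is handled in the same way, except that the eigenvariable $v_j$ read off from $d$ must have odd index and must satisfy the variable condition, which one writes as ``$(j,(g)_i)\notin\fv$ for all $i<\len(g)$ and $(j,\gn{\forall v_i\,\psi})\notin\fv$'', the relation $\fv$ having been shown primitive recursive in the proof of Lemma~\ref{lem:formula-pr}. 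For $(\mathsf{Cut})$ one decodes a cut-formula number $\gn{\varphi}$ from $d$ and checks $\operatorname{set}(s_1)=\operatorname{set}(s)\cup\{\gn{\varphi}\}$ and $\operatorname{set}(s_2)=\operatorname{set}(s)\cup\{\gn{\neg\varphi}\}$, again via Exercise~\ref{ex:neg-prim-rec}(a). Each $\operatorname{Ok}_r$ is thus a Boolean combination of primitive recursive relations preceded by bounded quantifiers, hence primitive recursive by Proposition~\ref{prop:bounded-form-prim-rec}.

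With these predicates in hand, I define $\operatorname{Proof}(p)$ by course-of-values recursion (Proposition~\ref{prop:course-of-values}, in the relational form noted after it): $\operatorname{Proof}(p)$ holds iff $p=\langle s,r,d,p_1,\dots,p_k\rangle$ with $k\le 2$ and $s$ a sequent code, $\operatorname{Proof}(p_j)$ holds for each of $p_1,\dots,p_k$, and $\operatorname{Ok}_r\bigl(s,d,(p_1)_0,\dots,(p_k)_0\bigr)$ holds. Since $(p)_i<p$ for $i<\len(p)$, this is a legitimate course-of-values recursion, so $\operatorname{Proof}$ is primitive recursive, and hence so is
\begin{equation*}
\prf:=\bigl\{\,(p,q)\ :\ \operatorname{Proof}(p)\ \text{and}\ \operatorname{set}\bigl((p)_0\bigr)=\{q\}\,\bigr\}.
\end{equation*}
A routine induction over the inductive definition of $\vdash$ shows, in both directions, that a sequent $\Gamma$ is derivable in the calculus precisely when some $p$ with $\operatorname{Proof}(p)$ has $\operatorname{set}((p)_0)=\Gamma$ (identifying $\Gamma$ with the set of G\"odel numbers of its members); specialising to the one-element sequent $\{\varphi\}$ — which is what ``$\varphi$ is provable in predicate logic'' means, cf.\ Definition~\ref{def:sequent-calc-FO} — this yields the stated equivalence: $\varphi$ is provable iff there is $p$ with $(p,\gn{\varphi})\in\prf$.

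Conceptually the argument is light; essentially all of it is bookkeeping, and the step that calls for genuine care is the faithful arithmetisation of the \emph{side conditions} of the rules — the substitution in $(\exists)$ and $(\forall)$ and the variable condition in $(\forall)$ — which is exactly what Exercise~\ref{ex:neg-prim-rec} and the primitive recursiveness of the relation $\fv$ were prepared for. The only design decision worth flagging is that the proof code records every auxiliary choice (principal formula, side sequent, term, eigenvariable): this makes the notion of proof a little more verbose but removes all unbounded searches, so that nothing beyond bounded quantification and (course-of-values) recursion is needed. One could instead restrict to cut-free derivations and appeal to the cut-elimination corollary, but including $(\mathsf{Cut})$ among the rules costs nothing here.
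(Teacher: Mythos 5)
Your proof is correct, and it differs from the paper's in a way worth recording. Both approaches arithmetise the sequent calculus of Definition~\ref{def:sequent-calc-FO} and use the toolkit of Section~\ref{sect:prim-rec} (Lemma~\ref{lem:formula-pr}, Exercise~\ref{ex:neg-prim-rec}, course-of-values recursion, closure under bounded quantification), but the chosen data structure for proof codes is different. The paper codes a derivation as a flat sequence $\langle\Gamma^0,\ldots,\Gamma^l\rangle$ of sequents -- a linearisation of the proof tree -- and the local correctness condition at position $k$ quantifies over earlier indices $i<k$ to find the premises, and over the contents of the listed sequents to recover the principal formula, eigenvariable, or substituted term. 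This keeps the code compact but creates one genuine subtlety, which the paper addresses explicitly: in clause~($\exists$) one must bound the existential quantifier over the substituted term $t$, and the paper observes that when $x$ is free in $\varphi$ the term $t$ occurs in $\varphi[x/t]$ and hence somewhere in the code, while when $x$ is not free the substitution is vacuous and no $t$ need be found. You instead code a derivation as a nested tree $p=\langle s,r,d,p_1,\ldots,p_k\rangle$ and record \emph{every} auxiliary choice (rule name, side sequent, principal formula, term, eigenvariable) in the field $d$. This makes the code more verbose but eliminates the bounding issue by construction: every object the correctness predicate $\operatorname{Ok}_r$ needs is a component of $p$, hence literally smaller than $p$, so nothing beyond bounded quantification and course-of-values recursion (licensed by $(p)_i<p$) enters. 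You also correctly flag the one place that demands real care -- the variable condition in~($\forall$) and the substitution operator from Exercise~\ref{ex:neg-prim-rec}(b) -- which is precisely where the paper's proof expends its effort too. The only loose end is that your treatment of~($\forall$) is described as ``handled in the same way'' as~($\exists$), whereas~($\forall$) allows a subsequent $\Gamma'\subseteq\Gamma$ rather than $\Gamma'=\Gamma$; this requires checking $\operatorname{set}(g)\subseteq\operatorname{set}(s)$ rather than an equality, but since that is again a bounded condition the change is cosmetic and does not affect the argument.
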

\begin{proof}
The idea is that $(p,\gn{\varphi})\in\prf$ holds precisely when~$p$ codes a proof of~$\varphi$ according to Definition~\ref{def:sequent-calc-FO} (see also the explanation after Lemma~\ref{lem:weakening}). In~order to make this precise, we declare that a coded sequent is a coded sequence of the form~$\langle\gn{\psi_0},\ldots,\gn{\psi_{n-1}}\rangle$, where each~$\psi_i$ is a formula. In the following, we sometimes omit the word `coded' and write $\varphi$ at the place of~$\gn{\varphi}$. Furthermore, we use familiar notation such as $\Delta\subseteq\Gamma$, which expresses that each entry of the coded sequent~$\Delta$ is also an entry of~$\Gamma$. By a coded proof, we mean a sequence~$\langle\Gamma^0,\ldots,\Gamma^l\rangle$ of sequents such that at least one of the following is satisfied for each~$k\leq l$:
\begin{description}[labelwidth=5.5ex,labelindent=\parindent,leftmargin=!,before={\renewcommand\makelabel[1]{(##1)}}]
\item[$\mathsf{Ax}$] The sequent $\Gamma^k$ contains some literal and its negation.
\item[$\land$] There is a formula~$\varphi_0\land\varphi_1$ in~$\Gamma^k$ such that each~$j<2$ admits an $i(j)<k$ for which $\Gamma^{i(j)}$ has the form~$\Gamma^k,\varphi_j$.
\item[$\lor$] There is a formula~$\varphi_0\lor\varphi_1$ in~$\Gamma^k$ such that some~$j<2$ admits an $i(j)<k$ for which $\Gamma^{i(j)}$ has the form~$\Gamma^k,\varphi_j$.
\item[$\forall$] We have a formula~$\forall x\,\varphi$ in~$\Gamma^k$, a subsequent $\Gamma'$ of $\Gamma^k$, a variable $y$ that is not free in $\Gamma',\forall x\,\varphi$ and an index $i<k$ such that $\Gamma^i$ has the form~$\Gamma',\varphi[x/y]$.
\item[$\exists$] We have a formula~$\exists x\,\varphi$ in~$\Gamma^k$, a term~$t$ and an index~$i<k$ such that $\Gamma^i$ has the form~$\Gamma^k,\varphi[x/t]$.
\item[$\mathsf{Cut}$] For some formula~$\varphi$, we have indices $i(0),i(1)<k$ such that $\Gamma^{i(0)}$ and~$\Gamma^{i(1)}$ are of the form~$\Gamma^k,\varphi$ and $\Gamma^k,\neg\varphi$, respectively.
\end{description}
In Section~\ref{sect:fundamentals}, we have considered proofs as trees. The coded proofs that we have just described list the vertices of these trees in a suitable order (from the leafs to the root). More formally, we can use induction over the generation of $\vdash\Gamma$ according to Definition~\ref{def:sequent-calc-FO} to find a coded proof~$\langle\Gamma^0,\ldots,\Gamma^l\rangle$ with $\Gamma=\Gamma^l$. Concersely, when $\langle\Gamma^0,\ldots,\Gamma^l\rangle$ is a coded proof, a straightforward induction on~$i\leq l$ yields~$\vdash\Gamma^i$.

We now declare that $(p,\gn{\varphi})\in\prf$ holds when $p$ is a coded proof~$\langle\Gamma^0,\ldots,\Gamma^l\rangle$ such that $\Gamma^l$ has the form~$\langle\gn{\varphi}\rangle$. The notion of coded proof is primitive recursive by Lemma~\ref{lem:formula-pr}, Exercise~\ref{ex:neg-prim-rec} and the results of Section~\ref{sect:prim-rec}. We provide some details for the case of clause~($\exists$). If we write $\Gamma^k=\langle\psi_0,\ldots,\psi_{n-1}\rangle$, the clause first asserts that there is an index $m<n$ such that $\varphi_m$ is of the form~$\exists x\,\varphi$. In view of Proposition~\ref{prop:bounded-form-prim-rec} (closure under bounded quantification) and Definition~\ref{def:Goedel-numbers}, this is a primitive recursive condition, and we can read off~$x$ and~$\varphi$ from the G\"odel number of~$\exists x\,\varphi$. We now determine whether~$x$ is actually free in~$\varphi$. If it is not, then $\varphi[x/t]$ coincides with~$\varphi$, so that the quantification over~$t$ can be avoided. If~$x$ is free in~$\varphi$, then $t$ will occur in~$\varphi[x/t]$ and hence in~$\Gamma^i$ for some~$i<k$, which allows us to bound the quantification over~$t$. Similarly, we see that the remaining conditions can be expressed in terms of bounded quantification and functions that we already know to be primitive recursive.
\end{proof}

Let us show that the result of the proposition extends to proofs from axioms.

\begin{exercise}\label{ex:prov-Sigma}
(a) Prove that $\{\gn{\varphi}\,|\,\mathsf T\vdash\varphi\}$ is $\Sigma$-definable if the same holds for the set~$\{\gn{\theta}\,|\,\theta\in\mathsf T\}$ of axioms of the theory~$\mathsf T$. \emph{Hint:} We have $\mathsf T\vdash\varphi$ precisely if there are~$\theta_i\in\mathsf T$ such that $\theta_1\land\ldots\land\theta_n\to\varphi$ is provable in predicate logic.

(b) Show that $\{\gn{\theta}\,|\,\theta\in\mathsf{PA}\}$ is primitive recursive and in particular $\Sigma$-definable. \emph{Remark:} This claim remains true when Peano arithmetic is replaced by Robinson arithmetic or by the theory~$\mathsf{I\Sigma}_1$.
\end{exercise}

We can now confirm that some of the previous constructions are optimal.

\begin{exercise}\label{ex:Sigma-Delta-diff}
(a) Show that there is a $\Sigma$-formula~$\theta(x)$ such that an arbitrary $\Sigma$-sentence~$\varphi$ is true precisely if we have $\mathbb N\vDash\theta(\gn{\varphi})$. \emph{Remark:} It is instructive to consider two different solutions. First, one can exploit that truth for $\Sigma$-formulas coincides with provability in~$\mathsf Q$. Second, one can use Proposition~\ref{prop:bounded-form-prim-rec} to show that $\{\gn{\theta}\,|\,\theta\in X\}$ is primitive recursive when $X$ consists of the true bounded formulas.

(b) Prove that there is no $\Pi$-formula~$\theta(x)$ such that an arbitrary $\Sigma$-sentence~$\varphi$ is true precisely if we have $\mathbb N\vDash\theta(\gn{\varphi})$.

(c) Conclude that there is a $\Sigma$-formula~$\varphi(x)$ such that the set $\{a\in\mathbb N\,|\,\mathbb N\vDash\varphi(a)\}$ is not $\Delta$-definable.

(d) Show that the set $\{\gn{\varphi}\,|\,\vdash\varphi\}$ of formulas that are provable in predicate logic is not $\Pi$-definable.
\end{exercise}

The work of the previous subsections culminates in the following. Let us note that the restrictions to theories of first-order arithmetic is inessential. The following result is readily extended to axiom systems over other signatures, as long as the natural numbers are definable and one can interpret~$\mathsf Q$ in a suitable sense.

\begin{theorem}[G\"odel's first incompleteness theorem~\cite{goedel-incompleteness}]\label{thm:goedel-one}
Consider a theory~$\mathsf T\supseteq\mathsf Q$ of first order arithmetic such that $\{\gn{\theta}\,|\,\theta\in\mathsf T\}$ is $\Sigma$-definable. It $\mathsf T$ is consistent, there is a $\Pi$-sentence~$\psi$ such that we have both $\mathsf T\nvdash\psi$ and $\mathsf T\nvdash\neg\psi$.
\end{theorem}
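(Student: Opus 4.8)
The hypothesis being mere consistency (rather than $\Sigma$-soundness or $\omega$-consistency), the plan is to diagonalize in the style of Rosser rather than using the plain G\"odel sentence. The one preliminary I would set up carefully is a \emph{decidable} proof predicate: by Exercise~\ref{ex:prov-Sigma}(a) and its hint, $\mathsf T\vdash\varphi$ holds exactly when $\theta_1\land\dots\land\theta_n\to\varphi$ is provable in predicate logic for some $\theta_i\in\mathsf T$; writing the $\Sigma$-definition of $\{\gn\theta\mid\theta\in\mathsf T\}$ as $\exists z\,\alpha(x,z)$ with $\alpha$ bounded and folding witnesses $z$ for the axiomhood of the $\theta_i$ into the proof code (Craig's trick), one obtains a \emph{primitive recursive} relation $\prf(p,y)$ such that $\mathsf T\vdash\varphi$ iff $\mathbb N\vDash\prf(\overline p,\gn\varphi)$ for some $p$. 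Here one uses the proposition preceding Exercise~\ref{ex:prov-Sigma}, Lemma~\ref{lem:formula-pr}, Exercise~\ref{ex:neg-prim-rec} and the closure results of Section~\ref{sect:prim-rec}. Since $\prf$ and its complement are primitive recursive, the corollaries after Theorem~\ref{thm:prim-rec-sigma} and after Proposition~\ref{prop:sigma-def-representable} give a $\Sigma$-formula $\prf^\Sigma$ and a $\Pi$-formula $\prf^\Pi$, both defining $\prf$ and both represented correctly by $\mathsf Q$ on numerals, i.e.\ $\mathsf Q\vdash\prf^{\bullet}(\overline p,\overline n)$ or $\mathsf Q\vdash\neg\prf^{\bullet}(\overline p,\overline n)$ according to whether $p$ codes a $\mathsf T$-proof of the formula with G\"odel number $n$. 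I would also fix a $\Sigma$-formula representing the primitive recursive negation function $n\mapsto\gn{\neg\varphi_n}$ of Exercise~\ref{ex:neg-prim-rec}(a), written $\mathrm{neg}$.

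Next I would apply the diagonal lemma to the formula $\varphi(x)$ expressing `every proof of $x$ is preceded, in code, by a proof of the negation of $x$', written so that after a numeral is substituted for $x$ the matrix under the outermost $\forall$ reads $\neg\prf^\Sigma(p,x)\lor\exists q\le p\,\prf^\Pi(q,\mathrm{neg}(x))$; since the negation of a $\Sigma$-formula is $\Pi$ and a bounded existential quantifier over a $\Pi$-formula is again $\Pi$, this makes $\varphi(\overline n)$ a $\Pi$-sentence for every numeral $\overline n$. The diagonal lemma yields a sentence $\psi$ with $\mathsf Q\vdash\psi\leftrightarrow\varphi(\overline{\gn\psi})$, and the $\Pi$-sentence I actually exhibit is
\[
 \rho\;:=\;\varphi(\overline{\gn\psi})\;=\;\forall p\bigl(\neg\prf^\Sigma(p,\overline{\gn\psi})\lor\exists q\le p\;\prf^\Pi(q,\overline{\gn{\neg\psi}})\bigr).
\]
Because $\mathsf T\supseteq\mathsf Q$ proves $\psi\leftrightarrow\rho$, it suffices to show $\mathsf T\nvdash\psi$ and $\mathsf T\nvdash\neg\psi$.

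For $\mathsf T\nvdash\psi$: assuming $\mathsf T\vdash\psi$, fix a code $p_0$ of a $\mathsf T$-proof of $\psi$; then $\mathsf Q\vdash\prf^\Sigma(\overline{p_0},\overline{\gn\psi})$ (a true $\Sigma$-sentence, Theorem~\ref{thm:Sigma-completeness}), and instantiating $\rho$ at $p:=\overline{p_0}$ gives $\mathsf T\vdash\exists q\le\overline{p_0}\,\prf^\Pi(q,\overline{\gn{\neg\psi}})$. But consistency together with $\mathsf T\vdash\psi$ rules out $\mathsf T\vdash\neg\psi$, so no number codes a proof of $\neg\psi$; hence $\mathsf Q\vdash\neg\prf^\Pi(\overline q,\overline{\gn{\neg\psi}})$ for each $q\le p_0$, and Exercise~\ref{ex:Q}(b) collapses this finite list to $\mathsf Q\vdash\neg\exists q\le\overline{p_0}\,\prf^\Pi(q,\overline{\gn{\neg\psi}})$, contradicting consistency. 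For $\mathsf T\nvdash\neg\psi$: assuming $\mathsf T\vdash\neg\psi$, consistency gives $\mathsf T\nvdash\psi$, so $\mathsf Q\vdash\neg\prf^\Sigma(\overline p,\overline{\gn\psi})$ for every $p$; fix a code $p_0$ of a proof of $\neg\psi$, so $\mathsf Q\vdash\prf^\Pi(\overline{p_0},\overline{\gn{\neg\psi}})$. Now $\mathsf T\vdash\neg\rho$, i.e.\ $\mathsf T\vdash\exists p\bigl(\prf^\Sigma(p,\overline{\gn\psi})\land\forall q\le p\,\neg\prf^\Pi(q,\overline{\gn{\neg\psi}})\bigr)$, and I would refute this inside $\mathsf Q$: for such a $p$, the clause $\forall q\le p\,\neg\prf^\Pi(q,\overline{\gn{\neg\psi}})$ and $\prf^\Pi(\overline{p_0},\overline{\gn{\neg\psi}})$ force $\neg(\overline{p_0}\le p)$, hence $p\le\overline{p_0}$ by Exercise~\ref{ex:Q}(c), hence $p$ equals one of $\overline 0,\dots,\overline{p_0}$ by Exercise~\ref{ex:Q}(b), with the value $\overline{p_0}$ itself excluded since $\neg(\overline{p_0}\le p)$; then $\prf^\Sigma(p,\overline{\gn\psi})$ together with Exercise~\ref{ex:eq-arb-forms} contradicts one of the refutations $\mathsf Q\vdash\neg\prf^\Sigma(\overline j,\overline{\gn\psi})$. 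So $\mathsf Q$, and hence $\mathsf T$, proves $\neg\neg\rho$ as well as $\neg\rho$: inconsistency again.

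I expect the main obstacle to be the first step: recognizing that plain consistency forces the Rosser construction, which in turn requires $\mathsf Q$ to \emph{decide} proofhood rather than merely enumerate it, and then actually manufacturing such a decidable proof predicate from a merely $\Sigma$-definable axiom set. Everything after that is bookkeeping --- verifying the primitive recursiveness of the coded-syntax relations, keeping the $\Sigma$- and $\Pi$-forms of $\prf$ apart so that $\rho$ is literally a $\Pi$-sentence, and not confusing the self-referential $\psi$ produced by the diagonal lemma with the $\Pi$-sentence $\rho$ one exhibits --- but it is the kind of bookkeeping that is easy to get subtly wrong.
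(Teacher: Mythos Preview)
Your proposal is correct and follows essentially the same approach as the paper: both go via Rosser's trick, diagonalizing to obtain a sentence equivalent to ``for any proof of me there is a smaller proof of my negation'', and both halves of the independence argument proceed by the same case analysis using Exercise~\ref{ex:Q}(b,c). The only noteworthy difference is in the setup of the bounded proof predicate: you use Craig's trick to make the axiom set primitive recursive and then extract separate $\Sigma$- and $\Pi$-forms of $\prf$, whereas the paper simply invokes Exercise~\ref{ex:sigma-sigma_1} to write the $\Sigma$-definable $\pr_{\mathsf T}$ in the form $\exists x\,\prf_{\mathsf T}(x,y)$ with $\prf_{\mathsf T}$ bounded---a shorter route to the same destination, since a bounded formula is already both $\Sigma$ and $\Pi$.
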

\begin{proof}
Let us recall that $\mathsf T$ is $\Pi$-sound by Corollary~\ref{cor:con-pi-sound}. We first prove the result under the assumption that $\mathsf T$ is $\Sigma$-sound as well. From Exercise~\ref{ex:prov-Sigma} we obtain a $\Sigma$-formula~$\pr_{\mathsf T}(x)$ such that $\mathsf T\vdash\varphi$ holds precisely if we have~$\mathbb N\vDash\pr_{\mathsf T}(\gn{\varphi})$. We use the diagonal lemma to get a sentence~$\psi$ with
\begin{equation*}
\mathsf T\vdash\psi\leftrightarrow\neg\pr_{\mathsf T}(\overline{\gn{\psi}}).
\end{equation*}
By this very equivalence, we can assume that $\psi$ is a $\Pi$-sentence. Intuitively, $\psi$ says `I am not provable', which is a variant of the so-called liar paradox. If we did have $\mathsf T\vdash\psi$ and hence $\mathsf T\vdash\neg\pr_{\mathsf T}(\overline{\gn{\psi}})$, we could use $\Pi$-soundness to get $\mathbb N\vDash\neg\pr_{\mathsf T}(\gn{\psi})$. The latter amounts to $T\nvdash\psi$, which we have thus established. In particular, we have seen that the so-called G\"odel sentence~$\psi$ is true. The latter also entails $\mathsf T\nvdash\neg\psi$, given that $\mathsf T$ is $\Sigma$-sound.

We now remove the assumption that $\mathsf T$ is $\Sigma$-sound. This yields an improvement of G\"odel's original result, which is due to Rosser. In view of Exercise~\ref{ex:neg-prim-rec}, we have a $\Sigma$-formula~$\operatorname{Neg}(x,y)$ such that
\begin{equation*}
\mathsf T\vdash\forall z\left(\operatorname{Neg}(\gn{\varphi},z)\leftrightarrow z=\gn{\neg\varphi}\right)
\end{equation*}
holds for any formula~$\varphi$. Due to Exercise~\ref{ex:sigma-sigma_1}, we may assume that $\pr_{\mathsf T}(y)$ has the form $\exists x\,\prf_{\mathsf T}(x,y)$ for a bounded formula $\prf_{\mathsf T}$. A number~$p$ with $\mathbb N\vDash\prf_{\mathsf T}(p,\gn{\varphi})$ may be interpreted as a proof of~$\varphi$. The notion of Rosser provability is defined by the $\Sigma$-formula $\pr_{\mathsf T}^{\operatorname{R}}(y)$ that is given as
\begin{equation*}
\exists x,z\left(\operatorname{Neg}(y,z)\land\prf_{\mathsf T}(x,y)\land\forall x'<x\,\neg\prf_{\mathsf T}(x',z)\right).
\end{equation*}
Given that~$\mathsf T$ is consistent, Rosser provability and usual provability coincide in the sense that $\mathbb N\vDash\pr_{\mathsf T}^{\operatorname{R}}(\overline{\gn{\varphi}})$ is equivalent to~$\mathsf T\vdash\varphi$. Let us now redefine the formula~$\psi$ from above by stipulating~$\mathsf T\vdash\psi\leftrightarrow\neg\pr_{\mathsf T}^{\operatorname{R}}(\overline{\gn{\psi}})$. Intuitively, $\psi$ says `for any proof of me there is a shorter proof of my negation'. Essentially the same argument as before yields~$\mathsf T\nvdash\psi$. To complete the proof, we assume $\mathsf T\vdash\neg\psi$ and derive a contradiction. In view of $\Sigma$-completeness, we find a number~$p$ with
\begin{equation*}
\mathsf T\vdash\prf_{\mathsf T}(\overline p,\overline{\gn{\neg\psi}})\quad\text{and}\quad\mathsf T\vdash\forall x<\overline p\,\,\neg\prf_{\mathsf T}(x,\overline{\gn{\psi}}).
\end{equation*}
Part~(c) of Exercise~\ref{ex:Q} allows us to infer
\begin{equation*}
\mathsf T\vdash\forall x\left(\prf_{\mathsf T}(x,\overline{\gn{\psi}})\to\exists x'<x\,\prf_{\mathsf T}(x',\overline{\gn{\neg\psi}})\right).
\end{equation*}
The latter entails $\mathsf T\vdash\neg\pr_{\mathsf T}^{\operatorname{R}}(\overline{\gn{\psi}})$ and hence $\mathsf T\vdash\psi$, which was already refuted.
\end{proof}

We now present versions of L\"ob's theorem and the second incompleteness theorem that are abstract in the sense that relevant properties of provability are included as assumptions. Some justification for the latter can be found in the paragraph after Corollary~\ref{cor:G2} below.

\begin{theorem}[L\"ob's theorem~\cite{loebs-theorem}]\label{thm:loeb}
Assume that~$\mathsf T\supseteq\mathsf Q$ and~$\pr_{\mathsf T}(y)$ are a theory and a formula of first-order arithmetic that validate the following (so-called Hilbert-Bernays conditions):
\begin{enumerate}[label=(D\arabic*)]
\item If we have $T\vdash\varphi$, we get $\mathsf T\vdash\pr_{\mathsf T}(\overline{\gn{\varphi}})$.
\item We have $\mathsf T\vdash\pr_{\mathsf T}(\overline{\gn{\varphi}})\to\pr_{\mathsf T}\big(\overline{\gn{\pr_{\mathsf T}(\overline{\gn{\varphi}})}}\big)$.
\item We have $\mathsf T\vdash\pr_{\mathsf T}(\overline{\gn{\varphi\to\psi}})\land\pr_{\mathsf T}(\overline{\gn{\varphi}})\to\pr_{\mathsf T}(\overline{\gn{\psi}})$.
\end{enumerate}
Given $\mathsf T\vdash\pr_{\mathsf T}\big(\overline{\gn{\varphi}}\big)\to\varphi$, we then get $\mathsf T\vdash\varphi$, for any sentence~$\varphi$.
\end{theorem}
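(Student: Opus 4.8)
The plan is to follow the classical Henkin--L\"ob argument, using the diagonal lemma together with the three derivability conditions and ordinary propositional reasoning inside~$\mathsf T$ (available since $\mathsf T\supseteq\mathsf Q$). No soundness hypothesis is needed.

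First I would apply the diagonal lemma to the formula $\theta(x)$ given by $\pr_{\mathsf T}(x)\to\varphi$, which has $x$ as its only free variable because $\varphi$ is a sentence. This produces a sentence~$\chi$ with
\begin{equation*}
\mathsf T\vdash\chi\leftrightarrow\big(\pr_{\mathsf T}(\overline{\gn{\chi}})\to\varphi\big).
\end{equation*}
Intuitively $\chi$ asserts ``if I am provable, then $\varphi$''; L\"ob's theorem will follow once we show that $\chi$ itself is provable in~$\mathsf T$.

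Next I would process the left-to-right half $\mathsf T\vdash\chi\to\big(\pr_{\mathsf T}(\overline{\gn{\chi}})\to\varphi\big)$ of the diagonal equivalence through the Hilbert--Bernays conditions. Applying (D1) and then (D3) gives
\begin{equation*}
\mathsf T\vdash\pr_{\mathsf T}(\overline{\gn{\chi}})\to\pr_{\mathsf T}\big(\overline{\gn{\pr_{\mathsf T}(\overline{\gn{\chi}})\to\varphi}}\big),
\end{equation*}
and a second use of (D3) on the inner implication yields
\begin{equation*}
\mathsf T\vdash\pr_{\mathsf T}\big(\overline{\gn{\pr_{\mathsf T}(\overline{\gn{\chi}})\to\varphi}}\big)\to\Big(\pr_{\mathsf T}\big(\overline{\gn{\pr_{\mathsf T}(\overline{\gn{\chi}})}}\big)\to\pr_{\mathsf T}(\overline{\gn{\varphi}})\Big).
\end{equation*}
Feeding in (D2), namely $\mathsf T\vdash\pr_{\mathsf T}(\overline{\gn{\chi}})\to\pr_{\mathsf T}\big(\overline{\gn{\pr_{\mathsf T}(\overline{\gn{\chi}})}}\big)$, and chaining these implications propositionally inside~$\mathsf T$, I obtain
\begin{equation*}
\mathsf T\vdash\pr_{\mathsf T}(\overline{\gn{\chi}})\to\pr_{\mathsf T}(\overline{\gn{\varphi}}).
\end{equation*}

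To close the argument I would compose this with the hypothesis $\mathsf T\vdash\pr_{\mathsf T}(\overline{\gn{\varphi}})\to\varphi$, getting $\mathsf T\vdash\pr_{\mathsf T}(\overline{\gn{\chi}})\to\varphi$. By the diagonal equivalence this is the right-hand side of the biconditional defining~$\chi$, hence $\mathsf T\vdash\chi$. Now (D1) applied to~$\chi$ gives $\mathsf T\vdash\pr_{\mathsf T}(\overline{\gn{\chi}})$, and combining with $\mathsf T\vdash\pr_{\mathsf T}(\overline{\gn{\chi}})\to\varphi$ produces $\mathsf T\vdash\varphi$, as required. I expect the only delicate point to be the double application of (D3) to nested provability statements: one must track carefully which G\"odel number is nested inside which, and check that the formula handed to the diagonal lemma really does have a single free variable. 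This is bookkeeping rather than a genuine obstacle.
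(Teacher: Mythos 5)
Your proposal is correct and follows essentially the same route as the paper: diagonalize on $\pr_{\mathsf T}(x)\to\varphi$, apply (D1) to the forward implication, use two instances of (D3) and one of (D2) to reach $\mathsf T\vdash\pr_{\mathsf T}(\overline{\gn{\chi}})\to\pr_{\mathsf T}(\overline{\gn{\varphi}})$, chain with the hypothesis to get $\mathsf T\vdash\chi$, and finish with (D1) and modus ponens. The only difference is expository: you spell out the intermediate formulas for the two (D3) steps where the paper compresses them.
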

\begin{proof}
The diagonal lemma provides a sentence~$\psi$ with
\begin{equation*}
\mathsf T\vdash\psi\leftrightarrow\left(\pr_{\mathsf T}(\overline{\gn{\psi}})\to\varphi\right).
\end{equation*}
Due to condition~(D1), we obtain
\begin{equation*}
\mathsf T\vdash\pr_{\mathsf T}\left(\overline{\gn{\psi\to\left(\pr_{\mathsf T}(\overline{\gn{\psi}})\to\varphi\right)}}\right).
\end{equation*}
By two applications of~(D3), we can infer
\begin{equation*}
\mathsf T\vdash\pr_{\mathsf T}(\overline{\gn{\psi}})\to\left(\pr_{\mathsf T}\left(\overline{\gn{\pr_{\mathsf T}(\overline{\gn{\psi}})}}\right)\to\pr_{\mathsf T}(\overline{\gn{\varphi}})\right).
\end{equation*}
In view of condition~(D2), the latter amounts to~$\mathsf T\vdash\pr_{\mathsf T}(\overline{\gn{\psi}})\to\pr_{\mathsf T}(\overline{\gn{\varphi}})$. Given that we have $\mathsf T\vdash\pr_{\mathsf T}\big(\overline{\gn{\varphi}}\big)\to\varphi$, we obtain $\mathsf T\vdash\pr_{\mathsf T}(\overline{\gn{\psi}})\to\varphi$ and hence~$\mathsf T\vdash\psi$. Another application of~(D1) yields $\mathsf T\vdash\pr_{\mathsf T}(\overline{\gn{\psi}})$, so that we finally get~$\mathsf T\vdash\varphi$.
\end{proof}

In the following result, the formula~$\neg\pr_{\mathsf T}(\overline{\gn{0=1}})$ is an arithmetization of the statement~$\mathsf T\nvdash 0=S0$, which asserts that~$\mathsf T$ is consistent. Intuitively, the result says that~$\mathsf T$ cannot prove its own consistency.

\begin{corollary}[G\"odel's second incompleteness theorem~\cite{goedel-incompleteness}]\label{cor:G2}
Let~$\mathsf T\supseteq\mathsf Q$ and~$\pr_{\mathsf T}(y)$ be a theory and a formula of first-order arithmetic such that the Hilbert-Bernays conditions are satisfied. If $\mathsf T$ is consistent, then we have $\mathsf T\nvdash\neg\pr_{\mathsf T}(\overline{\gn{0=S0}})$.
\end{corollary}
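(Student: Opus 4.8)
The plan is to obtain the corollary as an immediate consequence of L\"ob's theorem (Theorem~\ref{thm:loeb}), applied to the sentence $\varphi$ given by $0=S0$. Observe first that the formula $\neg\pr_{\mathsf T}(\overline{\gn{0=S0}})$ is precisely the arithmetized consistency statement for~$\mathsf T$, so the task is to show that~$\mathsf T$ cannot prove it under the standing assumptions.

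First I would argue by contradiction, assuming $\mathsf T\vdash\neg\pr_{\mathsf T}(\overline{\gn{0=S0}})$. Since $\neg\alpha$ propositionally entails $\alpha\to\beta$ for any $\beta$ (ex falso quodlibet, available in predicate logic), this immediately yields
\[
\mathsf T\vdash\pr_{\mathsf T}(\overline{\gn{0=S0}})\to 0=S0.
\]
This is exactly the hypothesis of L\"ob's theorem with $\varphi$ taken to be $0=S0$; the remaining hypotheses of that theorem, namely $\mathsf T\supseteq\mathsf Q$ and the Hilbert--Bernays conditions (D1)--(D3), hold by the assumptions of the present corollary. Hence Theorem~\ref{thm:loeb} gives $\mathsf T\vdash 0=S0$.

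It remains to observe that $\mathsf T\vdash 0=S0$ contradicts the consistency of~$\mathsf T$. Indeed, the Robinson axiom $x\neq 0\leftrightarrow\exists y\,x=Sy$, instantiated at $x:=S0$ with witness $y:=0$, yields $\mathsf Q\vdash S0\neq 0$, and then symmetry of equality gives $\mathsf Q\vdash\neg(0=S0)$. Since $\mathsf T\supseteq\mathsf Q$, combining this with $\mathsf T\vdash 0=S0$ shows $\mathsf T\vdash\bot$, so $\mathsf T$ is inconsistent --- contrary to hypothesis. There is essentially no obstacle here beyond the clever instantiation $\varphi:=0=S0$ in L\"ob's theorem; all the substantive work has already been done in establishing Theorem~\ref{thm:loeb} (and, implicitly, in verifying that $\mathsf{PA}$ and related theories satisfy the Hilbert--Bernays conditions).
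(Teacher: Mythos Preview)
Your proof is correct and follows essentially the same approach as the paper: assume $\mathsf T\vdash\neg\pr_{\mathsf T}(\overline{\gn{0=S0}})$, derive $\mathsf T\vdash\pr_{\mathsf T}(\overline{\gn{0=S0}})\to 0=S0$, apply L\"ob's theorem to obtain $\mathsf T\vdash 0=S0$, and note that this contradicts consistency since $\mathsf Q\vdash 0\neq S0$. The paper's proof is more terse but identical in structure.
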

\begin{proof}
If the result was false, we would get $\mathsf T\vdash\pr_{\mathsf T}(\overline{\gn{0=S0}})\to 0=S0$, so that L\"ob's theorem would yield $\mathsf T\vdash 0=S0$, which is incompatible with the assumption that~$\mathsf T$ is consistent and contains~$\mathsf Q$.
\end{proof}

Concerning the Hilbert-Bernays conditions, we note that~(D1) is a consequence of~$\Sigma$-completeness if $\pr_{\mathsf T}$ is a $\Sigma$-definition of provability in~$\mathsf T$. Condition~(D3) is readily established for standard arithmetizations of provability, since the inference from~$\varphi\to\psi$ and $\varphi$ to the conclusion~$\psi$ is a single application of the cut rule (see the work of Robert Jeroslow~\cite{jeroslow-goedel} for results on cut-free proofs). Finally, condition~(D2) is an internal version of~(D1). To show that~(D2) is satisfied, one can check that our proof of $\Sigma$-completeness can be formalized in the theory~$\mathsf T$. When the latter is~$\mathsf{I\Sigma}_1$ or Peano arithmetic, this is relatively straightforward but demands a lot of diligent technical work. A detailed proof of the Hilbert-Bernays conditions can be found, e.\,g., in the textbook by Wolfgang Rautenberg~\cite{rautenberg-introduction}.

\begin{exercise}\label{ex:Rosser-provability}
(a) Show that we have $\mathsf Q\vdash\neg\pr_{\mathsf Q}^{\operatorname{R}}(\overline{\gn{0=S0}})$, where $\pr_{\mathsf Q}^{\operatorname{R}}$ refers to the notion of Rosser provability from the proof of Theorem~\ref{thm:goedel-one}. Also give an informal explanation why condition~(D2) can be expected to fail for Rosser provability.

(b) Consider $\psi$ with $\mathsf{T}\vdash\psi\leftrightarrow\neg\pr_{\mathsf{T}}(\overline{\gn{\psi}})$, as in the proof of Theorem~\ref{thm:goedel-one}. Assuming the Hilbert-Bernays conditions, show that we have
\begin{equation*}
\mathsf{T}\vdash\psi\leftrightarrow\neg\pr_{\mathsf{T}}(\overline{\gn{0=S0}}), 
\end{equation*}
i.\,e., that the G\"odel sentence is equivalent to consistency.
\end{exercise}

Part~(a) of the exercise shows that the second incompleteness theorem does not apply to some non-standard arithmetizations of provability. To explain why this does not diminuish the importance of the theorem, we now discuss its role in the context of Hilbert's program (see~\cite{zach19} for more details and further references). Let us first point out that the incompleteness theorems extend to axiom systems like set theory, which allow for a definition of the natural numbers and an interpretation of~$\mathsf Q$ but can also express mathematical statements that are much more abstract and involve~complex~infinite objects. In the foundational debate around 1900, several famous mathematicians suggested that these abstract statements may lack a clear meaning and could endager the integrity of mathematics. Hilbert's program has been proposed as a way to contain such objections and to justify the use of abstract methods: The idea is that we do not actually need to ascribe meaning to abstract notions when we wish to secure their concrete consequences, where the latter are identified with the $\Pi$-sentences of first-order arithmetic. By Corollary~\ref{cor:con-pi-sound}, it suffices to ensure that our abstract methods form a consistent theory, which is a concrete condition about finite proofs.

To be more precise, the hope behind Hilbert's program was that a fairly benign theory such as Peano arithmetic would prove the consistency of a powerful and abstract theory such as Zermelo-Fraenkel set theory with choice ($\mathsf{ZFC}$), i.\,e., that we would have $\mathsf{PA}\vdash\neg\pr_{\mathsf{ZFC}}(\overline{\gn{0=S0}})$. If this had been the case, any $\Pi$-sentence that is provable in~$\mathsf{ZFC}$ would have been provable in~$\mathsf{PA}$ as well. Indeed, when we have $\mathsf{ZFC}\vdash\psi$, we get $\mathsf{PA}\vdash\pr_{\mathsf{ZFC}}(\overline{\gn{\psi}})$ by a version of~(D1). Furthermore, a version of~(D2) yields $\mathsf{PA}\vdash\neg\psi\to\mathsf{Pr}_{\mathsf{ZFC}}(\overline{\gn{\psi\to 0=S0}})$, given that $\neg\psi$ is a $\Sigma$-sentence. Using a version of~(D3), we can now conclude $\mathsf{PA}\vdash\neg\psi\to\mathsf{Pr}_{\mathsf{ZFC}}(\overline{\gn{0=S0}})$. If we did have $\mathsf{PA}\vdash\neg\pr_{\mathsf{ZFC}}(\overline{\gn{0=S0}})$, we could thus infer $\mathsf{PA}\vdash\psi$, as promised. The point is that this argument relies on versions of the Hilbert-Bernays conditions. This provides another form of justification for the inclusion of these conditions in the second incompleteness theorem: the latter is most relevant -- at least in the context of Hilbert's program -- when the conditions are satisfied. Conversely, the counterexample from part~(a) of Exercise~\ref{ex:Rosser-provability} is not particularly relevant, because Rosser provability was no suitable basis for Hilbert's program in the first place.

In a sense, we would not gain much from a theory that proves its own consistency. An equivalent but more compelling formulation of the second incompleteness theorem might say that no theory proves the consistency of a stronger one. In particular, we see that $\mathsf{I\Sigma}_1$ cannot prove the consistency of~$\mathsf{PA}$. Conversely, one can show that $\mathsf{PA}$ proves the consistency of~$\mathsf{I\Sigma}_1$, so that the inclusion $\mathsf{I\Sigma}_1\subseteq\mathsf{PA}$ is strict in a strong sense. The second incompleteness theorem can thus be used to compare the strength of theories.

Despite G\"odel's theorems, the goal of Hilbert's program has been realized to an astonishing extent: it has been confirmed that a great deal of mathematics can be done in theories that are no stronger than~$\mathsf{I\Sigma}_1$ or at least than~$\mathsf{PA}$ (see~\cite{kohlenbach-higher-types,simpson09}). In Section~\ref{sect:goodstein} we will see a notable exception in the form of a concrete mathematical theorem that cannot be proved in~$\mathsf{I\Sigma}_1$.

\section{Proof Theory}\label{sect:proof-theory}

This section offers a glimpse into proof theory, which is one of the main subfields of mathematical logic. As the name suggests, proof theory is directly concerned with proofs and their structure. We have already seen one phenomenon that falls within the scope of elementary proof theory, namely the connection between cut-elimination and the size of Herbrand disjunctions (see Exercise~\ref{ex:Herbrand-supexp} and the paragraph that precedes it).

On a more concrete level, our aim in the present section is to show that the theory $\mathsf{I\Sigma}_1$ (see Definition~\ref{def:Q-PA}) cannot prove a certain mathematical theorem about non-hereditary Goodstein sequences. This is interesting since it yields an example for G\"odel's incompleteness theorems. It shows that incompleteness is not just a theoretical possibility but can come up in everyday mathematics, even though this is rare. The theorem about Goodstein sequences asserts that a specific computer program terminates on every input (i.\,e., the program does not enter an infinite loop). In the author's view, it is fascinating that incompleteness occurs at such a concrete level and not just in the realm of infinite sets. It is a strength of proof theory that its tools are sensitive enough to detect complexity and incompleteness even in the finite domain. To avoid misunderstanding, we note that there are very fruitful connections with computability and set theory, which are often associated with the countably infinite and the uncountable, respectively (see Sections~\ref{sect:computability} and~\ref{sect:set-theory}).

In Subsection~\ref{subsect:prov-tot-ISigma_1} below, we introduce the concept of provably total function, which captures the $\Pi_2$-theorems of an axiom system (cf.~the paragraph after Definition~\ref{def:Sigma-formula}). We then show that the provably total functions of~$\mathsf{I\Sigma}_0$ are precisely the primitive recursive functions. In Subsection~\ref{sect:goodstein}, we prove that Goodstein sequences give rise to a function that grows faster than any primitive recursive one, which yields the desired independence result.

Once again, we conclude the introduction with some pointers to the literature. First, this section is a very selective introduction to proof theory. The survey article by Michael Rathjen and Wilfried Sieg~\cite{rathjen-sieg-stanford} provides a more complete picture. Secondly, our presentation may give the false impression that proof theory is only concerned with foundations. In fact, it has applications in various parts of mathematics, in particular via the use of G\"odel's so-called Dialectica interpretation in proof mining (see the textbook by Ulrich Kohlenbach~\cite{kohlenbach-proof-mining}). Finally, we point out that Goodstein sequences do not yield the most convincing example for G\"odel's theorems, because they do not seem to have many connections with other mathematical results. A much more convincing (but also more difficult) independence result is provided by the graph minor theorem, which is one of the towering achievements of modern mathematics and has important consequences in theoretical computer science. As shown by Harvey Friedman, Neil Robertson and Paul Seymour~\cite{friedman-robertson-seymour}, this theorem is independent of an axiom system denoted by $\Pi^1_1\text{-}\mathsf{CA}_0$, which is far stronger than~$\mathsf{I\Sigma}_1$. A main ingredient for this independence result is a proof-theoretic method known as ordinal analysis. For more information about the latter, we refer to another survey by Michael Rathjen~\cite{rathjen-realm} and to further lecture notes by the present author~\cite{first-course,second-course}.

\subsection{Provably Total Functions of $I\Sigma_1$}\label{subsect:prov-tot-ISigma_1}

In this section, we show that the computational content of the theory~$\mathsf{I\Sigma}_1$ is captured by the notion of primitive recursion. We know from Exercise~\ref{ex:ISigma_1-to-primrec} that any primitive recursive function is \mbox{$\mathsf{I\Sigma}_1$-}provably total in the sense of the following definition (except that some relevant facts about the encoding of sequences were not proved in the exercise). In the following we show that, conversely, any $\mathsf{I\Sigma}_1$-provably total function is primitive recursive.

\begin{definition}
Consider a theory~$\mathsf T$ over the signature of first-order arithmetic. A function $f:\mathbb N\to\mathbb N$ is called $\mathsf T$-provably total (or also $\mathsf T$-provably recursive) if we have $\mathsf T\vdash\forall x\exists y\,\varphi(x,y)$ for some $\Sigma_1$-formula~$\varphi$ that defines~$f$ (i.\,e., such that we have $f(a)=b$ precisely when $\mathbb N\vDash\varphi(a,b)$ holds).
\end{definition}

We point out that the definition would be trivial without the requirement that $\varphi$ is a $\Sigma_1$-formula (or in general: no more complex than the definition of~$f$ requires). Indeed, if $f$ is defined by~$\varphi(x,y)$, it is also defined by the formula $\psi(x,y)$ that is given as~$\forall u\exists w\,\varphi(u,w)\to\varphi(x,y)$. However, $\forall x\exists y\,\psi(x,y)$ is provable in any~theory~$\mathsf T$. Our definition is non-trivial since $\psi$ is no $\Sigma_1$-formula. The following shows that we could also require~$\mathsf T$ to prove unique existence (cf.~the paragraph before Exercise~\ref{ex:ISigma_1-to-primrec}).

\begin{exercise}
Assume that~$f$ is $\mathsf T$-provably total for a theory $\mathsf T$ that contains $\mathsf{I\Sigma}_1$ (even though less induction would also suffice). Show that we have $\mathsf T\vdash\forall x\exists! y\,\varphi(x,y)$ for some $\Sigma_1$-formula~$\varphi$ that defines~$f$. \emph{Hint:} Suppose we have $\mathsf T\vdash\forall x\exists y\exists z\,\theta(x,y,z)$ for some $\Sigma_1$-formula~$\exists z\,\theta(x,y,z)$ that defines~$f$. Let $\varphi(x,y)$ assert that $y$ is the first component of the minimal pair~$\langle y,z\rangle$ such that $\theta(x,y,z)$ holds.
\end{exercise}

In order to bound the $\mathsf{I\Sigma}_1$-provably total functions, we will use the following so-called asymmetric interpretation.

\begin{definition}\label{def:asym-int}
Given $m,n\in\mathbb N$ and a formula~$\varphi$ of first-order arithmetic, we define $\varphi^{m,n}$ as the formula that results from~$\varphi$ when all unbounded quantifiers~$\forall x$ and $\exists x$ are replaced by $\forall x\leq\overline m$ and $\exists x\leq\overline n$, respectively. Note that bounded quantifiers (cf.~Definition~\ref{def:Sigma-formula}) are unaffected, so that we get
\begin{alignat*}{3}
\varphi^{m,n}&=\forall x\left(x\leq t\to\psi^{m,n}\right)\quad&&\text{for}\quad\varphi=\forall x(x\leq t\to\psi)\text{ with }x\notin\fv(t),\\
\varphi^{m,n}&=\exists x\left(x\leq t\land\psi^{m,n}\right)\quad&&\text{for}\quad\varphi=\exists x(x\leq t\land\psi)\text{ with }x\notin\fv(t).
\end{alignat*}
Furthermore, we have
\begin{equation*}
(\forall x\,\varphi)^{m,n}=\forall x\leq\overline m\,\varphi^{m,n}\quad\text{and}\quad(\exists x\,\varphi)^{m,n}=\exists x\leq\overline n\,\varphi^{m,n}
\end{equation*}
in the case of unbounded quantifiers, i.\,e., when $\varphi$ is not of the form $x\leq t\to\psi$ or $x\leq t\land\psi$ with $x\notin\fv(t)$, respectively. In order to complete the recursive characterization of~$\varphi^{m,n}$, we note that $\varphi^{m,n}$ coincides with $\varphi$ when the latter is a literal and that we have
\begin{equation*}
(\varphi\land\psi)^{m,n}=\varphi^{m,n}\land\psi^{m,n}\quad\text{and}\quad(\varphi\lor\psi)^{m,n}=\varphi^{m,n}\lor\psi^{m,n}.
\end{equation*}
We write $\vDash^{m,n}\varphi$ when $\mathbb N,\eta\vDash\varphi^{m,n}$ holds for every~$\eta$ with $\eta(i)\leq m$ for all~$i\in\mathbb N$. Given $f:\mathbb N\to\mathbb N$, we write $\vDash^f\varphi$ if we have $\vDash^{m,f(m)}\varphi$ for all~$m\in\mathbb N$.
\end{definition}

The condition~$\eta(i)\leq m$ ensures that $\vDash^{m,n}\varphi$ is equivalent to~$\vDash^{m,n}\varphi'$ when $\varphi'$ is the universal closure of~$\varphi$. Let us also note that $\vDash^{m,n}\varphi$ entails $\vDash^{k,l}\varphi$ when we have $k\leq m$ and $l\geq n$. It follows that $\vDash^f\varphi$ entails $\vDash^g\varphi$ when $f$ is dominated by~$g$, i.\,e., when we have $f(m)\leq g(m)$ for all~$m\in\mathbb N$.

It will be shown that $\mathsf{I\Sigma}_1\vdash\psi$ entails $\vDash^g\psi$ for some primitive recursive~$g$. In~order to explain how this is applied, we now assume that $\psi$ has the form~$\exists y\,\varphi(x,y)$ for some $\Sigma$-formula~$\varphi$ that defines a function~$f$. Then each $m\in\mathbb N$ admits an $n\leq g(m)$ with $\vDash^{m,g(m)}\varphi(m,n)$. Since $\varphi$ is a $\Sigma_1$-formula, we can conclude $\mathbb N\vDash\varphi(m,n)$ and hence $f(m)=n\leq g(m)$. This does already show that $f$ is dominated by a primitive recursive function, which will be enough for the application in Section~\ref{sect:goodstein}. Due to
\begin{equation*}
f(m)=\min\left\{n\leq g(m)\,\left|\,\vDash^{m,g(m)}\varphi(m,n)\right.\right\},
\end{equation*}
we even learn that $f$ is primitive recursive, by Proposition~\ref{prop:bounded-min} and the following.

\begin{exercise}\label{ex:bounded-prim-rec}
For $c\in\mathbb N$ we write $c\vDash\varphi$ if we have $\mathbb N,\eta\vDash\varphi$ with $\eta(i):=(c)_i$ (using the sequence encoding from Definition~\ref{def:sequence-encoding}). Employ Proposition~\ref{prop:bounded-form-prim-rec}  to show that~$\{c\in\mathbb N\,|\,c\vDash\varphi\}$ is primitive recursive for each bounded formula~$\varphi$. \emph{Remark:} A somewhat more difficult argument yields the stronger result that
\begin{equation*}
\{(c,\ulcorner\varphi\urcorner)\in\mathbb N\,|\,\text{$\varphi$ is bounded and }c\vDash\varphi\}
\end{equation*}
is a primitive recursive relation.
\end{exercise}

In order to obtain a primitive recursive~$g$ with $\vDash^g\varphi$, we would like to use induction over a given proof~$\mathsf I\Sigma_1\vdash\varphi$. From Section~\ref{subsect:FO} we know that there is a cut-free proof~$\vdash_0\neg\theta_1,\ldots,\neg\theta_n,\varphi$ for suitable axioms~$\theta_i$ of~$\mathsf I\Sigma_1$. As we will see, one does obtain $\vDash^g\neg\theta_1\lor\ldots\lor\neg\theta_n\lor\varphi$ for some~$g$ that is primitive recursive (in fact even less complex). Given that the~$\theta_i$ are axioms of~$\mathsf I\Sigma_1$, one can also expect to come up with primitive recursive functions~$f_i$ that validate~$\vDash^{f_i}\theta_i$. Unfortunately, it does not seem possible to combine the given data into a primitive recursive~$h$ with $\vDash^h\varphi$, so that the argument breaks down at this point. In order to illustrate the difficulty in a somewhat simplified case, we assume that we have
\begin{equation*}
\vDash^f\forall x\exists y\,\theta\quad\text{and}\quad\vDash^g\varphi\lor\exists x\forall y\,\neg\theta
\end{equation*}
with bounded~$\theta$. Our aim is to deduce~$\vDash^h\varphi$ for a suitable~$h$. Assuming that~$\varphi$ is closed, the task is to show that $\mathbb N\vDash\varphi^{m,h(m)}$ holds for all~$m\in\mathbb N$. Since we have
\begin{equation*}
\mathbb N\vDash\varphi^{m,g(m)}\lor\exists x\leq g(m)\forall y\leq m\,\neg\theta,
\end{equation*}
an obvious idea is to take~$h=g$ and to aim at $\mathbb N\vDash\forall x\leq g(m)\exists y\leq m\,\theta$. However, the latter does not match the statements~$\mathbb N\vDash\forall x\leq n\exists y\leq f(n)\,\theta$ that are given (as we typically have $m<g(m)$ and $n<f(n)$ while we need $g(m)\leq n$ and $f(n)\leq m$). To circumvent the indicated issue, we now introduce a proof calculus that restricts the complexity of cut formulas and replaces the induction axioms by a rule.

\begin{definition}\label{def:vdashir}
For a sequent~$\Gamma$ over the signature of first-order arithmetic, we define $\vdashir\Gamma$ by the following recursive clauses (cf.~Definition~\ref{def:sequent-calc-FO}): 
\begin{description}[labelwidth=8ex,labelindent=\parindent,leftmargin=!,before={\renewcommand\makelabel[1]{(##1)}}]
\item[$\mathsf{Ax}$] When $\Gamma$ contains some literal and its negation, we have $\vdashir\Gamma$.
\item[$\land$] Given $\vdashir\Gamma,\varphi$ and $\vdashir\Gamma,\psi$, we get $\vdashir\Gamma,\varphi\land\psi$.
\item[$\lor$] From $\vdashir\Gamma,\psi$ we get both $\vdashir\Gamma,\psi\lor\theta$ and $\vdashir\Gamma,\varphi\lor\psi$.
\item[$\forall$] If we have $\vdashir\Gamma',\varphi[x/y]$ for some $\Gamma'\subseteq\Gamma$ and some~$y$ that satisfies the variable condition~$y\notin\fv(\Gamma',\forall x\,\varphi)$, then we get $\vdashir\Gamma,\forall x\,\varphi$.
\item[$\exists$] Given $\vdashir\Gamma,\varphi[x/t]$ for some term~$t$, we obtain $\vdashir\Gamma,\exists x\,\varphi$.
\item[$\Sigma\text{-}\mathsf{Cut}$] When have $\vdashir\Gamma,\varphi$ and $\vdashir\Gamma,\neg\varphi$ for some $\Sigma$-formula~$\varphi$, we get $\vdash\Gamma$.
\item[$\mathsf{IR}$] If we have $\vdashir\Gamma',\neg\varphi[x/y],\varphi[x/Sy]$ for $\Gamma'\subseteq\Gamma$, some $\Sigma$-formula~$\varphi$ and a variable $y\notin\fv(\Gamma',\forall x\,\varphi)$, we get $\vdashir\Gamma,\neg\varphi[x/0],\varphi[x/t]$ for any term~$t$.
\end{description}
\end{definition}

Given the induction rule, the following shows that we can derive $\vdashir\mathsf I\varphi$ for any $\Sigma$-formula~$\varphi$ (cf.~Exercise~\ref{ex:nega-pred}):

\begin{prooftree}
\AxiomC{$\varphi(x),\neg\varphi(x),\varphi(Sx)$}
\AxiomC{$\neg\varphi(Sx),\neg\varphi(x),\varphi(Sx)$}
\BinaryInfC{$\varphi(x)\land\neg\varphi(Sx),\neg\varphi(x),\varphi(Sx)$}
\UnaryInfC{$\exists x(\varphi(x)\land\neg\varphi(Sx)),\neg\varphi(x),\varphi(Sx)$}
\UnaryInfC{$\exists x(\varphi(x)\land\neg\varphi(Sx)),\neg\varphi(0),\varphi(x)$}
\UnaryInfC{$\exists x(\varphi(x)\land\neg\varphi(Sx)),\neg\varphi(0),\forall x\,\varphi(x)$}
\UnaryInfC{$\neg\varphi(0)\lor\exists x(\varphi(x)\land\neg\varphi(Sx))\lor\forall x\,\varphi(x)$}
\end{prooftree}
\vspace*{.5\baselineskip}

\noindent We would like to infer that $\mathsf{I\Sigma}_1\vdash\psi$ entails $\vdashir\neg\theta_1,\ldots,\neg\theta_n,\psi$ for suitable axioms~$\theta_i$ of Robinson arithmetic. In view of Theorem~\ref{thm:completeness-AL}, this could be achieved via cuts over the induction formulas~$\mathsf I\varphi$. However, these cuts are not permitted in the proof system from Definition~\ref{def:vdashir} (for reasons that were explained before that definition). One solution is to give a syntactic proof of cut-elimination (see, e.\,g.,~\cite{buss-introduction-98} for this classical approach). In our setting, it seems simpler to give a direct completeness proof, which amounts to another proof of cut-elimination by semantic methods.

Before we state the indicated completeness result, we note that it is unproblematic to have axioms that are $\Pi$-formulas, since their negations will be $\Sigma$-formulas, which are covered by the restricted cut rule~($\Sigma\text{-}\mathsf{Cut}$) from Definition~\ref{def:vdashir}. This applies to all axioms of Robinson's arithmetic~$\mathsf Q$ with the exception of
\begin{equation*}
\forall x,y\big(x\leq y\leftrightarrow\exists z\,z+x=y\big).
\end{equation*}
Let $\mathsf Q'$ be the theory that results from~$\mathsf Q$ when we replace this axiom by
\begin{equation*}
\forall x,y\big((x\leq y\rightarrow\exists z\leq y\,z+x=y)\land(\exists z\,z+x=y\to x\leq y)\big).
\end{equation*}
Note that the existential quantifier in the second conjunct occurs in the antecedent and will thus count as universal when the formula is brought into negation normal form. The point is that~$\mathsf Q'$ consists of true $\Pi$-sentences and entails~$\mathsf Q$.

\begin{theorem}\label{thm:compl-Sigma_1}
If we have $\mathsf{I\Sigma}_1\vdash\psi$, then we get $\vdashir\neg\theta_0,\ldots,\neg\theta_n,\psi$ for suitable axioms~$\theta_i$ of the theory~$\mathsf Q'$.
\end{theorem}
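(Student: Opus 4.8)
The overall strategy is to mimic the semantic completeness proofs of Proposition~\ref{prop:completeness-FO} and Theorem~\ref{thm:completeness-AL}, but to perform the search for a derivation \emph{inside the calculus $\vdashir$} rather than inside the full sequent calculus. The crucial observation is that the only axioms of $\mathsf{I\Sigma}_1$ that are not themselves instances of $\mathsf Q'$ (up to logical manipulations that the calculus handles for free) are the $\Sigma_1$-induction axioms $\mathsf I\varphi$, and by the derivation displayed just before the theorem statement, each such $\mathsf I\varphi$ is already derivable in $\vdashir$ using the rule~($\mathsf{IR}$). So it suffices to prove the following completeness statement: if $\bigvee\Gamma$ is a logical consequence of $\mathsf Q'$ together with all $\Sigma_1$-induction axioms, then $\vdashir\neg\theta_0,\ldots,\neg\theta_n,\Gamma$ for finitely many $\theta_i\in\mathsf Q'$.

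First I would set up the proof-search tree exactly as in the proof of Proposition~\ref{prop:completeness-FO}: build $T\subseteq 2^{<\omega}$ with ordered sequents $\Gamma(\sigma)$, fixing enumerations of terms and of the relevant formulas. The twist is that the root sequent must be enlarged at suitable even stages (as in the proof of Theorem~\ref{thm:completeness-prop}) not only by the finitely many $\Pi$-sentences of $\mathsf Q'$ and their $\nega$-negations, but also by the $\Sigma_1$-induction axioms $\mathsf I\varphi$ for a fixed enumeration of $\Sigma_1$-formulas $\varphi$, each time adding $\nega(\mathsf I\varphi)$ as a side formula. When the tree turns out finite, the branches give a derivation, and the nodes where an induction axiom was cut in must be replaced: at each such node one splices in the fixed $\vdashir$-derivation of $\mathsf I\varphi$ shown before the theorem (weakened to carry the ambient sequent along), so that the whole object is a genuine $\vdashir$-derivation whose only negated-axiom leaves come from $\mathsf Q'$. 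The decomposition rules for $\land,\lor,\forall,\exists$ translate into the corresponding clauses of Definition~\ref{def:vdashir} verbatim, and the even-stage cuts are legitimate because $\nega(\mathsf I\varphi)$ is (equivalent to) a $\Sigma$-formula when $\varphi$ is $\Sigma_1$ — so ($\Sigma\text{-}\mathsf{Cut}$) applies, which is precisely why the cut restriction in $\vdashir$ was chosen this way.

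If the tree has a branch $f$, I would run the countermodel construction of Proposition~\ref{prop:completeness-FO} unchanged: the term model $\mathcal M$ with $R^{\mathcal M}=\{\bar t\mid \nega Rt_1\ldots t_n\in\mathcal F\}$ satisfies all formulas on the branch to be false. In particular $\bigvee\Gamma$ is false in $\mathcal M$, while — because $\nega\theta_i$ was offered at a leaf for every $\theta_i\in\mathsf Q'$ and likewise $\nega(\mathsf I\varphi)$ for every $\Sigma_1$-formula $\varphi$ — the branch forces each such negation to be false in $\mathcal M$, i.e.\ $\mathcal M$ models $\mathsf Q'$ and all $\Sigma_1$-induction axioms. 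This contradicts the hypothesis that $\bigvee\Gamma$ follows from $\mathsf{I\Sigma}_1$ (here using that $\mathsf Q'$ proves $\mathsf Q$). Hence the tree is finite and we are done.

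The main obstacle I anticipate is bookkeeping rather than conceptual: one must be careful that the induction \emph{rule} ($\mathsf{IR}$) interacts correctly with the variable conditions when the spliced-in derivation of $\mathsf I\varphi$ is embedded under arbitrary side sequents (the eigenvariable $y$ in the $\forall$- and $\mathsf{IR}$-steps must be chosen fresh for the whole ambient sequent, which is always possible since sequents are finite). A second delicate point is that $\nega(\mathsf I\varphi)$ is only equivalent to, not literally, a $\Sigma$-formula; so at the even-stage cut one should cut on an actual $\Sigma$-formula $\chi$ with $\vDash\chi\leftrightarrow\nega(\mathsf I\varphi)$ and then restore $\mathsf I\varphi$ from $\nega\chi$ via a few applications of the propositional rules together with Exercise~\ref{ex:nega-pred}. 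Everything else is a routine adaptation of the earlier completeness arguments.
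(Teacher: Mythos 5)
The plan breaks down at the cut on $\nega(\mathsf I\varphi)$. You assert that $\nega(\mathsf I\varphi)$ is (equivalent to) a $\Sigma$-formula when $\varphi$ is $\Sigma_1$, but this is false. In negation normal form $\mathsf I\varphi$ reads
\begin{equation*}
\neg\varphi(0)\lor\exists x\big(\varphi(x)\land\neg\varphi(Sx)\big)\lor\forall x\,\varphi(x),
\end{equation*}
and for a $\Sigma_1$-formula $\varphi$ this is genuinely $\Pi_2$: the middle disjunct contains an unbounded existential, the last disjunct an unbounded universal over a $\Sigma_1$-matrix. No $\Sigma$-formula (and no $\Pi$-formula) can be logically equivalent to it; the alternation is essential. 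Hence the cut you propose at the even stages is not an instance of ($\Sigma\text{-}\mathsf{Cut}$), and your last-paragraph hedge of finding a logically equivalent $\Sigma$-formula $\chi$ cannot be executed because there is no such $\chi$. This is precisely the obstruction that the text explicitly raises in the paragraph between the displayed $\vdashir$-derivation of $\mathsf I\varphi$ and the statement of the theorem: "this could be achieved via cuts over the induction formulas $\mathsf I\varphi$. However, these cuts are not permitted in the proof system from Definition~\ref{def:vdashir}." Splicing in a $\vdashir$-derivation of $\mathsf I\varphi$ at the relevant leaves does not help either, since joining that derivation with the other branch still requires a cut on $\mathsf I\varphi$ itself.

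The paper's own proof sidesteps the illegal cut by never placing $\mathsf I\varphi$ (or its negation) into a sequent. At the stages devoted to induction, the tree construction branches in a finer way: for a given $\Sigma$-formula $\varphi_i$ and term $t_k$, it first cuts on the $\Sigma$-formula $\varphi_i(0)$, then, in the branch carrying $\neg\varphi_i(0)$, cuts again on the $\Sigma$-formula $\varphi_i(t_k)$, and finally recognizes the branch carrying $\neg\varphi_i(0),\varphi_i(t_k)$ as the conclusion of the induction rule ($\mathsf{IR}$) applied with premise $\neg\varphi_i(y),\varphi_i(Sy)$ (fresh $y$). All cut formulas occurring are genuine $\Sigma$-formulas, and ($\mathsf{IR}$) supplies the missing inference without any cut. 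In the countermodel direction, the verification that the resulting term model satisfies each $\mathsf I\varphi_i$ then proceeds by observing that, for every $k$, the branch must pass through the node $\neg\varphi_i(t_k)$ (since neither $\varphi_i(0)$ nor the pair $\neg\varphi_i(y),\varphi_i(Sy)$ can lie on the branch given the hypotheses of the induction). The decomposition into two $\Sigma$-cuts plus one application of ($\mathsf{IR}$) is the key idea you are missing; without it, the bounded-cut restriction that makes the subsequent asymmetric-interpretation argument of Theorem~\ref{thm:Sigma_1-prim-rec} go through cannot be respected.
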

\begin{proof}
We adapt the completeness proofs that were given in Sections~\ref{subsect:complete-PL} and~\ref{subsect:FO}. As a preparation, we fix sequences~$\theta_0,\theta_1,\ldots$, $\mathsf I\varphi_0,\mathsf I\varphi_1,\ldots$ and $t_0,t_1,\ldots$ that enumerate all axioms of~$\mathsf Q'$, all instances of $\Sigma$-induction and all terms, respectively. Once again, we recursively define a binary tree~$T$ and an ordered sequent~$\Gamma(\sigma)$ for each~$\sigma\in T$. In the base case, we declare that we have $\langle\rangle\in T$ and $\Gamma(\langle\rangle)=\langle\psi\rangle$. For the recursion step, we distinguish cases according to the length~$l(\sigma)$ of a given $\sigma\in T$ such that $\Gamma(\sigma)$ is already determined. If we have $l(\sigma)=6n$, we declare that $\sigma\star i\in T$ holds for both $i<2$ and that we have~$\Gamma(\sigma\star0)=\Gamma(\sigma),\theta_n$ and $\Gamma(\sigma\star1)=\Gamma(\sigma),\neg\theta_n$, as in the proof of Theorem~\ref{thm:completeness-prop}. We also declare that $\sigma\star 0$ is a leaf of~$\mathsf T$ and that the recursion step at~$\tau:=\sigma\star 1$ is carried out as in the proofs of Propositions~\ref{prop:completeness} and~\ref{prop:completeness-FO}. More explicitly, this means that we first check whether~$\Gamma(\tau)$ contains some atomic formula together with its negation. If this is the case, we declare that $\tau$ is a leaf of~$T$. Otherwise, we write $\Gamma(\tau)=\varphi,\Delta$ and distinguish cases according to the form of~$\varphi$. As an example, we recall that $\varphi=\psi_0\land\psi_1$ leads to $\Gamma(\tau\star i)=\Delta,\varphi,\psi_i$ for both~$i<2$.

Let us now consider the recursive definition above a sequence~$\sigma\in T$ that has length~$l(\sigma)=6n+2$. Consider~$i,j,k$ such that we have $n=\pi(i,k)$ and $\forall v_{2j}\,\varphi_i$ is the conclusion of~$\mathsf I\varphi_i$ (cf.~Definition~\ref{def_fv-formula} and Exercise~\ref{ex:Cantor-pairing}). Let~$\varphi_i(t)$ denote~$\varphi_i[v_{2j}/t]$. The tree above~$\sigma$ is to look as follows, where~$y$ is some fresh variable:

\begin{prooftree}
\AxiomC{$\Gamma(\sigma),\varphi_i(0)$}
\AxiomC{$\Gamma(\sigma),\neg\varphi_i(0),\neg\varphi_i(t_k)$}
\AxiomC{$\Gamma(\sigma),\neg\varphi_i(y),\varphi_i(Sy)$}
\UnaryInfC{$\Gamma(\sigma),\neg\varphi_i(0),\varphi_i(t_k)$}
\BinaryInfC{$\Gamma(\sigma),\neg\varphi_i(0)$}
\BinaryInfC{$\Gamma(\sigma)$}
\end{prooftree}
\vspace*{.5\baselineskip}
More explicitly, we declare that $\sigma\star p\star q\star r\in T$ holds for all~$p,q,r<2$. Still for arbitrary~$q,r<2$, the associated sequents are given by
\begin{align*}
\Gamma(\sigma\star 0)=\Gamma(\sigma\star 0\star q)=\Gamma(\sigma\star 0\star q\star r)&=\Gamma(\sigma),\varphi_i(0),\\
\Gamma(\sigma\star 1)&=\Gamma(\sigma),\neg\varphi_i(0),\\
\Gamma(\sigma\star 1\star 0)=\Gamma(\sigma\star 1\star 0\star r)&=\Gamma(\sigma),\neg\varphi_i(0),\neg\varphi_i(t_k),\\
\Gamma(\sigma\star 1\star 1)&=\Gamma(\sigma),\neg\varphi_i(0),\varphi_i(t_k),\\
\Gamma(\sigma\star 1\star 1\star r)&=\Gamma(\sigma),\neg\varphi_i(y),\varphi_i(Sy).
\end{align*}
For definiteness, one may take~$y$ to be~$v_m$ for the smallest odd number~$m$ such that we have $v_m\notin\fv(\Gamma(\sigma),\forall v_{2j}\,\varphi)$. Let us note that we have covered all sequences~$\tau$ of length $l(\tau)\leq 6n+5$, so that our recursive definition is complete.

As in the previous completeness proofs, we now distinguish two cases. Let us first assume that~$T$ has a branch~$f:\mathbb N\to\{0,1\}$. By the proof of Proposition~\ref{prop:completeness-FO}, we obtain a model~$\mathcal M$ with universe~$M=\{t_k\,|\,k\in\mathbb N\}$ and an assignment~$\eta:\mathbb N\to M$ with $t^{\mathcal M,\eta}=t$ such that we have $\mathcal M,\eta\nvDash\rho$ for any formula~$\rho$ that occurs on~$f$. In particular, this applies when~$\rho$ is the formula~$\psi\in\Gamma(\langle\rangle)$ that is given in the present theorem. As in the proof of Theorem~\ref{thm:completeness-prop}, we also get~$\mathcal M,\eta\vDash\mathsf Q'$ and hence~$\mathcal M,\eta\vDash\mathsf Q$ (since each $\theta_k$ occurs at a leaf, so that~$\neg\theta_k$ must lie on~$f$). To reach a contradiction with the given assumption~$\mathsf{I\Sigma}_1\vdash\psi$, we now show that~$\mathcal M,\eta\vDash\mathsf I\varphi_i$ holds for any~$i\in\mathbb N$. We still write $\varphi_i(t)$ for $\varphi_i[v_{2j}/t]$, where $\forall v_{2j}\,\varphi_i$ is the conclusion of~$\mathsf I\varphi_i$. Let us assume that we have
\begin{equation*}
\mathcal M,\eta\vDash\varphi_i(0)\quad\text{and}\quad\mathcal M,\eta\vDash\forall x\,(\varphi_i(x)\to\varphi_i(Sx)).
\end{equation*}
To obtain the desired conclusion~$\mathcal M,\eta\vDash\forall x\,\varphi_i(x)$, we need to show that $\mathcal M,\eta_{2j}^{t_k}\vDash\varphi_i$ or equivalently~$\mathcal M,\eta\vDash\varphi_i(t_k)$ holds for arbitrary~$k$ (cf.~Exercise\ref{ex:substitution}). Let us consider the construction of~$T$ above the sequence~$f[6n+2]$ with $n=\pi(i,k)$. The following entails that~$\neg\varphi_i(t_k)$ occurs on~$f$, which is enough to conclude. First, the formula~$\varphi_i(0)$ cannot occur on~$f$, as this would yield~$\mathcal M,\eta\nvDash\varphi_i(0)$. Secondly, it is not possible that both~$\neg\varphi_i(y)$ and $\varphi_i(Sy)$ occur on the branch~$f$, since this would allow us to conclude~$\mathcal M,\eta\vDash\exists x(\varphi_i(x)\land\neg\varphi_i(Sx))$.

In the remaining case, the tree~$T$ is finite due to K\H{o}nig's lemma. Analogous to the proof of Theorem~\ref{thm:completeness-prop}, we consider the height function $h:T\to\mathbb N$ and pick a number~$n$ with $h(\langle\rangle)\leq 6n$. An induction over~$h(\sigma)$ shows $\vdashir\neg\theta_0,\ldots,\neg\theta_n,\Gamma(\sigma)$, which amounts to the conclusion of the theorem when we take~$\sigma=\langle\rangle$. Crucially, each negated axiom~$\neg\theta_k$ and each instance~$\varphi_i(t)$ is a $\Sigma$-formula, so that the required cuts are permissible according to Definition~\ref{def:vdashir}. For~$\sigma\in T$ with $l(\sigma)=6n+2$, the inference from $\Gamma(\sigma\star1\star1\star0)$ to $\Gamma(\sigma\star1\star1)$ is justified by our induction rule.
\end{proof}

The previous theorem is particularly relevant in view of the following result.

\begin{theorem}\label{thm:Sigma_1-prim-rec}
From $\vdashir\Gamma$ we get $\vDash^g\bigvee\Gamma$ for some primitive recursive $g:\mathbb N\to\mathbb N$.
\end{theorem}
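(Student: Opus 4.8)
The plan is to argue by induction over the derivation of $\vdashir\Gamma$ according to the clauses of Definition~\ref{def:vdashir}, exhibiting in each case a primitive recursive bounding function. The key design decision is that $g$ will be assembled from the bounding functions of the premises using only primitive recursive operations (composition, iteration, maximum, and the coding of finite sequences), so that closure is automatic. First I would set up notation: for a sequence $\sigma$ of premise-bounds $g_1,\dots,g_k$ it is convenient to work with a single bound, so I would repeatedly use the observation (noted after Definition~\ref{def:asym-int}) that $\vDash^f\varphi$ implies $\vDash^h\varphi$ whenever $f(m)\le h(m)$ for all $m$; thus I may always replace finitely many primitive recursive bounds by their pointwise maximum, which is again primitive recursive. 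I should also record the dual monotonicity already in the excerpt: $\vDash^{m,n}\varphi$ entails $\vDash^{k,l}\varphi$ for $k\le m$, $l\ge n$.

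Next I would handle the easy cases. For ($\mathsf{Ax}$) the disjunction $\bigvee\Gamma$ contains a literal and its negation, hence $\varphi^{m,n}=\varphi$ for literals and $\bigvee\Gamma$ is true under every $\eta$; so $g\equiv 0$ works. For ($\lor$), ($\land$) and the propositional structure the bound is essentially inherited: if the premise(s) give $\vDash^{g_i}$ for the relevant subformulas, then, writing $\chi^{m,n}$ componentwise through $\land,\lor$, one checks $\vDash^{\max_i g_i}$ for the conclusion, using that adding a disjunct only makes $\bigvee\Gamma$ easier to satisfy and that bounded quantifiers are untouched by the $(\cdot)^{m,n}$ operation. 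For ($\exists$): from $\vdashir\Gamma,\varphi[x/t]$ with bound $g_0$ we want $\vDash^{g}\bigvee\Gamma\lor\exists x\,\varphi$. Given $m$ with $\eta(i)\le m$, the term $t$ evaluates to some value $t^{\mathbb N,\eta}$; I need a primitive recursive bound on $t^{\mathbb N,\eta}$ in terms of $m$. Since $t$ is a fixed term, $t^{\mathbb N,\eta}\le p_t(m)$ for a fixed polynomial (built from $+,\times,S$), which is primitive recursive; then taking $g(m):=\max\bigl(g_0(q(m)),\,p_t(m)\bigr)$ for a suitable "inflation" $q$ of $m$ (to absorb the witness back into the variable-bound side) does the job. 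The ($\forall$) case is the mild subtlety: from $\vdashir\Gamma',\varphi[x/y]$ with $y\notin\fv(\Gamma',\forall x\,\varphi)$ and bound $g_0$, I must produce, for each $m$, a bound on the existential witnesses in $(\forall x\,\varphi)^{m,g(m)}=\forall x\le\overline m\,\varphi^{m,g(m)}$; but here $x$ ranges only up to $\overline m$, which is already bounded, and $y$ was a genuinely fresh variable whose value I may also take $\le m$, so $g:=g_0$ suffices after noting that substituting a variable of value $\le m$ for $y$ is covered by the clause $\eta(i)\le m$ in the definition of $\vDash^{m,n}$ (invoke Exercise~\ref{ex:substitution}).

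The two cases carrying the real content are ($\Sigma\text{-}\mathsf{Cut}$) and ($\mathsf{IR}$), and the main obstacle is precisely the mismatch flagged in the paragraph preceding Definition~\ref{def:vdashir}: one has $\vDash^{m,n}$ information with $n$ large relative to $m$, but to use the cut one needs to feed a large value into the "outer" slot of the other premise. The restriction to $\Sigma$-cut formulas is what rescues this. For ($\Sigma\text{-}\mathsf{Cut}$) with cut formula $\varphi=\exists\vec z\,\theta$ ($\theta$ bounded, after Exercise~\ref{ex:sigma-sigma_1}): from $\vDash^{g_1}\bigvee\Gamma\lor\varphi$ and $\vDash^{g_2}\bigvee\Gamma\lor\neg\varphi$ I reason, for fixed $m$, as follows. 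Put $n_1:=g_1(m)$; either $\bigvee\Gamma$ is satisfied (done, with bound $n_1$), or there are witnesses $\le n_1$ making $\theta$ true, i.e.\ $\mathbb N\vDash\varphi^{m,n_1}$. Now I need $\bigvee\Gamma$ from the second premise evaluated at a first-coordinate large enough to see the refutation of this concrete witnessed $\varphi$; but $\neg\varphi=\forall\vec z\,\neg\theta$, and the witnesses live $\le n_1$, so I apply the second premise at $\vDash^{\max(m,n_1),\,g_2(\max(m,n_1))}$: monotonicity in the first coordinate lets me enlarge it. This forces $\bigvee\Gamma$ to be satisfied with the bound $g_2(\max(m,n_1))$. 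Hence $g(m):=\max\bigl(g_1(m),\,g_2(\max(m,g_1(m)))\bigr)$ is primitive recursive and works. For ($\mathsf{IR}$): from a bound $g_0$ on the step premise $\Gamma',\neg\varphi[x/y],\varphi[x/Sy]$ and a given term $t$, I iterate. The conclusion is $\bigvee\Gamma\lor\neg\varphi[x/0]\lor\varphi[x/t]$. For fixed $m$, let $N:=$ a primitive recursive bound on $t^{\mathbb N,\eta}$ as before; I define $g$ by iterating $g_0$ a number of times controlled by $N$ — concretely, set $h_0(m):=m$, $h_{k+1}(m):=g_0(\max(m,h_k(m)))$ and $g(m):=h_{N(m)+1}(m)$, which is primitive recursive by Proposition~\ref{prop:course-of-values} since both $N$ and $g_0$ are. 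Walking up the ladder $0,1,\dots,t^{\mathbb N,\eta}$ and applying the step bound (with the first coordinate inflated at each stage to absorb the previous witness, again using first-coordinate monotonicity), one gets: either $\bigvee\Gamma$ becomes true at some stage (with a bound $\le g(m)$), or $\neg\varphi[x/0]$ holds, or $\varphi$ propagates all the way to $\varphi[x/t]$ with witnesses $\le g(m)$. In every branch $\vDash^{g}$ of the conclusion holds. I expect the bookkeeping in ($\mathsf{IR}$) — matching the asymmetric bounds stage by stage and checking that the finitely-many-disjunct structure of $\bigvee\Gamma$ survives — to be the most delicate part, but no single estimate is hard; it is the orchestration of the two monotonicity directions that needs care.
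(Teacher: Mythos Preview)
Your handling of ($\mathsf{Ax}$), ($\land$), ($\lor$), ($\exists$), ($\Sigma\text{-}\mathsf{Cut}$), and ($\mathsf{IR}$) is essentially correct and tracks the paper's argument. The gap is in your treatment of~($\forall$). You write $(\forall x\,\varphi)^{m,g(m)}=\forall x\le\overline m\,\varphi^{m,g(m)}$ and conclude that $g:=g_0$ suffices because ``$x$ ranges only up to $\overline m$''. But that equality holds only when the quantifier is \emph{unbounded} in the sense of Definition~\ref{def:asym-int}. If $\varphi$ happens to be $x\le t\to\psi$ with $x\notin\fv(t)$, then the introduced formula is the bounded quantifier $\forall x\le t\,\psi$, and the asymmetric interpretation leaves the bound as~$t$ rather than replacing it by~$\overline m$. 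So you must verify $\psi^{m,g(m)}$ for all $x$ up to $t^{\mathbb N,\eta}$, which for $\eta(i)\le m$ can be as large as a polynomial $\tau(m)$ in~$m$. To instantiate the fresh variable~$y$ with such values, you must invoke the induction hypothesis at first coordinate~$\tau(m)$ rather than~$m$, which forces $g(m)=g_0(\tau(m))$ instead of~$g_0(m)$.

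The paper singles this out explicitly: ``Perhaps surprisingly, the case of a bounded quantifier is the more interesting one.'' The unbounded case is indeed trivial for the reason you give (the bound on the universal quantifier matches the bound on the variable assignment), but the bounded case is where the term-evaluation bound must be composed with the premise bound. This is not a mere bookkeeping issue: without the correction, your $g$ is simply too small, and the argument does not close. The fix is exactly parallel to what you already do for the term~$t$ in~($\exists$) and~($\mathsf{IR}$): compute a primitive recursive bound $\tau(m)\ge t^{\mathbb N,\eta}$, assume~$g_0$ is increasing, and take $g(m):=g_0(\tau(m))$.
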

\begin{proof}
We argue by induction over the given proof~$\vdashir\Gamma$, i.\,e., over the steps by which the latter is obtained according to the clauses from Definition~\ref{def:vdashir}. Let us first consider the case where $\vdashir\Gamma$ was derived by clause~($\mathsf{Ax}$), so that~$\Gamma$ contains some atomic formula~$\theta$ together with its negation. Our task is to find a primitive recursive~$g$ such that all $m\in\mathbb N$ and~$\eta:\mathbb N\to\{0,\ldots,m\}$ validate~$\mathbb N,\eta\vDash(\bigvee\Gamma)^{m,g(m)}$. The latter reduces to~$\mathbb N,\eta\vDash\theta^{m,g(m)}\lor(\neg\theta)^{m,g(m)}$. Since $\theta$ and~$\neg\theta$ are quantifier-free, they coincide with~$\theta^{m,g(m)}$ and $(\neg\theta)^{m,g(m)}$, respectively. In particular, it follows that the formulas $(\neg\theta)^{m,g(m)}$ and $\neg(\theta^{m,g(m)})$ coincide (which is not the case for formulas with quantifiers), so that the claim holds for arbitrary~$g$.

Next, we assume that $\vdashir\Gamma$ has been deduced by clause~($\land$), so that the sequent~$\Gamma$ is of the form~$\Delta,\varphi\land\psi$ and we have premises~$\vdashir\Delta,\varphi$ and $\vdashir\Delta,\psi$. The induction hypothesis yields primitive recursive functions~$f,g$ such that we have $\vDash^f\varphi\lor\bigvee\Delta$ and $\vDash^g\psi\lor\bigvee\Delta$. The function $h:\mathbb N\to\mathbb N$ with $h(n)=\max(f(n),g(n))$ is also primitive recursive. As noted after Definition~\ref{def:asym-int}, we obtain $\vDash^h\varphi\lor\bigvee\Delta$ and $\vDash^h\psi\lor\bigvee\Delta$. It is not hard to derive~$\vDash^h\bigvee\Gamma$, since the asymmetric interpretation distributes over conjunctions and disjunctions (see the relevant clauses in Definition~\ref{def:asym-int}). In the case of clause~($\lor$), this yields an even simpler reduction to the induction hypothesis.

Now assume that $\Gamma$ has the form~$\Delta,\forall v_i\,\varphi$ and was deduced by clause~($\forall$) with premise~$\vdashir\Delta',\varphi[v_i/v_j]$ for~$\Delta'\subseteq\Delta$ and $v_j\notin\fv(\Delta',\forall v_i\,\varphi)$. When the quantifier in~$\forall v_i\,\varphi$ is unbounded, we obtain a straightforward reduction to the induction hypothesis (similarly to the proof of Proposition~\ref{prop:soundness-FO}), since the definition of~$\vDash^{m,n}$ places the same bound on universal quantifiers and on the values of variable assignments. Perhaps surprisingly, the case of a bounded quantifier is the more interesting one. Here~$\varphi$ has the form~$v_i\leq t\to\psi$ with $v_i\notin\fv(t)$, which means that~$\forall v_i\,\varphi$~coincides with~$\forall v_i\leq t\,\psi$. The induction hypothesis yields a primitive recursive~$f$ that validates~$\vDash^f\Delta',\varphi[v_i/v_j]$. We may assume that~$f$ is increasing, as it is dominated by the increasing function $m\mapsto\sum_{i\leq m}f(i)$, which is still primitive recursive. Also, there is a primitive recursive function~$\tau:\mathbb N\to\mathbb N$ with $t^{\mathbb N,\eta}\leq\tau(m)$ for all $m\in\mathbb N$ and~$\eta:\mathbb N\to\{0,\ldots,m\}$, since each term is a polynomial in several variables. We may assume~$m\leq\tau(m)$. Consider the primitive recursive function~$g:\mathbb N\to\mathbb N$ that is given by~$g(m)=f(\tau(m))\geq f(m)$. In order to establish $\vDash^g\Gamma$, we consider arbitrary~$m\in\mathbb N$ and $\eta:\mathbb N\to\{0,\ldots,m\}$. If we have $\mathbb N,\eta\vDash\rho^{m,g(m)}$ for some~$\rho\in\Delta'\subseteq\Gamma$, then we are done. So we assume otherwise. Our task is to show~$\mathbb N,\eta\vDash\forall v_i\,\varphi^{m,g(m)}$. As in the proof of Proposition~\ref{prop:soundness-FO}, it suffices to establish~$\mathbb N,\eta_j^a\vDash\varphi^{m,g(m)}[v_i/v_j]$ for an arbitrary~$a\in\mathbb N$. It is straightforward to check that $\varphi^{m,g(m)}[v_i/v_j]$ coincides with $\varphi[v_i/v_j]^{m,g(m)}$. Also, the formula~$\varphi[v_i/v_j]$ coincides with~$v_j\leq t\to\psi[v_i/v_j]$, since we have $v_i\notin\fv(t)$ and hence $v_j\notin\fv(t)\subseteq\fv(\forall v_i\,\varphi)$. The latter does also yield~$t^{\mathbb N,\eta_j^a}=t^{\mathbb N,\eta}\leq\tau(m)$. It is thus enough to show that~$\mathbb N,\eta_j^a\vDash\varphi[v_i/v_j]^{m,g(m)}$ holds for arbitrary~$a\leq\tau(m)$. The result of the induction hypothesis entails
\begin{equation*}
\mathbb N,\eta_j^a\vDash\varphi[v_i/v_j]^{\tau(m),g(m)},\left(\bigvee\Delta'\right)^{\tau(m),g(m)}.
\end{equation*}
For $\rho\in\Delta'$ we have $v_j\notin\fv(\rho)$, so that $\mathbb N,\eta_j^a\vDash\rho^{\tau(m),g(m)}$ implies $\mathbb N,\eta\vDash\rho^{m,g(m)}$, against our assumption. So we indeed get $\mathbb N,\eta_j^a\vDash\varphi[v_i/v_j]^{m,g(m)}$.

We come to the case where $\Gamma$ has the form~$\Delta,\exists v_i\,\varphi$ and was deduced by clause~($\exists$) with premise~$\vdashir\Delta,\varphi[v_i/t]$. The induction hypothesis provides a primitive recursive function~$f$ that validates $\vDash^f\varphi[v_i/t]^{m,f(m)}\lor(\bigvee\Delta)^{m,f(m)}$. Since we can replace~$f$ by any function that dominates it, we may assume that~$t^{\mathbb N,\eta}\leq f(m)$ holds for any number~$m\in\mathbb N$ and all~$\eta:\mathbb N\to\{0,\ldots,m\}$. This ensures that $\mathbb N,\eta\vDash\varphi[v_i/t]^{m,f(m)}$ implies $\mathbb N,\eta\vDash(\exists v_i\,\varphi)^{m,f(m)}$, which is enough to secure the induction step.

In the penultimate case, the sequent~$\Gamma$ has been derived by clause~($\Sigma\text{-}\mathsf{Cut}$), which means that we have premises $\vdashir\Gamma,\varphi$ and $\vdashir\Gamma,\neg\varphi$ for some~$\Sigma$-formula~$\varphi$. By the induction hypothesis, there are primitive recursive functions~$f,g:\mathbb N\to\mathbb N$ that validate $\vDash^f\varphi\lor\bigvee\Gamma$ and $\vDash^g\neg\varphi\bigvee\Gamma$. We may assume that $n\leq f(n)$ and $n\leq g(n)$ holds for all~$n\in\mathbb N$. Our aim is to show that $\vDash^h\bigvee\Gamma$ holds for the primitive recursive function~$h:\mathbb N\to\mathbb N$ that is given by~$h(m)=g(f(m))$. Consider arbitrary~$m\in\mathbb N$ and $\eta:\mathbb N\to\{0,\ldots,m\}$. If we have $\mathbb N,\eta\vDash\rho^{m,f(m)}$ for some~$\rho\in\Gamma$, then we are done. Let us now assume~$\mathbb N,\eta\vDash\varphi^{m,f(m)}$. The crucial observation is that the formulas $\neg(\varphi^{m,f(m)})$ and~$(\neg\varphi)^{f(m),h(m)}$ coincide, as all unbounded quantifiers in~$\varphi$ and $\neg\varphi$ are existential and universal, respectively, so that the asymmetric interpretation restricts them to~$f(m)$ in both cases. Given that we have $\mathbb N,\eta\nvDash(\neg\varphi)^{f(m),h(m)}$, we can invoke~$\vDash^g\neg\varphi\lor\bigvee\Gamma$ to conclude that $\mathbb N,\eta\vDash\rho^{f(m),h(m)}$ holds for some~$\rho\in\Gamma$, as required to deduce~$\vDash^h\Gamma$.

Finally, we consider clause~($\mathsf{IR}$), so that $\Gamma$ has the form~$\Delta,\neg\varphi[v_i/0],\varphi[v_i/t]$ for some $\Sigma$-formula~$\varphi$. The rule has premise~$\vdashir\Delta',\neg\varphi[v_i/v_j],\varphi[v_i/Sv_j]$ with \mbox{$\Delta'\subseteq\Delta$} and $v_j\notin\fv(\Delta',\forall v_i\,\varphi)$. By the induction hypothesis, we have a primitive recursive function~$f$ that validates $\vDash^f\neg\varphi[v_i/v_j]\lor\varphi[v_i/Sv_j]\lor\bigvee\Delta'$. For an intuitive explanation of the proof idea, we temporarily assume that $\varphi(v_j)$ has the form $\exists x\,\theta(v_j,x)$ for a bounded formula~$\theta$. Given any $n\in\mathbb N$ and $\eta:\mathbb N\to n$, we then have
\begin{equation*}
\mathbb N,\eta\vDash\big(\exists x\leq n\,\theta(v_j,x)\to\exists x\leq f(n)\,\theta(S v_j,x)\big)\lor\left(\bigvee\Delta'\right)^{n,f(n)}.
\end{equation*}
Now the idea is to iterate~$f$ in order to bound the quantifier in~$\exists x\,\theta(v_j,x)$ for larger and larger~$v_j$. Back on the formal level, we find a primitive recursive~$\tau:\mathbb N\to\mathbb N$~such that $t^{\mathbb N,\eta}\leq\tau(m)$ holds for any~$m\in\mathbb N$ and $\eta:\mathbb N\to\{0,\ldots,m\}$. As before, we may assume that~$m\leq f(m)$ and~$m\leq\tau(m)$ holds for all~$m\in\mathbb N$. Let us consider the primitive recursive~$g:\mathbb N^2\to\mathbb N$ that is determined by
\begin{equation*}
g(m,0)=\tau(m)\quad\text{and}\quad g(m,n+1)=f(g(m,n)).
\end{equation*}
Our aim is to show that $\vDash^h\Gamma$ holds for the primitive recursive~$h:\mathbb N\to\mathbb N$ that is given by~$h(m)=g(m,\tau(m))$. Consider arbitrary~$m\in\mathbb N$ and $\eta:\mathbb N\to\{0,\ldots,m\}$. If we have $\mathbb N,\eta\vDash\rho^{m,h(m)}$ for some~$\rho\in\Delta'\subseteq\Delta$, then we are done. So we assume otherwise. Similarly, we may assume that we have~$\mathbb N,\eta\nvDash(\neg\varphi[v_i/0])^{m,h(m)}$. Given that $\varphi$ is a $\Sigma$-formula, we see that $(\neg\varphi[v_i/0])^{m,h(m)}$ coincides with $\neg(\varphi[v_i/0]^{k,m})$ for an arbitrary~$k\in\mathbb N$, which means that we obtain~$\mathbb N,\eta_i^0\vDash\varphi^{k,g(m,0)}$. By induction on~$n\leq\tau(m)$, we derive that $\mathbb N,\eta_i^n\vDash\varphi^{k,g(m,n)}$ holds for any~$k\in\mathbb N$. In the step, we first note that $n<\tau(m)$ entails
\begin{equation*}
\max(m,n)\leq\tau(m)\leq g(m,n)\leq f(g(m,n))=g(m,n+1)\leq h(m).
\end{equation*}
Given~$v_j\notin\fv(\Delta')$, we thus have $\mathbb N,\eta_j^n\nvDash\rho^{g(m,n),g(m,n+1)}$ for~$\rho\in\Delta'$, so that we~get
\begin{equation*}
\mathbb N,\eta_j^n\vDash(\neg\varphi[v_i/v_j])^{g(m,n),g(m,n+1)}\lor\varphi[v_i/S v_j]^{k,g(m,n+1)},
\end{equation*}
where~$k$ can again be arbitrary. As in the proof of Proposition~\ref{prop:soundness-FO}, one can~use Exercise~\ref{ex:substitution} to obtain $\mathbb N,\eta_i^n\nvDash\varphi^{g(m,n+1),g(m,n)}$ or $\mathbb N,\eta_i^{n+1}\vDash\varphi^{k,g(m,n+1)}$, as needed to complete the induction step. In view of $t^{\mathbb N,\eta}\leq\tau(m)$ and $g(m,n)\leq h(m)$, the result of the induction entails~$\mathbb N,\eta\vDash\varphi[v_i/t]^{m,h(m)}$, so that we indeed get~$\vDash^h\Gamma$.
\end{proof}

We now obtain the main result of this section.

\begin{corollary}[\cite{mints-PR,parsons70,takeuti-proof-theory-ed1}]\label{cor:ISigma1_prim-rec}
If a function is $\mathsf{I\Sigma}_1$-provably total, then it must be primitive recursive.
\end{corollary}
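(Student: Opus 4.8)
The plan is to assemble the two main theorems of this subsection together with the discussion following Definition~\ref{def:asym-int}. Suppose $f\colon\mathbb N\to\mathbb N$ is $\mathsf{I\Sigma}_1$-provably total, witnessed by a $\Sigma_1$-formula $\varphi(x,y)$ that defines $f$ and satisfies $\mathsf{I\Sigma}_1\vdash\forall x\exists y\,\varphi(x,y)$. Theorem~\ref{thm:compl-Sigma_1} then provides axioms $\theta_0,\ldots,\theta_n$ of~$\mathsf Q'$ with $\vdashir\neg\theta_0,\ldots,\neg\theta_n,\forall x\exists y\,\varphi(x,y)$, and Theorem~\ref{thm:Sigma_1-prim-rec} turns this into a primitive recursive $g\colon\mathbb N\to\mathbb N$ with $\vDash^g\neg\theta_0\lor\ldots\lor\neg\theta_n\lor\forall x\exists y\,\varphi(x,y)$.

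Next I would record a monotonicity lemma: for any $\Sigma$-formula $\chi$ and any $m,n\in\mathbb N$ and any assignment $\eta$, the statement $\mathbb N,\eta\vDash\chi^{m,n}$ implies $\mathbb N,\eta\vDash\chi$. This follows by a routine induction over the generation of $\Sigma$-formulas (Definition~\ref{def:Sigma-formula}); the point is that a $\Sigma$-formula has no unbounded universal quantifier, so the only nontrivial clause is $\chi=\exists x\,\psi$ with $\chi^{m,n}=\exists x\le\overline n\,\psi^{m,n}$, which obviously entails $\exists x\,\psi$. Now each $\theta_i$ is a true $\Pi$-sentence of $\mathsf Q'$ (as noted before Theorem~\ref{thm:compl-Sigma_1}), hence $\neg\theta_i$ is a false $\Sigma$-sentence, and by the lemma $(\neg\theta_i)^{m,g(m)}$ is false for every $m$. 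Therefore the disjunction above collapses: for every $m\in\mathbb N$ and every $\eta$ with $\eta(i)\le m$ for all $i$ we get $\mathbb N,\eta\vDash(\forall x\exists y\,\varphi(x,y))^{m,g(m)}$, that is, $\mathbb N,\eta\vDash\forall x\le\overline m\,\exists y\le\overline{g(m)}\,\varphi(x,y)^{m,g(m)}$.

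Fixing $m$ and taking $\eta$ that assigns the value $m$ to $x$ (allowed since $m\le m$), I would instantiate $x$ by $m$ to obtain some $n\le g(m)$ with $\vDash^{m,g(m)}\varphi(m,n)$; applying the monotonicity lemma to the $\Sigma$-formula $\varphi$ gives $\mathbb N\vDash\varphi(m,n)$, so $f(m)=n\le g(m)$ because $\varphi$ defines $f$. Since $\varphi$ defines a function, $\{n\le g(m)\mid{\vDash^{m,g(m)}}\varphi(m,n)\}$ is exactly $\{f(m)\}$, whence $f(m)=\min\{n\le g(m)\mid{\vDash^{m,g(m)}}\varphi(m,n)\}$. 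Because $g$ is primitive recursive and, by Exercise~\ref{ex:bounded-prim-rec} together with Proposition~\ref{prop:bounded-form-prim-rec} applied with the bounds $m$ and $g(m)$ carried along as parameters, the predicate $(m,n)\mapsto[\,{\vDash^{m,g(m)}}\varphi(m,n)\,]$ on the truncated (hence bounded) formula is primitive recursive, Proposition~\ref{prop:bounded-min} yields that $f$ is primitive recursive.

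The substantive content has already been isolated in Theorems~\ref{thm:compl-Sigma_1} and~\ref{thm:Sigma_1-prim-rec}, so the corollary is mostly bookkeeping; the only point deserving care is the last one, namely that the truth of $\varphi(\overline m,\overline n)^{m,g(m)}$ can be decided primitive-recursively \emph{uniformly} in $m$ and $n$, even though the bounded formula in question varies with $m$. This is handled by an induction over the structure of the fixed $\Sigma_1$-formula $\varphi$ in which the quantifier bounds appear as extra numerical parameters, using the closure properties of Proposition~\ref{prop:bounded-form-prim-rec}; I expect this to be the only step that needs to be written out in any detail.
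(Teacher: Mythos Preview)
Your proposal is correct and follows essentially the same route as the paper: the paper's proof simply cites Theorems~\ref{thm:compl-Sigma_1} and~\ref{thm:Sigma_1-prim-rec} together with the paragraph preceding Exercise~\ref{ex:bounded-prim-rec}, and you have spelled out exactly the steps that this paragraph and the combination of the two theorems leave implicit, including the elimination of the disjuncts~$\neg\theta_i$ via the observation that $(\neg\theta_i)^{m,g(m)}$ implies the false $\Sigma$-sentence~$\neg\theta_i$. Your closing remark about the uniform primitive recursiveness of $(m,n)\mapsto[\,\vDash^{m,g(m)}\varphi(m,n)\,]$ is well taken and is indeed the one detail the paper leaves to Exercise~\ref{ex:bounded-prim-rec}; your sketch of handling it by induction on the fixed formula~$\varphi$ with the bounds as numerical parameters is the intended argument.
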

\begin{proof}
It suffices to combine Theorems~\ref{thm:Sigma_1-prim-rec} and~\ref{thm:compl-Sigma_1} with the observation that was made in the paragraph before Exercise~\ref{ex:bounded-prim-rec}.
\end{proof}

\subsection{Non-Hereditary Goodstein Sequences}\label{sect:goodstein}

As a concrete example for G\"odel's first incompleteness theorem, we present a concrete mathematical result that is unprovable in the theory~$\mathsf{I\Sigma}_1$. For natural numbers $b\geq 2$ and~$n>0$, we write
\begin{equation*}
n\nf{b}b^{e_0}\cdot c_0+\ldots+b^{e_k}\cdot c_k
\end{equation*}
if we have $e_1>\ldots>e_k$ and $0<c_i<b$ for all~$i\leq k$. Let us recall that any~$n>0$ has a unique base-$b$ normal form of this format.

\begin{definition}\label{def:Goodstein}
By the Goodstein sequence with start value~$n\in\mathbb N$, we mean the function~$G_n:\mathbb N\to\mathbb N$ that is determined by $G_n(0)=n$ and the recursive clause
\begin{equation*}
G_n(i+1)=\left\{\begin{aligned}
&(i+3)^{e_0}\cdot c_0+\ldots+(i+3)^{e_k}\cdot c_k-1\\
&\phantom{0}\qquad\text{if $0<G_n(i)\nf{(i+2)}(i+2)^{e_0}\cdot c_0+\ldots+(i+2)^{e_k}\cdot c_k$},\\[1ex]
&0\qquad\text{if $G_n(i)=0$}.
\end{aligned}\right.
\end{equation*}
We say that the Goodstein sequence terminates if~$G_n(i)=0$ holds for some~$i\in\mathbb N$.
\end{definition}

For example, the Goodstein sequence with start value~$42$ begins as follows:
\begin{alignat*}{3}
G_{42}(0)&\nf{2}2^5+2^3+2^1&&=42,\\
G_{42}(1)&=3^5+3^3+3^1-1\nf{3}3^5+3^3+3^0\cdot 2&&=272,\\
G_{42}(2)&=4^5+4^3+4^0\cdot 2-1\nf{4}4^5+4^3+4^0&&=1089,\\
G_{42}(3)&=5^5+5^3+5^0-1\nf{5}5^5+5^3&&=3250,\\
G_{42}(4)&=6^5+6^3-1\nf{6}=6^5+6^2\cdot 5+6^1\cdot 5+6^0\cdot 5&&=7991,\\
G_{42}(5)&=7^5+7^2\cdot 5+7^1\cdot 5+7^0\cdot 5-1&&=17091.
\end{alignat*}
The sequence of numerical values may suggest that~$G_{42}$ grows indefinitely. However, we will see that the Goodstein sequences terminate for all start values. Furthermore, we will show that this result is unprovable in~$\mathsf{I\Sigma}_1$ (for any suitable formalization).

We note that our definition deviates from a more common version of Goodstein sequences, which works with a hereditary normal form that requires the exponents to be written in normal form as well. Here the start value~$42$ would be written as
\begin{equation*}
2^{2^{2^1}+2^0}+2^{2 ^1+2^0}+2^1,
\end{equation*}
so that the next value in this version of the Goodstein sequences is
\begin{equation*}
3^{3^3+1}+3^{3+1}+3-1=3^{26}+3^4+2>10^{12}.
\end{equation*}
For this version, it is still true that all Goodstein sequences terminate, but that~fact is unprovable in full Peano arithmetic (see~\cite{kirby-paris82,rathjen-goodstein}). The independence result that we will prove for non-hereditary Goodstein sequences is mentioned in~\cite{cichon83}, but no detailed proof is given there. We will use the following to show that all Goodstein sequences in the sense of Definition~\ref{def:Goodstein} terminate.

\begin{definition}\label{def:omega^omega}
Let $<_2$ be the linear order on~$\mathbb N^2$ in which $(m,n)\leq(m',n')$ holds when we have either~$m=m'$ and $n<n'$ or~$m'<m$. We write $\omega^\omega$ for the set of all expressions $\omega^{e_0}\cdot c_0+\ldots+\omega^{e_{k-1}}\cdot c_{k-1}$ with $e_i,c_i\in\mathbb N$ and $e_0>\ldots>e_{k-1}$ (including the empty expression~$0\in\omega^\omega$ for~$k=0$). To turn~$\omega^\omega$ into a linear order, we declare that
\begin{equation*}
\omega^{e_0}\cdot c_0+\ldots+\omega^{e_{k-1}}\cdot c_{k-1}\prec\omega^{f_0}\cdot d_0+\ldots+\omega^{f_{l-1}}\cdot d_{l-1}
\end{equation*}
holds when we have $k<l$ and $(e_i,c_i)=(f_i,d_i)$ for all~$i<k$ and also when there is~a $j<\min(k,l)$ with $(e_j,c_j)<(f_j,d_j)$ and $(e_i,c_i)=(f_i,d_i)$ for all~$i<j$.
\end{definition}

One may view elements of~$\omega^\omega$ as base-$\omega$ normal forms for an infinite number~$\omega$. To make this precise, one defines an ordinal number as an isomorphism class of well orders (cf.~the following exercise). Each number~$n\in\mathbb N$ corresponds to the~ordinal that is represented by the finite order~$\{0,\ldots,n-1\}$. We take~$\omega$ to be the isomorphism class of~$\mathbb N$ with the usual order, since we wish to validate the normal form condition~$c_i<\omega$ precisely~for~$c_i\in\mathbb N$. It will be shown that~$\omega^\omega$ is also a well order, which represents an ordinal number that is strictly bigger than~$\omega$ (as the latter is isomorphic to a strict initial segment of~$\omega^\omega$). Interestingly, the termination of Good\-stein sequences is a result about~$\mathbb N$ that is naturally proved with the help~of infinite ordinal numbers. The aforementioned unprovability results show that infinite ordinals are indeed indispensable in a certain sense (see~\cite{afrw-goodstein} for a precise result).

\begin{exercise}
Show that the following characterizations of well orders are equivalent whenever~$(X,<)$ is a linear order:
\begin{enumerate}[label=(\roman*)]
\item There is no $f:\mathbb N\to X$ such that $f(i+1)<f(i)$ holds for all~$i\in\mathbb N$.
\item Each non-empty subset~$Z\subseteq X$ has an element~$z\in Z$ that is minimal in the sense that $x<z$ implies~$x\notin Z$.
\item For any subset~$P\subseteq X$ such that we get $z\in P$ whenever $x\in P$ is given for all~$x<z$, we have $P=X$.
\end{enumerate}
Note that statement~(iii) amounts to an induction principle if one thinks of~$P$ as the collection of all elements with some desired property. 
\end{exercise}

The following will connect~$\omega^\omega$ to our Goodstein sequences.

\begin{definition}
To define~$\Omega_b:\mathbb N\to\mathbb N$ for~$b\geq 2$, we stipulate
\begin{equation*}
\Omega_b(n)=\omega^{e_0}\cdot c_0+\ldots+\omega^k\cdot c_k\quad\text{for}\quad 0<n\nf{b}b^{e_0}\cdot c_0+\ldots+b^k\cdot c_k
\end{equation*}
and declare that $\Omega_b(0)\in\omega^\omega$ is the empty expression.
\end{definition}

By a familiar property of base-$b$ normal forms, we have
\begin{equation*}
m<n\quad\Leftrightarrow\quad\Omega_b(m)\prec\Omega_b(n)
\end{equation*}
for any~$b\geq 2$ and all~$m,n\in\mathbb N$.

\begin{lemma}\label{lem:Goodstein-ord-decent}
Given $G_n(i)\neq 0$, we get $\Omega_{i+3}G_n(i+1)\prec\Omega_{i+2}G_n(i)$.
\end{lemma}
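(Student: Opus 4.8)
The plan is to unfold the definitions and reduce everything to the monotonicity property $m<n\Leftrightarrow\Omega_b(m)\prec\Omega_b(n)$ stated just before the lemma. First I would set $b:=i+2$ and, using the assumption $G_n(i)\neq 0$, write the base-$b$ normal form $G_n(i)\nf{b}b^{e_0}\cdot c_0+\ldots+b^{e_k}\cdot c_k$ with $e_0>\ldots>e_k$ and $0<c_j<b$ for all $j\leq k$. By Definition~\ref{def:Goodstein}, since $G_n(i)\neq 0$ the first branch of the recursive clause applies, so $G_n(i+1)=M-1$ where $M:=(b+1)^{e_0}\cdot c_0+\ldots+(b+1)^{e_k}\cdot c_k$ is obtained by replacing the base $b=i+2$ by $b+1=i+3$ in the above expansion.

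The key observation is that the displayed expression for $M$ is already the base-$(b+1)$ normal form of $M$: the exponents still satisfy $e_0>\ldots>e_k$, and each coefficient satisfies $0<c_j<b<b+1$. Hence $M\nf{b+1}(b+1)^{e_0}\cdot c_0+\ldots+(b+1)^{e_k}\cdot c_k$, so the definition of $\Omega_{b+1}$ gives $\Omega_{b+1}(M)=\omega^{e_0}\cdot c_0+\ldots+\omega^{e_k}\cdot c_k$. By the definition of $\Omega_b$, the right-hand side is exactly $\Omega_b(G_n(i))$. Therefore $\Omega_{b+1}(M)=\Omega_b(G_n(i))$.

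It remains to compare $\Omega_{b+1}(M-1)$ with $\Omega_{b+1}(M)$. Since $G_n(i)\neq 0$, the normal form above is nonempty and $c_0\geq 1$, so $M\geq 1$ and hence $M-1<M$ as natural numbers. Invoking the stated equivalence with base $b+1=i+3\geq 2$, we get $\Omega_{b+1}(M-1)\prec\Omega_{b+1}(M)=\Omega_b(G_n(i))$. Recalling that $b+1=i+3$, $b=i+2$ and $G_n(i+1)=M-1$, this is precisely $\Omega_{i+3}G_n(i+1)\prec\Omega_{i+2}G_n(i)$, as required.

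I do not expect any genuine obstacle here; the argument is short. The one point that must be noticed is that increasing the base preserves the normal-form side condition $0<c_j<\text{base}$ on the coefficients, which is exactly why $\Omega_{b+1}(M)$ and $\Omega_b(G_n(i))$ literally coincide rather than merely being related — after that, subtracting $1$ and applying monotonicity of $\Omega_{b+1}$ finishes the proof.
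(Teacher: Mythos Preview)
Your proof is correct and follows the same approach as the paper: both arguments observe that $G_n(i+1)+1$ (your $M$) has base-$(i+3)$ normal form with the same exponents and coefficients as the base-$(i+2)$ normal form of $G_n(i)$, so $\Omega_{i+3}(G_n(i+1)+1)=\Omega_{i+2}(G_n(i))$, and then conclude by monotonicity of $\Omega_{i+3}$. Your version is in fact more explicit than the paper's in justifying why the coefficient bound $c_j<b+1$ is preserved.
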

\begin{proof}
For $G_n(i)\nf{(i+2)}(i+2)^{e_0}\cdot c_0+\ldots+(i+2)^{e_k}\cdot c_k$, the definition of Goodstein sequences yields
\begin{equation*}
G_n(i+1)+1\nf{(i+3)}(i+3)^{e_0}\cdot c_0+\ldots+(i+3)^{e_k}\cdot c_k.
\end{equation*}
We get $\Omega_{i+3}(G_n(i+1)+1)=\Omega_{i+2}(G_n(i))$ and conclude as~$\Omega_{i+3}$ is monotone.
\end{proof}

The previous lemma will be combined with the following result.

\begin{exercise}\label{ex:omega-omega}
(a) Prove that $\omega^\omega$ is a well order. \emph{Hint:} Use induction on~$n$ to show that an~$i\in\mathbb N$ with $f(i)\preceq f(i+1)$ exists for any~$f:\mathbb N\to\omega^\omega$ such that~$f(0)$ is of the form $\omega^{e_0}\cdot c_0+\ldots+\omega^k\cdot c_k$ with $e_0=n$.

(b) Show that Peano arithmetic proves the statement ``any number $m\in\mathbb N$~admits an $i\in\mathbb N$ with $f(m,i)\preceq f(m,i)$" for each externally given~$f:\mathbb N^2\to\omega^\omega$ with a definition in first-order arithmetic. \emph{Hint:} The induction from the hint in~(a) cannot be formalized literally, as it involves a quantification over arbitrary functions. To obtain an induction statement that can be expressed in first-order arithmetic, one observes that all relevant functions are straightforward modifications of the given~$f$. 
\end{exercise}

We now derive a result that was promised above.

\begin{proposition}\label{prop:Goodstein-terminates}
The Goodstein sequence for any start value terminates.
\end{proposition}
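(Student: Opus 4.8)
The plan is to argue by contradiction using the well-foundedness of $\omega^\omega$. First I would record the trivial but essential observation that a Goodstein sequence is eventually constant once it reaches zero: the recursive clause in Definition~\ref{def:Goodstein} gives $G_n(i)=0\Rightarrow G_n(i+1)=0$, so ``the Goodstein sequence terminates'' is equivalent to ``$G_n(i)=0$ for some~$i\in\mathbb N$''. It therefore suffices to rule out the possibility that $G_n(i)\neq 0$ for every~$i\in\mathbb N$.

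So suppose, towards a contradiction, that $G_n(i)\neq 0$ holds for all~$i\in\mathbb N$. I would then define $f:\mathbb N\to\omega^\omega$ by $f(i):=\Omega_{i+2}\,G_n(i)$. Since $G_n(i)\neq 0$ at every stage, Lemma~\ref{lem:Goodstein-ord-decent} applies throughout and yields
\begin{equation*}
f(i+1)=\Omega_{i+3}\,G_n(i+1)\prec\Omega_{i+2}\,G_n(i)=f(i)\qquad\text{for all }i\in\mathbb N.
\end{equation*}
Hence $f$ is an infinite strictly $\prec$-descending sequence in~$\omega^\omega$. But $\omega^\omega$ is a well order by Exercise~\ref{ex:omega-omega}(a), so no such descending sequence can exist — a contradiction. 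Therefore $G_n(i)=0$ for some~$i$, and the Goodstein sequence with start value~$n$ terminates.

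The short argument above is essentially bookkeeping; the real content sits in its two inputs. One is Lemma~\ref{lem:Goodstein-ord-decent}, already established, whose proof rests on the identity $\Omega_{i+3}(G_n(i+1)+1)=\Omega_{i+2}(G_n(i))$ together with monotonicity of~$\Omega_{i+3}$ — so the only subtle point there (worth double-checking) is the edge case in which the base-$(i+2)$ normal form of $G_n(i)$ has a nonzero constant term, so that passing to $G_n(i+1)$ merely decrements that term rather than introducing new lower powers of the base. The genuinely substantial input is the well-foundedness of~$\omega^\omega$ from Exercise~\ref{ex:omega-omega}(a), which requires the nested induction on the leading exponent indicated in the hint there, and which is the step I expect to be the main obstacle. (I would also flag, for the independence discussion that follows, that the displayed sequence $f$ quantifies over a function, so this is not literally an argument inside $\mathsf{I\Sigma}_1$ or even $\mathsf{PA}$; the amount of induction really needed is precisely what the unprovability result is about.)
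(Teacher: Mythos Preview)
Your proof is correct and follows essentially the same approach as the paper: argue by contradiction, define $f(i)=\Omega_{i+2}G_n(i)$, invoke Lemma~\ref{lem:Goodstein-ord-decent} to get an infinite $\prec$-descending sequence, and conclude via the well-foundedness of~$\omega^\omega$ from Exercise~\ref{ex:omega-omega}(a). Your additional remarks on edge cases and on formalizability are accurate but go beyond what the paper records at this point.
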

\begin{proof}
Towards a contradiction, we assume that we have an~$m\in\mathbb N$ with $G_m(i)\neq 0$ for all~$i\in\mathbb N$. By Lemma~\ref{lem:Goodstein-ord-decent}, the function $f:\mathbb N\to\omega^\omega$ with $f(i)=\Omega_{i+2}G_n(i)$ will then witness that~$\omega^\omega$ is no well order, against the previous exercise.
\end{proof}

The proposition ensures that the following is well-defined.

\begin{definition}\label{def:Ackermann}
In order to define~$A:\mathbb N\to\mathbb N$, we stipulate that $A(m)$ is the smallest number~$i\in\mathbb N$ with $G_m(i)=0$.
\end{definition}

It is not hard to see that the function~$(m,i)\mapsto G_m(i)$ is primitive recursive, which entails that~$A$ is $\Sigma$-definable. In the rest of this section, we show that~$A$ grows faster than any primitive recursive function. This means that~$A$, which is a variant of the so-called Ackermann function, cannot be primitive recursive itself. By the analysis of provably total functions that was given in the previous section, it will follow that Proposition~\ref{prop:Goodstein-terminates} cannot be proved in the theory~$\mathsf{I\Sigma}_1$. On the other hand, part~(b) of Exercise~\ref{prop:Goodstein-terminates} entails that Proposition~\ref{prop:Goodstein-terminates} can be formalized in Peano arithmetic. A careful solution of the exercise shows that it is enough to have induction for~$\Pi_2$-formulas (see the paragraph after Definition~\ref{def:Sigma-formula}).

\begin{definition}\label{def:mesh}
For $\alpha,\beta\in\omega^\omega$ we say that $\alpha$ and $\beta$ mesh if they are of the form
\begin{equation*}
\alpha=\omega^{e_0}\cdot c_0+\ldots+\omega^{e_k}\cdot c_k\quad\text{and}\quad\beta=\omega^{f_0}\cdot d_0+\ldots+\omega^{f_l}\cdot d_l
\end{equation*}
with $e_k\geq e_0$. In this case we set
\begin{equation*}
\alpha+\beta=\omega^{e_0}\cdot c_0+\ldots+\omega^{e_k}\cdot c_k+\omega^{f_0}\cdot d_0+\ldots+\omega^{f_l}\cdot d_l,
\end{equation*}
where $\omega^{e_k}\cdot c_k+\omega^{f_0}\cdot d_0$ is replaced by~$\omega^{e_k}\cdot(c_k+d_0)$ when we have~$e_k=f_0$. We also say that $\alpha$ and $\beta$ mesh when at least one of them is the empty expression~$0\in\omega^\omega$, and we declare $\alpha+0=\alpha$ and $0+\beta=\beta$.
\end{definition}

There are reasonable ways to extend addition to summands that do not mesh. In the present lecture notes, we implicitly assume that $\alpha$ and $\beta$ mesh whenever we write~$\alpha+\beta$. Let $\omega^e$ abbreviate~$\omega^e\cdot 1$. Each element~$\alpha\neq 0$ of~$\omega^\omega$ can be uniquely written as $\alpha=\beta+\omega^e$. Elements of the form~$\beta+\omega^0$ and $\beta+\omega^{e+1}$ are called successors and limits, respectively. We will also write~$1$ at the place of~$\omega^0$.

\begin{definition}
For each~$\alpha\in\omega^\omega$, we define a so-called fundamental sequence
\begin{equation*}
\mathbb N\ni n\mapsto\{\alpha\}(n)\in\omega^\omega
\end{equation*}
by setting $\{0\}(n)=0$ and $\{\beta+1\}(n)=\beta$ as well as $\{\beta+\omega^{e+1}\}(n)=\beta+\omega^e\cdot n$.
\end{definition}

The characteristic property of fundamental sequences is that they approximate limits in the following sense.

\begin{exercise}
If~$\alpha\in\omega^\omega$ is a limit, then $m<n$ entails $\{\alpha\}(m)\prec\{\alpha\}(n)\prec\alpha$ and each~$\gamma\prec\alpha$ admits an~$n\in\mathbb N$ with $\gamma\prec\{\alpha\}(n)$.
\end{exercise}

To connect with Goodstein sequences, we provide a variant of fundamental sequences that skips limits. Given a function~$f:X\to X$ on some set~$X$, we write~$f^i$ for the iterates that are defined by $f^0(x)=x$ and $f^{i+1}(x)=f(f^i(x))$. A straightforward induction shows $f^{i+j}=f^j\circ f^i$. Let us note that the following is well-defined since we have $\{\alpha\}(n)\prec\alpha$ for any~$\alpha\neq 0$.

\begin{definition}\label{def:modified-fund-seq}
Given $\alpha\in\omega^\omega$, we set $[\alpha](n)=\{\alpha\}^{i+1}(n)$ for the smallest~$i\in\mathbb N$ such that~$\{\alpha\}^i(n)$ is no limit.
\end{definition}

We can now state a more precise version of Lemma~\ref{lem:Goodstein-ord-decent}.

\begin{exercise}\label{ex:goodstein-fund-seq}
Prove $[\Omega_{i+2}(G_n(i))](i+3)=\Omega_{i+3}(G_n(i+1))$ for~$i,n\in\mathbb N$.
\end{exercise}

The following will allow us to iterate the result of the exercise. Let us note that the function~$H_\alpha$ is well-defined since we have~$[\gamma](m)\prec\gamma$ for~$\gamma\neq 0$.

\begin{definition}\label{def:Hardy}
For $\alpha\in\omega^\omega$ and $i,n\in\mathbb N$, we explain~$[\alpha]_i(n)$ recursively by
\begin{equation*}
[\alpha]_0(n)=\alpha\quad\text{and}\quad [\alpha]_{i+1}(n)=\big[[\alpha]_i(n)\big](n+i).
\end{equation*}
We then put $H_\alpha(n)=n+i$ for the smallest~$i\in\mathbb N$ with $[\alpha]_i(n)=0$.
\end{definition}

The functions $H_\alpha$ form the so-called Hardy hierarchy~\cite{wainer-hardy}. The latter is characterized by the recursive clauses in part~(a) of the following.

\begin{exercise}\label{ex:Hardy}
(a) Show that we have $H_0(n)=n$ and $H_{\alpha+1}(n)=H_\alpha(n+1)$ as well as $H_\lambda(n)=H_{\{\lambda\}(n)}(n)$ for limit $\lambda$. \emph{Hint:} For the successor case, use induction on~$i\in\mathbb N$ to prove $[\alpha+1]_{i+1}(n)=[\alpha]_i(n+1)$. In the limit case, observe that we have $[\lambda](n)=[\{\lambda\}(n)](n)$ according to Definition~\ref{def:modified-fund-seq}.

(b) Prove that we have $H_{\alpha+\beta}=H_\alpha\circ H_\beta$ when $\alpha$ and $\beta$ mesh. \emph{Hint:} Use~induction on~$\beta$ and employ the result of part~(a).
\end{exercise}

Let us come to the crucial connection with Goodstein sequences.

\begin{proposition}\label{prop:Goodstein-Hardy}
We have $H_{\Omega_2(n)}(3)=n+A(n)$ for any~$n\in\mathbb N$.
\end{proposition}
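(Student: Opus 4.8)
The plan is to prove $H_{\Omega_2(n)}(3)=n+A(n)$ by iterating Exercise~\ref{ex:goodstein-fund-seq}, which provides the single-step relation $[\Omega_{i+2}(G_n(i))](i+3)=\Omega_{i+3}(G_n(i+1))$. First I would establish, by induction on~$i\in\mathbb N$, the identity
\begin{equation*}
[\Omega_2(n)]_i(3)=\Omega_{i+2}(G_n(i)).
\end{equation*}
The base case $i=0$ is immediate from $[\Omega_2(n)]_0(3)=\Omega_2(n)=\Omega_2(G_n(0))$ and the definition $G_n(0)=n$. For the induction step, recall that $[\alpha]_{i+1}(n)=\big[[\alpha]_i(n)\big](n+i)$ by Definition~\ref{def:Hardy}, so that with $n$ replaced by the start value~$3$ we get $[\Omega_2(n)]_{i+1}(3)=\big[[\Omega_2(n)]_i(3)\big](3+i)$. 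By the induction hypothesis this equals $\big[\Omega_{i+2}(G_n(i))\big](i+3)$, which by Exercise~\ref{ex:goodstein-fund-seq} equals $\Omega_{i+3}(G_n(i+1))$, as required.

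Next I would relate the index at which the Hardy iteration reaches~$0$ to the index at which the Goodstein sequence terminates. By Definition~\ref{def:Hardy}, $H_{\Omega_2(n)}(3)=3+i$ for the smallest~$i$ with $[\Omega_2(n)]_i(3)=0$. Using the identity just proved, $[\Omega_2(n)]_i(3)=0$ is equivalent to $\Omega_{i+2}(G_n(i))=0$. Here I would invoke the defining property of~$\Omega_b$: since $\Omega_b$ sends $0$ to the empty expression and is strictly monotone (so in particular injective, with $\Omega_b(k)=0\Leftrightarrow k=0$), we have $\Omega_{i+2}(G_n(i))=0$ precisely when $G_n(i)=0$. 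Hence the smallest~$i$ with $[\Omega_2(n)]_i(3)=0$ is exactly the smallest~$i$ with $G_n(i)=0$, which is $A(n)$ by Definition~\ref{def:Ackermann}. Therefore $H_{\Omega_2(n)}(3)=3+A(n)$.

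Finally I would reconcile the constant: the claimed value is $n+A(n)$, not $3+A(n)$, so the statement as written presumably intends the normalization where the Goodstein recursion starts with base~$2$ and the Hardy function is evaluated at an argument matching~$n$ rather than~$3$, or else there is a shift by $n-3$ to absorb. I expect the clean route is to observe that $A(n)$ counts the steps to termination and that the first few Goodstein steps for small~$n$ make the bookkeeping match up; more likely the intended reading pairs $H_{\Omega_2(n)}$ with base~$n$ in a rescaled Goodstein sequence. The main obstacle will be getting this additive constant exactly right — tracking precisely how the ``$+i$'' offset inside $[\alpha]_{i+1}(n)=[[\alpha]_i(n)](n+i)$ interacts with the base shift ``$+2$'' inside $\Omega_{i+2}$ and the start value~$3$ in $H_{\Omega_2(n)}(3)$ — whereas the structural content (the inductive identity plus monotonicity of~$\Omega_b$) is routine given the preceding exercises.
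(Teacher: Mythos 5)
Your argument is correct and follows exactly the same route as the paper's proof: establish by induction on~$i$ the identity $[\Omega_2(n)]_i(3)=\Omega_{i+2}(G_n(i))$ using Exercise~\ref{ex:goodstein-fund-seq}, observe that $\Omega_{i+2}(G_n(i))=0$ if and only if $G_n(i)=0$, and read off the result from Definitions~\ref{def:Ackermann} and~\ref{def:Hardy}.

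Your instinct about the additive constant is sound, and you should have trusted it: the correct conclusion of this argument is $H_{\Omega_2(n)}(3)=3+A(n)$, exactly as you computed, and the printed statement of the proposition contains an error (an~``$n$'' where a~``$3$'' should stand). There is no rescaling or reindexing that rescues the formula $n+A(n)$; a direct check refutes it. For $n=1$ one has $\Omega_2(1)=\omega^0=1$ and $H_1(3)=H_0(4)=4=3+A(1)$, since $A(1)=1$ (as $G_1(0)=1$ and $G_1(1)=0$), whereas $n+A(n)=2$. For $n=2$ one computes $A(2)=3$ (the base-change sequence runs $2,2,1,0$) and $H_{\omega}(3)=H_3(3)=6=3+A(2)$, whereas $n+A(n)=5$. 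The paper's own proof in fact establishes $3+A(n)$: it declares the claim ``immediate by Definitions~\ref{def:Ackermann} and~\ref{def:Hardy},'' and unwinding those definitions gives $3+i^*$ where~$i^*$ is the least~$i$ with $[\Omega_2(n)]_i(3)=0$, and this~$i^*$ is precisely $A(n)$. So the error is in the statement, not the argument.

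The place where you went slightly astray is the final paragraph, where you speculate about rescaled Goodstein sequences or a base shift absorbing $n-3$. No such reconciliation exists or is needed: the fix is simply to replace $n+A(n)$ by $3+A(n)$ in the proposition, and the downstream corollary survives unchanged in substance. From $3+A(n)=H_{\Omega_2(n)}(3)\geq F_k(F_l^2(3))>F_k(n)\geq n+f(n)$ one still obtains $A(n)>n+f(n)-3>f(n)$, since the corollary only considers $n\geq 2^{k+1}\geq 8$. When a careful calculation contradicts a stated claim, and you have concrete counterexamples in hand, the right move is to conclude that the claim is mis-stated rather than to hunt for a hidden convention.
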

\begin{proof}
A straighforward induction based on Exercise~\ref{ex:goodstein-fund-seq} yields
\begin{equation*}
\Omega_{i+2}(G_n(i))=[\Omega_2(n)]_i(3).
\end{equation*}
Since~$G_n(i)=0$ in $\mathbb N$ is equivalent to~$\Omega_{i+2}(G_n(i))=0$ in $\omega^\omega$, the claim is now immediate by Definitions~\ref{def:Ackermann} and~\ref{def:Hardy}.
\end{proof}

To connect the Hardy hierarchy with primitive recursion, we introduce another hierarchy of functions. As before, we shall write $F_n^k$ for the~$k$-th iterate of~$F_n$.

\begin{definition}
In order to define functions~$F_n:\mathbb N\to\mathbb N$ for~$n\in\mathbb N$, we recursively stipulate~$F_0(b)=b+1$ and $F_{n+1}(b)=F_n^b(b)$.
\end{definition}

The functions~$F_n$ are part of the so-called fast-growing hierarchy~\cite{Grzegorczyk,schwichtenberg71,wainer70}. This name is justified by part~(a) of the following exercise and by Proposition~\ref{prop:pr-fast-growing} below.

\begin{exercise}\label{ex:fast-growing}
(a) Prove $F_2(n)=2^n\cdot n$ and~$F_3(3)>10^N$ for $N=10^6$.

(b) Show that we have $b<F_n(b)\leq F_{n+1}(b)$ for $b>0$ and even $F_n(b)<F_{n+1}(b)$ for~$b>1$. Also show that $b<c$ entails $F_n(b)<F_n(c)$.
\end{exercise}

We now connect the fast-growing and the Hardy hierarchy.

\begin{proposition}\label{prop:Hardy-fast-growing}
The functions $F_n$ and $H_{\omega^n}$ coincide for any~$n\in\mathbb N$.
\end{proposition}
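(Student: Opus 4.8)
The plan is to prove the identity by induction on $n$, using the recursive characterisation of the Hardy hierarchy from Exercise~\ref{ex:Hardy}(a) together with the composition law $H_{\alpha+\beta}=H_\alpha\circ H_\beta$ (for meshing $\alpha,\beta$) from Exercise~\ref{ex:Hardy}(b). For the base case $n=0$ I would simply observe that $\omega^0=1$ is the successor of $0$, so that Exercise~\ref{ex:Hardy}(a) gives $H_{\omega^0}(b)=H_1(b)=H_0(b+1)=b+1=F_0(b)$.

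For the induction step I assume $F_n=H_{\omega^n}$ and isolate the following auxiliary claim: $H_{\omega^n\cdot b}=(H_{\omega^n})^b$ for every $b\in\mathbb N$. I would establish this by a side induction on $b$. The case $b=0$ is the trivial identity $H_0=\mathrm{id}$. For the step one checks that $\omega^n\cdot b$ and $\omega^n$ mesh: the smallest exponent occurring in $\omega^n\cdot b$ and the largest exponent of $\omega^n$ both equal $n$, so the sum collapses to $\omega^n\cdot(b+1)$. Hence Exercise~\ref{ex:Hardy}(b) yields $H_{\omega^n\cdot(b+1)}=H_{\omega^n\cdot b}\circ H_{\omega^n}=(H_{\omega^n})^b\circ H_{\omega^n}=(H_{\omega^n})^{b+1}$, using the side induction hypothesis.

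With the auxiliary claim in hand I would conclude as follows. The element $\omega^{n+1}$ is a limit (of the form $0+\omega^{e+1}$ with $e=n$), and its fundamental sequence is $\{\omega^{n+1}\}(b)=\omega^n\cdot b$. Therefore Exercise~\ref{ex:Hardy}(a) gives
\[
H_{\omega^{n+1}}(b)=H_{\{\omega^{n+1}\}(b)}(b)=H_{\omega^n\cdot b}(b)=(H_{\omega^n})^b(b)=F_n^b(b)=F_{n+1}(b),
\]
where the penultimate equality is the induction hypothesis and the last is the defining clause $F_{n+1}(b)=F_n^b(b)$. The degenerate case $b=0$ is covered uniformly, since $\omega^n\cdot 0$ collapses to the empty expression $0$ and both sides evaluate to $0$.

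I do not expect a genuine obstacle: the argument is a routine unwinding of the definitions once Exercise~\ref{ex:Hardy} is available, and it is essentially the classical proof that $F_n=H_{\omega^n}$ in the fast-growing and Hardy hierarchies. The only point demanding a little care is the meshing bookkeeping in the auxiliary claim — one must confirm, in accordance with Definition~\ref{def:mesh}, that $\omega^n\cdot b$ together with $\omega^n$ really produces the single normal-form term $\omega^n\cdot(b+1)$ (rather than two separate summands), and that the boundary value $\{\omega^{n+1}\}(0)$ is handled consistently with the conventions on the empty expression.
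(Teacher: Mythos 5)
Your proof is correct and follows essentially the same route as the paper: a main induction on $n$ with the identical base-case computation, an auxiliary induction using Exercise~\ref{ex:Hardy}(b) to relate $H_{\omega^n\cdot b}$ to a $b$-fold iterate, and then the limit clause $H_{\omega^{n+1}}(b)=H_{\{\omega^{n+1}\}(b)}(b)$. The only (cosmetic) differences are that you state the auxiliary claim as $H_{\omega^n\cdot b}=(H_{\omega^n})^b$ and apply the main induction hypothesis $F_n=H_{\omega^n}$ at the end, whereas the paper states it directly as $F_n^i=H_{\omega^n\cdot i}$; and your meshing decomposition is $\omega^n\cdot b+\omega^n$ (giving $H_{\omega^n\cdot b}\circ H_{\omega^n}$) rather than the paper's $\omega^n+\omega^n\cdot i$ (giving $H_{\omega^n}\circ H_{\omega^n\cdot i}$) -- both are legitimate since the factors mesh either way and $f^{b}\circ f=f\circ f^{b}=f^{b+1}$ by the noted identity $f^{i+j}=f^j\circ f^i$.
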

\begin{proof}
We argue by induction on~$n$. In the base case, we compute
\begin{equation*}
H_1(n)=H_0(n+1)=n+1=F_0(n).
\end{equation*}
For the induction step, we use an auxiliary induction on~$i$ to show that $F_n^i$ coincides with $H_{\omega^n\cdot i}$. The step of the auxiliary induction is secured by Exercise~\ref{ex:Hardy}(b), which allows us to compute
\begin{equation*}
H_{\omega^n\cdot(i+1)}=H_{\omega^n}\circ H_{\omega^n\cdot i}=F_n\circ F_n^i=F_n^{i+1}.
\end{equation*}
Once the auxiliary induction is complete, we obtain
\begin{equation*}
H_{\omega^{n+1}}(k)=H_{\{\omega^{n+1}\}(k)}(k)=H_{\omega^n\cdot k}(k)=F_n^k(k)=F_{n+1}(k),
\end{equation*}
as needed to complete the step of the main induction.
\end{proof}

Let us note that~$F_n$ is primitive recursive for each~$n\in\mathbb N$. In view of the following result, the primitive recursive functions are exhausted by the given segment of the fast-growing hierarchy.

\begin{proposition}\label{prop:pr-fast-growing}
For each primitive recursive $f:\mathbb N^k\to\mathbb N$, there is an $n\in\mathbb N$ such that $f(a_0,\ldots,a_{k-1})\leq F_n(b)$ holds for any $b\geq 2$ with $a_i\leq b$ for all~$i<k$.
\end{proposition}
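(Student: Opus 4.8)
The plan is to argue by induction on the build-up of the primitive recursive function $f$ according to the clauses of Definition~\ref{def:prim-rec}, establishing in each case the existence of an $n$ with the stated domination property. Throughout I will use the basic monotonicity facts from Exercise~\ref{ex:fast-growing}(b): that $F_n$ is strictly increasing in its argument, that $b<F_n(b)$ for $b>0$, and that $F_n(b)\leq F_{n+1}(b)$ for $b>0$ (with strict inequality for $b>1$). A useful preliminary remark is that the property is monotone in $n$: if $f(\mathbf a)\leq F_n(b)$ holds for all admissible $\mathbf a, b$, then the same holds with $n$ replaced by any $n'\geq n$, since $F_n(b)\leq F_{n'}(b)$ for $b\geq 2$. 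This lets me freely increase the witnessing index when combining several subfunctions.

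For the base cases the choice of $n$ is direct. For a constant function $C^k_j$ with value $j$, pick $n$ large enough that $F_n(2)\geq j$ (for instance using $F_n(2)\geq F_0(2)+\cdots$, or simply iterating $F_0$); since $F_n$ is increasing and $b\geq 2$, this gives $C^k_j(\mathbf a)=j\leq F_n(b)$. For a projection $I^k_i$ we have $I^k_i(\mathbf a)=a_i\leq b<F_0(b)$, so $n=0$ works. For the successor $S$ we have $S(a)=a+1\leq b+1=F_0(b)$, so again $n=0$ suffices. The composition case is where the monotonicity-in-$n$ remark pays off: given $f(\mathbf a)=h(g_1(\mathbf a),\ldots,g_m(\mathbf a))$, apply the induction hypothesis to get an index $n_h$ for $h$ and indices $n_1,\ldots,n_m$ for the $g_i$; let $n^\ast=\max(n_h,n_1,\ldots,n_m)$. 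For $a_i\leq b$ with $b\geq 2$ we get $g_i(\mathbf a)\leq F_{n^\ast}(b)$ for each $i$, and setting $c:=F_{n^\ast}(b)$ (which satisfies $c\geq b\geq 2$) we obtain $h(g_1(\mathbf a),\ldots,g_m(\mathbf a))\leq F_{n^\ast}(c)=F_{n^\ast}(F_{n^\ast}(b))=F_{n^\ast}^2(b)\leq F_{n^\ast+1}(b)$, using $F_{n^\ast}^2(b)\leq F_{n^\ast}^b(b)=F_{n^\ast+1}(b)$ for $b\geq 2$. So $n=n^\ast+1$ works for the composition.

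The primitive recursion case is the main obstacle and will need the most care. Suppose $f(\mathbf a,b)$ is defined from $g$ and $h$ by the recursion scheme in clause~(v). By the induction hypothesis there are indices $n_g$ and $n_h$ for $g$ and $h$; set $n^\ast=\max(n_g,n_h)+1$, say, chosen large enough to absorb the bookkeeping. The idea is to prove, by an internal induction on $c$, a bound of the shape $f(\mathbf a,c)\leq F_{n^\ast}^{\,c+1}(b)$ whenever all the $a_i$ and $c$ are $\leq b$ and $b\geq 2$; the base case $f(\mathbf a,0)=g(\mathbf a)\leq F_{n^\ast}(b)$ uses the hypothesis for $g$, and the successor step computes $f(\mathbf a,c+1)=h(\mathbf a,c,f(\mathbf a,c))$, feeding the bound for the previous value into the hypothesis for $h$ with a suitably enlarged argument $F_{n^\ast}^{\,c+1}(b)\geq b\geq 2$, producing $f(\mathbf a,c+1)\leq F_{n^\ast}(F_{n^\ast}^{\,c+1}(b))=F_{n^\ast}^{\,c+2}(b)$. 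One subtlety is that $h$ takes $\mathbf a$, $c$, and $f(\mathbf a,c)$ as inputs, and all three must be bounded by a common value $\geq 2$ that one can plug into $F_{n^\ast}$; since $a_i\leq b\leq F_{n^\ast}^{\,c+1}(b)$ and $c\leq b\leq F_{n^\ast}^{\,c+1}(b)$, the value $F_{n^\ast}^{\,c+1}(b)$ serves as such a common bound. Finally, since $c\leq b$, we have $f(\mathbf a,b')\leq F_{n^\ast}^{\,b'+1}(b)\leq F_{n^\ast}^{\,b+1}(b)$ for the relevant argument $b'\leq b$; and $F_{n^\ast}^{\,b+1}(b)\leq F_{n^\ast}^{\,b^2}(b)$ or, more simply, one absorbs the extra iteration into one more level: $F_{n^\ast}^{\,b+1}(b)\leq (F_{n^\ast+1})(b)$ will not hold directly because of the $+1$, so instead I would note $F_{n^\ast}^{\,b+1}(b)=F_{n^\ast}(F_{n^\ast}^{\,b}(b))=F_{n^\ast}(F_{n^\ast+1}(b))\leq F_{n^\ast+1}(F_{n^\ast+1}(b))=F_{n^\ast+1}^2(b)\leq F_{n^\ast+2}(b)$ for $b\geq 2$. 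Hence $n=n^\ast+2$ witnesses the claim for $f$, completing the induction. The only real work is keeping the indices and the arguments straight so that every application of $F_j$ is to something $\geq 2$ and every iteration count is $\leq b$, which is exactly what makes the step from $F_j^{\,b}$ to $F_{j+1}$ legitimate.
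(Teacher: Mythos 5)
Your proof is correct and follows essentially the same route as the paper: induction on the build-up of $f$, using that $n\mapsto F_n(b)$ is non-decreasing so a single index can serve all subfunctions, with $F_n^2(b)\leq F_n^b(b)=F_{n+1}(b)$ absorbing the composition step and an internal induction giving $f(\mathbf a,c)\leq F_n^{c+1}(b)$ for the recursion case, rounded off by $F_n^{b+1}(b)\leq F_{n+2}(b)$. The only cosmetic difference is that you pass through $F_{n+1}^2(b)$ rather than $F_{n+1}^b(b)$ in the final absorption, which is an equally valid way to land on $n+2$.
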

\begin{proof}
We argue by induction over the build-up of primitive recursive functions according to Definition~\ref{def:prim-rec}. In the case of a constant function with value~$n\in\mathbb N$, it suffices to note that Exercise~\ref{ex:fast-growing} yields $n\leq F_n(b)$ for any~$b\geq 2$. For projections and the successor function, the claim of the proposition holds with~$n=0$. Now assume that $f$ is a composition given by
\begin{equation*}
f(a_1,\ldots,a_k)=h\left(g_1(a_1,\ldots,a_k),\ldots,g_m(a_1,\ldots,a_k)\right).
\end{equation*}
Since $n\mapsto F_n(b)$ is increasing for~$b>0$, we may inductively assume that there is a single~$n\in\mathbb N$ such that we have $g_j(a_1,\ldots,a_k)\leq F_n(b)$ for all $b\geq 2$ with $a_i\leq b$ as well as $h(c_1,\ldots,c_m)\leq F_n(d)$ for all $d\geq 2$ with $c_j\leq d$. If we employ the latter with $d=F_n(b)>b$, we see that any $a_i\leq b$ with $b\geq 2$ validate
\begin{equation*}
f(a_1,\ldots,a_k)\leq F_n(F_n(b))\leq F_n^b(b)=F_{n+1}(b).
\end{equation*}
So the claim for~$f$ holds with $n+1$ at the place of~$n$. Finally, we consider the case where $f$ has been obtained by recursive clauses
\begin{align*}
f(a_1,\ldots,a_k,0)&=g(a_1,\ldots,a_k),\\
f(a_1,\ldots,a_k,c+1)&=h(a_1,\ldots,a_k,c,f(a_1,\ldots,a_k,c)).
\end{align*}
Again, we can invoke the induction hypothesis to obtain a single~$n\in\mathbb N$ such that we have $g(a_1,\ldots,a_k)\leq F_n(b)$ for all $b\geq 2$ with $a_i\leq b$ and $h(c_1,\ldots,c_{k+2})\leq F_n(d)$ for all $d\geq 2$ with $c_j\leq d$. Given $b\geq 2$ with $a_i\leq b$, a straightforward induction on~$c\leq b$ yields $f(a_1,\ldots,a_k,c)\leq F_n^{c+1}(b)$. Still for $c\leq b$, we thus get
\begin{equation*}
f(a_1,\ldots,a_k,c)\leq F_n^{b+1}(b)=F_n(F_{n+1}(b))\leq F_{n+1}^b(b)=F_{n+2}(b),
\end{equation*}
which completes the main induction step with $n+2$ at the place of~$n$.
\end{proof}

We are now able to confirm that our version of the Ackermann function~\cite{ackermann28} grows extremely fast.

\begin{corollary}
For each primitive recursive~$f:\mathbb N\to\mathbb N$, there is an $N\in\mathbb N$ such that we have $f(n)<A(n)$ for all~$n\geq N$.
\end{corollary}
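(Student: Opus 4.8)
The plan is to bound $A$ from below by the fast-growing hierarchy, using the two structural facts already available. By Proposition~\ref{prop:Goodstein-Hardy} we have $A(n)=H_{\Omega_2(n)}(3)-n$, and by Proposition~\ref{prop:Hardy-fast-growing} we have $H_{\omega^k}=F_k$. Since $2^k\nf{2}2^k\cdot 1$ forces $\Omega_2(2^k)=\omega^k$, this yields
\begin{equation*}
A(2^k)=H_{\omega^k}(3)-2^k=F_k(3)-2^k .
\end{equation*}
If in addition $A$ is non-decreasing, then for any $n$ with $2^k\leq n<2^{k+1}$ (i.e.\ $k=\lfloor\log_2 n\rfloor$) we get $A(n)\geq A(2^k)=F_k(3)-2^k$, and the corollary reduces to an elementary growth comparison among the $F_k$.

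First I would show $A$ is non-decreasing. The key claim is that $m\leq n$ implies $G_m(i)\leq G_n(i)$ for all $i\in\mathbb N$, proved by induction on $i$; the base case is $G_m(0)=m\leq n=G_n(0)$. For the step, the cases $G_m(i)=0$ and $G_m(i)=G_n(i)$ are trivial (in the second the two sequences coincide from then on), so assume $0<G_m(i)<G_n(i)$. By the computation in the proof of Lemma~\ref{lem:Goodstein-ord-decent}, whenever $v>0$ is the value at step $i$ and $v'$ the next value, we have $\Omega_{i+3}(v'+1)=\Omega_{i+2}(v)$. Since $\Omega_{i+2}$ and $\Omega_{i+3}$ are order-preserving in both directions (the equivalence $m<n\Leftrightarrow\Omega_b(m)\prec\Omega_b(n)$ recorded before Lemma~\ref{lem:Goodstein-ord-decent}), $G_m(i)<G_n(i)$ gives $\Omega_{i+3}(G_m(i+1)+1)\prec\Omega_{i+3}(G_n(i+1)+1)$ and hence $G_m(i+1)<G_n(i+1)$. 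From $G_m(i)\leq G_n(i)$ for all $i$ we obtain $\{i\in\mathbb N\,|\,G_n(i)=0\}\subseteq\{i\in\mathbb N\,|\,G_m(i)=0\}$, so $A(m)=\min\{i\,|\,G_m(i)=0\}\leq\min\{i\,|\,G_n(i)=0\}=A(n)$.

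Now let $f:\mathbb N\to\mathbb N$ be primitive recursive. Proposition~\ref{prop:pr-fast-growing} provides $n_0\in\mathbb N$ with $f(a)\leq F_{n_0}(b)$ whenever $b\geq 2$ and $a\leq b$. Put $N=2^{n_0+2}$ and let $n\geq N$; set $k=\lfloor\log_2 n\rfloor\geq n_0+2$, so $2^k\leq n<2^{k+1}$. Taking $b=2^{k+1}$ gives $f(n)\leq F_{n_0}(2^{k+1})$, while $A(n)\geq F_k(3)-2^k$ by the previous paragraph. It thus suffices to verify
\begin{equation*}
F_k(3)-2^k>F_{n_0}(2^{k+1})\qquad\text{for all }k\geq n_0+2 .
\end{equation*}
This is a routine iteration of the clause $F_{j+1}(b)=F_j^b(b)$ with Exercise~\ref{ex:fast-growing}: one checks $F_j(2)\geq 2^{j+1}$ for $j\geq 1$ by induction (using $F_1(m)=2m$ and index-monotonicity), deduces $F_{k-1}^2(3)\geq 2^{k+1}$ and hence $F_k(3)=F_{k-1}\big(F_{k-1}^2(3)\big)\geq F_{k-1}(2^{k+1})\geq F_{n_0+1}(2^{k+1})=F_{n_0}^{2^{k+1}}(2^{k+1})$, and finally notes that applying $F_{n_0}$ once and then $2^{k+1}-1$ further times (each step adding at least $1$) gives $F_{n_0}^{2^{k+1}}(2^{k+1})\geq F_{n_0}(2^{k+1})+2^{k+1}-1>F_{n_0}(2^{k+1})+2^k$. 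Combining, $A(n)\geq F_k(3)-2^k>F_{n_0}(2^{k+1})\geq f(n)$ for every $n\geq N$, as required.

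The main obstacle is the monotonicity of $A$: the growth estimates in the last paragraph are routine and the reduction via Propositions~\ref{prop:Goodstein-Hardy} and~\ref{prop:Hardy-fast-growing} is immediate, but establishing $m\leq n\Rightarrow A(m)\leq A(n)$ needs the (elementary, but not previously isolated) fact that a single Goodstein step is order-preserving on positive integers. An alternative route would prove directly that $\alpha\preceq\beta$ implies $H_\alpha(j)\leq H_\beta(j)$ and combine this with the order-preservation of $\Omega_2$, but that requires a comparable amount of work.
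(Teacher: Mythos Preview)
Your proof is correct, but it takes a genuinely different route from the paper's.

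The paper never establishes that $A$ is monotone. Instead, for an arbitrary $n$ with $2^{l+1}\leq n<2^{l+2}$ it writes $\Omega_2(n)=\omega^{l+1}+\beta$ (where $\omega^{l+1}$ and $\beta$ mesh) and invokes the composition law $H_{\alpha+\beta}=H_\alpha\circ H_\beta$ from Exercise~\ref{ex:Hardy}(b) to obtain
\[
n+A(n)=H_{\Omega_2(n)}(3)=H_{\omega^{l+1}}\bigl(H_\beta(3)\bigr)=F_{l+1}\bigl(H_\beta(3)\bigr)\geq F_{l+1}(3),
\]
and then closes with growth estimates similar in spirit to yours. So the paper handles the ``tail'' $\beta$ of $\Omega_2(n)$ via the Hardy hierarchy itself, whereas you eliminate it by rounding $n$ down to~$2^k$ and compensating with the monotonicity of~$A$.

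Each approach has its own cost: the paper relies on Exercise~\ref{ex:Hardy}(b), which you never touch; you instead need the new lemma that a Goodstein step is order-preserving on positive values (your use of $\Omega_{i+3}(v'+1)=\Omega_{i+2}(v)$ together with the order-equivalence for~$\Omega_b$ is clean and correct). Your route is arguably more self-contained about Goodstein sequences, while the paper's is a bit shorter and stays entirely within the Hardy/fast-growing machinery already set up.
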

\begin{proof}
By the proposition above, we get a~$k\geq 2$ such that $n+f(n)\leq F_k(n)$ holds for all~$n\geq 2$. We set $N=2^{k+1}$ and consider an arbitrary~$n\geq N$. Let $l\geq 3$ be determined by $2^{l+1}\leq n<2^{l+2}$. We can then write $\Omega_2(n)=\omega^{l+1}+\beta$ such that $\omega^{l+1}$ and $\beta$ mesh. An easy induction on~$\beta$ yields $i\leq H_\beta(i)$, so that we can invoke Propositions~\ref{prop:Goodstein-Hardy} and~\ref{prop:Hardy-fast-growing} as well as Exercises~\ref{ex:Hardy} and~\ref{ex:fast-growing} to get
\begin{equation*}
n+A(n)=H_{\Omega_2(n)}(3)=H_{\omega^{l+1}}(H_\beta(3))=F_{l+1}(H_\beta(3))\geq F_{l+1}(3)\geq F_k(F_l^2(3)).
\end{equation*}
Observe that $l\geq k\geq 2$ and $F_l(3)>l+2$ entail $F_l^2(3)>F_2(l+2)>2^{l+2}>n$, again by Exercise~\ref{ex:fast-growing}. We get $n+A(n)>F_k(n)\geq n+f(n)$ and hence $A(n)>f(n)$.
\end{proof}

The work of the previous and present section culminates in the following result. Let us note that the definition of Goodstein sequences is naturally represented by a $\Sigma$-formula, by the proof of Theorem~\ref{thm:prim-rec-sigma} and the remark after Definition~\ref{def:Ackermann}. We also point out that the termination of non-hereditary Goodstein sequences is provable in Peano arithmetic, which confirms that the latter is strictly stronger than the theory~$\mathsf{I\Sigma}_1$ (cf.~the paragraph before Definition~\ref{def:mesh}).

\begin{theorem}
Consider any $\Sigma$-formula~$\varphi(x,y,z)$ that defines the Goodstein sequences from Definition~\ref{def:Goodstein}, in the sense that all $i,k,n\in\mathbb N$ validate
\begin{equation*}
G_n(i)=k\quad\Leftrightarrow\quad\mathbb N\vDash\varphi(n,i,k).
\end{equation*}
We then have $\mathsf{I\Sigma}_1\nvdash\forall x\exists y\,\varphi(x,y,0)$, i.\,e., the termination of non-hereditary Goodstein sequences is unprovable in~$\mathsf{I\Sigma}_1$.
\end{theorem}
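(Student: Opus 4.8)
The plan is to derive a contradiction from the assumption $\mathsf{I\Sigma}_1\vdash\forall x\exists y\,\varphi(x,y,0)$ by showing that this would force the function $A$ from Definition~\ref{def:Ackermann} to be dominated by a primitive recursive function, contradicting the Corollary at the end of Section~\ref{sect:goodstein}. First I would observe that, since $G_n(i)=0$ holds for all $i\geq A(n)$ (the Goodstein sequence stays at $0$ once it reaches $0$), the formula $\varphi(n,i,0)$ is true precisely for $i\geq A(n)$; in particular $A$ is $\Sigma$-definable by the formula $\varphi(x,y,0)\land\forall y'<y\,\neg\varphi(x,y',0)$, using that $(m,i)\mapsto G_m(i)$ is primitive recursive (remark after Definition~\ref{def:Ackermann}) so that this auxiliary formula can be taken to be $\Sigma_1$ after Exercise~\ref{ex:sigma-sigma_1}.

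Next I would invoke the machinery of Subsection~\ref{subsect:prov-tot-ISigma_1}. Assuming $\mathsf{I\Sigma}_1\vdash\forall x\exists y\,\varphi(x,y,0)$, Theorem~\ref{thm:compl-Sigma_1} yields a derivation $\vdashir\neg\theta_0,\ldots,\neg\theta_n,\exists y\,\varphi(x,y,0)$ for suitable axioms $\theta_i$ of $\mathsf Q'$. Since the $\theta_i$ are true $\Pi$-sentences, a short argument (of the kind sketched in the paragraph before Exercise~\ref{ex:bounded-prim-rec}, using that one can find primitive recursive functions witnessing $\vDash^{f_i}\theta_i$ — or more simply, since $\theta_i$ are true closed $\Pi$-sentences their asymmetric interpretations are automatically satisfied) gives a primitive recursive $g$ with $\vDash^g\exists y\,\varphi(x,y,0)$ via Theorem~\ref{thm:Sigma_1-prim-rec} together with the observation that negated true $\Pi$-axioms can be absorbed. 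Concretely, following the paragraph after Definition~\ref{def:asym-int}: for each $m\in\mathbb N$ there is an $n\leq g(m)$ with $\vDash^{m,g(m)}\varphi(m,n,0)$, and since $\varphi(x,y,0)$ is $\Sigma_1$ this entails $\mathbb N\vDash\varphi(m,n,0)$, hence $G_m(n)=0$, hence $A(m)\leq n\leq g(m)$. Thus $A$ is dominated by the primitive recursive function $g$.

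Finally, this contradicts the Corollary stating that for each primitive recursive $f:\mathbb N\to\mathbb N$ there is $N$ with $f(n)<A(n)$ for all $n\geq N$: applying it with $f=g$ gives $g(n)<A(n)$ for large $n$, contradicting $A(n)\leq g(n)$ for all $n$. Hence $\mathsf{I\Sigma}_1\nvdash\forall x\exists y\,\varphi(x,y,0)$, as claimed.

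I expect the main obstacle to be the careful handling of the step that converts the derivation $\vdashir\neg\theta_0,\ldots,\neg\theta_n,\exists y\,\varphi(x,y,0)$ into a clean bound $\vDash^g\exists y\,\varphi(x,y,0)$: one needs to check that the negated axioms $\neg\theta_i$ do not interfere, which works because the $\theta_i$ are \emph{true} closed $\Pi$-sentences, so under the asymmetric interpretation $\neg\theta_i^{m,n}$ is false for every $m,n$ (as all unbounded quantifiers in $\neg\theta_i$ are existential and $\theta_i$ has no free variables, truth of $\theta_i$ gives, for suitable bounds, failure of $\neg\theta_i$ — one may need to first bound the witnesses in $\neg\theta_i$, absorbing them into $g$). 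A cleaner route, which I would actually prefer to write up, is to note that Corollary~\ref{cor:ISigma1_prim-rec} already packages exactly this: it says every $\mathsf{I\Sigma}_1$-provably total function is primitive recursive. Since $\varphi(x,y,0)$ is a $\Sigma_1$-formula and $A(m)=\min\{n\mid\mathbb N\vDash\varphi(m,n,0)\}$, the assumption makes a $\Sigma_1$-definition of $A$ provably total in $\mathsf{I\Sigma}_1$, so $A$ is primitive recursive — directly contradicting the Corollary. This reduces the whole argument to citing Corollary~\ref{cor:ISigma1_prim-rec} and the growth corollary, with the only genuine content being the $\Sigma_1$-definability of $A$ and the observation that $G_n$ stabilises at $0$.
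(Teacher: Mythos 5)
Your first route is correct and matches the paper's own proof almost exactly: assume $\mathsf{I\Sigma}_1\vdash\forall x\exists y\,\varphi(x,y,0)$, extract (via Theorem~\ref{thm:compl-Sigma_1} and Theorem~\ref{thm:Sigma_1-prim-rec}, together with the remark in the paragraph before Exercise~\ref{ex:bounded-prim-rec}) a primitive recursive $g$ with $A(m)\leq g(m)$ for all $m$, and contradict the corollary saying $A$ eventually dominates every primitive recursive function. You also correctly notice that the witnesses $n$ from the asymmetric interpretation only give $G_m(n)=0$ and hence a \emph{bound} $A(m)\leq n\leq g(m)$ rather than $A(m)=n$; this is why the paper cites ``the proof of'' Corollary~\ref{cor:ISigma1_prim-rec} rather than the corollary itself. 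Your technical point about the negated true $\Pi$-axioms $\neg\theta_i$ is also right: each $\theta_i$ has no unbounded existential quantifier, so $(\neg\theta_i)^{m,n}$ is at least as strong as $\neg\theta_i$ itself and is therefore false for all $m,n$, so these disjuncts can never be the ones that make $\vDash^{m,g(m)}\bigvee\Gamma$ true.

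The ``cleaner route'' you say you would actually prefer to write up has a genuine gap, however. To conclude via Corollary~\ref{cor:ISigma1_prim-rec} that $A$ is primitive recursive you would need $A$ to be $\mathsf{I\Sigma}_1$-\emph{provably total}, i.\,e., you need a $\Sigma_1$-formula $\psi(x,y)$ that \emph{defines the function $A$} and satisfies $\mathsf{I\Sigma}_1\vdash\forall x\exists y\,\psi(x,y)$. The hypothesis only gives $\mathsf{I\Sigma}_1\vdash\forall x\exists y\,\varphi(x,y,0)$, and $\varphi(x,y,0)$ does not define $A$ as a function; it defines the relation $\{(n,i):i\geq A(n)\}$. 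Your candidate $\psi(x,y):=\varphi(x,y,0)\land\forall y'<y\,\neg\varphi(x,y',0)$ is, for a general $\Sigma$-formula $\varphi$, a conjunction of a $\Sigma$- and a $\Pi$-part and hence not $\Sigma_1$; and your suggested fix of switching to a canonical primitive-recursive $\Sigma$-formula $\gamma$ for $G$ from Exercise~\ref{ex:ISigma_1-to-primrec} does not obviously help, because the theorem quantifies over an \emph{arbitrary} $\Sigma$-formula $\varphi$ that defines $G$ on the standard model, and $\mathsf{I\Sigma}_1$ need not prove $\varphi$ and $\gamma$ equivalent, so $\mathsf{I\Sigma}_1\vdash\forall x\exists y\,\varphi(x,y,0)$ does not transfer to $\mathsf{I\Sigma}_1\vdash\forall x\exists y\,\gamma(x,y,0)\land\ldots$. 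What \emph{is} true is that, once you have the primitive recursive bound $g$ from the first route, the identity $A(m)=\min\{i\leq g(m)\,|\,G_m(i)=0\}$ together with the primitive-recursiveness of $(m,i)\mapsto G_m(i)$ and Proposition~\ref{prop:bounded-min} makes $A$ primitive recursive outright -- but notice this derives the primitive recursiveness from the domination bound, not from provable totality, so it is not really a shortcut around the first route. Stick with your first write-up; the ``cleaner'' packaging loses the point at which the argument actually bites.
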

\begin{proof}
Aiming at a contradiction, we assume that we have $\mathsf{I\Sigma}_1\vdash\forall x\exists y\,\varphi(x,y,0)$ for $\varphi$ as indicated. By the proof of Corollary~\ref{cor:ISigma1_prim-rec}, we obtain a primitive recursive function~$f:\mathbb N\to\mathbb N$ such that each~$n$ admits an~$i\leq f(n)$ with~$\mathbb N\vDash\varphi(n,i,0)$ and hence~$G_n(i)=0$. So~$A:\mathbb N\to\mathbb N$ as in Definition~\ref{def:Ackermann} is dominated by~$f$. But this contradicts the previous corollary.
\end{proof}

\section{Computability}\label{sect:computability}

This section is a brief introduction to computability theory, which is also known as recursion theory. The theory of computation is fascinating both as a modern technical subject and for its connections to the development of the computer, in which logicians such as Alan Turing played a crucial role.

In Subsection~\ref{subsect:Kleene-Turing} below, we prove an equivalence between three possible definitions of computable functions. This provides support for the widely accepted Church-Turing thesis, which asserts that there is a single canonical notion of computability. We also consider computations with oracle and prove fundamental results such as Kleene's normal form theorem and the uncomputability of the halting problem. In Subsection~\ref{subsect:KL}, we use computability theory to analyze the complexity of K\H{o}nig's lemma. As we will see, the latter entails the existence of uncomputable sets. We also prove the low basis theorem, which shows that the weak version of K\H{o}nig's lemma for binary trees (see Theorem~\ref{thm:wkl}) is considerably simpler than the full version for trees with arbitrary finite branchings.

A more comprehensive introduction to computability theory can be found, e.\,g., in the textbook by Robert Soare~\cite{soare-computability}. Let us also point out that there is a close connection between computability and provability. The latter is explored, e.\,g., in the research program of reverse mathematics, which aims to determine the minimal axioms that are needed to prove a given mathematical theorem (see the founding article by Harvey Friedman~\cite{friedman-rm} and the textbook by Stephen Simpson~\cite{simpson09}).

\subsection{Kleene's Normal Form and the Halting Problem}\label{subsect:Kleene-Turing}

In the present section, we show that several definitions of computable function are equivalent. This suggests that there is a single and robust notion of computability. We study fundamental properties of this notion.

It is instructive to approach computability via a machine model, i.\,e., by a mathematical description of program evaluation on a computer. The Turing~machine is probably the most famous model in this context. We work with a particularly succinct model known as the register machine, which leads to an equivalent notion of computability (while comparisons of computational complexity can be subtle).

Let us begin with a somewhat informal description of register machines, which will be made formal later. By a program, we mean a finite sequence of instructions that have one of the forms
\begin{equation*}
r_i\leftarrow 0,\qquad r_i\leftarrow r_i+1,\qquad\texttt{if }r_i=r_j\texttt{ then }I_m\texttt{ else }I_n.
\end{equation*}
Any program refers to a finite number of so-called registers~$r_0,\ldots r_n$, each of which can store a natural number. The first two of the given instructions have the effect that the number in $r_i$ is replaced by $0$ or increased by one, respectively. Instructions are usually executed in order, except when we encounter an instruction of the third form, which we call a jump instruction. Here one continues with the $m$-th instruction of the program when the numbers in $r_i$ and $r_j$ are equal and with the $n$-th instruction otherwise. The evaluation of a program terminates when an instruction beyond the length of the program is called, either after the last instruction in the program or due to a jump instruction.

To perform a computation on given arguments, one places these in some registers before the program is executed. It is assumed that all other registers are initially set to zero. Which registers are used is inessential, as the following program allows us to copy the content of $r_i$ into~$r_j$, assuming that $i$ and~$j$ are different. We write each instruction in an individual line and label the $n$-th instruction by~$I_n$.
\begin{align*}
I_0\qquad& r_j\leftarrow 0\\
I_1\qquad& \texttt{if }r_i=r_j\texttt{ then }I_4\texttt{ else }I_2\\
I_2\qquad& r_j\leftarrow r_j+1\\
I_3\qquad& \texttt{if }r_i=r_j\texttt{ then }I_4\texttt{ else }I_2
\end{align*}
While the given program terminates on any input, there are programs that do not. An example of a program that never terminates is provided by
\begin{align*}
I_0\qquad& \texttt{if }r_0=r_0\texttt{ then }I_0\texttt{ else }I_1.
\end{align*}
We will later see that non-termination plays an indispensable role in the theory of computation (see Exercise~\ref{ex:computable-partial} and the paragraph that precedes it).

The following denotes a program which decreases the content of~$r_i$ by one unless it is already zero. We note that the line with label $I_2\text{-}I_5$ is no single instruction but refers to a subprogram that copies the content of~$r_i$ into~$r_{i+2}$. This subprogram coincides with the above program for $j=i+2$, except that each expression~$I_k$ is replaced by~$I_{2+k}$. In particular, the subprogram terminates by calling~$I_6$, which means that the main program continues with that instruction.
\begin{align*}
I_0\qquad& r_{i+1}\leftarrow 0\\
I_1\qquad& \texttt{if }r_i=r_{i+1}\texttt{ then }I_{12}\texttt{ else }I_2\\
I_2\text{-}I_5\qquad& r_{i+2}\leftarrow r_i\\
I_6\qquad& r_0\leftarrow 0\\
I_7\qquad& r_1\leftarrow r_1+1\\
I_8\qquad& \texttt{if }r_1=r_2\texttt{ then }I_{12}\texttt{ else }I_9\\
I_9\qquad& r_0\leftarrow r_0+1\\
I_{10}\qquad& r_2\leftarrow r_2+1\\*
I_{11}\qquad & \texttt{if }r_1=r_2\texttt{ then }I_{12}\texttt{ else }I_9
\end{align*}
Whenever a given program is used as a subprogram that spans the lines~$I_m,\ldots,I_n$ of a new program, we assume that each jump to~$I_k$ in the originally given program is tacitly replaced by a jump to~$I_{\min(m+k,n+1)}$ in the new program. When the mini\-mum is equal to~$m+k\leq n$, the jump goes to the intended line of the subprogram. The minimum is taken in order to ensure that a jump beyond the subprogram goes to the next instruction of the new program.

We will call a set $Y\subseteq\mathbb N$ computable when there is a program that computes its characteristic function~$\chi_Y:\mathbb N\to\{0,1\}$, in the sense that the program terminates with $\chi_Y(n)$ in~$r_0$ when it is evaluated starting with~$n$ in~$r_0$. It will also be of interest (cf.~Section~\ref{subsect:KL}) whether some $Y\subseteq\mathbb N$ is computable relative to another set~$Z\subseteq\mathbb N$. This can be made precise via register machines with a so-called oracle. In addition to the three types of instruction from above, these involve instructions of the form
\begin{equation*}
r_i\leftarrow\chi(r_i).
\end{equation*}
Programs are now executed relative to some set~$Z\subseteq\mathbb N$. When the given new instruction occurs while~$r_i$ contains~$n$, we replace the latter by~$\chi_Z(n)$. Computability without oracle can be recovered as the special case where we have~$Z=\emptyset$. It is common to work with oracles for sets rather than functions. The following shows that this is no real restriction (at least for oracles that never fail to respond). 

\begin{exercise}
Consider a function~$f:\mathbb N\to\mathbb N$ and its coded graph
\begin{equation*}
\mathcal G(f)=\{\pi(m,n)\,|\,f(m)=n\}\subseteq\mathbb N,
\end{equation*}
where $\pi(m,n)$ refers to the Cantor pairing function (see Exercise~\ref{ex:Cantor-pairing}). Find a program that computes~$f$ when it is executed relative to~$\mathcal G(f)$. \emph{Remark:} You may assume that a program to compute~$\pi$ is given (cf.~Theorem~\ref{thm:comp-equiv}).
\end{exercise}

We continue with a formal definition of program evaluation. The aim is not just to make things precise but also to show that evaluation is a primitive recursive process. For this reason, we define programs as sequences of natural numbers, which are coded according to Definition~\ref{def:seq-code}. The paragraph before Definition~\ref{def:binary-tree} explains basic notation for sequences.

\begin{definition}\label{def:RM-evaluation}
A program is a coded sequence $e=\langle e_0,\ldots,e_{l(e)-1}\rangle$ where each $e_k$ for~$k<l(e)$ codes a sequence of one of the following forms (with $i,j,m,n\in\mathbb N$):
\begin{equation*}
\langle 0,i\rangle,\qquad\langle 1,i\rangle,\qquad\langle 2,i,j,m,n\rangle,\qquad\langle 3,i\rangle.
\end{equation*}
Consider a program $e$, a sequence $\sigma\in\mathbb N^{<\omega}$ (`input') and a set $Z\subseteq\mathbb N$ (`oracle'). We employ recursion on~$k\in\mathbb N$ to define a number~$I^Z_{e,\sigma,k}\in\mathbb N$ (`current instruction') and, simultaneously, a function~$R^Z_{e,\sigma,k}:\mathbb N\to\mathbb N$ (`current register content'). Set
\begin{equation*}
I^Z_{e,\sigma,0}=0\qquad\text{and}\qquad R^Z_{e,\sigma,0}(p)=\begin{cases}
\sigma_p & \text{for }p<l(\sigma),\\
0 & \text{otherwise}.
\end{cases}
\end{equation*}
In the recursion step, we first declare that $I^Z_{e,\sigma,k}\geq l(e)$ entails $I^Z_{e,\sigma,k+1}=I^Z_{e,\sigma,k}$ as well as $R^Z_{e,\sigma,k+1}=R^Z_{e,\sigma,k}$. Now assume we have $I:=I^Z_{e,\sigma,k}<l(e)$. We then put
\begin{align*}
I^Z_{e,\sigma,k+1}&=\begin{cases}
I^Z_{e,\sigma,k}+1 & \text{if $e_I$ has the form $\langle 0,i\rangle$, $\langle 1,i\rangle$ or $\langle 3,i\rangle$},\\
m & \text{if $e_I=\langle 2,i,j,m,n\rangle$ and $R^Z_{e,\sigma,k}(i)=R^Z_{e,\sigma,k}(j)$},\\
n & \text{if $e_I=\langle 2,i,j,m,n\rangle$ and $R^Z_{e,\sigma,k}(i)\neq R^Z_{e,\sigma,k}(j)$},
\end{cases}\\
R^Z_{e,\sigma,k+1}(p)&=\begin{cases}
0 & \text{if $e_I=\langle 0,p\rangle$},\\
R^Z_{e,\sigma,k}(p)+1 & \text{if $e_I=\langle 1,p\rangle$},\\
\chi_Z\big(R^Z_{e,\sigma,k}(p)\big) & \text{if $e_I=\langle 3,p\rangle$},\\
R^Z_{e,\sigma,k}(p) & \text{in all other cases}.
\end{cases}
\end{align*}
For each program~$e$ and each set~$Z\subseteq\mathbb N$, we define
\begin{equation*}
K^Z_e=\left\{\sigma\in\mathbb N^{<\omega}\,|\,\text{there is a $k\in\mathbb N$ with $I^Z_{e,\sigma,k}\geq l(e)$}\right\}.
\end{equation*}
The function $\{e\}^Z:K^Z_e\to\mathbb N$ is determined by
\begin{equation*}
\{e\}^Z(\sigma)=R^Z_{e,\sigma,k}(0)\quad\text{for the smallest $k\in\mathbb N$ with $I^Z_{e,\sigma,k}\geq l(e)$}.
\end{equation*}
For an $e\in\mathbb N$ that is no program, we let $K^Z_e$ and $\{e\}^Z$ be the empty set and function. When we have $Z=\emptyset$, we write $K_e$ and $\{e\}$ at the place of~$K_e^Z$ and $\{e\}^Z$.
\end{definition}

In the context of computability, one writes $f:X\rightharpoonup Y$ (note the arrow head) and speaks of a partial~function in order to refer to a function $f:D\to Y$ that is defined on some set~$D\subseteq X$. When the latter is an equality, we say that~$f$ is total. To assert that we have $\mathbf a\in D$, one writes~$f(\mathbf a)\!\downarrow$ and says that $f(\mathbf a)$ is defined. In particular, we have $\{e\}^Z(\sigma)\!\downarrow$ precisely for~$\sigma\in K^Z_e$. For partial functions $f$ and~$g$, one writes $f(\mathbf a)\simeq g(\mathbf b)$ to express that $f(\mathbf a)\!\downarrow$ is equivalent to $g(\mathbf b)\!\downarrow$ and that we have $f(\mathbf a)=g(\mathbf b)$ if both values are defined. In particular, $f(\mathbf a)\simeq c$ denotes that~$f(\mathbf a)$ is defined with value~$c\in\mathbb N$.

The operator~$\mu$ of unbounded minimization or unbounded search (see Proposition~\ref{prop:bounded-min} for the bounded case) transforms a partial function $f:\mathbb N^{l+1}\rightharpoonup\mathbb N$ into the partial function $\mu f:\mathbb N^l\rightharpoonup\mathbb N$ that is given by
\begin{equation*}
\mu f(a_0,\ldots,a_{l-1})\simeq\min\{b\in\mathbb N\,|\,f(a_0,\ldots,a_{l-1},b)=0\}.
\end{equation*}
More precisely, $\mu f(\mathbf a,b)$ is defined with value~$b$ precisely if we have $f(\mathbf a,b')\!\downarrow$ for all~$b'\leq b$ and the number~$b$ is mimimal with $f(\mathbf a,b)=0$.

If an evaluation terminates, this is witnessed by a finite computation sequence that involves only finitely many oracle values. The following result provides a formal version of this simple but crucial observation. We recall that the notion of primitive recursion has been discussed in Section~\ref{sect:prim-rec}.

\begin{theorem}[Kleene Normal Form Theorem~\cite{kleene-nf}]\label{thm:Kleene-normal-form}
There are primitive recursive functions $T,U:\mathbb N^3\to\mathbb N$ such that all~$e\in\mathbb N$, $\sigma\in\mathbb N^{<\omega}$ and $Z\subseteq\mathbb N$ validate
\begin{align*}
\{e\}^Z(\sigma)\!\downarrow\quad&\Leftrightarrow\quad T(e,\sigma,\tau)=0\text{ for some }\tau\sqsubset\chi_Z,\\
T(e,\sigma,\tau)=0\text{ with }\tau\sqsubset\chi_Z\quad &\Rightarrow\quad \{e\}^Z(\sigma)\simeq U(e,\sigma,\tau).
\end{align*}
In the case without oracle, we obtain
\begin{equation*}
\{e\}(\sigma)\simeq U_0(\mu T_0(e,\sigma))
\end{equation*}
for primitive recursive functions $T_0:\mathbb N^2\to\mathbb N$ and $U_0:\mathbb N\to\mathbb N$.
\end{theorem}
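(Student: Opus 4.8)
The strategy is to extract, from the recursively defined evaluation data $I^Z_{e,\sigma,k}$ and $R^Z_{e,\sigma,k}$, a single number $\tau$ that packages a full terminating run of the machine together with all oracle queries it makes, and then to observe that checking the correctness of such a packaged run is a primitive recursive task. First I would fix a uniform way to code a \emph{computation trace}: for a program $e$ on input $\sigma$ with oracle $Z$ that halts after $k$ steps, let $\tau$ code the finite sequence $\langle c_0,c_1,\ldots,c_{k}\rangle$ where each $c_m$ records the current instruction $I^Z_{e,\sigma,m}$ together with a finite sequence giving the relevant register contents $R^Z_{e,\sigma,m}(0),\ldots,R^Z_{e,\sigma,m}(N)$ up to a bound $N$ large enough to include every register mentioned in $e$ or in $\sigma$ (this bound is itself a primitive recursive function of $e$ and $\len(\sigma)$, using Lemma~\ref{lem:seq-code} and the fact that register indices occurring in $e$ are bounded by $e$). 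Since $R^Z_{e,\sigma,m}(p)=0$ for all $p$ outside this finite range at every stage, the sequence $\tau$ faithfully determines the whole run.

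Next I would write down, as a primitive recursive relation in $(e,\sigma,\tau)$, the assertion ``$\tau$ is a valid halting trace for $e$ on $\sigma$, \emph{consistent with some oracle behaviour}''. Here the subtlety is that oracle instructions $\langle 3,i\rangle$ cannot be checked against $Z$ directly, so instead I would have $\tau$ additionally carry, at each step that executes an oracle instruction, the answer bit that was used, and demand only \emph{internal} consistency of the trace: each successive $c_{m+1}$ must follow from $c_m$ by the deterministic transition rules of Definition~\ref{def:RM-evaluation} for the three non-oracle instruction types, while for an oracle instruction the new register content must equal whatever answer bit $\tau$ records for that step at that queried value. Checking one transition step is plainly primitive recursive (a bounded case distinction on the coded shape of $e_{I}$, using closure of primitive recursive relations under bounded quantification and case distinction from Section~\ref{sect:prim-rec}), and the conjunction over all $m<\len(\tau)$ is a bounded quantification, hence primitive recursive; finally one checks $I_{e,\sigma,\len(\tau)-1}\ge\len(e)$ to certify halting. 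Define $T(e,\sigma,\tau)=0$ exactly when this relation holds, and let $U(e,\sigma,\tau)$ read off the content of register $0$ from the last entry $c_{\len(\tau)-1}$ of $\tau$ (and $0$ otherwise), both primitive recursive by Lemma~\ref{lem:seq-code}.

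Then I would verify the two displayed equivalences. For the forward direction of the first: if $\{e\}^Z(\sigma)\!\downarrow$, take the least halting stage $k$, read off the genuine traces $I^Z_{e,\sigma,m}, R^Z_{e,\sigma,m}$ for $m\le k$, record at each oracle step the true value $\chi_Z(R^Z_{e,\sigma,m}(i))$, and package this into $\tau$; by construction $T(e,\sigma,\tau)=0$, and since only finitely many oracle values are consulted, all of them are determined by a sufficiently long initial segment of $\chi_Z$ — this is where one invokes the notion $\tau'\sqsubset\chi_Z$, possibly after a further coding step that re-presents the list of (query, answer) pairs recorded in $\tau$ as a genuine initial segment of $\chi_Z$ by padding with the true values on all smaller arguments; call the result $\tau$ again so that $\tau\sqsubset\chi_Z$. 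Conversely, if $T(e,\sigma,\tau)=0$ with $\tau\sqsubset\chi_Z$, an induction on $m\le\len(\tau)-1$ shows that the $m$-th entry of $\tau$ equals $(I^Z_{e,\sigma,m},R^Z_{e,\sigma,m})$ — the oracle steps match precisely because $\tau\sqsubset\chi_Z$ forces the recorded answer bits to be the true $\chi_Z$ values — so the machine halts and $\{e\}^Z(\sigma)\simeq U(e,\sigma,\tau)$, giving the second displayed line. For the no-oracle case set $T_0(e,\sigma,k)$ to be $0$ iff $k$ codes a valid halting trace for $e$ on $\sigma$ (no oracle bits needed), using the pairing of Exercise~\ref{ex:Cantor-pairing} to turn the two-argument $(e,\sigma)$ into one argument if desired; then $\mu T_0(e,\sigma)$ returns the least such trace and $U_0$ reads off register $0$, so $\{e\}(\sigma)\simeq U_0(\mu T_0(e,\sigma))$.

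\textbf{Main obstacle.} The routine part is the long but mechanical verification that one-step transition checking is primitive recursive; the genuinely delicate point is the oracle bookkeeping — arranging that ``$T(e,\sigma,\tau)=0$ for some $\tau\sqsubset\chi_Z$'' captures exactly halting, which forces a careful choice of what $\tau$ encodes (recorded query/answer pairs versus a literal initial segment of $\chi_Z$) and a small lemma reconciling the two so that the statement can be phrased with $\tau\sqsubset\chi_Z$ as written. Everything else follows by the closure properties of primitive recursive functions and relations established in Section~\ref{sect:prim-rec}.
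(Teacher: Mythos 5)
There is a genuine gap in the oracle case, and you have located it but underestimated it. Your proposed $T$ expects its third argument to be a \emph{computation trace}: a coded sequence of configurations $\langle c_0,\ldots,c_k\rangle$, each $c_m$ recording an instruction pointer, register contents, and (at oracle steps) a recorded answer bit. But the theorem requires $\tau\sqsubset\chi_Z$, which forces $\tau\in 2^{<\omega}$ to be a \emph{literal initial segment of $\chi_Z$}, hence a sequence of $0$s and $1$s that is entirely determined by $Z$ and its own length. These two roles for $\tau$ cannot be reconciled by ``a further coding step'': if you replace your trace $\tau$ by an actual $\chi_Z$-prefix $\tau'$, then $T(e,\sigma,\tau')\neq 0$, because $\tau'$ is not a trace and your $T$ is a trace-checker. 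Your converse direction makes this concrete: you assert that ``the $m$-th entry of $\tau$ equals $(I^Z_{e,\sigma,m},R^Z_{e,\sigma,m})$,'' which is impossible when each entry of $\tau$ lies in $\{0,1\}$.

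Resolving this is not a ``small lemma reconciling the two''; it requires redesigning $T$ from a trace-checker into a \emph{simulator}. That is what the paper does: $\tau$ is taken to be a binary sequence, and primitive recursive auxiliary functions $i^\tau_{e,\sigma,k}$ and $r^\tau_{e,\sigma,k}$ \emph{re-run} the machine using $\tau$ as a finite oracle approximation (padding with $0$ past $l(\tau)$), together with a flag $s^\tau_{e,\sigma,k}$ that records whether any oracle call so far went beyond the length of $\tau$. One then defines $T(e,\sigma,\tau)=0$ to hold when there is a halting stage $k\leq\len(\tau)$ at which $\tau$ is still a ``secure'' approximation (no out-of-range oracle call so far). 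The key lemma is that a secure simulation with $\tau\sqsubset\chi_Z$ agrees step-for-step with the genuine computation relative to $Z$, which yields both displayed implications. Your one-step-transition bookkeeping, the bounding of register indices via $N(e,\sigma)$, and the reading-off of register $0$ by $U$ are all sound and match the paper; and your treatment of the no-oracle case with a trace-checking $T_0$ is actually fine, since there is no $\tau\sqsubset\chi_Z$ constraint there. The gap is localized to the oracle case, but it is at the heart of why the result is nontrivial, and as written the proof does not close it.
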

\begin{proof}
Assuming that~$e$ is a program, we first determine a number~$N(e,\sigma)\geq l(\sigma)$ such that we have $i,j\leq N(e,\sigma)$ whenever $e_k$ with $k<l(e)$ is of the form $\langle 0,i\rangle$, $\langle 1,i\rangle$, $\langle 2,i,j,m,n\rangle$ or $\langle 3,i\rangle$. This entails that only the first~$N(e,\sigma)$ registers are relevant for the computation of~$\{e\}^Z(\sigma)$. For a coded sequence $\tau\in 2^{<\omega}$, we define
\begin{equation*}
i^\tau_{e,\sigma,k}\in\mathbb N\qquad\text{and}\qquad r^\tau_{e,\sigma,k}:\{0,\ldots,N(e,\sigma)\}\to\mathbb N
\end{equation*}
just like $I^Z_{e,\sigma,k}$ and $R^Z_{e,\sigma,k}$ from Definition~\ref{def:RM-evaluation}, except that the function $r^\tau_{e,\sigma,k}$ has the indicated finite domain and that any reference to $\chi_Z(q)$ is replaced by~$\tau_q$, where we agree on~$\tau_q=0$ for~$q\geq l(\tau)$. To keep track of the unintended case where the oracle is called for values beyond the length of~$\tau$, we define
\begin{equation*}
s^\tau_{e,\sigma,k}=\begin{cases}
0 & \text{if $e_I=\langle 3,p\rangle$ with $I:=i^\tau_{e,\sigma,k}<l(e)$ and $r^\tau_{e,\sigma,k}(p)\geq l(\tau)$},\\
1 & \text{in all other cases}.
\end{cases}
\end{equation*}
The set of secure approximations to some oracle is given by
\begin{equation*}
S_{e,\sigma,k}=\{\tau\in 2^{<\omega}\,|\,s^\tau_{e,\sigma,l}=1\text{ for all }l<k\}.
\end{equation*}
If we have~$\tau\sqsubset\chi_Z$ and $\tau\in S_{e,\sigma,k}$, an induction on~$k\in\mathbb N$ yields
\begin{equation*}
i^\tau_{e,\sigma,k}=I^Z_{e,\sigma,k}\qquad\text{and}\qquad r^\tau_{e,\sigma,k}(p)=R^Z_{e,\sigma,k}(p)\text{ for $p\leq N(e,\sigma)$}.
\end{equation*}
For arbitrary~$e,\sigma,k$ and~$Z$, one has $\tau\in S_{e,\sigma,k}$ when the length of $\tau\sqsubset\chi_Z$ exceeds all values $R^Z_{e,\sigma,l}(p)$ with~$l<k$ for which we have $e_I=\langle 3,p\rangle$ with $I=I^Z_{e,\sigma,l}$.

We now declare that $T(e,\sigma,\tau)=0$ holds precisely when~$e$ is a program and there is a~$k\leq\len(\tau)$ with $\tau\in S_{e,\sigma,k}$ and $i^\tau_{e,\sigma,k}\geq l(e)$. Based on the results of Section~\ref{sect:prim-rec}, one readily checks that the function
\begin{equation*}
(e,\sigma,\tau,k)\mapsto\langle i^\tau_{e,\sigma,k},r^\tau_{e,\sigma,k}(0),\ldots,r^\tau_{e,\sigma,k}(N(e,\sigma))\rangle
\end{equation*}
is primitive recursive, which entails that the same holds for~$T$. To verify the equivalence from the theorem, we first assume that we have~$\{e\}^Z(\sigma)\!\downarrow$. The latter allows us to pick a~$k\in\mathbb N$ with $I^Z_{e,\sigma,k}\geq l(e)$. As seen above, we can find a~$\tau\sqsubset\chi_Z$ such~that we have $\tau\in S_{e,\sigma,k}$ and $k\leq\len(\tau)$. In order to conclude that we have~$T(e,\sigma,\tau)=0$, it suffices to recall that $\tau\in S_{e,\sigma,k}$ entails $i^\tau_{e,\sigma,k}=I^Z_{e,\sigma,k}$. Before we prove the converse implication, we declare that~$U(e,\sigma,\tau)$ is defined as~$r^\tau_{e,\sigma,k}(0)$ for the smallest number~$k\leq\len(\tau)$ such that we have $\tau\in S_{e,\sigma,k}$ and $i^\tau_{e,\sigma,k}\geq l(e)$, if such a~$k$ exists. The function~$U$ is primitive recursive due to Proposition~\ref{prop:bounded-min}. Now assume that we have~$T(e,\sigma,\tau)=0$ with $\tau\sqsubset\chi_Z$. It follows that there is a~$k\leq\len(\tau)$ with $\tau\in S_{e,\sigma,k}$ and $i^\tau_{e,\sigma,k}\geq l(e)$. We obtain
\begin{equation*}
U(e,\sigma,\tau)=r^\tau_{e,\sigma,k}(0)=R^Z_{e,\sigma,k}(0)\qquad\text{and}\qquad I^Z_{e,\sigma,k}=i^\tau_{e,\sigma,k}\geq l(e)
\end{equation*}
for some such~$k$. This already yields~$\{e\}^Z(\sigma)\!\downarrow$, as needed for the open direction of our equivalence. When~$l$ is minimal with $I^Z_{e,\sigma,l}\geq l(e)$, Definition~\ref{def:RM-evaluation} yields
\begin{equation*}
\{e\}^Z(\sigma)\simeq R^Z_{e,\sigma,l}(0)=R^Z_{e,\sigma,l+1}(0)=\ldots=R^Z_{e,\sigma,k}(0),
\end{equation*}
which shows that we indeed have~$\{e\}^Z(\sigma)\simeq U(e,\sigma,\tau)$.

For the case without oracle, we note that $\tau\sqsubset\chi_\emptyset$ is a primitive recursive property of the coded sequence~$\tau\in 2^{<\omega}$. We now declare that~$T_0(e,\sigma,\rho)=0$ holds precisely if~$\rho=\langle e,\sigma,\tau,k\rangle$ codes a sequence with $\tau\sqsubset\chi_\emptyset$ and $T(e,\sigma,\tau)=0$ such that~$k\leq\len(\tau)$ is minimal with $\tau\in S_{e,\sigma,k}$ and $i^\tau_{e,\sigma,k}\geq l(e)$. We also put~$U_0(\langle e,\sigma,\tau,k\rangle)=r^\tau_{e,\sigma,k}(0)$. To reach the desired conclusion, it is enough to observe that $T_0(e,\sigma,\rho)=0$ holds for some~$\rho$ precisely if $T(e,\sigma,\tau)=0$ holds for some~$\tau\sqsubset\chi_\emptyset$, and that $T_0(e,\sigma,\rho)=0$ with $\rho=\langle e,\sigma,\tau,k\rangle$ entails~$U_0(\rho)=U(e,\sigma,\tau)$.
\end{proof}

When we consider the values~$\{e\}^Z(\sigma)$ of a program, we will mostly think of~$\sigma$ as the variable argument and of~$Z$ as some fixed parameter. However, one can also consider~$Z$ as an argument, which leads to a notion of computation with infinite data. The observation that each computation can only access finitely many oracle values amounts to a continuity principle that is explored in the following exercise. We note that part~(b) of the exercise does not reverse, i.\,e., that there are continuous functions that are not computable (cf.~Theorem~\ref{thm:halting}). Part~(a) shows why it can be necessary to consider partial functions. It is no real restriction to consider~$\{e\}^Z(\sigma)$ for~$\sigma=\langle\rangle$ only, as an arbitrary~$\sigma$ can be coded into~$Z$.

\begin{exercise}
(a) Prove that there is a program~$e$ with $\{e\}^Z(\langle\rangle)\simeq\mu\chi_Z$ (which yields the smallest~$n\notin Z$ when $Z\neq\mathbb N$) but no total extension, i.\,e., no program~$e'$ such that we have $\{e'\}^Z(\langle\rangle)\simeq\mu\chi_Z$ for $Z\neq\mathbb N$ as well as $\{e'\}^{\mathbb N}(\langle\rangle)\!\downarrow$. \emph{Hint:} You can conclude via part~(b). For a direct proof, assume that~$e'$ is as given and consider $T(e',\langle\rangle,\tau,\langle k,r\rangle)=0$ with~$\tau\sqsubset\chi_{\mathbb N}$. Then pick a $Z\neq\mathbb N$ with $\tau\sqsubset\chi_Z$ and $\mu\chi_Z\neq r$.

(b) Show that~$\mathbb N\supseteq Z\mapsto\{e\}^Z(\langle\rangle)\in\mathbb N$ is continuous for any program~$e$, where the domain carries the topology from Exercise~\ref{ex:compactness} and the topology on~$\mathbb N$ is discrete.

(c) Given~$e$ with $\{e\}^Z(\langle\rangle)\!\downarrow$ for all~$Z\subseteq\mathbb N$, show that there is a number~$N\in\mathbb N$ such that we have $\{e\}^Y(\langle\rangle)\simeq\{e\}^Z(\langle\rangle)$ whenever~$\chi_Y(n)=\chi_Z(n)$ holds for all~$n<N$. \emph{Hint:} In view of Exercise~\ref{ex:compactness}, the claim is an instance of the fact that a con\-tin\-u\-ous function on a compact space is uniformly continuous.
\end{exercise}

Our next aim is to show that several notions of computability are equivalent. We focus on the case without oracle in order to save on notation. It is straightforward to generalize our considerations to the case of computability with oracle. The~following definition seems natural in view of the Kleene normal form theorem. We note that the composition of partial functions is explained by
\begin{equation*}
h(g_1(\mathbf a),\ldots,g_n(\mathbf a))\simeq c\quad\Leftrightarrow\quad\text{there are~$b_i$ with $g_i(\mathbf a)\simeq b_i$ and $h(b_1,\ldots,b_n)\simeq c$}.
\end{equation*}
Hence $h(g_1(\mathbf a),\ldots,g_n(\mathbf a))$ is undefined if the same holds for some argument~$g_i(\mathbf a)$. Recursive definitions on partial functions have the same form as in clause~(v) of Definition~\ref{def:prim-rec} but with~$\simeq$ at the place of~$=$ (where~$f(\mathbf a,b+1)\simeq h(\mathbf a,b,f(\mathbf a,b))$ is undefined when the same holds for~$f(\mathbf a,b)$).

\begin{definition}
The class of $\mu$-recursive functions is the smallest class of partial functions~$\mathbb N^n\rightharpoonup\mathbb N$ that contains the constant functions, the projections and the successor function (i.\,e., the total functions from clauses~(i) to~(iii) of Definition~\ref{def:prim-rec}) and is closed under compositions, recursive definitions and the operation~$f\mapsto\mu f$.
\end{definition}

When~$n$ is clear from the context, we also write~$\{e\}$ for the function~$\mathbb N^n\rightharpoonup\mathbb N$ that is given by $\{e\}(a_1,\ldots,a_n)\simeq\{e\}(\langle a_1,\ldots,a_n\rangle)$. Condition~(c) in the following theorem amounts to an extension of Definition~\ref{def:sigma-definable} to the partial case. The equivalence of~(b) and~(c) can be seen as an optimal strengthening of Theorem~\ref{thm:prim-rec-sigma}.

\begin{theorem}\label{thm:comp-equiv}
For any partial function~$f:\mathbb N^n\rightharpoonup\mathbb N$, the following are equivalent:
\begin{enumerate}[label=(\alph*)]
\item We have $f\simeq\{e\}$ for some program~$e$.
\item The function~$f$ is $\mu$-recursive.
\item There is a $\Sigma$-formula $\varphi(x_1,\ldots,x_n,y)$ such that $f(a_1,\ldots,a_n)\simeq b$ is equivalent to $\mathbb N\vDash\varphi(a_1,\ldots,a_n,b)$ for all natural numbers~$a_i$ and~$b$.
\end{enumerate}
\end{theorem}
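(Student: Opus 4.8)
The plan is to prove the three implications (a)$\Rightarrow$(c)$\Rightarrow$(b)$\Rightarrow$(a), closing the cycle. The implication (a)$\Rightarrow$(c) is essentially Kleene's normal form theorem (Theorem~\ref{thm:Kleene-normal-form}) combined with Theorem~\ref{thm:prim-rec-sigma}: given a program $e$ with $f\simeq\{e\}$, we have $\{e\}(\mathbf a)\simeq b$ precisely when there is a $\tau$ with $T_0(e,\langle\mathbf a\rangle,\langle e,\langle\mathbf a\rangle,\tau,k\rangle)=0$ for appropriate $k$ and $U_0$ returns $b$; since $T_0$ and $U_0$ are primitive recursive, they are $\Sigma$-definable by Theorem~\ref{thm:prim-rec-sigma}, and one assembles a $\Sigma$-formula $\varphi(\mathbf x,y)$ asserting ``$\exists\rho\,(T_0(e,\langle\mathbf x\rangle,\rho)=0\land\forall\rho'<\rho\,T_0(e,\langle\mathbf x\rangle,\rho')\neq 0\land U_0(\rho)=y)$'' — taking care that $\langle\mathbf x\rangle$ is itself a term built by a primitive recursive (hence $\Sigma$-definable) pairing operation, and that the minimality clause makes the value unique. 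One small point to watch: the sequence-coding used inside the formula must be the $\beta$-based coding from the proof of Theorem~\ref{thm:prim-rec-sigma}, not the primitive recursive one, but this is already handled by invoking $\Sigma$-definability of $T_0,U_0$ as black boxes.

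For (c)$\Rightarrow$(b), suppose $f(\mathbf a)\simeq b\iff\mathbb N\vDash\varphi(\mathbf a,b)$ for a $\Sigma$-formula $\varphi$. First I would reduce $\varphi$ to a $\Sigma_1$-formula $\exists z\,\theta(\mathbf x,y,z)$ with $\theta$ bounded, using (the semantic content of) Exercise~\ref{ex:sigma-sigma_1}; this is legitimate because we only need truth in $\mathbb N$, not provability. Then $f(\mathbf a)$ is the first projection of $\mu g(\mathbf a)$ where $g(\mathbf a,c)$ tests whether $c=\langle y,z\rangle$ with $\theta(\mathbf a,y,z)$ true, returning $0$ if so and $1$ otherwise. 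The key sublemma here is that the characteristic function of $\{c\mid c=\langle y,z\rangle\land\mathbb N\vDash\theta(\mathbf a,y,z)\}$ is primitive recursive in $\mathbf a$ and $c$ — this follows from Exercise~\ref{ex:bounded-prim-rec} (or the stronger remark there that bounded satisfaction is a primitive recursive relation), together with the primitive recursiveness of the projections $\pi_0,\pi_1$. Since primitive recursive functions are $\mu$-recursive (they are built by clauses (i)--(v), a subset of the $\mu$-recursive closure conditions), and $\mu$-recursive functions are closed under $\mu$ and under composition with primitive recursive projections, we conclude $f$ is $\mu$-recursive.

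The implication (b)$\Rightarrow$(a) is the one I expect to be the main obstacle, since it requires showing the register-machine model is closed under all the $\mu$-recursive operations — essentially writing a (meta-level) compiler. The plan is induction on the build-up of $f$ as a $\mu$-recursive function. The base functions (constants, projections, successor) are computed by short explicit programs, using the copy subroutine and the predecessor subroutine already exhibited before Definition~\ref{def:RM-evaluation}, plus the convention about relocating subprogram jumps. For composition, one runs the subprograms for $g_1,\dots,g_n$ in turn on fresh register blocks (copying inputs in, saving outputs), then runs the subprogram for $h$; the partiality bookkeeping is automatic because a nonterminating subcomputation simply never returns control. For primitive recursion, one uses a counter register and a loop that iterates the subprogram for $h$, initialising with the subprogram for $g$. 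For $\mu f$: initialise a counter to $0$, repeatedly run the subprogram for $f$ on $(\mathbf a,\text{counter})$, halt if it returns $0$, otherwise increment the counter and loop — this loops forever exactly when $\mu f(\mathbf a)$ is undefined, which is the desired behaviour. The bureaucratic burden is the careful tracking of which registers are ``scratch'' for each subprogram so that nesting does not clobber data; I would state once and for all a normalisation lemma (``every $\mu$-recursive $g$ of arity $m$ is computed by a program using only registers $r_0,\dots,r_{N(g)}$, reading inputs from $r_1,\dots,r_m$, writing output to $r_0$, leaving $r_1,\dots,r_m$ unchanged'') and then verify it is preserved by each construction, shifting register indices as needed. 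Once (b)$\Rightarrow$(a) is in place the cycle closes and all three conditions are equivalent.
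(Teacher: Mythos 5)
Your proposal is correct and, despite being organized as a triangle (a)$\Rightarrow$(c)$\Rightarrow$(b)$\Rightarrow$(a) rather than as the two biconditionals (a)$\Leftrightarrow$(b) and (b)$\Leftrightarrow$(c) that the paper proves, it relies on exactly the same ingredients: Kleene's normal form theorem, the $\Sigma$-definability of primitive recursive functions (Theorem~\ref{thm:prim-rec-sigma}), the primitive recursiveness of bounded satisfaction (Exercise~\ref{ex:bounded-prim-rec}), the reduction of $\Sigma$- to $\Sigma_1$-formulas (Exercise~\ref{ex:sigma-sigma_1}), and the register-machine compiler for (b)$\Rightarrow$(a). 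The one point of genuine divergence is your (a)$\Rightarrow$(c): you write down a $\Sigma$-formula directly from $T_0$ and~$U_0$, bypassing the paper's separate step which shows that $\Sigma$-definability is preserved by the~$\mu$-operator in general. This is slightly more economical, since the only unbounded search you need to internalize is the single one in $\mu T_0$, and the existential quantifier $\exists\rho$ already handles it; the paper's route makes the role of primitive recursion as the common core of all three characterizations a bit more visible, but either decomposition is legitimate. Two small remarks: the minimality clause $\forall\rho'<\rho\,T_0(e,\langle\mathbf x\rangle,\rho')\neq 0$ is harmless but unnecessary, since any $\rho$ with $T_0(e,\sigma,\rho)=0$ already yields the correct output $U_0(\rho)\simeq\{e\}(\sigma)$; and in your (c)$\Rightarrow$(b), if one uses the sequence coding~$\langle y,z\rangle$ (rather than the Cantor pair~$\pi(y,z)$ as in the paper), one should make the test for ``$c$ codes a length-2 sequence'' explicit in~$g$, but as you note this is primitive recursive, and the uniqueness of~$y$ then follows from~$f$ being a partial function rather than a relation.
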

\begin{proof}
We first show that~(b) implies~(c). By the proof of Theorem~\ref{thm:prim-rec-sigma}, it suffices to check that~(c) holds for~$\mu f$ whenever it holds for~$f$. Now if $f(\mathbf a,b)=c$ is equivalent to~$\mathbb N\vDash\varphi(\mathbf a,b,c)$, then $\mu f(\mathbf a)=b$ is~equi\-valent to
\begin{equation*}
\mathbb N\vDash\varphi(\mathbf a,b,0)\land\forall b'<b\exists c\left(c\neq 0\land\varphi(\mathbf a,b,c)\right).
\end{equation*}
To see that~(c) implies~(b), we recall that~$\varphi(\mathbf x,y)$ is equivalent to~$\exists z\,\theta(\mathbf x,y,z)$ for some bounded formula~$\theta$, as seen in Exercise~\ref{ex:sigma-sigma_1}. We know from Exercise~\ref{ex:bounded-prim-rec} that $\mathbb N\vDash\theta(\mathbf a,b,c)$ is equivalent to~$g(\mathbf a,\pi(b,c))=0$ for some primitive recursive~$g$ (where~$\pi$ refers to Cantor pairing). Assuming that~$f$ is characterized by the equivalence from~(c), we now derive $f(\mathbf a)\simeq U(\mu g(\mathbf a))$ for~$U(\pi(b,c))=b$. First assume that $\mu g(\mathbf a)$ is defined with value~$\pi(b,c)$. Then~$c$ witnesses~$\mathbb N\vDash\exists z\,\theta(\mathbf a,b,z)$, so that we obtain~$f(\mathbf a)\simeq b\simeq U(\mu g(\mathbf a))$. Conversely, if~$f(\mathbf a)$ is defined, then there are~$b,c$ with $\mathbb N\vDash\theta(\mathbf a,b,c)$, which yields~$\mu g(\mathbf a)\!\downarrow$.

The Kleene normal form theorem ensures that~(a) implies~(b). For the converse, we argue my induction over the generation of~$\mu$-recursive functions according to the previous definition. Constant functions and the successor are readily implemented as programs. At the beginning of this section, we have seen how the content of one register can be copied into another, which accounts for the projections. Let us now consider a $\mu$-recursive function that arises as a composition
\begin{equation*}
f(a_1,\ldots,a_m)\simeq h(g_1(a_1,\ldots,a_m),\ldots,g_n(a_1,\ldots,a_m)).
\end{equation*}
The induction hypothesis provides programs to compute the~$g_i$ and~$h$. Let~\mbox{$N\geq m,n$} be so large that these given programs refer to the registers~$r_0,\ldots,r_{N-1}$ only.\nobreak\ A~pro\-gram that computes~$f$ can be described as follows: One first has~$m$ subprograms that copy each argument $a_{1+i}$ from~$r_i$ into~$r_{N+i}$. These are followed by~$n$ subprograms, each of which fills $r_0,\ldots,r_{m-1}$ with the arguments stored in~$r_N,\ldots,r_{N+i}$ (but leaves the latter unchanged for later re-use), sets $r_m,\ldots,r_{N-1}$ to zero, computes the respective value~$g_i(a_1,\ldots,a_m)$ via the given program (recall that instruction numbers need to be adapted consistently) and copies it from~$r_0$ into~$r_{N+m+i}$. Finally, one copies the results from~$r_{N+m},\ldots,r_{N+m+n-1}$ into~$r_0,\ldots,r_{n-1}$, sets the registers~$r_n,\ldots,r_{N-1}$ to zero and concludes with the program for~$h$. We now consider a function that is recursively determined by
\begin{equation*}
f(\mathbf a,0)\simeq g(\mathbf a)\quad\text{and}\quad f(\mathbf a,b+1)\simeq h(\mathbf a,b,f(\mathbf a,b)).
\end{equation*}
Given programs for~$g$ and~$h$, the values~$f(\mathbf a,b)$ can be computed as follows: First copy the arguments~$\mathbf a,b$ into registers from where they can be recovered at any~point. Then compute~$g(\mathbf a)$ and place it into a suitable register~$r_i$. Pick a fresh register~$r_j$ and set it to zero. Use a jump instruction to test whether the current value~$b'$ in~$r_j$ is equal~to~$b$. If this is not the case, take the current content~$c$ of~$r_i$, compute~$h(\mathbf a,b,c)$ and place the result into~$r_i$. Then increase~$r_j$ to~$b'+1$ and jump back to the jump instruction from before. Once~$r_j$ has been increased to~$b$, the content of~$r_i$ will be equal to~$f(\mathbf a,b)$, as one can see by induction. One then jumps to a subprogram that copies~$r_i$ to the output register~$r_0$. Finally, we have a function with specification
\begin{equation*}
f(\mathbf a)\simeq\mu g(\mathbf a).
\end{equation*}
To compute~$f$ based on a program for~$g$, one uses a jump instruction to fill some registers~$r_i$ and~$r_j$ with the numbers~$b=0,1,\ldots$ and the corresponding value~$g(\mathbf a,b)$ as long as the latter is positive. If it becomes zero, one copies its content~$\mu g(\mathbf a)\simeq b$ from~$r_i$ into the output register~$r_0$.
\end{proof}

The theorem suggests that there is a single robust notion of computability. This justifies the following terminology.

\begin{definition}
A partial function~$f:\mathbb N^n\rightharpoonup\mathbb N$ is called computable in or relative to a set $Z\subseteq\mathbb N$ if there is a program~$e$ such that we have $f\simeq\{e\}^Z$. We say that a set~$Y\subseteq\mathbb N$ is computable in~$Z$ when the same holds for its characteristic function. In this case we write $Y\leq_{\mathsf T}Z$. Let us also agree that $Y\equiv_{\mathsf T}Z$ denotes the conjunction of $Y\leq_{\mathsf T}Z$ and $Z\leq_{\mathsf T}Y$ while $Y<_{\mathsf T}Z$ means that we have $Y\leq_{\mathsf T}Z$ but $Z\not\leq_{\mathsf T}Y$. When a function or set is computable relative to the empty set, we simply say that it is computable.
\end{definition}

From the following exercise we learn that~$\equiv_{\mathsf T}$ is an equivalence relation. The associated equivalence classes are known as the Turing degrees.

\begin{exercise}
Show that $\leq_{\mathsf T}$ is a transitive relation on subsets of~$\mathbb N$.
\end{exercise}

As a basis for the following remark, we note that other suggested definitions of com\-put\-ability (e.\,g., via Turing machines or the lambda calculus of Alonzo Church) are also equivalent to the given one. 

\begin{remark}[Church-Turing Thesis]
According to the Church-Turing thesis (see~\cite{copeland-church-turing} for more details), the formal definition that we have given captures `the' informal notion of computability and covers any algorithm that can possibly be evaluated on something like a computer. The thesis admits no formal proof, since it explicitly refers to an informal notion. At the same time, it is widely accepted and receives strong support from the equivalence between the formal definitions that have been suggested, from experience with computers in practice and from the idea that a computation is a finite sequence of elementary manipulations of data (cf.~the proof of the Kleene normal form theorem). Some authors appeal to the Church-Turing thesis in order to argue that some function that is intuitively computable must also be computable according to the formal definition.
\end{remark}

Besides its role in connection with the Church-Turing thesis, Theorem~\ref{thm:comp-equiv} has concrete technical applications. When combined with Kleene's normal form theorem, it entails that any~$\mu$-recursive function has a definition in which the~\mbox{$\mu$-operator} occurs just once. The fact that programs can perform $\mu$-recursion is also used in the proof of the following result. We note that the latter can readily be extended to functions~$\{e\}^Z:\mathbb N^n\to\mathbb N$ with $n>1$.

\begin{theorem}\label{thm:univ-machine}
There is a so-called universal machine or universal program~$u$ such that all $e,n\in\mathbb N$ and $Z\subseteq\mathbb N$ validate
\begin{equation*}
\{u\}^Z(e,n)\simeq\{e\}^Z(n).
\end{equation*}
In other words, the function $(e,n)\mapsto\{e\}^Z(n)$ is computable uniformly in~$Z$.
\end{theorem}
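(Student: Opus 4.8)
The plan is to exhibit a single program $u$ by describing an algorithm and then invoking Theorem~\ref{thm:comp-equiv} to conclude that the algorithm is realised by an actual program (possibly adapting the argument to computations with an oracle, which was noted to be routine after the statement of that theorem). The key point is that the Kleene normal form theorem (Theorem~\ref{thm:Kleene-normal-form}) already does the hard work: it provides primitive recursive functions $T,U$ such that $\{e\}^Z(\sigma)\!\downarrow$ holds precisely if $T(e,\sigma,\tau)=0$ for some $\tau\sqsubset\chi_Z$, in which case $\{e\}^Z(\sigma)\simeq U(e,\sigma,\tau)$. So the universal machine, on input $(e,n)$ and with oracle $Z$, needs only to search for a witness $\tau$.

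First I would spell out the algorithm. On input coded as $\langle e,n\rangle$ and relative to oracle $Z$, the machine computes the pairing-inverse components $e$ and $n$ (the Cantor pairing function and its inverses are primitive recursive, hence computable by Theorem~\ref{thm:comp-equiv}), sets $\sigma=\langle n\rangle$, and then searches through $k=0,1,2,\dots$; for each $k$ it forms the candidate sequence $\tau=\langle\chi_Z(0),\dots,\chi_Z(k-1)\rangle$ by making $k$ oracle calls (using instructions of the form $r_i\leftarrow\chi(r_i)$), checks whether $T(e,\sigma,\tau)=0$, and if so outputs $U(e,\sigma,\tau)$; otherwise it increments $k$ and repeats. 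Termination of this search on $(e,n)$ is, by the first equivalence in Theorem~\ref{thm:Kleene-normal-form}, exactly equivalent to $\{e\}^Z(n)\!\downarrow$: if $\{e\}^Z(n)\!\downarrow$ there is some $\tau\sqsubset\chi_Z$ with $T(e,\sigma,\tau)=0$, and since every initial segment of $\chi_Z$ is eventually enumerated the search finds one (one should note that $T(e,\sigma,\tau)=0$ is preserved under extending $\tau$ along $\chi_Z$, or simply that \emph{some} witness of \emph{some} length is found); conversely, if the search halts it has found $\tau\sqsubset\chi_Z$ with $T(e,\sigma,\tau)=0$, so by the second clause $\{e\}^Z(n)\simeq U(e,\sigma,\tau)$, which is what the machine outputs. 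Hence $\{u\}^Z(e,n)\simeq\{e\}^Z(n)$ for all $e,n,Z$.

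Then I would justify that this algorithm is genuinely a program in the sense of Definition~\ref{def:RM-evaluation}. The components $T$, $U$, the pairing function and its inverses, and the operation forming $\langle\chi_Z(0),\dots,\chi_Z(k-1)\rangle$ from $k$ and the oracle are all either primitive recursive or built from primitive recursive functions together with finitely many oracle calls; the outer unbounded search on $k$ is a $\mu$-recursion. By the oracle analogue of Theorem~\ref{thm:comp-equiv} (equivalence of $\mu$-recursiveness relative to $Z$ with computability by a program relative to $Z$, obtained by the same argument as in the oracle-free case), all of this assembles into a single program $u$, which moreover does not depend on $Z$ since the oracle is only ever accessed through the instruction $r_i\leftarrow\chi(r_i)$. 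This gives the uniformity in $Z$ claimed in the theorem. The extension to $n>1$ arguments is immediate: replace $\langle n\rangle$ by $\langle a_1,\dots,a_n\rangle$, using the convention $\{e\}^Z(a_1,\dots,a_n)\simeq\{e\}^Z(\langle a_1,\dots,a_n\rangle)$.

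The main obstacle, such as it is, is purely bookkeeping rather than conceptual: one must be careful about how the oracle-relative version of Theorem~\ref{thm:comp-equiv} is invoked (the excerpt states the theorem only without oracle but explicitly remarks that the generalisation is straightforward), and one must handle the edge cases where $e$ is not a program (then $\{e\}^Z$ is the empty function, and $T(e,\sigma,\tau)$ is never $0$ since $T$ is built so as to require $e$ to be a program, so the search diverges — matching $\{e\}^Z(n)\!\uparrow$). Beyond that, the proof is essentially a one-line appeal to the normal form theorem plus the observation that a register machine can enumerate finite initial segments of its oracle and run a primitive recursive test on each. I would present it at that level of detail, not grinding through an explicit listing of instructions for $u$.
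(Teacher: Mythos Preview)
Your proposal is correct and follows essentially the same approach as the paper: both reduce to the Kleene normal form theorem and perform an unbounded search for a witness~$\tau\sqsubset\chi_Z$ with $T(e,\langle n\rangle,\tau)=0$, then output $U(e,\langle n\rangle,\tau)$. The only cosmetic difference is that you search over lengths~$k$ and build $\tau=\chi_Z[k]$ directly via oracle calls, whereas the paper searches over all coded sequences~$\tau$ and uses an auxiliary program~$u_0$ to test $\tau\sqsubset\chi_Z$; these are trivially equivalent organisations of the same search.
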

\begin{proof}
In the case where~$Z$ is empty, we can argue that the function~$(e,n)\mapsto\{e\}(n)$ is $\mu$-recursive by Kleene's normal form theorem, so that it can be computed by a program due to Theorem~\ref{thm:comp-equiv}. To accommodate the oracle, we first recall that the length and entries of a coded finite sequence can be read off by primitive recursive functions (see Lemma~\ref{lem:seq-code}). In view of Theorem~\ref{thm:comp-equiv}, one can derive that there is a program~$u_0$ that does not depend on~$Z$ and validates
\begin{equation*}
\{u_0\}^Z(\tau)\simeq\begin{cases}
1 & \text{if $\tau$ is a coded sequence with $\tau\sqsubset\chi_Z$},\\
0 & \text{otherwise}.
\end{cases}
\end{equation*}
The desired program~$u$ makes use of~$u_0$ in an unbounded search for the smallest~$\tau$ that codes a sequence with~$T(e,\langle n\rangle,\tau)$ and~$\tau\sqsubset\chi_Z$, which is then used in the~compu\-tation of~$\{e\}^Z(n)\simeq U(e,\langle n\rangle,\tau)$, where~$T$ and~$U$ are the functions from Kleene's normal form theorem.
\end{proof}

We will see that the following provides an example of an uncomputable set.

\begin{definition}
The Turing jump or halting set of a set~$Z\subseteq\mathbb N$ is defined as
\begin{equation*}
Z'=\{\pi(e,n)\,|\,\{e\}^Z(n)\!\downarrow\}\subseteq\mathbb N,
\end{equation*}
where~$\pi$ is the Cantor pairing function.
\end{definition}

Let us note that we have used pairing in order to achieve~$Z'\subseteq\mathbb N$. In view of Exercise~\ref{ex:jump} below, this allows us to view the Turing jump as an operation on degrees. By the same exercise, other possible definitions of the Turing jump are equivalent to the given one.

\begin{theorem}[Halting problem]\label{thm:halting}
We have $Z'\not\leq_{\mathsf T}Z$ for any~$Z\subseteq\mathbb N$. In particular, the set $\emptyset'\subseteq\mathbb N$ is uncomputable.
\end{theorem}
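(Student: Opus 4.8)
The plan is to run the classical diagonalization argument, adapted to the oracle setting. Suppose toward a contradiction that $Z'\leq_{\mathsf T}Z$, so that the characteristic function $\chi_{Z'}:\mathbb N\to\mathbb N$ is computable relative to~$Z$. I would first observe that, since $n\mapsto\pi(e,n)$ is computable for any fixed~$e$ and compositions of $Z$-computable functions are $Z$-computable (by Theorem~\ref{thm:comp-equiv} together with the fact that register machines can compute~$\pi$), the function $n\mapsto\chi_{Z'}(\pi(e,n))$ is also $Z$-computable. In particular the relation ``$\{e\}^Z(n)\!\downarrow$'' is decidable relative to~$Z$, uniformly enough for the construction below.

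Next I would build a specific program. Using the fact that the universal machine~$u$ from Theorem~\ref{thm:univ-machine} is $Z$-computable and that $\mu$-recursion can be implemented on register machines (Theorem~\ref{thm:comp-equiv}), I would describe a program~$d$ that, on input~$n$, first computes $\chi_{Z'}(\pi(n,n))$ relative to~$Z$ and then: enters an infinite loop (e.\,g.\ the program $\langle\langle 2,0,0,0,1\rangle\rangle$ exhibited in the informal discussion before Definition~\ref{def:RM-evaluation}) if this value is~$1$, and halts with output~$0$ if this value is~$0$. Thus for every~$n$ we have
\begin{equation*}
\{d\}^Z(n)\!\downarrow\quad\Longleftrightarrow\quad\pi(n,n)\notin Z'\quad\Longleftrightarrow\quad\{n\}^Z(n)\uparrow,
\end{equation*}
where the last equivalence is just the definition of the Turing jump applied with $e=n$.

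Finally I would instantiate $n:=d$. The displayed equivalence gives $\{d\}^Z(d)\!\downarrow$ if and only if $\{d\}^Z(d)\uparrow$, which is absurd. Hence no such program~$d$ exists, so $\chi_{Z'}$ is not $Z$-computable, i.\,e.\ $Z'\not\leq_{\mathsf T}Z$. Taking $Z=\emptyset$ yields $\emptyset'\not\leq_{\mathsf T}\emptyset$, so $\emptyset'$ is uncomputable. The only genuinely delicate point is the second bullet of the first paragraph: one must be careful that the construction of~$d$ really only uses $Z$-computable ingredients and a single halt/loop branch, so that $d$ is a legitimate program (and in particular the existence of~$d$ does not secretly presuppose what we are refuting). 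This is routine given Theorems~\ref{thm:comp-equiv} and~\ref{thm:univ-machine}, but it is where all the care goes; everything after that is a one-line diagonal contradiction.
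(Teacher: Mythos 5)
Your proof is correct, and it takes a genuinely different diagonal from the paper's. The paper defines a total $Z$-computable function $f(e)=\{e\}^Z(e)+1$ on $\{e\,|\,(e,e)\in Z'\}$ and $f(e)=0$ otherwise, realizes it as $\{e_0\}^Z$, and derives the arithmetic contradiction $\{e_0\}^Z(e_0)=\{e_0\}^Z(e_0)+1$. You instead build a program $d$ that \emph{halts} on $n$ precisely when $\{n\}^Z(n)$ does \emph{not} halt, and get the termination contradiction $\{d\}^Z(d)\!\downarrow\leftrightarrow\{d\}^Z(d)\!\uparrow$. Both are classical diagonalizations, but they differ in what they actually require: the paper's $f$ must \emph{simulate} $\{e\}^Z(e)$ in order to add~$1$ to its value, so it genuinely needs the universal machine of Theorem~\ref{thm:univ-machine}; your~$d$ only needs to \emph{apply the hypothesized decision procedure} $\chi_{Z'}$ at the point $\pi(n,n)$ and then either halt or enter a fixed infinite loop, so the appeal to Theorem~\ref{thm:univ-machine} in your first paragraph is in fact superfluous -- composition of $Z$-computable functions via Theorem~\ref{thm:comp-equiv} already suffices. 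In that sense your route is marginally more economical; the paper's has the small advantage of yielding a contradiction between two defined numerical values rather than between halting and non-halting, which some readers find cleaner to state. Your closing remark about the legitimacy of~$d$ is exactly the right thing to flag, and it is indeed routine for the reason you give: $d$ is assembled solely from the assumed $Z'$-decider, the computable pairing function, a jump instruction, and the constant loop, none of which presupposes the negation of the hypothesis.
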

\begin{proof}
Aiming at a contradiction, we assume that~$Z'$ is computable relative to~$Z$. Using Theorem~\ref{thm:univ-machine}, we conclude that the same holds for the total~$f:\mathbb N\to\mathbb N$~with
\begin{equation*}
f(e)=\begin{cases}
\{e\}^Z(e)+1 & \text{when $(e,e)\in Z'$},\\
0 & \text{otherwise}.
\end{cases}
\end{equation*}
But then there is a program~$e_0$ with~$\{e_0\}^Z(e)\simeq f(e)$. Given that the function~$f$ is total, we get $(e_0,e_0)\in Z'$ and hence
\begin{equation*}
\{e_0\}^Z(e_0)=f(e_0)=\{e_0\}^Z(e_0)+1,
\end{equation*}
which is impossible.
\end{proof}

In the following exercise, we show that the class of total computable functions admits no universal machine, which confirms the crucial role of partial functions in the theory of computation.

\begin{exercise}\label{ex:computable-partial}
Show that there is no total computable function~$f:\mathbb N^2\to\mathbb N$ such that~$f(e,n)$ is equal to~$\{u\}(e,n)\simeq\{e\}(n)$ whenever the function~$\{e\}:\mathbb N\to\mathbb N$ is total. \emph{Hint:} Assuming that~$f$ is as indicated, consider~$e_0$ with $\{e_0\}(e)=f(e,e)+1$, similarly to the proof of the previous theorem.
\end{exercise}

\begin{lemma}[S-m-n Theorem]
For each pair of natural numbers~$m$ and~$n$, there is a primitive recursive function~$S^m_n:\mathbb N^{1+m}\to\mathbb N$ such that
\begin{equation*}
\{e\}^Z(a_1,\ldots,a_m,b_1,\ldots,b_n)\simeq\{S^m_n(e,a_1,\ldots,a_m)\}^Z(b_1,\ldots,b_n)
\end{equation*}
holds for any program~$e$, oracle~$Z\subseteq\mathbb N$ and natural numbers~$a_i$ and~$b_j$.
\end{lemma}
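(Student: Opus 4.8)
The plan is to exhibit, for each fixed pair $m,n$, a primitive recursive function $S^m_n:\mathbb N^{1+m}\to\mathbb N$ that, given a program $e$ and arguments $a_1,\ldots,a_m$, produces a new program that first hard-codes the values $a_1,\ldots,a_m$ into its registers and then runs $e$. Concretely, I would first describe the construction informally at the level of register machine programs, as in the proofs of Theorems~\ref{thm:comp-equiv} and~\ref{thm:univ-machine}: the program with index $S^m_n(e,a_1,\ldots,a_m)$ should, on input $b_1,\ldots,b_n$ (which is a coded sequence $\langle b_1,\ldots,b_n\rangle$ placed in the appropriate registers), shift these inputs to higher registers $r_N,\ldots,r_{N+n-1}$, then place the constants $a_1,\ldots,a_m$ into $r_0,\ldots,r_{m-1}$ and the inputs back into $r_m,\ldots,r_{m+n-1}$ (zeroing the remaining working registers of $e$), and finally hand control to a consistently renumbered copy of the program $e$. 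Here, placing the constant $a_i$ into a register is done by a block consisting of one instruction $\langle 0,\cdot\rangle$ followed by $a_i$ copies of $\langle 1,\cdot\rangle$; copying registers uses the copy subprogram from the beginning of Section~\ref{subsect:Kleene-Turing}.

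The key step is to argue that the map sending $(e,a_1,\ldots,a_m)$ to the \emph{code} of this assembled program is primitive recursive. Since a program is, by Definition~\ref{def:RM-evaluation}, a coded sequence $\langle e_0,\ldots,e_{l(e)-1}\rangle$ of coded instructions, assembling a program means concatenating a fixed prefix of boilerplate instructions (whose length and contents depend only on $m,n$, together with the $a_i$, which only affect how many $\langle 1,\cdot\rangle$ instructions the initialization block contains) with a copy of $e$ in which each jump target $k$ appearing in an instruction $\langle 2,i,j,m',n'\rangle$ is replaced by $k+c$ for the appropriate constant offset $c$ (with the $\min$-adjustment for jumps beyond the subprogram, exactly as described in the paragraph after the second sample program). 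The length of the prefix and the offset $c$ are computable from $m$, $n$ and the $a_i$ by primitive recursive functions (addition, multiplication, and the sequence operations $\len$ and $(\cdot)_i$ from Lemma~\ref{lem:seq-code}, together with the concatenation function from the exercise following Proposition~\ref{prop:course-of-values}). Rewriting the instructions of $e$ is a primitive recursive transformation of the coded sequence $e$: one maps over the at most $\len(e)$ entries, inspects the first coordinate of each $(e)_k$ to see whether it is a jump instruction, and if so rebuilds it with shifted targets; this is a bounded-search / bounded-recursion construction covered by Proposition~\ref{prop:bounded-form-prim-rec} and the closure properties of Section~\ref{sect:prim-rec}. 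For $e$ that are not programs one may set $S^m_n(e,\mathbf a)=e$ (or any fixed non-program), so the equivalence still holds vacuously since both sides are undefined.

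Finally I would verify the stated equivalence: for a genuine program $e$ and any oracle $Z$, running the assembled program on $\langle b_1,\ldots,b_n\rangle$ first performs the finitely many deterministic bookkeeping steps that leave registers $r_0,\ldots,r_{m+n-1}$ holding $a_1,\ldots,a_m,b_1,\ldots,b_n$ and all other working registers of $e$ at $0$ — this is the same register configuration as the initial configuration $R^Z_{e,\langle a_1,\ldots,a_m,b_1,\ldots,b_n\rangle,0}$ of $e$ on input $\langle a_1,\ldots,a_m,b_1,\ldots,b_n\rangle$ — and then executes (the renumbered copy of) $e$, so by induction on the number of steps the two evaluations agree, and in particular one terminates with a given output iff the other does. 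Hence $\{e\}^Z(a_1,\ldots,a_m,b_1,\ldots,b_n)\simeq\{S^m_n(e,a_1,\ldots,a_m)\}^Z(b_1,\ldots,b_n)$, as required. The main obstacle, and the only part that needs care, is the routine but fiddly bookkeeping in the renumbering-and-concatenation step: one must make sure the jump-target adjustment matches the convention in Section~\ref{subsect:Kleene-Turing} exactly and that this adjustment is visibly primitive recursive in the code of $e$; I would state the necessary auxiliary primitive recursive functions explicitly (offset, instruction-rewriting, concatenation) and then appeal to the closure properties already established, rather than writing out the full instruction lists.
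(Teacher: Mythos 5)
Your proposal is correct and takes the same approach as the paper, whose proof is a single sentence describing the same program (move $b_1,\ldots,b_n$ from $r_0,\ldots,r_{n-1}$ to $r_m,\ldots,r_{m+n-1}$, fill $r_0,\ldots,r_{m-1}$ with the constants $a_i$ using zero and successor instructions, then run $e$) and leaves everything else --- the primitive recursiveness of the code-assembly map, the jump-target renumbering implicit in the subprogram convention, the handling of non-programs, and the step-by-step verification that the simulation is faithful --- to the reader. You supply exactly those missing details, and your detour through fresh high registers $r_N,\ldots,r_{N+n-1}$ is a sound precaution against an in-place copy clobbering some $b_j$ when $m<n$.
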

\begin{proof}
The program~$S^m_n(e,\mathbf a)$ copies the arguments~$b_j$ from the registers~$r_0,\ldots,r_{n-1}$ into $r_m,\ldots,r_{m+n-1}$, then uses zero and successor instructions to fill $r_0,\ldots,r_{m-1}$ with the arguments~$a_i$ and finally employs the program~$e$. 
\end{proof}

The aim of the following exercise is to establish basic properties of the Turing jump, which were promised in the paragraph before Theorem~\ref{thm:halting}.

\begin{exercise}\label{ex:jump}
(a) Show that any~$X\subseteq\mathbb N$ validates
\begin{equation*}
X'\equiv_{\mathsf T}\{e\in\mathbb N\,|\,\{e\}^X(e)\!\downarrow\}\equiv_{\mathsf T}\{e\in\mathbb N\,|\,\{e\}^X(\langle\rangle)\!\downarrow\}.
\end{equation*}
\emph{Hint:} The s-m-n theorem yields~$\{e\}^X(n)\simeq\{S^1_0(e,n)\}^X(\langle\rangle)$.

(b) Prove that $X\leq_{\mathsf T}Y$ entails $X'\leq_{\mathsf T}Y'$. \emph{Hint:} Show that there is a primitive recursive function~$g:\mathbb N\to\mathbb N$ such that~$\{e\}^X(n)\simeq\{g (e)\}^Y(n)$ holds for all~$e,n\in\mathbb N$.

(c) Show that we have $X\leq_{\mathsf T}X'$ and hence~$X<_{\mathsf T}X'$. \emph{Hint:} Find a program~$e$ such that~$\{e\}^X(n)\!\downarrow$ holds precisely for~$n\in X$.
\end{exercise}

We conclude our brief introduction to computability with a notion that will be relevant in the next section.

\begin{definition}\label{def:ce}
A subset of~$\mathbb N$ is called computably enumerable (c.e.) if it is of the form~$\{n\in\mathbb N\,|\,\{e\}(n)\!\downarrow\}$ for some program~$e$.
\end{definition}

The terminology in the definition is motivated by part~(a) of the following.

\begin{exercise}\label{ex:ce}
(a) Show that a non-empty set~$X\subseteq\mathbb N$ is computably enumerable in the sense of Definition~\ref{def:ce} precisely if it has the form~$\{f(n)\,|\,n\in\mathbb N\}$ for some total computable~$f:\mathbb N\to\mathbb N$. Also prove that one can choose an injective~$f$ when the set~$X$ is infinite. \emph{Hint:} Fix some~$n_0\in X$. Set~$f(\pi(n,k))=n$ when $T_0(e,\langle n\rangle,k)=0$ and $f(\pi(n,k))=n_0$ otherwise, where~$T_0$ is as in Kleene's normal form theorem.

(b) Prove that $\{f(n)\,|\,n\in\mathbb N\}$ is computable for any total computable $f:\mathbb N\to\mathbb N$ that is strictly increasing.

(c) Show that a set $X\subseteq\mathbb N$ is computable when both~$X$ and~$\mathbb N\backslash X$ are computably enumerable. \emph{Hint:} Assume that $f_0$ and $f_1$ list the elements of $X$ and~$\mathbb N\backslash X$ as in part~(a). Consider a program that alternately computes~$f_0(k)$ and~$f_1(k)$. Note that any given~$n\in\mathbb N$ must occur as~$n=f_i(k)$ for some~$i<2$.

(d) Use part~(c) to give another proof of G\"odel's first incompleteness theorem. \emph{Hint:} Assuming that~$\mathsf T$ is a consistent theory that proves any true~$\Pi$-formula, we have $\pi(e,n)\in\mathbb N\backslash\emptyset'$ precisely if~$\mathsf T$ proves that~$\{e\}(n)$ is undefined.
\end{exercise}

Let us note that the sets $X$ and~$Y$ in the following result cannot be computable, as we could otherwise take~$S=X$ or~$S=\mathbb N\backslash Y$.

\begin{proposition}\label{prop:comp-insep}
There are computably enumerable and disjoint sets~$X,Y\subseteq\mathbb N$ that are computably inseparable, i.\,e., that admit no computable set~$S\subseteq\mathbb N$ such that we have $X\subseteq S$ and~$Y\subseteq\mathbb N\backslash S$.
\end{proposition}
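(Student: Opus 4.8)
The plan is to construct $X$ and $Y$ as the two ``diagonal halves'' of the universal machine. Specifically, I would set
\begin{equation*}
X=\{e\in\mathbb N\,|\,\{e\}(e)\simeq 0\}\quad\text{and}\quad Y=\{e\in\mathbb N\,|\,\{e\}(e)\simeq 1\},
\end{equation*}
or some cosmetic variant such as $\{e\}(e)\!\downarrow$ with even versus odd value. Both sets are computably enumerable: using the universal machine from Theorem~\ref{thm:univ-machine} together with Kleene's normal form theorem (Theorem~\ref{thm:Kleene-normal-form}), the relation $\{e\}(e)\simeq 0$ is of the form $\exists k\,(T_0(e,\langle e\rangle,k)=0\land U_0(k)=0)$, hence it is the domain of a computable partial function, so $X$ is c.e.\ by Definition~\ref{def:ce}; the argument for $Y$ is identical. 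They are visibly disjoint, since $\{e\}(e)$ cannot equal both $0$ and $1$.

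Next I would prove computable inseparability by the standard diagonal trick. Suppose, for contradiction, that $S\subseteq\mathbb N$ is computable with $X\subseteq S$ and $Y\subseteq\mathbb N\backslash S$. Then $\chi_S$ is a total computable function, so by Theorem~\ref{thm:comp-equiv} there is a program that computes it; composing with the successor-or-predecessor adjustment, there is a program $e_0$ such that $\{e_0\}(n)\simeq 0$ when $n\notin S$ and $\{e_0\}(n)\simeq 1$ when $n\in S$ (this is total since $S$ is computable). Now ask whether $e_0\in S$. If $e_0\in S$, then $\{e_0\}(e_0)\simeq 1$, so $e_0\in Y\subseteq\mathbb N\backslash S$, a contradiction. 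If $e_0\notin S$, then $\{e_0\}(e_0)\simeq 0$, so $e_0\in X\subseteq S$, again a contradiction. Hence no such $S$ exists.

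The final remark before the proposition already records why $X$ and $Y$ themselves cannot be computable, so nothing further is needed there; but it is worth noting in passing that if $X$ were computable we could separate by $S=X$, and if $Y$ were computable we could separate by $S=\mathbb N\backslash Y$, so computable inseparability is strictly stronger than mere noncomputability of the two sets. There is no serious obstacle here: the only point requiring a little care is the passage from ``$S$ computable'' to ``a program $e_0$ that outputs $0$ or $1$ according to membership in $S$ \emph{and does so at the single argument $e_0$}'' --- this is exactly where the s-m-n theorem or, more simply, the explicit register-machine construction from the proof of Theorem~\ref{thm:comp-equiv} is invoked, and one must make sure the constructed program is total (which it is, because $\chi_S$ is total). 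Everything else is bookkeeping about the closure of the c.e.\ sets under the relevant operations.
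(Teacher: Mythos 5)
Your proof is correct and follows essentially the same route as the paper: the same sets $X=\{e\,|\,\{e\}(e)\simeq 0\}$ and $Y=\{e\,|\,\{e\}(e)\simeq 1\}$, the same appeal to the universal machine for computable enumerability, and the same diagonal contradiction via a program $e_0$ with $\{e_0\}\simeq\chi_S$. The only superfluous remark is the ``successor-or-predecessor adjustment'': since $\chi_S$ already returns $0$ or $1$, the program computing it directly gives the $e_0$ you need, with no s-m-n step required.
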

\begin{proof}
Consider the sets $X=\{e\in\mathbb N\,|\,\{e\}(e)\simeq 0\}$ and $Y=\{e\in\mathbb N\,|\,\{e\}(e)\simeq 1\}$. These are computably enumerable since Theorem~\ref{thm:univ-machine} yields~$\{e\}(e)\simeq\{u\}(e,e)$. Now assume we have a computable separator~$S$. Pick a program~$e_0$ with $\{e_0\}\simeq\chi_S$. We then get the implications
\begin{alignat*}{3}
e_0\in S\quad&\Rightarrow\quad \{e_0\}(e_0)\simeq 1\quad&&\Rightarrow\quad e_0\in Y\quad&&\Rightarrow\quad e_0\in\mathbb N\backslash S,\\
e_0\in\mathbb N\backslash S\quad&\Rightarrow\quad\{e_0\}(e_0)\simeq 0\quad&&\Rightarrow\quad e_0\in X\quad&&\Rightarrow\quad e_0\in S,
\end{alignat*}
which are contradictory.
\end{proof}

If the scope of the present lecture had allowed it, we would have continued with a presentation of Kleene's recursion theorem. The interested reader will find this fundamental result in various textbooks on computability theory.

\subsection{An Analysis of K\H{o}nig's Lemma}\label{subsect:KL}

In this section, we employ computability theory to analyse two forms of K\H{o}nig's lemma. Let us first show that even the~weak K\H{o}nig's lemma (see Theorem~\ref{thm:wkl}) involves sets that are uncomputable. We recall that binary trees are realized as subtrees of~$2^{<\omega}$, which consists of the finite sequences with entries in $\{0,1\}$. Some notation for sequences has been explained in the paragraph before Definition~\ref{def:binary-tree}. Here and in the following, we invoke the sequence encoding given by Definition~\ref{def:seq-code} (see also Lemma~\ref{lem:seq-code}) to view a tree as a subset of~$\mathbb N$, to which the notions from computability theory can be applied.

\begin{theorem}\label{thm:wkl-uncomputable}
There is an infinite binary tree that is computable but does not have a computable branch.
\end{theorem}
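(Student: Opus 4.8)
The plan is to build the tree from the pair of computably inseparable, computably enumerable sets $X,Y$ provided by Proposition~\ref{prop:comp-insep}. The idea is that a branch through the tree must encode a separator of $X$ and $Y$, so a computable branch would contradict computable inseparability, while carefully using only \emph{bounded} information about $X$ and $Y$ in the construction will keep the tree itself computable.

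Concretely, fix programs $e_X,e_Y$ with $X=\{n\,|\,\{e_X\}(n)\!\downarrow\}$ and $Y=\{n\,|\,\{e_Y\}(n)\!\downarrow\}$, and use Kleene's normal form theorem (Theorem~\ref{thm:Kleene-normal-form}) to get a primitive recursive $T_0$ such that $n\in X$ iff $T_0(e_X,\langle n\rangle,k)=0$ for some $k$, and likewise for $Y$. For a finite sequence $\sigma\in 2^{<\omega}$ of length $s$, I would declare $\sigma\in T$ precisely if for every $n<s$ the following holds: if $T_0(e_X,\langle n\rangle,k)=0$ for some $k<s$ then $\sigma_n=1$, and if $T_0(e_Y,\langle n\rangle,k)=0$ for some $k<s$ then $\sigma_n=0$. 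In other words, $\sigma$ is kept in the tree as long as it has not yet been ``caught'' putting an already-witnessed element of $X$ on the wrong side or an already-witnessed element of $Y$ on the wrong side. One checks immediately that $T$ is closed under initial segments (a witness $k<s$ for a shorter sequence is still a witness, so restricting to an initial segment only removes constraints) and that membership in $T$ is a primitive recursive, hence computable, condition by the closure properties from Section~\ref{sect:prim-rec}; in particular $\langle\rangle\in T$, so $T$ is a genuine binary tree.

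Next I would verify that $T$ is infinite. Since $X\cap Y=\emptyset$, the set $S:=X$ is a (not necessarily computable) separator, and for each length $s$ the sequence $\sigma^{(s)}\in 2^{<\omega}$ defined by $\sigma^{(s)}_n=1$ iff $n\in X$ lies in $T$: any witness $k<s$ forcing $\sigma_n=1$ means $n\in X$, and any witness forcing $\sigma_n=0$ means $n\in Y$, hence $n\notin X$, so the constraints are met. Thus $T$ contains sequences of every length and is infinite. Finally, suppose $f\in 2^\omega$ were a computable branch of $T$. Put $S=\{n\,|\,f(n)=1\}$, which is computable since $f$ is. If $n\in X$, pick $k$ with $T_0(e_X,\langle n\rangle,k)=0$ and take $s>\max(n,k)$; then $f[s]\in T$ forces $f(n)=1$, so $n\in S$. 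Symmetrically, if $n\in Y$ then $f(n)=0$, so $n\notin S$. Hence $X\subseteq S\subseteq\mathbb N\setminus Y$, contradicting the computable inseparability of $X$ and $Y$. Therefore $T$ has no computable branch.

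The main obstacle, and the point requiring the most care, is getting the definition of $T$ exactly right so that it is simultaneously (i) computable — which forces the use of the bounded search $k<s$ rather than genuine membership in $X$ or $Y$ — and (ii) still infinite and branch-separating. The subtlety is that at a given stage $s$ we only ``know'' finitely much of $X$ and $Y$, so the tree must tolerate nodes whose correctness is not yet decided; one has to confirm that no branch can avoid respecting, in the limit, every witnessed membership, which is exactly the computation done in the last paragraph. Everything else — closure under initial segments, primitive recursiveness via Lemma~\ref{lem:seq-code} and Proposition~\ref{prop:bounded-form-prim-rec}, and the reduction to Proposition~\ref{prop:comp-insep} — is routine.
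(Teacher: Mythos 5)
Your proof is correct and follows essentially the same route as the paper's: both build the tree from computably inseparable c.e.\ sets $X,Y$ (Proposition~\ref{prop:comp-insep}), using a stage-$s$ approximation to the enumerations so that nodes of length $s$ are constrained only by witnesses bounded below $s$, and both conclude by showing a computable branch would yield a computable separator. The only cosmetic difference is that you bound the search via the Kleene $T$-predicate from Theorem~\ref{thm:Kleene-normal-form}, whereas the paper first extracts total computable enumeration functions $f_i$ via Exercise~\ref{ex:ce} and constrains $\sigma_n$ by whether $f_i(k)=n$ for some $k<l$; the two are interchangeable encodings of the same ``witness seen by stage $s$'' idea.
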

\begin{proof}
From Proposition~\ref{prop:comp-insep} we have disjoint sets~$X_0$ and $X_1$ that are computably enumerable and cannot be computably separated. In view of Exercise~\ref{ex:ce}, we may write $X_i=\{f_i(k)\,|\,k\in\mathbb N\}$ for computable functions~$f_i:\mathbb N\to\mathbb N$. Here the latter are total, so that the relation $f_i(k)=n$ is computable. Let $T\subseteq 2^{<\omega}$ consist of all sequences $\langle\sigma_0,\ldots,\sigma_{l-1}\rangle$ such that, for any $n<l$ and $i<2$, we have $\sigma_n=i$ if there is a $k<l$ with $f_i(k)=n$. We point out that $\sigma_n$ may be arbitrary when no such~$k$ exists for either~$i<2$. It is readily seen that $T$ is a computable tree. We claim that a function $g:\mathbb N\to\{0,1\}$ is a branch of~$T$ precisely when $Y_g=\{n\in\mathbb N\,|\,g(n)=0\}$ separates $X_0$ and $X_1$, i.\,e., when we have $X_0\subseteq Y_g$ and $X_1\subseteq\mathbb N\backslash Y_g$. Once this is established, we learn that $T$ has a branch and is thus infinite (as we get~$Y_g=X_0$ when we stipulate that $g(n)=0$ holds precisely for~$n\in X$). We also learn that $Y_g$ and hence~$g$ is uncomputable whenever~$g$ is a branch.

To prove the open claim, we first assume $g$ is a branch of~$T$. Aiming at~$X_0\subseteq Y_g$, we consider an arbitrary element~$n\in X_0$. The latter can be written as $n=f_0(k)$. For $l=\max(k,n)+1$, the definition of~$T$ ensures that~$g[l]=\langle g(0),\ldots,g(l-1)\rangle\in T$ implies $g(n)=0$, which yields $n\in Y_g$. An analogous argument shows $X_1\subseteq\mathbb N\backslash Y_g$. For the remaining direction, we now assume that $Y_g$ separates $X_0$ and~$X_1$. To show that $g[l]\in T$ holds for any~$l\in\mathbb N$, we consider $k,n<l$ and $i<2$ with $f_i(k)=n$. Our task is to derive~$g(n)=i$. If we have~$i=0$, the latter holds due to $n\in X_0\subseteq Y_g$. To conclude for $i=1$, it suffices to note that $g(n)\neq 0$ follows from $n\in X_1\subseteq\mathbb N\backslash Y_g$.
\end{proof}

We now state the stronger version of K\H{o}nig's lemma that was alluded to above. Let $\mathbb N^{<\omega}$ denote the set of finite sequences with entries in~$\mathbb N$. Our previous notation for sequences in~$2^{<\omega}$ will also be used for these sequences. A tree is a set~$T\subseteq\mathbb N^{<\omega}$ such that $\sigma\sqsubset\tau\in T$ entails $\sigma\in T$. By a branch of a tree~$T$, we mean a function $f:\mathbb N\to\mathbb N$ with $f[n]=\langle f(0),\ldots,f(n-1)\rangle\in T$ for all~$n\in\mathbb N$. A tree~$T$ is called finitely branching if each $\sigma\in T$ admits an $N\in\mathbb N$ with $\sigma\star i\notin T$ for all $i\geq N$.

\begin{theorem}[K\H{o}nig's Lemma]\label{thm:kl}
Any tree that is both infinite and finitely branching does have a branch.
\end{theorem}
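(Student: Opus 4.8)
The plan is to mimic the proof of Weak K\H{o}nig's Lemma (Theorem~\ref{thm:wkl}), replacing the dichotomy ``$\sigma\star 0\sqsubseteq\tau$ or $\sigma\star 1\sqsubseteq\tau$'' by a finitary pigeonhole argument that exploits the finite-branching hypothesis. Concretely, given a tree $T\subseteq\mathbb N^{<\omega}$ that is infinite and finitely branching, I would say that a node $\sigma\in T$ is \emph{fat} if there are infinitely many $\tau\in T$ with $\sigma\sqsubseteq\tau$. The root $\langle\rangle$ is fat, since $T$ itself is infinite. The key recursion step: if $\sigma$ is fat, then $\sigma$ has a fat immediate successor. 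Indeed, every $\tau\in T$ with $\sigma\sqsubset\tau$ satisfies $\sigma\star i\sqsubseteq\tau$ for some $i$ with $\sigma\star i\in T$; by the finite-branching assumption there are only finitely many such $i$ (say $i<N$ for suitable $N$), so the infinitely many $\tau$ extending $\sigma$ are distributed among finitely many sets $\{\tau\,|\,\sigma\star i\sqsubseteq\tau\in T\}$. Since a finite union of finite sets is finite, at least one of these sets must be infinite, which gives a fat $\sigma\star i$.

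Next I would iterate this to build $\sigma^0\sqsubset\sigma^1\sqsubset\ldots$ with $l(\sigma^n)=n$ and each $\sigma^n$ fat: set $\sigma^0=\langle\rangle$, and given the fat node $\sigma^n$, let $\sigma^{n+1}$ be a fat immediate successor as provided by the previous step. Finally, define $f\colon\mathbb N\to\mathbb N$ by $f(i):=\sigma^{i+1}_i$, exactly as in the proof of Theorem~\ref{thm:wkl}. A straightforward induction on $n$ shows $f[n]=\sigma^n$: the base case is $f[0]=\langle\rangle=\sigma^0$, and the step uses $\sigma^n\sqsubset\sigma^{n+1}$ together with $l(\sigma^{n+1})=n+1$, so that $\sigma^{n+1}=\langle\sigma^0_0,\ldots,\sigma^{n}_{n-1},\sigma^{n+1}_n\rangle=f[n]\star f(n)=f[n+1]$. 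Since each $\sigma^n$ lies in $T$ (fat nodes are in particular elements of $T$), we get $f[n]\in T$ for all $n$, so $f$ is a branch of $T$.

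The only genuinely new ingredient compared to the binary case is the pigeonhole move in the recursion step, and the only obstacle there is making the bookkeeping clean: one should note that the finite-branching hypothesis gives, for the specific fat node $\sigma$ at hand, a bound $N$ with $\sigma\star i\notin T$ for $i\geq N$, so the relevant index set $\{i<N\,|\,\sigma\star i\in T\}$ is finite, and then invoke that a finite union of finite sets is finite to locate the infinite piece. This is the same ``union of finitely many finite sets is finite'' principle already used in the proof of Theorem~\ref{thm:wkl}, just applied with an $N$-fold union instead of a $2$-fold one, so I do not expect any real difficulty. One might remark in passing that, in contrast to Theorem~\ref{thm:wkl-uncomputable}, there is no uniform way to make this branch computable even from a computable tree together with a computable branching bound — but that is a comment for the surrounding text, not part of the proof.
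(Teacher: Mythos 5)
Your proof is correct and is precisely the adaptation the paper has in mind: the paper's one-line proof of Theorem~\ref{thm:kl} says to modify the argument for Theorem~\ref{thm:wkl} using that a finite union of finite sets is finite, and your ``fat node'' recursion with the $N$-fold pigeonhole step is exactly that modification spelled out.
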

\begin{proof}
It is straightforward to adapt the proof of Theorem~\ref{thm:wkl}, since any finite union of finite sets is finite.
\end{proof}

The following exercise gives a first idea how~$2^{<\omega}$ and $\mathbb N^{<\omega}$ differ on questions of computability. We note that the upper bound in part~(a) will later be improved.

\begin{exercise}
(a) Show that any computable binary tree has a branch~$f\leq_{\mathsf T}\emptyset'$. \emph{Hint:} Devise an algorithm that halts if the subtree above a given sequence is finite.

(b) Extend the result of (a) to trees $T\subseteq\mathbb N^{<\omega}$ that are computably bounded, which means that they admit a computable $h:T\to\mathbb N$ with $\sigma\star i\notin T$ for~$i\geq h(\sigma)$.

(c) Show that any computable tree that is infinite and finitely branching has a branch~$f\leq_{\mathsf T}\emptyset''$. \emph{Hint:} There is a function~$h\leq_{\mathsf T}\emptyset'$ as in the hint for~(b). 
\end{exercise}

As the exercise suggests, the reference to binary trees is somewhat misleading. The crucial feature of our binary trees is that they are computably bounded, since we have agreed that they are realized as subtrees of~$2^{<\omega}$. On the other hand, the following proof involves a tree~$T$ that is not computably bounded and not binary in our sense, even though each~$\sigma\in T$ has at most two immediate successors~$\sigma\star i\in T$.

\begin{theorem}\label{thm:KL-to-jump}
There is a computable tree~$T$ that is finitely branching and has exactly one branch~$f$, for which we have $\emptyset'\leq_{\mathsf T}f$.
\end{theorem}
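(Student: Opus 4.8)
The plan is to encode the halting set into the unique branch of a suitable tree. First I would fix $A=\emptyset'$, which is computably enumerable (it has the form $\{m\mid\{e^*\}(m)\!\downarrow\}$ for the program $e^*$ that, on input $m$, runs the universal machine on $(\pi_0(m),\pi_1(m))$) and is noncomputable by Theorem~\ref{thm:halting}, together with a computable enumeration of it: a uniformly computable, increasing sequence of finite sets $A_s$ with $\bigcup_{s\in\mathbb N}A_s=A$ and $A_s\subseteq\{0,\ldots,s-1\}$, which exists by Exercise~\ref{ex:ce}. For $n\in A$ write $\operatorname{st}(n)$ for the unique stage at which $n$ enters $A$, i.e.\ the least $s$ with $n\in A_s$; note that ``$s=\operatorname{st}(n)$'' (equivalently $n\in A_s$ and $n\notin A_{s-1}$, reading $A_{-1}=\emptyset$) is a decidable property of the pair $(n,s)$, and $\operatorname{st}(n)\geq 1$.

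Then I would define $T\subseteq\mathbb N^{<\omega}$ by declaring $\langle\sigma_0,\ldots,\sigma_{l-1}\rangle\in T$ if and only if for every $n<l$ the following holds: if $\sigma_n=0$ then $n\notin A_l$, and if $\sigma_n>0$ then $\sigma_n=\operatorname{st}(n)$ (so in particular $n\in A$). Four properties then need to be checked. (i) $T$ is computable: membership is a finite conjunction of the decidable conditions above, using Lemma~\ref{lem:seq-code} to decode the sequence code and that ``$n\in A_l$'' is decidable uniformly in $n,l$. (ii) $T$ is a tree: if $\sigma\in T$ has length $l$ and $l'\leq l$, then for $n<l'$ the clause ``$\sigma_n=\operatorname{st}(n)$'' does not depend on the length, while ``$n\notin A_l$'' implies ``$n\notin A_{l'}$'' because $A_{l'}\subseteq A_l$; hence the initial segment of length $l'$ also lies in $T$, and $\langle\rangle\in T$, so $T$ is nonempty. (iii) $T$ is finitely branching: a one-step extension $\sigma\star v$ of a node of length $l$ imposes on position $l$ that either $v=0$ or $v=\operatorname{st}(l)$, so $\sigma$ has at most two immediate successors in $T$ (which is exactly the behaviour anticipated in the remark preceding the theorem; note also that $T$ is not computably bounded, as $A$ is noncomputable, so no computable function bounds the values $\operatorname{st}(l)$).

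Property (iv), that $T$ has exactly one branch, is the heart of the argument. Let $f\colon\mathbb N\to\mathbb N$ be given by $f(n)=\operatorname{st}(n)$ if $n\in A$ and $f(n)=0$ otherwise. For any $l$ the sequence $f[l]$ lies in $T$: if $f(n)=0$ then $n\notin A$, hence $n\notin A_l$, and if $f(n)>0$ then $f(n)=\operatorname{st}(n)$ by definition; so $f$ is a branch. Conversely, let $g$ be any branch and fix $n$. Using $g[l]\in T$ with $l>n$ large, the possibility $g(n)=0$ forces $n\notin A_l$ for all such $l$, hence $n\notin A$; and using $g[n+1]\in T$, the possibility $g(n)>0$ forces $g(n)=\operatorname{st}(n)$ and in particular $n\in A$. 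Thus $g(n)=0\iff n\notin A$ and $g(n)=\operatorname{st}(n)$ whenever $n\in A$, so $g=f$. Finally $\emptyset'\leq_{\mathsf T}f$: from $f$ one decides membership of $n$ in $A$ by testing whether $f(n)=0$, so $A\leq_{\mathsf T}f$, and $\emptyset'=A$.

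The main obstacle is precisely the balancing act built into the definition of $T$: the clause governing $\sigma_n=0$ must be tested against the \emph{finite} stage-$l$ approximation $A_l$ rather than against $A$ itself. This is simultaneously what keeps $T$ computable, what makes every incorrect ``$0$''-guess about an element of $A$ eventually fall out of $T$ (so that no branch passes through such a node, which is what forces uniqueness), and yet weak enough that the genuine branch $f$ never leaves $T$. Verifying these three effects coherently, together with downward closure, is the crux; the remaining verifications are routine applications of the closure properties of primitive recursive and computable functions from Section~\ref{sect:computability} and Section~\ref{sect:prim-rec}.
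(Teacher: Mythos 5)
Your proof is correct and takes essentially the same approach as the paper: both build $T$ from a computable enumeration of $\emptyset'$, store a positive witness of membership at position $n$ (you use the stage of entry $\operatorname{st}(n)$, the paper uses $k+1$ where $g(k)=n$ for an injective enumeration $g$, which under the identification $A_s=\{g(0),\ldots,g(s-1)\}$ is the same number), and let $\sigma_n=0$ encode non-membership as tested against the stage-$l$ approximation so that incorrect zero-guesses eventually drop out. The papers's write-up is terser, but the mechanism and the verification of computability, finite branching, uniqueness of the branch $f$, and $\emptyset'\leq_{\mathsf T}f$ are all the same.
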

\begin{proof}
In view of Exercise~\ref{ex:ce}, we may write $\emptyset'=\{g(k)\,|\,k\in\mathbb N\}$ for a computable injection~$g:\mathbb N\to\mathbb N$. Let $T$ consist of all sequences $\langle\sigma_0,\ldots,\sigma_{l-1}\rangle\in\mathbb N^{<\omega}$ such that the following holds for all $n<l$:
\begin{enumerate}[label=(\roman*)]
\item If we have $\sigma_n=0$, then there is no~$k<l$ with $g(k)=n$.
\item If we have $\sigma_n>0$, then we get $g(\sigma_n-1)=n$.
\end{enumerate}
Given that~$f$ is injective, clause~(ii) can be validated by at most one number~$\sigma_n$. This entails that $T$ is finitely branching (with at most two immediate successors at each node). Consider the function $f:\mathbb N\to\mathbb N$ with
\begin{equation*}
f(n)=\begin{cases}
0 & \text{if $n\notin\emptyset'$},\\
k+1 & \text{if $n=g(k)$}.
\end{cases}
\end{equation*}
It is straightforward to see that~$f$ is a branch of~$T$. To establish uniqueness, we consider an arbitrary branch~$h$. Given~$n\in\emptyset'$, we set $l=\max(k,n)+1$ for the unique $k\in\mathbb N$ with~$n=g(k)$. As we have~$h[l]\in T$, clause~(i) above yields~$h(n)>0$. Due to clause~(ii), we then get $h(n)=k+1=f(n)$. For $n\notin\emptyset'$ we obtain~$h(n)=0=f(n)$ by a similar but simpler argument. Finally, we note that $\emptyset'\leq_{\mathsf T}f$ holds since $n\in\emptyset'$ is equivalent to $f(n)>0$.
\end{proof}

In order to show that the weak version of K\H{o}nig's lemma admits computationally simpler solutions, we need to find an uncomputable set that lies strictly below the Turing jump. The following is a source of such sets.

\begin{definition}
A set $X\subseteq\mathbb N$ is called low if we have $X'\leq_{\mathsf T}\emptyset'$.
\end{definition}

Let us make explicit that we get $X<_{\mathsf T}\emptyset'$ when~$X$ is low. As a consequence, the next result entails that Theorem~\ref{thm:KL-to-jump} becomes wrong when we demand~$T\subseteq 2^{<\omega}$. So indeed, weak K\H{o}nig's lemma is strictly weaker from a computational perspective.

\begin{theorem}[Low basis theorem~\cite{low-basis}]
Any computable tree $T\subseteq 2^{<\omega}$ that is infinite has a low branch.
\end{theorem}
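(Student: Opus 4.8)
The plan is to build a branch of $T$ by an $\emptyset'$-computable forcing construction that decides, one by one, every potential halting fact about the branch. First I would fix a computable infinite tree $T\subseteq 2^{<\omega}$. The construction produces a decreasing sequence $T=T_0\supseteq T_1\supseteq T_2\supseteq\cdots$ of infinite subtrees, each of which is ``$\Pi^0_1$ uniformly in a finite amount of information'', together with a nested sequence of nodes that converges to the desired branch $f$. At stage $e$, given an infinite tree $T_e$ whose infinitude is a $\Pi^0_1$ fact with a known index, I ask whether the set $T_e^\star:=\{\sigma\in T_e\mid\{e\}^\sigma(e)\!\downarrow\text{ via a computation using only oracle bits determined by }\sigma\}$ is such that $T_{e}\setminus(\text{extensions of nodes in }T_e^\star)$ is still infinite. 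More precisely, let $T_{e+1}$ be the subtree of $T_e$ consisting of those $\sigma$ that have \emph{no} initial segment $\tau$ with $|\tau|\le|\sigma|$ such that the computation $\{e\}^{\tau}(e)$ halts within $|\sigma|$ steps using only oracle queries below $|\tau|$; if this $T_{e+1}$ is infinite, keep it, and otherwise keep $T_{e+1}:=T_e$. In the first case every branch $g$ of $T_{e+1}$ satisfies $\{e\}^g(e)\!\uparrow$; in the second case, by compactness (Weak K\H onig's Lemma, Theorem~\ref{thm:wkl}, applied to the finitely many ``non-halting'' nodes), there is a uniform bound $N$ such that every $\sigma\in T_e$ of length $N$ has an initial segment forcing $\{e\}^\sigma(e)\!\downarrow$, so every branch of $T_e$ satisfies $\{e\}^g(e)\!\downarrow$.

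The key point is that each step is decidable relative to $\emptyset'$: ``$T_{e+1}$ is infinite'' is a $\Pi^0_1$ statement about a tree with an index computable from the data of stage $e$ (by Kleene's normal form theorem, the predicate ``$\{e\}^\tau(e)$ halts in $s$ steps querying only below $|\tau|$'' is primitive recursive), and any $\Pi^0_1$ question is answerable by $\emptyset'$. Hence the map $e\mapsto(\text{index for }T_{e+1},\text{ answer})$ is $\emptyset'$-computable, so the whole sequence $(T_e)_{e\in\mathbb N}$ is $\emptyset'$-computable. Define $f(n)$ to be $i\in\{0,1\}$ such that infinitely many $\sigma\in T_{n+1}$ satisfy $\sigma\sqsubset f$ and $\sigma(n)=i$ — concretely, at substage $n$ pick the value $i$ for which $\{\sigma\in T_{n+1}\mid f[n]\star i\sqsubseteq\sigma\}$ is infinite (at least one such $i$ exists since $T_{n+1}$ is infinite above $f[n]$; choose the least). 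Checking infinitude of these subtrees is again $\Pi^0_1$, so $f\le_{\mathsf T}\emptyset'$ and $f$ is a branch of $T=T_0$. Moreover, for each $e$, the stage-$e$ action guarantees that the truth value of ``$\{e\}^f(e)\!\downarrow$'' is settled and is in fact computable from $\emptyset'$, because it equals the answer recorded at stage $e$. By Exercise~\ref{ex:jump}(a), $f'\equiv_{\mathsf T}\{e\mid\{e\}^f(e)\!\downarrow\}$, and this set is $\emptyset'$-computable, so $f'\le_{\mathsf T}\emptyset'$; that is, $f$ is low.

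The main obstacle I expect is the bookkeeping needed to make ``$T_{e+1}$ is infinite'' genuinely a $\Pi^0_1$ predicate with an index obtainable \emph{uniformly} and \emph{computably} (not merely $\emptyset'$-computably) from the stage-$e$ data, so that the oracle $\emptyset'$ is only invoked to answer the yes/no questions and never to run the construction. This requires carefully arranging that $T_{e+1}$ is defined by a computable pruning rule applied to $T_e$ (which itself carries a computable index), and verifying via Weak K\H onig's Lemma that in the ``halting'' case the infinitely-many-non-halting-nodes alternative forces halting along \emph{every} branch with a uniform length bound — this compactness step is where Theorem~\ref{thm:wkl} is essential and where one must be precise about oracle use being confined to bits below $|\tau|$. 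The remaining verifications — that $f$ is total, is a branch, and that the recorded answers correctly compute $f'$ — are routine inductions once the pruning is set up correctly.
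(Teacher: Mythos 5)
Your forcing framework is essentially the same as the paper's (the Jockusch--Soare argument): prune $T_e$ to the computable subtree of nodes that do not force $\{e\}^{(\cdot)}(e)$ to halt, keep the pruned tree if it is still infinite, and observe that $\emptyset'$ can answer each ``is the pruned tree infinite?'' question and thereby determine $\{e\,:\,\{e\}^f(e)\!\downarrow\}$ for any branch $f$ of $\bigcap_e T_e$. The paper's trees $T_p\cap S_p$ play exactly the role of your pruned tree at stage $e$, and your appeal to Exercise~\ref{ex:jump} to reduce $f'$ to $\{e\,:\,\{e\}^f(e)\!\downarrow\}$ matches the paper's.

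The gap is in how you extract the branch. You fix the entire chain $T_0\supseteq T_1\supseteq\cdots$ in advance, where each pruning decision asks only whether the pruned tree is infinite \emph{at the root}. Then at substage $n$ you pick $f(n)$ so that $T_{n+1}$ is infinite above $f[n]\star f(n)$, which you justify by the claim that ``$T_{n+1}$ is infinite above $f[n]$''. That claim is not maintained by your construction: the fact that $T_n$ is infinite above $f[n]$ (which the earlier substage did arrange) does not prevent the stage-$n$ pruning from deleting all but finitely many extensions of $f[n]$ while leaving $T_{n+1}$ infinite elsewhere in $2^{<\omega}$ --- for instance, this happens if $\{n\}^{g}(n)$ halts with small use for every $g$ extending $f[n]$ but fails to halt along some incompatible part of $T_n$. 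In that case substage $n$ has no legal move, the bit-by-bit construction stalls, and the $f$ it produces need not be a branch of every $T_e$; but then the forcing answer recorded at stage $e$ says nothing about $\{e\}^f(e)$, and the lowness argument collapses. The paper sidesteps this entirely: it does not attempt to compute $f$ bit by bit. Instead it proves that $T_\infty=\bigcap_e T_e$ is infinite by a counting argument (if $T_\infty$ were finite, only finitely many sequences of some fixed length $n$ survive, so a single $T_p$ would already be finite), invokes weak K\H{o}nig's lemma to obtain \emph{some} branch $f$ of $T_\infty$, and then shows $f'\leq_{\mathsf T}\emptyset'$ from the recorded answers; the bound $f\leq_{\mathsf T}\emptyset'$ then comes for free via $f\leq_{\mathsf T}f'$ (Exercise~\ref{ex:jump}). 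If you insist on the effective bit-by-bit construction, you must couple the pruning with the branch: carry along the current node $\sigma_e$ and at stage $e$ ask $\emptyset'$ whether the pruned tree is infinite \emph{above $\sigma_e$}, not whether it is infinite overall. That coupling is what your proposal is missing.
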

\begin{proof}
In view of Theorem~\ref{thm:Kleene-normal-form}, we have a computable function $g:\mathbb N^3\to\{0,1\}$ such that any $Y\subseteq\mathbb N$ validates
\begin{equation*}
(e,n)\in Y'\quad\Leftrightarrow\quad\text{$g(e,n,\sigma)=1$ for some $\sigma\sqsubset\chi_Y$}.
\end{equation*}
Here we write $\chi_Y$ for the characteristic function and use coding to view $\sigma\in 2^{<\omega}$ as a natural number. For each $p\in\mathbb N$, which we consider as the Cantor pair with components $\pi_0(p)$ and $\pi_1(p)$, we form the tree
\begin{equation*}
S_p=\{\tau\in 2^{<\omega}\,|\,g(\pi_0(p),\pi_1(p),\sigma)=0\text{ for all }\sigma\sqsubseteq\tau\}.
\end{equation*}
To define a sequence of binary trees~$T_0\supseteq T_1\supseteq\ldots$, we recursively stipulate
\begin{equation*}
T_0=T\quad\text{and}\quad T_{p+1}=\begin{cases}
T_p\cap S_p&\text{if $T_p\cap S_p$ is infinite},\\
T_p & \text{otherwise}.
\end{cases}
\end{equation*}
The tree $T_\infty=\bigcap_{p\in\mathbb N}T_p$ must be infinite. If it was not, there would be an $n\in\mathbb N$ with $\sigma\notin T_\infty$ for all sequences~$\sigma\in 2^{<\omega}$ of length $l(\sigma)=n$. The number of these sequences is $2^n$ and in particular finite. Hence we would get a~$p\in\mathbb N$ with $\sigma\notin T_p$ for~$l(\sigma)=n$, against the fact that~$T_p$ is infinite. By K\H{o}nig's lemma, we now obtain a branch~$f$ of the tree $T_\infty\subseteq T$.

In order to show that our branch is low, we first recall that $f$ is identified with the coded graph $\mathcal G(f)=\{\langle n,i\rangle\,|\,f(n)=i\}$. Let us also consider $Y=\{n\in\mathbb N\,|\,f(n)=1\}$. We clearly have $Y\equiv_{\mathsf T}\mathcal G(f)$. Due to Exercise~\ref{ex:jump}, it is thus enough to show $Y'\leq_{\mathsf T}\emptyset'$. Given that $f$ is the characteristic function of~$Y$, we know that $(e,n)\in Y'$ holds precisely when we have $g(e,n,\sigma)=1$ for some~$\sigma\sqsubset f$. This entails
\begin{equation*}
(e,n)\in Y'\quad\Leftrightarrow\quad T_{\langle e,n\rangle}\cap S_{\langle e,n\rangle}\text{ is finite}.
\end{equation*}
Indeed, if the tree $T_{\langle e,n\rangle}\cap S_{\langle e,n\rangle}$ is finite, some $\sigma\sqsubset f$ is not contained in it. Given that $f$ is a branch of~$T_\infty\subseteq T_{\langle e,n\rangle}$, it follows that $\sigma$ does not lie in~$S_{\langle e,n\rangle}$. By the definition of the latter, we get $g(e,n,\sigma)=1$ and hence $(e,n)\in Y'$. On the other hand, if $T_{\langle e,n\rangle}\cap S_{\langle e,n\rangle}$ is infinite, then $f$ is a branch of~$T_{\langle e,n\rangle+1}\subseteq S_{\langle e,n\rangle}$. We thus obtain $g(e,n,\sigma)=0$ for any~$\sigma\sqsubset f$, as need to establish $(e,n)\notin Y'$.

It remains to show that the Turing jump can compute the set of indices $p\in\mathbb N$ such that $T_p\cap S_p$ is finite. Let us write
\begin{equation*}
T^\rho=T\cap\bigcap\left\{\left.S_{\rho(i)}\,\right|\,i<l\text{ with }\rho_i=1\right\}\quad\text{for}\quad\rho=\langle\rho_0,\ldots,\rho_{l-1}\rangle\in 2^{<\omega}.
\end{equation*}
Let $c$ be the code of a program that takes an input~$\rho$ and searches for an $n\in\mathbb N$ with $\sigma\notin T^\rho$ for all~$\sigma\in\ 2^{<\omega}$ of length~$l(\sigma)=n$. Since the computation of $\{c\}(\rho)$ terminates precisely when $T^\rho$ is finite, we get
\begin{equation*}
\left\{\left.\rho\in 2^{<\omega}\,\right|\,T^\rho\text{ is finite}\right\}\leq_{\mathsf T}\emptyset'.
\end{equation*}
By course-of-values recursion (see Proposition~\ref{prop:course-of-values}), we obtain a $\emptyset'$-computable function $h:\mathbb N\to\{0,1\}$ such that $h(p)=1$ holds precisely when~$T^{h[p]\star 1}$ is infinite. A straightforward induction yields $T^{h[p]}=T_p$ and hence $T^{h[p]\star 1}=T_p\cap S_p$. Finally, we can conclude $Y'=\{p\in\mathbb N\,|\,h(p)=0\}\leq_{\mathsf T}\emptyset'$.
\end{proof}

\section{Model Theory}\label{sect:model-theory}

This section presents some basic ideas from model theory. The latter is mostly concerned with theories that are more `tame' than those of first-order arithmetic or set theory. These theories are not intended to serve as foundation for a large part of mathematics. Instead, they axiomatize some particular structure, such as the class of vector spaces or algebraically closed fields. The natural numbers are usually not definable in the theories studied by model theory, so that G\"odel's incompleteness theorems do not apply. One often encounters theories that are complete and have models with a great deal of symmetry. Model theory has strong connections with other parts of mathematics, in particular with algebraic geometry.

In Subsection~\ref{subsect:ultraprod} below, we present two classical constructions that combine a collection of given models into a new model, namely products with respect to an ultrafilter and unions along chains. The constructions are useful in applications but also for foundational purposes. We employ them to derive completeness and compactness for uncountable signatures (recall that only the countable case was treated in Section~\ref{sect:fundamentals}). This will allow us to prove an upward L\"owenheim-Skolem theorem. Let us note that we do not obtain a downward L\"owenheim-Skolem theorem for uncountable languages. Nevertheless, we will be able to derive a version of the \L{}o\'s-Vaught test for complete theories, which is somewhat weaker than usual but sufficient for applications (see Theorem~\ref{thm:los-vaught-test} and the paragraph that precedes it). We will see one such application in the following Subsection~\ref{subsect:acf}, where we give a proof of the Ax-Grothendieck theorem that a polynomial function~$\mathbb C^n\to\mathbb C^n$ over the complex numbers is surjective if it is injective.

More information on research in model theory can be found in a survey by Wilfrid Hodges~\cite{hodges-stanford} (see in particular Section~5 of that article). We point out that the present section focuses on the model theory of first-order logic with potentially infinite structures. There are other important directions such as finite model theory, which has close connections with computer science~\cite{finite-model-theory-applic}.

\subsection{Ultraproducts and Elementary Extensions}\label{subsect:ultraprod}

As noted in the introduction to the present section, this subsection discusses ultraproducts and chains. These construction will be used to prove an upward L\"owenheim-Skolem theorem (cf.~Theorem~\ref{thm:loewenheim-skolem} above) and a version of the \L{}o\'s-Vaught test.

Given a family of sets~$A_i$ with $i$ from a so-called index set~$I$, we write~$\prod_{i\in I}A_i$ for the set of all sequences~$(a_i)_{i\in I}$ with~$a_i\in A_i$ for each~$i\in I$. When~$I$ is clear from the context, we sometimes abbreviate~$(a_i)_{i\in I}$ as~$(a_i)$. For a given family of $\sigma$-structures~$\mathcal M_i$ with a common signature~$\sigma$, we want to define a new structure~$\mathcal M$ with universe~$\prod_{i\in I}M_i$, where~$M_i$ is the universe of~$\mathcal M_i$. It is not obvious how the interpretation~$R^{\mathcal M}$ of a predicate symbol in the product is to be defined from the interpretations~$R^{\mathcal M_i}$ in the given structures. One intuitive idea, which we state for the case of a unary predicate, is that $(a_i)\in R^{\mathcal M}$ should hold when we have~$a_i\in R^{\mathcal M_i}$ for most~$i\in I$. The following offers a way to make this precise.

\begin{definition}\label{def:filter}
By a filter on a set~$I$ we mean a subset~$\mathcal F\subseteq\mathcal P(I)$ of the power set that satisfies the following conditions:
\begin{enumerate}[label=(\roman*)]
\item We have $I\in\mathcal F$ and~$\emptyset\notin\mathcal F$.
\item For any~$A,B\in\mathcal F$ we also have~$A\cap B\in\mathcal F$.
\item Given $A\in\mathcal F$ and $A\subseteq B\subseteq I$, we get $B\in\mathcal F$.
\end{enumerate}
A filter~$\mathcal F$ on~$I$ is called an ultrafilter if we have $A\in\mathcal F$ or $I\backslash A\in\mathcal F$ for each~$A\subseteq I$.
\end{definition}

One may think of a filter as a collection of subsets that are large in a certain sense. Conditions~(i) and~(iii) are clearly reasonable in this context, while condition~(ii) may seem counterintuitive. To make sense of it, one can think of~$A$ and~$B$ as \emph{so}~large that~\emph{even} $A\cap B$ is still large. An instructive example is the so-called Fr\'echet filter on~$\mathbb N$, which consists of all sets~$A\subseteq\mathbb N$ such that~$\mathbb N\backslash A$ is finite.

In condition~(ii) of Definition~\ref{def:filter}, the implication is actually an equivalence, as condition~(iii) ensures that $A\cap B\in\mathcal F$ entails~$A,B\in\mathcal F$. If~$\mathcal F$ is an ultrafilter on the set~$I$, then $A\in\mathcal F$ is equivalent to~$I\backslash A\notin\mathcal F$.

\begin{exercise}\label{ex:ultrafilter}
Show that a filter~$\mathcal F$ on~$I$ is an ultrafilter precisely if $A\cup B\in\mathcal F$ is equivalent to the disjunction of $A\in\mathcal F$ and~$B\in\mathcal F$. \emph{Hint:} Since $I\backslash(A\cup B)$ is equal to $(I\backslash A)\cap(I\backslash B)$, the claim is dual to one from the previous paragraph.
\end{exercise}

For any element~$i_0\in I$, the collection $\mathcal F=\{A\subseteq I\,|\,i_0\in A\}$ is an ultafilter on~$I$, as one readily checks. An ultrafilter of the indicated form is called principal. The following result entails that there are non-principal ultrafilters (consider an extension of the Fr\'echet filter). It cannot be proved in Zermelo-Fraenkel set theory without the axiom of choice~\cite{feferman-ultrafilter} (which is equivalent to Zorn's lemma).

\begin{proposition}\label{prop:ultrafilters-exist}
For any filter~$\mathcal F$ on a set~$I$, there is an ultrafilter~$\mathcal U$ on~$I$ such that we have $\mathcal F\subseteq\mathcal U$.
\end{proposition}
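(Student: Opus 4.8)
The plan is to obtain $\mathcal U$ via Zorn's lemma, applied to the poset of all filters on $I$ that extend $\mathcal F$, ordered by inclusion. First I would fix $\mathcal F$ and let $\mathcal P$ be the set of filters $\mathcal G$ on $I$ with $\mathcal F\subseteq\mathcal G$, partially ordered by $\subseteq$. This set is nonempty since $\mathcal F\in\mathcal P$. The key step is to check the chain condition: given a nonempty chain $\mathcal C\subseteq\mathcal P$, its union $\mathcal G^*:=\bigcup\mathcal C$ is again a filter extending $\mathcal F$. Conditions~(i) and~(iii) of Definition~\ref{def:filter} are immediate, since they are "local" (they involve only one or two sets from $\mathcal G^*$ at a time and are inherited from whichever member of $\mathcal C$ contains the relevant sets). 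For condition~(ii), given $A,B\in\mathcal G^*$, we pick $\mathcal G_1,\mathcal G_2\in\mathcal C$ with $A\in\mathcal G_1$ and $B\in\mathcal G_2$; since $\mathcal C$ is a chain, one of these is contained in the other, say $\mathcal G_1\subseteq\mathcal G_2$, so both $A$ and $B$ lie in $\mathcal G_2$, whence $A\cap B\in\mathcal G_2\subseteq\mathcal G^*$. Thus $\mathcal G^*\in\mathcal P$ is an upper bound for $\mathcal C$.

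By Zorn's lemma, $\mathcal P$ has a maximal element $\mathcal U$. It remains to show that $\mathcal U$ is an ultrafilter, i.e.\ that for each $A\subseteq I$ we have $A\in\mathcal U$ or $I\backslash A\in\mathcal U$. The plan here is a standard maximality argument: suppose $A\notin\mathcal U$, and show $I\backslash A\in\mathcal U$. Consider the collection
\begin{equation*}
\mathcal G=\{B\subseteq I\,|\,\text{$(I\backslash A)\cap C\subseteq B$ for some $C\in\mathcal U$}\}.
\end{equation*}
I would verify that $\mathcal G$ is a filter: it clearly satisfies~(iii) by construction and contains $\mathcal U$ (take $C=B$), hence also $\mathcal F$; it is closed under intersection because $\mathcal U$ is; and the only real point is that $\emptyset\notin\mathcal G$, i.e.\ that $(I\backslash A)\cap C\neq\emptyset$ for every $C\in\mathcal U$. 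If this failed, we would have $C\subseteq A$ for some $C\in\mathcal U$, and then condition~(iii) would force $A\in\mathcal U$, contrary to assumption. So $\mathcal G\in\mathcal P$ with $\mathcal U\subseteq\mathcal G$ and $I\backslash A\in\mathcal G$; by maximality $\mathcal G=\mathcal U$, hence $I\backslash A\in\mathcal U$, as required.

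There is no serious obstacle here; the argument is entirely routine once one decides to invoke Zorn's lemma. The only point demanding a little care is the verification that $\emptyset\notin\mathcal G$ in the last paragraph, since this is exactly where the hypothesis $A\notin\mathcal U$ is used, and it is easy to gloss over. One should also be slightly careful that the statement of Zorn's lemma being applied has the form "every poset in which every chain has an upper bound has a maximal element" — in particular the empty chain is handled by the fact that $\mathcal F$ itself serves as an upper bound (so $\mathcal P$ is nonempty), which is why I single out nonempty chains above and note $\mathcal F\in\mathcal P$ separately. No properties of $\sigma$-structures or of the intended ultraproduct construction are needed for this proposition; it is purely set-theoretic.
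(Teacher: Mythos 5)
Your proof is correct and follows essentially the same strategy as the paper's: Zorn's lemma on the poset of filters extending $\mathcal F$, then a maximality argument to show the maximal filter is an ultrafilter. The only cosmetic difference is that you assume $A\notin\mathcal U$ and deduce $I\backslash A\in\mathcal U$, whereas the paper assumes $I\backslash A\notin\mathcal U$ and deduces $A\in\mathcal U$ — a mirror image of the same step.
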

\begin{proof}
Consider the collection of all filters~$\mathcal G$ on~$I$ such that we have~$\mathcal F\subseteq\mathcal G$, ordered by inclusion. It is not hard to see that the union over any non-empty chain in this collection is again a filter. Thus Zorn's lemma provides a filter~$\mathcal U$ on~$I$ that is maximal with~$\mathcal F\subseteq\mathcal U$. In order to show that~$\mathcal U$ is an ultrafilter, we assume~$I\backslash A\notin\mathcal U$ and derive~$A\in\mathcal U$. The crucial point is that we have $A\cap B\neq\emptyset$ for any~$B\in\mathcal U$, as we would otherwise get~$B\subseteq I\backslash A$, against condition~(iii) of Definition~\ref{def:filter}. Consider
\begin{equation*}
\mathcal U'=\{C\subseteq I\,|\,A\cap B\subseteq C\text{ for some }B\in\mathcal U\}.
\end{equation*}
It is straightforward to conclude that $\mathcal U'$ is a filter. By maximality we get $\mathcal U=\mathcal U'$ and hence~$A\in\mathcal U$.
\end{proof}

The previous proof shows that any maximal filter on a set is an ultrafilter. Conversely, one readily shows that any ultrafilter is maximal. We now present the construction that was sketched in the paragraph before Definition~\ref{def:filter}, where the reader will also find relevant notation.

\begin{definition}\label{def:ultraproduct}
Consider a signature~$\sigma$, an ultrafilter~$\mathcal U$ on a set~$I$ and a family of $\sigma$-structures~$\mathcal M_i$ for~$i\in I$. The ultraproduct $\mathcal M=\prod^{\mathcal U}_{i\in I}\mathcal M_i$ is the $\sigma$-structure with universe~$\prod_{i\in I}M_i$ (where $M_i$ is the universe of~$\mathcal M_i$) and
\begin{align*}
f^{\mathcal M}\left((a^1_i)_{i\in I},\ldots,(a^n_i)_{i\in I}\right)&:=\left(f^{\mathcal M_i}(a^1_i,\ldots,a^n_i)\right)_{i\in I},\\
\left((a^1_i)_{i\in I},\ldots,(a^n_i)_{i\in I}\right)\in R^{\mathcal M}\,&:\Leftrightarrow\,\left\{i\in I\,\left|\,(a^1_i,\ldots,a^n_i)\in R^{\mathcal M_i}\right.\right\}\in\mathcal U,
\end{align*}
where~$f$ and~$R$ range over the $n$-ary function and relation symbols from~$\sigma$.
\end{definition}

With the given definition, the interpretation of equality in an ultraproduct may not coincide with actual equality, i.\,e., the structure $\prod^{\mathcal U}_{i\in I}\mathcal M_i$ may not be strict even when each component~$\mathcal M_i$ is (cf.~Definition~\ref{def:equality}). One can always obtain a strict model by taking a quotient as in the proof of Theorem~\ref{thm:strict-model}. Many authors define the ultraproduct as this quotient, but we find it simpler to separate the two steps. The central result about ultraproducts reads as follows.

\begin{theorem}[\L{}o\'s's Theorem~\cite{los-ultraproduct}]
In the situation of Definition~\ref{def:ultraproduct} we have
\begin{equation*}
\textstyle\prod^{\mathcal U}_{i\in I}\mathcal M_i\vDash\varphi\left((a^1_i)_{i\in I},\ldots,(a^n_i)_{i\in I}\right)\quad\Leftrightarrow\quad\left\{i\in I\,\left|\,\mathcal M_i\vDash\varphi(a^1_i,\ldots,a^n_i)\right.\right\}\in\mathcal U,
\end{equation*}
for any $\sigma$-formula~$\varphi(x_1,\ldots,x_n)$ and any elements $(a^j_i)_{i\in I}\in\prod_{i\in I}M_i$.
\end{theorem}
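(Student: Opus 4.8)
The plan is to prove \L{}o\'s's theorem by induction over the construction of the formula~$\varphi$ according to Definition~\ref{def:signature} (clauses~(i') to~(iii')), since the statement concerns formulas in negation normal form. As a preliminary step I would record a lemma about terms: for any $\sigma$-term $t(x_1,\ldots,x_n)$ and any tuple $(a^1_i)_{i\in I},\ldots,(a^n_i)_{i\in I}$ in $\prod_{i\in I}M_i$, the interpretation in the ultraproduct is computed coordinatewise, i.\,e.\
\begin{equation*}
t^{\mathcal M,\eta}=\left(t^{\mathcal M_i,\eta_i}\right)_{i\in I},
\end{equation*}
where $\eta$ assigns $(a^j_i)_{i\in I}$ to $x_j$ and $\eta_i$ assigns $a^j_i$ to $x_j$. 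This is a straightforward induction on the build-up of $t$: the base case of a variable is immediate, and the case of $ft_1\ldots t_m$ follows directly from the definition of $f^{\mathcal M}$ in Definition~\ref{def:ultraproduct} together with the induction hypothesis.

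For the main induction, the base case consists of literals $Rt_1\ldots t_m$ and $\neg Rt_1\ldots t_m$. Writing $b^j=(b^j_i)_{i\in I}$ for the coordinatewise value of $t_j$ (using the term lemma), the positive case is precisely the definition of $R^{\mathcal M}$: we have $\mathcal M\vDash Rt_1\ldots t_m$ iff $(b^1,\ldots,b^m)\in R^{\mathcal M}$ iff $\{i\mid(b^1_i,\ldots,b^m_i)\in R^{\mathcal M_i}\}\in\mathcal U$ iff $\{i\mid\mathcal M_i\vDash Rt_1\ldots t_m\}\in\mathcal U$. For the negated literal, $\mathcal M\vDash\neg Rt_1\ldots t_m$ means $(b^1,\ldots,b^m)\notin R^{\mathcal M}$, which by the definition of ultraproduct is $\{i\mid(b^1_i,\ldots,b^m_i)\in R^{\mathcal M_i}\}\notin\mathcal U$; since $\mathcal U$ is an ultrafilter, this is equivalent to the complement $\{i\mid(b^1_i,\ldots,b^m_i)\notin R^{\mathcal M_i}\}\in\mathcal U$, which is $\{i\mid\mathcal M_i\vDash\neg Rt_1\ldots t_m\}\in\mathcal U$. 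This is the first place where the ultrafilter property (as opposed to a mere filter) is essential.

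For the induction step I would treat the four remaining connectives. The conjunction case uses that $\mathcal U$ is closed under intersections (condition~(ii) of Definition~\ref{def:filter}) and upward closed (condition~(iii)): the set where $\varphi_0\land\varphi_1$ holds is the intersection of the two sets where $\varphi_0$ and $\varphi_1$ hold, and a set lies in $\mathcal U$ iff both these sets do. The disjunction case uses Exercise~\ref{ex:ultrafilter}, which says exactly that $A\cup B\in\mathcal U$ iff $A\in\mathcal U$ or $B\in\mathcal U$. The existential quantifier $\exists x\,\varphi$ is the main obstacle and the one genuinely requiring the axiom of choice: for the direction from right to left, if $J:=\{i\mid\mathcal M_i\vDash\exists x\,\varphi(a^1_i,\ldots,a^n_i)\}\in\mathcal U$, then for each $i\in J$ one \emph{chooses} a witness $c_i\in M_i$ with $\mathcal M_i\vDash\varphi(c_i,a^1_i,\ldots)$, and sets $c_i$ arbitrary for $i\notin J$; then the tuple $(c_i)_{i\in I}$ witnesses $\mathcal M\vDash\exists x\,\varphi$ by the induction hypothesis, since the set of good coordinates contains $J$ and hence lies in $\mathcal U$ by upward closure. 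For the converse direction, a witness $(c_i)_{i\in I}$ for $\mathcal M\vDash\exists x\,\varphi$ gives, by the induction hypothesis applied to $\varphi$, that $\{i\mid\mathcal M_i\vDash\varphi(c_i,a^1_i,\ldots)\}\in\mathcal U$, and this set is contained in $\{i\mid\mathcal M_i\vDash\exists x\,\varphi\}$, which therefore lies in $\mathcal U$. Finally, the universal quantifier $\forall x\,\varphi$ can either be handled directly by a dual argument (using that $A\in\mathcal U$ iff $I\setminus A\notin\mathcal U$) or, more economically, reduced to the existential case by noting that $\neg\forall x\,\varphi$ is literally $\exists x\,\nega\varphi$ in our treatment of negation normal form, so the claim for $\forall x\,\varphi$ follows from the already-established claim for $\exists x\,\nega\varphi$ together with the ultrafilter complement law — though since the induction is over formula build-up and $\nega\varphi$ need not be a subformula of $\forall x\,\varphi$ in the syntactic sense, I would instead carry out the direct dual argument for robustness. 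One should also take care, when the quantified variable $x$ is one of the $x_j$ or has even index, that the substitution bookkeeping from Definition~\ref{def:substitution} and Exercise~\ref{ex:substitution} is invoked correctly; this is routine but worth a sentence.
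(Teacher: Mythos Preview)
Your proposal is correct and follows essentially the same approach as the paper: a term lemma followed by induction over formulas, with the literal, conjunction, disjunction, and existential cases handled identically. The one organizational difference is that the paper inducts on the \emph{number of occurrences} of $\land,\lor,\forall,\exists$ rather than on the syntactic build-up; the point of this measure is that $\neg\psi$ has the same count as $\psi$, so the induction hypothesis is simultaneously available for $\neg\psi$, and the universal case can then be reduced to the already-treated existential case via the duality $\neg\forall x\,\psi=\exists x\,\neg\psi$ without repeating the witness-choosing argument. You correctly diagnosed that this reduction does not go through under plain structural induction and opted for the direct dual argument instead, which works fine and yields the same proof content.
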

\begin{proof}
Let us write~$\mathcal M=\prod^{\mathcal U}_{i\in I}\mathcal M_i$. A straightforward induction shows that
\begin{equation*}
t^{\mathcal M}\left((a^1_i)_{i\in I},\ldots,(a^n_i)_{i\in I}\right)=\left(t^{\mathcal M_i}(a^1_i,\ldots,a^n_i)\right)_{i\in I}
\end{equation*}
holds for any term~$t(x_1,\ldots,x_n)$. Here we recall that $t^{\mathcal M}(b^1,\ldots,b^n)$ abbreviates~$t^{\mathcal M,\eta}$ for the assignment~$\eta$ that maps $x_j$ to~$b^j$. The statement of the theorem involves a similar abbreviation.

To establish the theorem, we argue by induction on the number of occurrences of the symbols~$\land,\lor$ and $\forall,\exists$ in the formula~$\varphi$. The point is that the induction hypothesis will be available not just for~$\varphi$ but also for~$\neg\varphi$ (as we work with formulas in negation normal form, cf.~Definitions~\ref{def:signature} and~\ref{def_fv-formula}). Let us first assume that~$\varphi$ is of the form~$Rt_1\ldots t_k$, where the variables of each term~$t_j$ are among~$x_1,\ldots,x_n$. For notational simplicity, we assume $k=n=1$ and write~$\varphi$ as $Rt$ with~$t=t(x)$. In view of Definition~\ref{def:ultraproduct}, we get
\begin{align*}
\mathcal M\vDash\varphi\left((a_i)_{i\in I}\right)\quad&\Leftrightarrow\quad\left(t^{\mathcal M_i}(a_i)\right)_{i\in I}=t^{\mathcal M}\left((a_i)_{i\in I}\right)\in R^{\mathcal M}\\
\quad&\Leftrightarrow\quad\{i\in I\,|\,\mathcal M_i\vDash\varphi(a_i)\}=\left\{i\in I\,\left|\,t^{\mathcal M_i}(a_i)\in R^{\mathcal M_i}\right.\right\}\in\mathcal U,
\end{align*}
as desired. To cover the case where~$\varphi$ is of the form~$\neg Rt_1\ldots t_k$, it suffices to note that we have
\begin{equation*}
\left\{i\in I\,\left|\,\mathcal M_i\vDash\varphi(a^1_i,\ldots,a^n_i)\right.\right\}\in\mathcal U\quad\Leftrightarrow\quad\left\{i\in I\,\left|\,\mathcal M_i\nvDash\varphi(a^1_i,\ldots,a^n_i)\right.\right\}\notin\mathcal U.
\end{equation*}
For this duality principle, it is crucial that~$\mathcal U$ is an ultrafilter and not just a filter.

Let us now consider a formula $\varphi(x)$ of the form~$\varphi_0(x)\land\varphi_1(x)$. Here we again assume that there is just one free variable, which simplifies the notation. Due to condition~(ii) of Definition~\ref{def:filter}, the induction hypothesis yields
\begin{align*}
\mathcal M\vDash\varphi\left((a_i)_{i\in I}\right)\quad\Leftrightarrow\quad\{i\in I\,|\,\mathcal M_i\vDash\varphi_0(a_i)\}\in\mathcal U\text{ and }\{i\in I\,|\,\mathcal M_i\vDash\varphi_0(a_i)\}&\in\mathcal U\\
\Leftrightarrow\quad\{i\in I\,|\,\mathcal M\vDash\varphi(a_i)\}=\{i\in I\,|\,\mathcal M_i\vDash\varphi_0(a_i)\}\cap\{i\in I\,|\,\mathcal M_i\vDash\varphi_0(a_i)\}&\in\mathcal U.
\end{align*}
When~$\varphi$ has the form~$\varphi_0\lor\varphi_1$, one argues similarly, using Exercise~\ref{ex:ultrafilter}. Alternatively, one can employ the above duality principle (as in the last case below).

Next, we assume that $\varphi(x)$ has the form~$\exists y\,\psi(x,y)$. If we have $\mathcal M\vDash\varphi((a_i)_{i\in I})$, we can pick an element $(b_i)_{i\in I}\in\prod_{i\in I}M_i$ with $\mathcal M\vDash\psi((a_i)_{i\in I},(b_i)_{i\in I})$. Due to the induction hypothesis, we obtain
\begin{equation*}
\left\{i\in I\,\left|\,\mathcal M_i\vDash\varphi(a_i)\right.\right\}\supseteq\left\{i\in I\,\left|\,\mathcal M_i\vDash\psi(a_i,b_i)\right.\right\}\in\mathcal U.
\end{equation*}
For the converse implication, we pick an element $(b_i)_{i\in I}\in\prod_{i\in I}M_i$ such that we have $\mathcal M_i\vDash\psi(a_i,b_i)$ for any~$i\in I$ with $\mathcal M_i\vDash\varphi(a_i)$. Given $\{i\in I\,|\,\mathcal M_i\vDash\varphi(a_i)\}\in\mathcal U$, the induction hypothesis yields~$\mathcal M\vDash\psi((a_i)_{i\in I},(b_i)_{i\in I})$ and hence~$\mathcal M\vDash\varphi((a_i)_{i\in I})$. Finally, we assume that $\varphi(x)$ has the form~$\forall y\,\psi(x,y)$. Let us point out that~$\neg\varphi(x)$ coincides with~$\exists y\,\psi(x,y)$, since we work with formulas in negation normal form (see Definition~\ref{def:nega-pred-log}). As noted above, the induction hypothesis is available not just for~$\psi$ but also for~$\neg\psi$. Thus the argument from the previous case yields
\begin{equation*}
\mathcal M\vDash\neg\varphi\left((a_i)_{i\in I}\right)\quad\Leftrightarrow\quad\{i\in I\,|\,\mathcal M_i\vDash\neg\varphi(a_i)\}\in\mathcal U.
\end{equation*}
If we negate both sides of this equivalence, we obtain the one from the theorem, due to the duality principle that was mentioned above.
\end{proof}

In Section~\ref{sect:fundamentals}, we have assumed that all signatures are countable. Due to this assumption, we could prove completeness directly for cut-free proofs, which allowed us to derive Herbrand's theorem and Beth's definability theorem as straightforward consequences. Using \L{}o\'s's theorem, we will now prove completeness and compactness in a more general setting. The following generalizes Theorem~\ref{thm:compactness}.

\begin{theorem}[Compactness for Uncountable Signatures]\label{thm:compactness-uncountable}
Consider a set~$\Theta$ of sentences over some signature, which may be uncountable. If any finite subset of~$\Theta$ has a model, then there is a single model for the entire set~$\Theta$.
\end{theorem}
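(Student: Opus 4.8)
The idea is to reduce to the countable case (Theorem~\ref{thm:compactness}) by applying \L{}o\'s's theorem to a suitable ultraproduct over finite subsets of~$\Theta$. First I would let $I$ be the set of all finite subsets of~$\Theta$. For each sentence $\theta\in\Theta$, put $I_\theta=\{i\in I\mid\theta\in i\}$, and let $\mathcal F$ be the filter on $I$ generated by the family $\{I_\theta\mid\theta\in\Theta\}$; concretely, $\mathcal F=\{A\subseteq I\mid A\supseteq I_{\theta_1}\cap\cdots\cap I_{\theta_n}\text{ for some }\theta_1,\ldots,\theta_n\in\Theta\}$. This is genuinely a filter: it is closed upward by construction, closed under finite intersections because $I_{\theta_1}\cap\cdots\cap I_{\theta_n}=I_{\{\theta_1,\ldots,\theta_n\}}$ is again of the generating form, and it does not contain~$\emptyset$ because each $I_\theta$ contains the singleton $\{\theta\}$, so any finite intersection of the generators contains $\{\theta_1,\ldots,\theta_n\}$ and is nonempty. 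By Proposition~\ref{prop:ultrafilters-exist}, extend $\mathcal F$ to an ultrafilter~$\mathcal U$ on~$I$.

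Next I would use the hypothesis. For each $i\in I$, the set $i$ is a finite subset of~$\Theta$, so by assumption it has a model~$\mathcal M_i$; choose one. Form the ultraproduct $\mathcal M=\prod^{\mathcal U}_{i\in I}\mathcal M_i$. The claim is that $\mathcal M\vDash\theta$ for every $\theta\in\Theta$. Fix $\theta\in\Theta$. For every $i\in I_\theta$ we have $\theta\in i$, hence $\mathcal M_i\vDash\theta$ since $\mathcal M_i$ models all of~$i$. Therefore $\{i\in I\mid\mathcal M_i\vDash\theta\}\supseteq I_\theta\in\mathcal F\subseteq\mathcal U$, and by upward closure of~$\mathcal U$ this set lies in~$\mathcal U$. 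Since $\theta$ is a sentence, \L{}o\'s's theorem gives $\mathcal M\vDash\theta$ directly (no free variables to assign). Hence $\mathcal M$ is a model of the entire set~$\Theta$.

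One small point to address explicitly is that, as remarked after Definition~\ref{def:ultraproduct}, the ultraproduct need not be strict even if the~$\mathcal M_i$ are; but for the bare statement of compactness this is irrelevant, since \L{}o\'s's theorem is stated for arbitrary structures and we have produced a structure satisfying~$\Theta$. If one wants a strict model (when the signature has equality and $\Theta$ contains the equality axioms), one passes to the quotient as in the proof of Theorem~\ref{thm:strict-model}.

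\textbf{Main obstacle.} There is no deep difficulty here: the entire content is the verification that $\mathcal F$ is a proper filter (in particular that finite intersections of the generators are nonempty, which is exactly the finite-satisfiability hypothesis repackaged) together with the invocation of Proposition~\ref{prop:ultrafilters-exist} and \L{}o\'s's theorem. The only conceptual subtlety worth flagging is that this argument, unlike the proof of Theorem~\ref{thm:compactness} via Sch\"utte deduction chains, requires the existence of nonprincipal ultrafilters and hence the axiom of choice (Zorn's lemma), as already noted before Proposition~\ref{prop:ultrafilters-exist}; this is unavoidable once we drop the countability restriction of Convention~\ref{conv:countable}.
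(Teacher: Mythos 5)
Your proof is correct and is essentially the same argument the paper gives: both take $I$ to be the finite subsets of~$\Theta$, both build the filter generated by the sets $I_\theta=\{\varphi\}\!\uparrow$ (your $I_{\theta_1}\cap\cdots\cap I_{\theta_n}$ is precisely the paper's $\{\theta_1,\ldots,\theta_n\}\!\uparrow$), extend it to an ultrafilter, and conclude by \L{}o\'s's theorem. One small remark: the opening sentence of your plan says you will \emph{reduce to the countable case} (Theorem~\ref{thm:compactness}), but the proof that follows never invokes that theorem --- it uses finite satisfiability directly --- so that phrasing should be dropped or replaced by something like ``bypass the countability restriction via ultraproducts.''
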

\begin{proof}
Let~$I$ be the set of all finite subsets of~$\Theta$. For each~$\theta\in I$, the assumption of the theorem allows us to pick a model~$\mathcal M_\theta$ such that $\mathcal M_\theta\vDash\varphi$ holds for all~$\varphi\in\theta$. Also for~$\theta\in I$, we define $\theta\!\uparrow$ as the set of all~$\theta'\in I$ with $\theta\subseteq\theta'$. Then
\begin{equation*}
\mathcal F=\{A\subseteq I\,|\,{\theta\!\uparrow}\subseteq A\text{ for some }\theta\in I\}
\end{equation*}
is a filter on~$I$. Indeed, the crucial condition~(ii) from Definition~\ref{def:filter} is satisfied in view of~${\theta\!\uparrow}\cap{\rho\!\uparrow}={(\theta\cap\rho)\!\uparrow}$. Proposition~\ref{prop:ultrafilters-exist} provides an ultrafilter~$\mathcal U\supseteq\mathcal F$ on~$I$. For every~$\varphi\in\Theta$ we have
\begin{equation*}
\{\theta\in I\,|\,\mathcal M_\theta\vDash\varphi\}\supseteq{\{\varphi\}\!\uparrow}\in\mathcal U,
\end{equation*}
so that \L{}o\'s's theorem yields~$\prod^{\mathcal U}_{\theta\in I}\mathcal M_\theta\vDash\varphi$. 
\end{proof}

As indicated, we also obtain a generalization of Theorem~\ref{thm:completeness-AL}.

\begin{theorem}[Completeness for Uncountable Signatures]
We have
\begin{equation*}
\Theta\vDash\varphi\quad\Rightarrow\quad\Theta\vdash\varphi
\end{equation*}
whenever~$\Theta$ is a set of $\sigma$-formulas and~$\varphi$ is a $\sigma$-formula, where the signature~$\sigma$ may be uncountable.
\end{theorem}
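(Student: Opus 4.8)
The plan is to argue by contraposition, reducing the problem to the countable case (Theorem~\ref{thm:completeness-AL}) via the observation that a single failure of provability involves only finitely many formulas, hence only finitely many symbols, and then to recombine the finitely-many resulting models by the ultraproduct construction from the proof of Theorem~\ref{thm:compactness-uncountable}. So suppose $\Theta\nvdash\varphi$; I want to produce a $\sigma$-structure $\mathcal M$ and an assignment $\eta$ with $\mathcal M,\eta\vDash\psi$ for every $\psi\in\Theta$ while $\mathcal M,\eta\nvDash\varphi$.

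First I would build finite approximations. Let $I$ be the nonempty set of finite subsets of $\Theta$. Fix $\Gamma=\{\gamma_1,\dots,\gamma_k\}\in I$ and let $\sigma_\Gamma\subseteq\sigma$ be the finite signature consisting of the function and relation symbols occurring in the formulas of $\Gamma\cup\{\varphi\}$. By Definition~\ref{def:sequent-calc-FO}, $\Theta\nvdash\varphi$ means that the sequent $\neg\gamma_1,\dots,\neg\gamma_k,\varphi$ is not derivable in the $\sigma$-calculus; since every $\sigma_\Gamma$-proof is in particular a $\sigma$-proof, it is not derivable over $\sigma_\Gamma$, and a fortiori it has no cut-free proof there. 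As $\sigma_\Gamma$ is finite, hence countable, the contrapositive of Proposition~\ref{prop:completeness-FO} applies over $\sigma_\Gamma$ and yields a $\sigma_\Gamma$-structure $\mathcal N_\Gamma$ and an assignment $\eta_\Gamma$ with $\mathcal N_\Gamma,\eta_\Gamma\nvDash\neg\gamma_i$ for all $i$ and $\mathcal N_\Gamma,\eta_\Gamma\nvDash\varphi$; by the remark after Definition~\ref{def:nega-pred-log} this says $\mathcal N_\Gamma,\eta_\Gamma\vDash\gamma_i$ for all $i$ and $\mathcal N_\Gamma,\eta_\Gamma\vDash\nega\varphi$. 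I then expand $\mathcal N_\Gamma$ to a $\sigma$-structure $\mathcal M_\Gamma$ on the same (nonempty) universe, interpreting the symbols outside $\sigma_\Gamma$ arbitrarily — say each function symbol as a constant function and each relation symbol as the empty relation. Since satisfaction of a formula depends only on the interpretation of the symbols occurring in it, we still have $\mathcal M_\Gamma,\eta_\Gamma\vDash\gamma$ for every $\gamma\in\Gamma$ and $\mathcal M_\Gamma,\eta_\Gamma\vDash\nega\varphi$.

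Next I would recombine these models exactly as in the proof of Theorem~\ref{thm:compactness-uncountable}. Writing $\Gamma\!\uparrow=\{\Gamma'\in I\,|\,\Gamma\subseteq\Gamma'\}$, the sets containing some $\Gamma\!\uparrow$ form a filter on $I$ (condition~(ii) of Definition~\ref{def:filter} holds because $\Gamma\!\uparrow\cap\Gamma'\!\uparrow=(\Gamma\cup\Gamma')\!\uparrow$), which by Proposition~\ref{prop:ultrafilters-exist} extends to an ultrafilter $\mathcal U$ on $I$. Put $\mathcal M=\prod^{\mathcal U}_{\Gamma\in I}\mathcal M_\Gamma$ and let $\eta(j)=(\eta_\Gamma(j))_{\Gamma\in I}$, which is an assignment for $\mathcal M$. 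For $\psi\in\Theta$ we have $\{\Gamma\in I\,|\,\mathcal M_\Gamma,\eta_\Gamma\vDash\psi\}\supseteq\{\psi\}\!\uparrow\in\mathcal U$, so \L{}o\'s's theorem gives $\mathcal M,\eta\vDash\psi$; hence $\mathcal M,\eta\vDash\Theta$. Likewise $\{\Gamma\in I\,|\,\mathcal M_\Gamma,\eta_\Gamma\vDash\nega\varphi\}=I\in\mathcal U$, so $\mathcal M,\eta\vDash\nega\varphi$, i.e.\ $\mathcal M,\eta\nvDash\varphi$. Thus $\Theta\nvDash\varphi$, which is the contrapositive of the claim.

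I expect the main obstacle to be bookkeeping rather than anything deep: correctly passing from unprovability over the possibly uncountable $\sigma$ to unprovability over a finite sub-signature, justifying the harmless expansion (which tacitly uses that $\vDash$ is insensitive to symbols not occurring in the formula), and applying \L{}o\'s's theorem in its stated form by threading the variable assignments through componentwise. It is precisely because \L{}o\'s's theorem is formulated for arbitrary formulas, not just sentences, that the free variables of the members of $\Theta$ need no separate treatment.
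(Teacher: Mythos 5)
Your proof is correct and follows essentially the same overall route as the paper's: contrapositive, finite approximation over finite sub-signatures via the countable case, and an ultrafilter product over the set of finite subsets of $\Theta$. The one genuine difference is how the free variables of $\Theta\cup\{\varphi\}$ are handled. The paper first replaces each free variable $x_i$ by a fresh constant $c_i$ so that all formulas become sentences, then invokes Theorem~\ref{thm:compactness-uncountable} as a black box, and finally undoes the substitution. You instead keep the countermodel assignments and thread them through the ultraproduct componentwise, $\eta(j)=(\eta_\Gamma(j))_{\Gamma\in I}$; this is legitimate precisely because \L{}o\'s's theorem is stated for formulas with parameters, so $\mathcal M,\eta\vDash\psi$ (which by the notational convention means $\mathcal M\vDash\psi(\eta(i_1),\ldots,\eta(i_n))$ for the free variables $v_{i_1},\ldots,v_{i_n}$ of $\psi$) reduces directly to the ultrafilter condition on the index set. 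Your route inlines the ultraproduct argument rather than delegating to Theorem~\ref{thm:compactness-uncountable}, which buys you the omission of the constant-replacement step at the modest cost of repeating the filter construction; the paper's route is marginally more modular since it re-uses compactness verbatim. The remaining bookkeeping you flag --- restricting to the finite $\sigma_\Gamma$, expanding back to a $\sigma$-structure, and using that satisfaction depends only on the symbols occurring in a formula --- is carried out correctly.
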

\begin{proof}
Aiming at the contrapositive, we assume~$\Theta\nvdash\varphi$. For any finite set~$\Theta_0\subseteq\Theta$, we find a finite sub-signature~$\sigma_0$ such that~$\varphi$ and the elements of~$\Theta_0$ can be considered as~$\sigma_0$-formulas. In view of~$\Theta_0\nvdash\varphi$, the completeness theorem for countable signatures yields a $\sigma_0$-structure~$\mathcal M_0$ together with a variable assignment~$\eta$ such that we have $\mathcal M_0,\eta\vDash\Theta_0\cup\{\neg\varphi\}$. Here we may replace~$\mathcal M_0$ by any $\sigma$-structure that extents it, i.\,e., that has the same universe and interprets the functions and relation from~$\sigma_0$ in the same way.

Let us consider an extension~$\sigma'$ of~$\sigma$ that contains a new constant symbol~$c_i$ for each variable~$x_i$. Given a~$\sigma$-formula~$\psi$, we write $\psi'$ for the formula that results from~$\psi$ when each free variable~$x_i$ is replaced by the corresponding constant~$c_i$. The point is that $\psi'$ will always be a sentence. When~$\Psi$ is a set of $\sigma$-formulas, we write~$\Psi'$ for the set~$\{\psi'\,|\,\psi\in\Theta\}$. Given~$\mathcal M_0,\eta\vDash\Theta\cup\{\neg\varphi\}$, we get~$\mathcal M_0'\vDash\Theta'\cup\{\neg\varphi'\}$ for the extension~$\mathcal M'_0$ of~$\mathcal M_0$ that interprets~$c_i$ as the value of~$x_i$ under $\eta$.

The previous theorem on compactness yields a $\sigma'$-structure~$\mathcal M'\vDash\Theta'\cup\{\neg\varphi'\}$. Conversely to the previous paragraph, we get $\mathcal M,\eta\vDash\Theta\cup\{\neg\varphi\}$ for some variable assignment~$\eta$, where~$\mathcal M$ is the restriction of~$\mathcal M'$ to a $\sigma$-structure. This means that we have established~$\Theta\nvDash\varphi$, as needed for our proof of the contrapositive.
\end{proof}

Let us recall that we speak of a strict model when the symbol~$=$ is interpreted as actual equality (rather than some other equivalence relation).

\begin{convention}
From this point on, we assume that any signature includes a designated symbol for equality and that any structure is strict (cf.~Definition~\ref{def:equality}).
\end{convention}

In the following we study notions of substructure.

\begin{definition}\label{def:substructure}
For a signature~$\sigma$, an embedding between two $\sigma$-structures $\mathcal M$ and~$\mathcal N$ is given by a function $h:M\to N$ between their universes such that
\begin{equation*}
\begin{array}{rcl}
h\left(f^{\mathcal M}(a_1,\ldots,a_n)\right)&\!\!\!\!=\!\!\!\!&f^{\mathcal N}(h(a_1),\ldots,h(a_n)),\\
(a_1,\ldots,a_n)\in R^{\mathcal M}&\!\!\!\!\Leftrightarrow\!\!\!\!&(h(a_1),\ldots,h(a_n))\in R^{\mathcal N}
\end{array}
\end{equation*}
holds for any $a_i\in M$, where $f$ and~$R$ range over function and relation symbols from~$\sigma$. Such an~em\-bed\-ding is called elementary if we have
\begin{equation*}
\mathcal M\vDash\varphi(a_1,\ldots,a_n)\quad\Leftrightarrow\quad\mathcal N\vDash\varphi(h(a_1),\ldots,h(a_n))
\end{equation*}
for all $a_i\in M$ and any $\sigma$-formula~$\varphi(x_1,\ldots,x_n)$. By an isomorphism of $\sigma$-structures, we mean an embedding that is surjective. We write $\mathcal M\subseteq\mathcal N$ and say that $\mathcal M$ is a substructure of~$\mathcal N$ when we have an inclusion $M\subseteq N$ between the universes and the inclusion map is an embedding. If the latter is elementary, we write $\mathcal M\preceq\mathcal N$ and speak of an elementary substructure.
\end{definition}

Let us note that any embedding is injective, since equality is among the relation symbols that it must respect. We also point out that the universe of a substructure~$\mathcal M\subseteq\mathcal N$ must be closed under the interpretation of function symbols, i.\,e., that we must have~$f^{\mathcal N}(a_1,\ldots,a_n)\in M$ for any~$a_i\in M$. It is instructive to verify the following fundamental property.

\begin{exercise}
Show that any isomorphism of $\sigma$-structures is elementary.
\end{exercise}

The next notion will be used to find an extension of a given structure, i.\,e., to realize the latter as a substructure.

\begin{definition}
Consider a signature~$\sigma$ and a $\sigma$-structure~$\mathcal M$. By~$\sigma_{\mathcal M}$ we denote the extension of~$\sigma$ by a new constant~$c_a$ for each~$a\in M$. The elementary diagram of $\mathcal M$ is defined as the set~$\operatorname{Diag}_{\operatorname{el}}(\mathcal M)$ of all $\sigma_{\mathcal M}$-sentences that hold in~$\mathcal M$, where the latter is considered as a $\sigma_{\mathcal M}$-structure with~$c_a^{\mathcal M}=a$. The atomic diagram of $\mathcal M$ is the set $\operatorname{Diag}_{\operatorname{at}}(\mathcal M)$ that consists of all literals in~$\operatorname{Diag}_{\operatorname{el}}(\mathcal M)$.
\end{definition}

In the following lemma, the equivalence remains valid if one uses the atomic diagram in~(i) and settles for a not necessarily elementary embedding in~(ii), by essentially the same proof.

\begin{lemma}
Given a signature~$\sigma$ and a $\sigma$-structure~$\mathcal M$, the following are equivalent for any $\sigma_{\mathcal M}$-structure~$\mathcal N$:
\begin{enumerate}[label=(\roman*)]
\item We have $\mathcal N\vDash\operatorname{Diag}_{\operatorname{el}}(\mathcal M)$.
\item The function that is given by $M\ni a\mapsto c_a^{\mathcal N}\in N$ is an elementary embedding of $\sigma_{\mathcal M}$-structures (where we assume~$c_a^{\mathcal M}=a$ as in the previous definition).
\end{enumerate}
\end{lemma}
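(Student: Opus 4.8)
The plan is to prove the equivalence by unwinding the definitions of elementary diagram and elementary embedding, noting that the key bookkeeping concerns the translation between $\sigma$-formulas with free variables and $\sigma_{\mathcal M}$-sentences obtained by substituting constants $c_a$.

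First I would prove that (ii) implies (i). Assume $h\colon M\to N$ with $h(a)=c_a^{\mathcal N}$ is an elementary embedding of $\sigma_{\mathcal M}$-structures. Take any $\theta\in\operatorname{Diag}_{\operatorname{el}}(\mathcal M)$, so $\theta$ is a $\sigma_{\mathcal M}$-sentence with $\mathcal M\vDash\theta$. Write $\theta$ as $\varphi(c_{a_1},\ldots,c_{a_n})$, where $\varphi(x_1,\ldots,x_n)$ is obtained from $\theta$ by replacing each occurrence of $c_{a_i}$ (for the finitely many constants $c_a$ appearing in $\theta$) by a fresh variable $x_i$; here I should remark that $\varphi$ need not be a $\sigma$-formula, since $\theta$ may contain other constants $c_a$, but we can simply keep those as additional constants and regard $\varphi$ as a $\sigma_{\mathcal M}$-formula in which $c_{a_1},\ldots,c_{a_n}$ do not occur. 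A standard induction (cf.~Exercise~\ref{ex:substitution}, applied in the structures $\mathcal M$ and $\mathcal N$ viewed as $\sigma_{\mathcal M}$-structures) shows that $\mathcal M\vDash\varphi(c_{a_1},\ldots,c_{a_n})$ is equivalent to $\mathcal M\vDash\varphi(a_1,\ldots,a_n)$ under the assignment sending $x_i$ to $a_i$, using $c_{a_i}^{\mathcal M}=a_i$; similarly $\mathcal N\vDash\varphi(c_{a_1},\ldots,c_{a_n})$ is equivalent to $\mathcal N\vDash\varphi(h(a_1),\ldots,h(a_n))$, since $c_{a_i}^{\mathcal N}=h(a_i)$. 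The elementarity of $h$ then gives $\mathcal N\vDash\varphi(h(a_1),\ldots,h(a_n))$, hence $\mathcal N\vDash\theta$. Thus $\mathcal N\vDash\operatorname{Diag}_{\operatorname{el}}(\mathcal M)$.

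Next I would prove that (i) implies (ii). Assume $\mathcal N\vDash\operatorname{Diag}_{\operatorname{el}}(\mathcal M)$. One first checks that $h\colon a\mapsto c_a^{\mathcal N}$ respects the function and relation symbols of $\sigma$ (and of $\sigma_{\mathcal M}$): for an $n$-ary function symbol $f$ and $a_i\in M$, the sentence $f c_{a_1}\ldots c_{a_n}=c_{f^{\mathcal M}(a_1,\ldots,a_n)}$ is a literal true in $\mathcal M$, hence lies in $\operatorname{Diag}_{\operatorname{el}}(\mathcal M)$ and holds in $\mathcal N$, giving $f^{\mathcal N}(h(a_1),\ldots,h(a_n))=h(f^{\mathcal M}(a_1,\ldots,a_n))$; for a relation symbol $R$, both $Rc_{a_1}\ldots c_{a_n}$ and its negation are $\sigma_{\mathcal M}$-sentences, exactly one of which lies in $\operatorname{Diag}_{\operatorname{el}}(\mathcal M)$ according as $(a_1,\ldots,a_n)\in R^{\mathcal M}$ or not, and this one holds in $\mathcal N$, yielding the required equivalence; in particular, applying this to the equality symbol shows $h$ is injective. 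For elementarity, take any $\sigma_{\mathcal M}$-formula $\varphi(x_1,\ldots,x_n)$ and $a_i\in M$. If $\mathcal M\vDash\varphi(a_1,\ldots,a_n)$, then $\mathcal M\vDash\varphi(c_{a_1},\ldots,c_{a_n})$ by the same instance of Exercise~\ref{ex:substitution} as above, so this sentence is in $\operatorname{Diag}_{\operatorname{el}}(\mathcal M)$, hence holds in $\mathcal N$, giving $\mathcal N\vDash\varphi(h(a_1),\ldots,h(a_n))$. For the converse, suppose $\mathcal M\nvDash\varphi(a_1,\ldots,a_n)$; then $\mathcal M\vDash\neg\varphi(a_1,\ldots,a_n)$, and by the case just treated $\mathcal N\vDash\neg\varphi(h(a_1),\ldots,h(a_n))$, so $\mathcal N\nvDash\varphi(h(a_1),\ldots,h(a_n))$. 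This establishes the elementary embedding property and completes the proof.

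The main obstacle, such as it is, is purely notational: keeping careful track of the distinction between a $\sigma_{\mathcal M}$-sentence $\varphi(c_{a_1},\ldots,c_{a_n})$ and the corresponding open formula $\varphi(x_1,\ldots,x_n)$ evaluated under an assignment, and invoking the substitution lemma (Exercise~\ref{ex:substitution}) in the right structure. I would streamline this by fixing the convention, once and for all at the start of the proof, that for a $\sigma_{\mathcal M}$-formula $\varphi$ and a tuple $\mathbf a$ from $M$ the expression $\varphi(\mathbf a)$ denotes evaluation under the assignment sending $x_i$ to $a_i$, and observing that by Exercise~\ref{ex:substitution} this agrees with the truth value of the $\sigma_{\mathcal M}$-sentence obtained by substituting the constants $c_{a_i}$, in any $\sigma_{\mathcal M}$-structure interpreting $c_{a_i}$ by the interpretation of the respective $x_i$ — which is exactly the situation both in $\mathcal M$ (where $c_{a_i}^{\mathcal M}=a_i$) and in $\mathcal N$ (where $c_{a_i}^{\mathcal N}=h(a_i)$). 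I would also note at the end, as the lemma statement already indicates, that restricting attention to literals and dropping the word ``elementary'' gives the analogous characterization via $\operatorname{Diag}_{\operatorname{at}}(\mathcal M)$ by literally the same argument, now using only the atomic and negated-atomic cases.
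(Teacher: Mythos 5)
Your proof is correct, and for the substantive direction (i)$\Rightarrow$(ii) it is essentially the same as the paper's argument: you verify that $h$ respects function and relation symbols (and is injective) via the literals in the diagram, then upgrade to full elementarity by noting that for any $\sigma_{\mathcal M}$-formula $\varphi(x_1,\ldots,x_n)$ exactly one of $\varphi(c_{a_1},\ldots,c_{a_n})$ and $\neg\varphi(c_{a_1},\ldots,c_{a_n})$ lies in $\operatorname{Diag}_{\operatorname{el}}(\mathcal M)$ and therefore holds in $\mathcal N$, with the substitution lemma (Exercise~\ref{ex:substitution}) used to move between the sentence in the constants $c_{a_i}$ and the evaluation under the assignment $x_i\mapsto a_i$ (resp.\ $x_i\mapsto h(a_i)$).

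Where your write-up departs from the paper is the easy direction (ii)$\Rightarrow$(i), which you handle by pulling constants out of an arbitrary $\theta\in\operatorname{Diag}_{\operatorname{el}}(\mathcal M)$ and then reassembling via the substitution lemma. This is correct but more machinery than needed: the defining condition for an elementary embedding already applies with $n=0$, so for $\sigma_{\mathcal M}$-\emph{sentences} $\varphi$ it reads directly as $\mathcal M\vDash\varphi\Leftrightarrow\mathcal N\vDash\varphi$, and the elements of $\operatorname{Diag}_{\operatorname{el}}(\mathcal M)$ are $\sigma_{\mathcal M}$-sentences that $\mathcal M$ satisfies by definition. The paper simply observes $\mathcal M\vDash\operatorname{Diag}_{\operatorname{el}}(\mathcal M)$ and invokes the definition of elementary embedding; no parameter-pulling or appeal to Exercise~\ref{ex:substitution} is needed in this direction. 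Your extra steps are harmless but redundant, and they force the slightly awkward digression about whether $\varphi$ is a $\sigma$- or a $\sigma_{\mathcal M}$-formula, which the shorter route avoids entirely.
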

\begin{proof}
To see that~(ii) implies~(i), we note that $\mathcal M\vDash\operatorname{Diag}_{\operatorname{el}}(\mathcal M)$ holds due to the~definition of the elementary diagram, so that we get $\mathcal N\vDash\operatorname{Diag}_{\operatorname{el}}(\mathcal M)$ by the definition of elementary embeddings. Let us now verify the converse implication. We assume~(i) and write $h:M\to N$ for the map that is defined in~(ii). In order to see that the latter is injective, it suffices to note that the diagram of~$\mathcal M$ contains the formula~$c_a\neq c_b$ for any pair of distinct elements~$a,b\in M$. Given an $n$-ary function symbol~$f$ from~$\sigma$ and arbitrary elements~$a_i\in M$, we put $b:=f^{\mathcal M}(a_1,\ldots,a_n)$ and note that $f(c_{a_1},\ldots,c_{a_n})=c_b$ occurs in the diagram of~$\mathcal M$. In view of~$c_a^{\mathcal N}=h(a)$, we can thus invoke~(i) to get
\begin{equation*}
f^{\mathcal N}(h(a_1),\ldots,h(a_n))=h(b)=h\left(f^{\mathcal M}(a_1,\ldots,a_n)\right),
\end{equation*}
as required by Definition~\ref{def:substructure}. For the new constants or $0$-ary function symbols $c_a$ from $\sigma_{\mathcal M}$, we have $h(c_a^{\mathcal M})=h(a)=c_a^{\mathcal N}$. Now consider an $n$-ary relation symbol~$R$. If we have $(a_1,\ldots,a_n)\notin R^{\mathcal M}$, then $\neg Rc_{a_1}\ldots c_{a_n}$ lies in the diagram of~$\mathcal M$, so that we obtain $(h(a_1),\ldots,h(a_n))\notin R^{\mathcal N}$ due to~(i). Similarly, we see that $(a_1,\ldots,a_n)\in R^{\mathcal M}$ implies $(h(a_1),\ldots,h(a_n))\in R^{\mathcal N}$. Hence $h$ is an embedding. In order to show that it is elementary, we first assume~$\mathcal M\nvDash\varphi(a_1,\ldots,a_n)$. The latter means that $\mathcal M,\eta\vDash\varphi$ holds for the variable assignment~$\eta$ that sends~$x_i$ to~$a_i$. When we consider $\mathcal M$ as a $\sigma_{\mathcal M}$-structure with~$c_a^{\mathcal M}=a$, we obtain $\mathcal M\nvDash\varphi(c_{a_1},\ldots,c_{a_n})$ due to Exercise~\ref{ex:substitution}. So the diagram of~$\mathcal M$ contains $\neg\varphi(c_{a_1},\ldots,c_{a_n})$. We now get $\mathcal N\nvDash\varphi(h(a_1),\ldots,h(a_n))$ thanks to~(i). Similarly, $\mathcal M\vDash\varphi(a_1,\ldots,a_n)$ entails $\mathcal N\vDash\varphi(h(a_1),\ldots,h(a_n))$.
\end{proof}

While the downward L\"owenheim-Skolem theorem from Section~\ref{sect:cut-free-completeness} (see Theorem~\ref{thm:loewenheim-skolem}) guarantees the existence of models that are relatively small, the following result provides large models. The reader with some exposure to set theory will note that the set~$\kappa$ in the theorem plays the role of a cardinal number. We have chosen a formulation that does not require familiarity with cardinals.

\begin{theorem}[Upward L\"owenheim-Skolem Theorem]\label{thm:up-LS}
For any set~$\kappa$ and any infinite model~$\mathcal M$, there is a model~$\mathcal N$ such that we have $\mathcal M\preceq\mathcal N$ and there is an injection~$\iota:\kappa\to N$.
\end{theorem}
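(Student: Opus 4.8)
The plan is to use the compactness theorem for uncountable signatures together with the elementary diagram of $\mathcal M$. First I would fix the set $\kappa$ and pass to the signature $\sigma_{\mathcal M}$ obtained by adjoining a constant $c_a$ for each $a\in M$. On top of this, I would further expand the signature by adding a new family of constants $d_\lambda$ for $\lambda\in\kappa$, obtaining a signature $\sigma^+$. Then I would consider the theory
\begin{equation*}
\Theta=\operatorname{Diag}_{\operatorname{el}}(\mathcal M)\cup\{d_\lambda\neq d_\mu\mid\lambda,\mu\in\kappa,\ \lambda\neq\mu\}.
\end{equation*}

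The key step is to check that every finite subset $\Theta_0\subseteq\Theta$ has a model. Such a $\Theta_0$ mentions only finitely many of the new constants $d_{\lambda_1},\ldots,d_{\lambda_k}$, and its non-diagram part is a finite set of inequalities among them. Since $\mathcal M$ is infinite, I can interpret $\mathcal M$ as a $\sigma^+$-structure by setting $c_a^{\mathcal M}=a$ as usual and choosing for $d_{\lambda_1},\ldots,d_{\lambda_k}$ any $k$ pairwise distinct elements of $M$ (possible precisely because $M$ is infinite), with the remaining $d_\lambda$ interpreted arbitrarily. This $\sigma^+$-structure satisfies $\operatorname{Diag}_{\operatorname{el}}(\mathcal M)$ by definition of the elementary diagram and satisfies the finitely many inequalities in $\Theta_0$ by choice of the $d_{\lambda_i}$, so $\Theta_0$ is satisfiable. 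By Theorem~\ref{thm:compactness-uncountable}, the whole theory $\Theta$ has a model $\mathcal N^+$.

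Finally I would extract the desired conclusion. Restricting $\mathcal N^+$ to the signature $\sigma_{\mathcal M}$ gives a $\sigma_{\mathcal M}$-structure $\mathcal N$ that satisfies $\operatorname{Diag}_{\operatorname{el}}(\mathcal M)$, so by the previous lemma the map $a\mapsto c_a^{\mathcal N}$ is an elementary embedding of $\mathcal M$ into $\mathcal N$; after identifying $\mathcal M$ with its image (replacing the relevant elements of $N$ by those of $M$, or simply noting that an elementary embedding suffices for the statement up to isomorphism) we obtain $\mathcal M\preceq\mathcal N$. The map $\iota:\kappa\to N$ given by $\iota(\lambda)=d_\lambda^{\mathcal N^+}$ is injective, since $\mathcal N^+\vDash d_\lambda\neq d_\mu$ for $\lambda\neq\mu$ and the interpretation of equality is actual equality by our standing convention on strict structures. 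This completes the argument. The only mild subtlety — and the step I would be most careful about — is the identification that turns the elementary embedding into an actual inclusion $\mathcal M\preceq\mathcal N$; this is the standard move of replacing $c_a^{\mathcal N}$ by $a$ throughout the universe of $\mathcal N$, which is harmless because $a\mapsto c_a^{\mathcal N}$ is injective, but it should be stated explicitly to match the literal wording of Definition~\ref{def:substructure}.
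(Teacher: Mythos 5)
Your proof is correct and follows essentially the same route as the paper: adjoin new constants $d_\lambda$ for $\lambda\in\kappa$, apply compactness for uncountable signatures to $\operatorname{Diag}_{\operatorname{el}}(\mathcal M)$ together with the inequalities $d_\lambda\neq d_\mu$, use the elementary-diagram lemma to get an elementary embedding, and replace its image by $\mathcal M$ to turn it into an inclusion. Your explicit verification of finite satisfiability and the remark on the identification step are good practice but match the paper's argument; the only detail you elide is that compactness as proved via ultraproducts yields a model that is not automatically strict, so one should cite Theorem~\ref{thm:strict-model} to pass to a quotient before reading off the injectivity of $\iota$.
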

\begin{proof}
Extend the given signature by a constant~$d_\alpha$ for each~$\alpha\in\kappa$. The theory
\begin{equation*}
\operatorname{Diag}_{\operatorname{el}}(\mathcal M)\cup\{d_\alpha\neq d_\beta\,|\,\alpha,\beta\in\kappa\text{ with }\alpha\neq\beta\}
\end{equation*}
has a model~$\mathcal N$ by compactness (see Theorem~\ref{thm:compactness-uncountable} and also Theorem~\ref{thm:strict-model}), since each finite subtheory is satisfied by~$\mathcal M$ when the finitely many relevant constants are interpreted by different elements. In view of~$\mathcal N\vDash d_\alpha\neq d_\beta$, the desired injection can be given by $\iota(\alpha)=d_\alpha^{\mathcal N}$. The previous lemma ensures that $M\ni a\mapsto c_a^{\mathcal N}\in N$ defines an elementary embedding. To turn the latter into an inclusion map, we replace the substructure~$\{c_a^{\mathcal N}\,|\,a\in M\}\subseteq\mathcal N$ by the isomorphic structure~$\mathcal M$.
\end{proof}

The previous theorem implies, in particular, that any theory with an infinite model has models of arbitrarily large cardinality. Concerning the following exercise, we note that only the easier direction (i)$\Rightarrow$(ii) will be needed below. 

\begin{exercise}[\L{}o\'s-Tarski Preservation Theorem]\label{ex:Los-Tarski}
By a $\forall$-formula, we mean a formula of the form~$\forall x_1\ldots\forall x_n\,\theta$ with quantifier-free~$\theta$. Show that the following are equivalent for any theory~$\mathsf T$: 
\begin{enumerate}[label=(\roman*)]
\item There is a theory~$\mathsf T'$ that consists of $\forall$-formulas and is equivalent to~$\mathsf T$, in the sense that $\mathcal M\vDash\mathsf T$ and $\mathcal M\vDash\mathsf T'$ are equivalent for every structure~$\mathcal M$.
\item The theory~$\mathsf T$ is downward absolute, which means that we get $\mathcal M\vDash\mathsf T$ when\-ever we have $\mathcal N\vDash\mathsf T$ and $\mathcal M$ is a substructure of~$\mathcal N$.
\end{enumerate}
\emph{Hint:} For the crucial direction from~(ii) to~(i), let $\mathsf T'$ be the set of all $\forall$-sentences~$\varphi$ such that we have $\mathsf T\vDash\varphi$. In the proof that $\mathcal M\vDash\mathsf T'$ implies $\mathcal M\vDash\mathsf T$, use the previous lemma to reduce to the claim that $\mathsf T\cup\operatorname{Diag}_{\operatorname{at}}(\mathcal M)$ is consistent. To prove the latter, note that $\mathsf T\vDash\neg\theta(c)$ implies $\mathsf T\vDash\forall x\,\neg\theta(x)$ when the constant~$c$ does not occur in~$\mathsf T$.
\end{exercise}

The following is a simple but useful criterion for elementarity.

\begin{lemma}[Tarski-Vaught Test]\label{lem:Tarski-Vaught-test}
Consider structures~$\mathcal M\subseteq\mathcal N$ and assume that $\mathcal N\vDash\exists x\,\varphi(x,b_1,\ldots,b_n)$ implies~$\mathcal N\vDash\varphi(a,b_1,\ldots,b_n)$ for some~$a\in M$, for any formula~$\varphi$ and arbitrary parameters~$b_i\in M$. Then we have $\mathcal M\preceq\mathcal N$.
\end{lemma}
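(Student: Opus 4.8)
The statement to prove is the Tarski--Vaught test: if $\mathcal M\subseteq\mathcal N$ and for every formula $\varphi(x,x_1,\ldots,x_n)$ and all parameters $b_1,\ldots,b_n\in M$ the truth of $\mathcal N\vDash\exists x\,\varphi(x,b_1,\ldots,b_n)$ forces $\mathcal N\vDash\varphi(a,b_1,\ldots,b_n)$ for some witness $a\in M$, then the inclusion $\mathcal M\subseteq\mathcal N$ is elementary, i.e.\ $\mathcal M\preceq\mathcal N$. The plan is to show directly, by induction over the build-up of formulas (in negation normal form, as in Definition~\ref{def_fv-formula}), that for every formula $\varphi(x_1,\ldots,x_n)$ and all $a_1,\ldots,a_n\in M$ we have $\mathcal M\vDash\varphi(a_1,\ldots,a_n)$ if and only if $\mathcal N\vDash\varphi(a_1,\ldots,a_n)$; taking $\varphi$ to be an arbitrary sentence then yields $\mathcal M\preceq\mathcal N$.

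First I would handle the base case. For literals $Rt_1\ldots t_k$ and $\neg Rt_1\ldots t_k$ (including the case of equality), the equivalence is immediate from the hypothesis that $\mathcal M\subseteq\mathcal N$ is an embedding in the sense of Definition~\ref{def:substructure}: one checks by a small induction on terms that $t^{\mathcal M,\eta}=t^{\mathcal N,\eta}$ whenever $\eta$ takes values in $M$ (using that $M$ is closed under the function symbols and the inclusion commutes with them), and then the defining clause for embeddings transfers membership in $R^{\mathcal M}$ to membership in $R^{\mathcal N}$ and back. Note that at this stage we do \emph{not} yet need the Tarski--Vaught hypothesis at all.

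Next come the induction steps. The propositional cases $\varphi_0\land\varphi_1$ and $\varphi_0\lor\varphi_1$ are routine: satisfaction distributes over $\land$ and $\lor$ in both structures, so the equivalence for the components gives the equivalence for the compound formula. The universal quantifier case reduces to the existential one: if $\varphi$ has the form $\forall y\,\psi$, then $\neg\varphi$ is $\exists y\,\neg\psi$ (negation normal form, Definition~\ref{def:nega-pred-log}), the induction hypothesis is available for $\neg\psi$ as well as $\psi$, so it suffices to treat $\exists$ and then negate. The heart of the argument is thus the case $\varphi(x_1,\ldots,x_n)=\exists y\,\psi(y,x_1,\ldots,x_n)$ with parameters $a_1,\ldots,a_n\in M$. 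For the forward direction, $\mathcal M\vDash\exists y\,\psi$ gives some $a\in M$ with $\mathcal M\vDash\psi(a,a_1,\ldots,a_n)$; by the induction hypothesis $\mathcal N\vDash\psi(a,a_1,\ldots,a_n)$, hence $\mathcal N\vDash\exists y\,\psi$. For the converse --- and this is the step where the hypothesis of the lemma is essential --- suppose $\mathcal N\vDash\exists y\,\psi(y,a_1,\ldots,a_n)$. The Tarski--Vaught condition, applied to $\psi$ and the parameters $a_1,\ldots,a_n\in M$, furnishes a witness $a\in M$ with $\mathcal N\vDash\psi(a,a_1,\ldots,a_n)$; the induction hypothesis then gives $\mathcal M\vDash\psi(a,a_1,\ldots,a_n)$ and therefore $\mathcal M\vDash\exists y\,\psi(y,a_1,\ldots,a_n)$. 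This closes the induction, and the main obstacle --- really the only non-bookkeeping point --- is precisely this use of the in-$M$ witness to descend an existential statement from $\mathcal N$ back to $\mathcal M$; everything else is the standard satisfaction recursion. I would also remark that $\mathcal M$ being a substructure already guarantees it is nonempty and that all terms evaluated at parameters from $M$ land back in $M$, so there are no degenerate cases to worry about.
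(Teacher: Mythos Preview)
Your proposal is correct and follows essentially the same approach as the paper: an induction on formulas showing that $\mathcal M\vDash\varphi(\mathbf a)\Leftrightarrow\mathcal N\vDash\varphi(\mathbf a)$ for parameters $\mathbf a$ from~$M$, with the Tarski--Vaught hypothesis invoked exactly at the quantifier step. The only cosmetic difference is that the paper spells out the $\forall$-case directly (using the hypothesis via $\mathcal N\vDash\exists x\,\neg\varphi$) and says the $\exists$-case is similar, whereas you do the reverse; both routes are the same argument up to negation.
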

\begin{proof}
We use induction over~$\psi$ to show that $\mathcal M\vDash\psi(\mathbf b)$ is equivalent to~$\mathcal N\vDash\psi(\mathbf b)$ for any tuple of parameters~$\mathbf b$ from~$M$. When~$\psi$ is a literal, this holds since~$\mathcal M$ is a substructure of~$\mathcal N$. The quantifier step for conjunctions and disjunctions is~straight\-forward. Now assume $\psi(\mathbf b)$ has the form~$\forall x\,\varphi(x,\mathbf a)$. If we have $\mathcal N\nvDash\psi(\mathbf b)$ and hence $\mathcal N\vDash\exists x\,\neg\varphi(x,\mathbf b)$, the assumption of the lemma yields~$\mathcal N\vDash\neg\varphi(a,\mathbf b)$ for some~$a\in M$. By induction hypothesis, we now get $\mathcal M\nvDash\varphi(a,\mathbf b)$ and hence~$\mathcal M\nvDash\psi(\mathbf b)$. The implication from $\mathcal N\vDash\forall x\,\varphi(x,\mathbf a)$ to $\mathcal M\vDash\forall x\,\varphi(x,\mathbf a)$ is a direct consequence of the induction hypothesis, as any $a\in M$ lies in~$N$ (cf.~the previous exercise). A~similar argument covers the case of an existential quantifier. In effect we have shown that $\mathcal M$ satisfies the elementary diagram of~$\mathcal N$, but the reference to the diagram does not simplify the present proof.
\end{proof}

Let us recall that two sets are equinumerous when there is a bijection between them. The following concerns a fundamental property of this notion.

\begin{exercise}[Schr\"oder-Bernstein Theorem]\label{ex:schroeder-bernstein}
Consider sets $A$ and $B$ for which there are injections $f:A\to B$ and $g:B\to A$. Prove that $A$ and~$B$ are equinumerous. \emph{Hint:} Draw~$A$ as a circle and~$B$ as a square. Successive appli\-cations of~$f$ and~$g$ yield fractal nestings of circles and squares in~$A$ and~$B$. The drawings suggest how to define the desired isomorphism.
\end{exercise}

In the next exercise, we prove some basic facts about cardinality that will be needed later. The hints suggest proofs that only requires minimal prerequisites from set theory.

\begin{exercise}\label{ex:card-seqs}
(a) Prove that any infinite set~$\kappa$ is equinumerous to~$\mathbb N\times\kappa$. \emph{Hint:}~Apply Zorn's lemma to the collection of all injections $\mathbb N\times X\to X$ with $X\subseteq\kappa$, ordered by extension. Show that if $\mathbb N\times X\to X$ is maximal in this order, then $\kappa\backslash X$ is finite, so that we get an injection of~$\mathbb N\times\kappa$ into~$\mathbb N\times X$.

(b) Derive that any infinite~$\kappa$ is equinumerous to~$\kappa\times\kappa$. \emph{Hint:} Apply Zorn's lemma to the injections $X\times X\to X$ with $X\subseteq\kappa$. Assume that such an injection is maximal. If $X$ and $\kappa$ are equinumerous, we are done. Otherwise, there is an injection~$\iota:X\to\kappa\backslash X$ (if not, use Zorn's lemma to get a maximal injection~\mbox{$Y\to\kappa\backslash X$} with $Y\subseteq X$, note that the latter must be bijective with inverse~$j:\kappa\backslash X\to X$, and construct injections $\kappa\to\mathbb N\times X\to X$). Let $X'$ be the image of~$\iota$ and construct an injection~$(X\cup X')\times(X\cup X')\to(X\cup X')$ that extends our maximal one.

(c) Let $\kappa$ be the set of function symbols (including constants) of some signature~$\sigma$. Show that the set of $\sigma$-terms is equinumerous to~$\kappa$ when the latter is infinite and that it is countable otherwise.
\end{exercise}

The following strengthens Theorem~\ref{thm:loewenheim-skolem} in two respects. It removes the countability condition and yields a model that is elementary in a given one.

\begin{theorem}[Uncountable Downward L\"owenheim-Skolem Theorem]\label{thm:downward-LS-uncount}
Consider a $\sigma$-model~$\mathcal N$ and an infinite~$X\subseteq N$ that is at least as numerous as~$\sigma$, i.\,e., such that there is an injection from the set of $\sigma$-formulas into~$X$. Then there is an elementary submodel~$\mathcal M\preceq\mathcal N$ with $X\subseteq M$ such that $X$ and~$M$ are equinumerous.
\end{theorem}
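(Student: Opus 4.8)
The plan is to carry out the classical Skolem-hull construction relative to the given set $X$, using the Tarski-Vaught test (Lemma~\ref{lem:Tarski-Vaught-test}) to guarantee elementarity and the cardinality facts from Exercise~\ref{ex:card-seqs} to control the size of the resulting submodel. First I would fix, for each $\sigma$-formula $\varphi(x,y_1,\ldots,y_n)$ and each tuple $\mathbf b=(b_1,\ldots,b_n)$ of parameters, a choice of witness: if $\mathcal N\vDash\exists x\,\varphi(x,\mathbf b)$, pick some $a\in N$ with $\mathcal N\vDash\varphi(a,\mathbf b)$; otherwise pick anything, say a fixed element of $X$. (This uses the axiom of choice, which is available throughout, cf.\ the use of Zorn's lemma in Proposition~\ref{prop:ultrafilters-exist}.) Write $F_\varphi(\mathbf b)$ for this chosen element. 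Then define an increasing sequence of subsets $X=X_0\subseteq X_1\subseteq X_2\subseteq\cdots$ of $N$ by letting $X_{k+1}$ be the union of $X_k$ with the set of all values $F_\varphi(\mathbf b)$, where $\varphi$ ranges over $\sigma$-formulas and $\mathbf b$ over tuples from $X_k$. Finally set $M=\bigcup_{k\in\mathbb N}X_k$.

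The second step is to check that $M$ is (the universe of) a substructure. Since function symbols are a special case of the witnessing construction --- given an $n$-ary function symbol $f$ and parameters $\mathbf b\in X_k$, the element $f^{\mathcal N}(\mathbf b)$ is the unique witness of $\exists x\,(x=f(b_1,\ldots,b_n))$, hence lies in $X_{k+1}\subseteq M$ --- the set $M$ is closed under all interpretations $f^{\mathcal N}$, so it carries a substructure $\mathcal M\subseteq\mathcal N$ (with relations and functions inherited by restriction; note that the designated equality symbol is interpreted as genuine equality in $\mathcal M$ because it is so in $\mathcal N$, so $\mathcal M$ is strict as required by the standing convention). The third step is elementarity: I would verify the hypothesis of the Tarski-Vaught test. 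Suppose $\mathcal N\vDash\exists x\,\varphi(x,\mathbf b)$ with all $b_i\in M$. Since there are only finitely many $b_i$ and the $X_k$ are increasing, there is a single $k$ with $\mathbf b\in X_k$; then $F_\varphi(\mathbf b)\in X_{k+1}\subseteq M$ witnesses $\varphi$, so $\mathcal N\vDash\varphi(F_\varphi(\mathbf b),\mathbf b)$ with $F_\varphi(\mathbf b)\in M$. By Lemma~\ref{lem:Tarski-Vaught-test} this gives $\mathcal M\preceq\mathcal N$.

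The remaining step, and the only one requiring a little care, is the cardinality bound $X\subseteq M$ (which is immediate, as $X=X_0$) together with the claim that $M$ and $X$ are equinumerous. Let $\kappa$ be the set of $\sigma$-formulas; by hypothesis there is an injection $\kappa\to X$, and since $X$ is infinite $\kappa$ is infinite as well, so by Exercise~\ref{ex:card-seqs} the sets $\kappa$, $\kappa\times\kappa$, and (by iterating and using part~(a)) $\bigcup_{n}\kappa^n$ are all equinumerous, and in particular equinumerous to a subset of $X$; conversely $\mathbb N\times\kappa$ is equinumerous to $\kappa$ by part~(a). Now $X_{k+1}$ is the union of $X_k$ with a set that is the surjective image of the set of pairs $(\varphi,\mathbf b)$ with $\varphi$ a formula and $\mathbf b$ a finite tuple from $X_k$; when $X_k$ is infinite and no larger than $X$, that index set injects into $\kappa\times\bigl(\bigcup_n X_k^n\bigr)$, which is no larger than $X$ by the cardinality arithmetic just recalled (using the Schr\"oder-Bernstein theorem, Exercise~\ref{ex:schroeder-bernstein}, to pass between "injects into" and "equinumerous"). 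Hence by induction each $X_k$ is equinumerous to $X$, and $M=\bigcup_k X_k$ is a countable union of such sets, so $M$ injects into $\mathbb N\times X$, which is equinumerous to $X$ by Exercise~\ref{ex:card-seqs}(a); since $X\subseteq M$ gives an injection the other way, Schr\"oder-Bernstein yields that $M$ and $X$ are equinumerous. The main obstacle is purely bookkeeping: keeping the cardinality estimates at each stage uniformly bounded by $|X|$ and invoking exactly the facts proved in Exercises~\ref{ex:schroeder-bernstein} and~\ref{ex:card-seqs} rather than appealing to general cardinal arithmetic; everything else is a direct application of the Tarski-Vaught test.
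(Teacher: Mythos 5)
Your proof is correct and follows essentially the same route as the paper: iterate the Skolem-hull construction starting from $X$, apply the Tarski-Vaught test (Lemma~\ref{lem:Tarski-Vaught-test}) for elementarity, and control the cardinality at each stage via Exercise~\ref{ex:card-seqs} together with Schr\"oder-Bernstein. The only cosmetic departure is that you derive closure under function symbols from the (necessarily unique) witnesses to $\exists x\,(x = f y_1\ldots y_n)$, whereas the paper includes the values $f^{\mathcal N}(b_1,\ldots,b_n)$ explicitly in the definition of the hull at each stage; the two are plainly equivalent.
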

\begin{proof}
For each~$\sigma$-formula~$\varphi(x,y_1,\ldots,y_n)$ we pick a function~$F_\varphi:N^n\to N$ such that $\mathcal N\vDash\exists x\,\varphi(x,b_1,\ldots,b_n)$ implies
\begin{equation*}
\mathcal N\vDash\varphi\big(F_\varphi(b_1,\ldots,b_n),b_1,\ldots,b_n\big).
\end{equation*}
We now put~$M_0:=X$ and recursively declare that~$M_{k+1}$ consists of the elements of the set~$M_k$ and all values~$f^{\mathcal N}(b_1,\ldots,b_n)$ and $F_\varphi(b_1,\ldots,b_n)$ for~$b_1,\ldots,b_n\in M_k$, where $\varphi$ and~$f$ range over $\sigma$-formulas and function symbols from~$\sigma$. Given that the union $M:=\bigcup_{k\in\mathbb N}M_k$ is closed under each function~$f^{\mathcal N}$, we obtain a unique substructure~$\mathcal M\subseteq\mathcal N$ with universe~$M$. Since~$M$ is closed under the functions~$F_\varphi$, the Tarski-Vaught test (Lemma~\ref{lem:Tarski-Vaught-test}) yields~$\mathcal M\preceq\mathcal N$. Each set~$M_{k+1}$ is equinumerous to~$M_k$ and hence to~$X$. This follows from the previous exercise, since there is an injection from~$M_{k+1}$ into a set of terms with function symbols~$f$ and~$F_\varphi$ as well as constants for elements of~$M_k$. We thus obtain an injection of~$M$ into~$\mathbb N\times X$ and hence into~$X$. To conclude, we invoke the Schr\"oder-Bernstein theorem.
\end{proof}

In the following, one should again think of~$\kappa$ as a cardinal number.

\begin{definition}
Assume that we have an isomorphism between two models $\mathcal M$ and~$\mathcal N$ of a theory~$\mathsf T$ whenever the universes~$M$ and~$N$ are equinumerous to a set~$\kappa$. Then we say that $\mathsf T$ is $\kappa$-categorical.
\end{definition}

In the next section, we will employ the following result to show that the theory of algebraically closed fields of fixed characteristic is complete.

\begin{theorem}[\L{}o\'s-Vaught Test~\cite{los-categorical,vaught-categorical}]\label{thm:los-vaught-test}
Consider a theory~$\mathsf T$ over some signature~$\sigma$. Assume that $\mathsf T$ has no finite models and that it is $\kappa$-categorical for some~$\kappa$ that is at least as numerous as the set of~$\sigma$-formulas. Then $\mathsf T$ is complete, i.\,e., we have $\mathsf T\vDash\varphi$ or $\mathsf T\vDash\neg\varphi$ for any $\sigma$-sentence~$\varphi$.
\end{theorem}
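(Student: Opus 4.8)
The plan is to argue by contradiction via the standard \L{}o\'s-Vaught argument. Suppose $\mathsf T$ is not complete, so there is a $\sigma$-sentence~$\varphi$ with $\mathsf T\nvDash\varphi$ and $\mathsf T\nvDash\neg\varphi$. By the completeness theorem (in the uncountable form proved above) together with Exercise~\ref{ex:completeness}, both $\mathsf T\cup\{\neg\varphi\}$ and $\mathsf T\cup\{\varphi\}$ are consistent and hence each has a model; call them $\mathcal M_0\vDash\mathsf T\cup\{\neg\varphi\}$ and $\mathcal M_1\vDash\mathsf T\cup\{\varphi\}$. Since $\mathsf T$ has no finite models, both $\mathcal M_0$ and~$\mathcal M_1$ are infinite.

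Next I would adjust the cardinalities of these two models so that both have a universe equinumerous to~$\kappa$. To make a model large enough, apply the upward L\"owenheim-Skolem theorem (Theorem~\ref{thm:up-LS}) to obtain, for each~$i<2$, an elementary extension $\mathcal M_i\preceq\mathcal M_i^+$ whose universe admits an injection from~$\kappa$; since elementary extensions preserve all sentences, we still have $\mathcal M_0^+\vDash\mathsf T\cup\{\neg\varphi\}$ and $\mathcal M_1^+\vDash\mathsf T\cup\{\varphi\}$. Now the universe of~$\mathcal M_i^+$ is at least as numerous as~$\kappa$, and $\kappa$ is by hypothesis at least as numerous as the set of $\sigma$-formulas, so the uncountable downward L\"owenheim-Skolem theorem (Theorem~\ref{thm:downward-LS-uncount}) applied with a subset~$X$ of size~$\kappa$ yields an elementary submodel $\mathcal N_i\preceq\mathcal M_i^+$ whose universe is equinumerous to~$\kappa$. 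Again by elementarity, $\mathcal N_0\vDash\mathsf T\cup\{\neg\varphi\}$ and $\mathcal N_1\vDash\mathsf T\cup\{\varphi\}$, and both $\mathcal N_0$ and~$\mathcal N_1$ are models of~$\mathsf T$ with universes equinumerous to~$\kappa$.

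Finally, $\kappa$-categoricity of~$\mathsf T$ provides an isomorphism $h:\mathcal N_0\to\mathcal N_1$. By the exercise recording that isomorphisms are elementary, $\mathcal N_0\vDash\varphi$ if and only if $\mathcal N_1\vDash\varphi$. But $\mathcal N_0\vDash\neg\varphi$ and $\mathcal N_1\vDash\varphi$, a contradiction. Hence no such~$\varphi$ exists and $\mathsf T$ is complete.

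I do not expect any serious obstacle: the argument is a routine assembly of results already established in the excerpt. The one point requiring a little care is the cardinality bookkeeping — ensuring that after passing up with Theorem~\ref{thm:up-LS} and back down with Theorem~\ref{thm:downward-LS-uncount} one genuinely lands on a universe \emph{equinumerous} (not merely injectible into~$\kappa$) to~$\kappa$, which is why the downward step is invoked with a witnessing subset~$X$ of size exactly~$\kappa$ and why its conclusion is phrased in terms of equinumerosity. The hypothesis that $\kappa$ is at least as numerous as the set of $\sigma$-formulas is exactly what licenses that downward step.
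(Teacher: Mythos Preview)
Your proof is correct and follows essentially the same route as the paper's own argument: contradiction, produce two models of $\mathsf T$ satisfying $\varphi$ and $\neg\varphi$ respectively, go up via Theorem~\ref{thm:up-LS} and then down via Theorem~\ref{thm:downward-LS-uncount} to land at size~$\kappa$, and conclude by $\kappa$-categoricity. Your version is slightly more explicit about invoking completeness to obtain the initial models and about citing the isomorphism-is-elementary exercise, but the structure is identical.
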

\begin{proof}
Aiming at a contradiction, we assume that we have two models~$\mathcal M\vDash\mathsf T\cup\{\varphi\}$ and $\mathcal N\vDash\mathsf T\cup\{\neg\varphi\}$. Note that these models must be infinite by assumption. By the upward L\"owenheim-Skolem theorem (see Theorem~\ref{thm:up-LS}), we find elementary extensions~$\mathcal M\preceq\mathcal M'$ and~$\mathcal N\preceq\mathcal N'$ that admit injections~$\kappa\to M'$ and $\kappa\to N'$. Now the downward L\"owenheim-Skolem theorem (see Theorem~\ref{thm:downward-LS-uncount}) yields elementary substructures $\mathcal M''\preceq\mathcal M'$ and $\mathcal N''\preceq\mathcal N'$ such that $M''$ and~$N''$ are equinumerous to~$\kappa$. Given that~$\mathsf T$ is $\kappa$-categorical, we can infer that~$\mathcal M''$ and~$\mathcal N''$ are isomorphic. This, however, is impossible in view of~$\mathcal M''\vDash\varphi$ and~$\mathcal N''\vDash\neg\varphi$.
\end{proof}

To conclude this section, we discuss chains of models. Note that $f^{\mathcal M}$ is well-defined in the following construction as $\mathcal M_i\subseteq\mathcal M_j$ entails that all $a_1,\ldots,a_n\in M_i$ validate $f^{\mathcal M_i}(a_1,\ldots,a_n)=f^{\mathcal M_j}(a_1,\ldots,a_n)$. A~similar observation applies in the case of relation symbols.

\begin{definition}\label{def:chain}
A sequence~$\mathcal M_0,\mathcal M_1,\ldots$ of $\sigma$-structures for a common signature~$\sigma$ is called a chain if $\mathcal M_i\subseteq\mathcal M_{i+1}$ holds for all~$i\in\mathbb N$. The union $\mathcal M:=\bigcup_{i\in I}\mathcal M_i$ over such a chain is defined as the $\sigma$-structure with universe~$\bigcup_{i\in\mathbb N}M_i$ such that
\begin{equation*}
\begin{array}{rcl}
f^{\mathcal M}(a_1,\ldots,a_n)&\!\!\!\!=\!\!\!\!&f^{\mathcal M_i}(a_1,\ldots,a_n),\\
(a_1,\ldots,a_n)\in R^{\mathcal M}&\!\!\!\!\Leftrightarrow\!\!\!\!&(a_1,\ldots,a_n)\in R^{\mathcal M_i}
\end{array}
\end{equation*}
holds whenever we have $a_1,\ldots,a_n\in M_i$, where $f$ and~$R$ range over the $n$-ary function and relation symbols from~$\sigma$. A chain $\mathcal M_0,\mathcal M_1,\ldots$ is called elementary if we have $\mathcal M_i\preceq\mathcal M_{i+1}$ for every~$i\in\mathbb N$.
\end{definition}

By a $\forall\exists$-formula, we mean a formula that has the form~$\forall x_1\ldots\forall x_m\exists y_1\ldots\exists y_n\,\theta$ for quantifier-free~$\theta$. The following yields fundamental properties of chains.

\begin{proposition}\label{prop:chains}
For any chain $\mathcal M_0\subseteq\mathcal M_1\subseteq\ldots$, we have the following:
\begin{enumerate}[label=(\alph*)]
\item Given a $\forall\exists$-formula $\varphi$ with $\mathcal M_i\vDash\varphi$ for all~$i\in\mathbb N$, we get $\bigcup_{i\in I}\mathcal M_i\vDash\varphi$.
\item If the given chain is elementary, then we have $\mathcal M_j\preceq\bigcup_{i\in I}\mathcal M_i$ for all~$j\in I$.
\end{enumerate}
\end{proposition}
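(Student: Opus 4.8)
The plan is to prove both parts by induction over formulas, after isolating one elementary bookkeeping observation about how finitely many parameters sit inside the chain. Write $\mathcal M=\bigcup_{i\in\mathbb N}\mathcal M_i$ with universe $M=\bigcup_{i\in\mathbb N}M_i$. Since the chain is increasing and any finite set of indices has a maximum, any finite tuple $a_1,\dots,a_n\in M$ already lies in a common $M_i$; I will use this repeatedly without further comment. I also record the routine sublemma that for a substructure $\mathcal A\subseteq\mathcal B$, a quantifier-free formula $\theta$ and parameters $\vec a$ from $A$, we have $\mathcal A\vDash\theta(\vec a)$ iff $\mathcal B\vDash\theta(\vec a)$: this is just the literal case (using that embeddings respect function symbols, hence agree on term values, and respect relation symbols and equality in both directions) together with the trivial $\land$- and $\lor$-steps, exactly as in the proof of the Tarski-Vaught test (Lemma~\ref{lem:Tarski-Vaught-test}); in particular it applies to each inclusion $\mathcal M_i\subseteq\mathcal M$.

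For part~(a), let $\varphi=\forall x_1\dots\forall x_m\exists y_1\dots\exists y_n\,\theta$ with $\theta$ quantifier-free, and assume $\mathcal M_i\vDash\varphi$ for all $i$. To check $\mathcal M\vDash\varphi$, I fix parameters $\vec a=a_1,\dots,a_m\in M$ and choose $i$ with $\vec a\in M_i$. From $\mathcal M_i\vDash\varphi$ I get witnesses $\vec b=b_1,\dots,b_n\in M_i$ with $\mathcal M_i\vDash\theta(\vec a,\vec b)$; the sublemma applied to $\mathcal M_i\subseteq\mathcal M$ yields $\mathcal M\vDash\theta(\vec a,\vec b)$, hence $\mathcal M\vDash\exists\vec y\,\theta(\vec a,\vec y)$. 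As $\vec a$ was arbitrary, $\mathcal M\vDash\varphi$.

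For part~(b), assume the chain is elementary. First note that $\preceq$ is transitive on substructures, so $\mathcal M_i\preceq\mathcal M_k$ whenever $i\le k$. I will prove, by induction over the $\mathsf{NNF}$-formula $\psi$, the statement that for every $i$ and every tuple $\vec a$ from $M_i$ we have $\mathcal M_i\vDash\psi(\vec a)$ iff $\mathcal M\vDash\psi(\vec a)$; since $\mathcal M_j\subseteq\mathcal M$ is clear from Definition~\ref{def:chain}, taking $i=j$ then gives $\mathcal M_j\preceq\mathcal M$. The literal case is the substructure sublemma above, and the $\land$- and $\lor$-steps are immediate from the induction hypothesis. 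For $\psi(\vec a)=\exists x\,\varphi(x,\vec a)$: if $\mathcal M_i\vDash\psi(\vec a)$, a witness $b\in M_i$ and the induction hypothesis for $\varphi$ give $\mathcal M\vDash\varphi(b,\vec a)$, so $\mathcal M\vDash\psi(\vec a)$; conversely, if $\mathcal M\vDash\psi(\vec a)$, I pick a witness $b\in M$ and an index $k\ge i$ with $b\in M_k$ (so also $\vec a\in M_k$), apply the induction hypothesis between $\mathcal M_k$ and $\mathcal M$ to get $\mathcal M_k\vDash\varphi(b,\vec a)$ and hence $\mathcal M_k\vDash\psi(\vec a)$, and finally use $\mathcal M_i\preceq\mathcal M_k$ to descend to $\mathcal M_i\vDash\psi(\vec a)$. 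The case $\psi(\vec a)=\forall x\,\varphi(x,\vec a)$ is symmetric: the implication from $\mathcal M_i$ to $\mathcal M$ is immediate from the induction hypothesis applied to every $b\in M_i$, and for the reverse one relocates an arbitrary $b\in M$ to some $M_k$ with $k\ge i$, uses $\mathcal M_i\preceq\mathcal M_k$ to get $\mathcal M_k\vDash\varphi(b,\vec a)$, and concludes by the induction hypothesis.

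I expect the only genuinely delicate point to be this relocation step in part~(b): a witness supplied by $\mathcal M$ lives at \emph{some} level, and one must climb to an index $k$ that simultaneously contains the witness and all the parameters, apply the induction hypothesis there, and then come back down to the fixed level — it is precisely here that the hypothesis $\mathcal M_i\preceq\mathcal M_{i+1}$ (via its transitive consequence $\mathcal M_i\preceq\mathcal M_k$) is essential, and where the argument of part~(a) cannot be pushed past $\forall\exists$-formulas, since there the descent from $\mathcal M_k$ to $\mathcal M_i$ is unavailable for plain substructures.
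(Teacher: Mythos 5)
Your proof is correct and follows the same route as the paper: part~(a) by choosing a level $\mathcal M_i$ that contains the parameters, obtaining witnesses there, and transferring the quantifier-free matrix up to $\mathcal M$ via substructure absoluteness; part~(b) by induction on formulas, where a witness from $\mathcal M$ is relocated to a high enough $\mathcal M_k$ and then transported down to $\mathcal M_i$ via $\mathcal M_i\preceq\mathcal M_k$. Your formulation of the inductive claim in~(b), with the level $i$ universally quantified, is in fact the careful way to state it: the relocation step applies the hypothesis between the \emph{varying} level $\mathcal M_k$ and $\mathcal M$, a point the paper glosses over by phrasing the induction for a single fixed $j$ and $\sigma_{\mathcal M_j}$-sentences even though the witness~$b$ may not lie in~$M_j$.
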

\begin{proof}
(a) Write $\varphi$ as $\forall\mathbf x\exists\mathbf y\,\theta(\mathbf x,\mathbf y)$ for quantifier-free~$\theta$. To show~$\bigcup_{i\in I}\mathcal M_i\vDash\varphi$, we consider arbitrary elements $\mathbf a\subseteq\bigcup_{i\in\mathbb N}M_i$. Pick~$i\in\mathbb N$ so large that we have $\mathbf a\subseteq M_i$. Given $\mathcal M_i\vDash\varphi$, we get $\mathcal M_i\vDash\exists\mathbf y\,\theta(\mathbf a,\mathbf y)$. Now $\mathcal M_i$ is a substructure of~$\bigcup_{i\in\mathbb N}\mathcal M_i$, by construction of the latter. We thus get $\bigcup_{i\in\mathbb N}\mathcal M_i\vDash\exists\mathbf y\,\theta(\mathbf a,\mathbf y)$ due to the implication from~(i) to~(ii) in Exercise~\ref{ex:Los-Tarski} (as the negation of $\exists\mathbf y\,\theta(\mathbf a,\mathbf y)$ is a $\forall$-formula).

(b) By induction on the build-up of~$\varphi$, we establish that $\mathcal M_j\vDash\varphi$ is equivalent to $\mathcal M:=\bigcup_{i\in I}\mathcal M_i\vDash\varphi$ for any $\sigma_{\mathcal M_j}$-sentence~$\varphi$. When $\varphi$ is a literal, the claim holds since $\mathcal M_j$ is a substructure of~$\mathcal M$. Now assume that $\varphi$ has the form~$\forall x\,\psi(x)$. Given the induction hypothesis, it is straightforward to show that $\mathcal M\vDash\varphi$ entails~$\mathcal M_j\vDash\varphi$ (argue as in the proof that (i) implies~(ii) in Exercise~\ref{ex:Los-Tarski}). To prove the converse, we assume $\mathcal M_j\vDash\varphi$. Aiming at $\mathcal M\vDash\varphi$, we consider an arbitrary~$b\in\bigcup_{i\in\mathbb N}M_i$. Pick a~$k\geq j$ with $b\in M_k$. Given $\mathcal M_j\preceq\mathcal M_k$, we get $\mathcal M_k\vDash\varphi$ and hence $\mathcal M_k\vDash\psi(b)$. The induction hypothesis yields $\mathcal M\vDash\psi(b)$, as required. For $\varphi$ of the form~$\exists x\,\psi(x)$, one concludes by a dual argument (prove both directions of our equivalence by contra\-position). The induction step is easy when~$\varphi$ is a conjunction or disjunction.
\end{proof}

Part~(a) of the proposition is sharp in the sense that any~$\mathsf T$ that is preserved under unions of chains (i.\,e., such that we get $\bigcup_{i\in\mathbb N}\mathcal M_i\vDash\mathsf T$ for any $\mathcal M_0\subseteq\mathcal M_1\subseteq\ldots$ with~$\mathcal M_i\vDash\mathsf T$) is equivalent to a set of $\forall\exists$-sentences. This result, which strikes a similar note as Exercise~\ref{ex:Los-Tarski}, is known as the Chang-\L{}o\'s-Suszko theorem.

\subsection{An Application to Algebraically Closed Fields}\label{subsect:acf}

In this section, we use tools from mathematical logic to prove the Ax-Grothendieck theorem, which asserts that any injective polynomial $p:\mathbb C^n\to\mathbb C^n$ is surjective (where $\mathbb C$ is the field of complex numbers). One can generalize the theorem to algebraic varieties, but we will not do so in the present lecture notes.

\begin{definition}
By the language of rings, we mean the signature~$\sigma_r$ that consists of constants~$0,1$ and binary function symbols $+,-,\times$ as well as the binary relation symbol~$=$ for equality. The theory~$T_f$ of fields is the $\sigma_r$-theory that consists of the equality axioms (cf.~Definition~\ref{def:equality}) and the universal closures of the formulas
\begin{gather*}
\begin{aligned}
x+(y+z)&=(x+y)+z,\qquad &x\times(y\times z)&=(x\times y)\times z,\\
x+y&=y+x,\qquad &x\times y&=y\times x,\\
x+0&=x,\qquad &x\times 1&=x,
\end{aligned}\\
x-x=0,\qquad x\neq 0\to\exists y\,(x\times y=1),\qquad 0\neq 1,\\
(x+y)\times z=(x\times z)+(y\times z).
\end{gather*}
In order to define a $\sigma_r$-term $\overline n$ for each~$n\in\mathbb N$, we write $\overline 0$ for~$0$ and recursively declare that $\overline{n+1}$ coincides with $\overline{n}+1$. For each prime number~$p$, we define~$T_{f,p}$ as the extension of~$T_f$ by the axioms~$\overline n\neq 0$ for~$n<p$ and the axiom~$\overline p=0$. We also define $T_{f,0}$ as the extension of~$T_f$ by the axioms $\overline n\neq 0$ for all~$n\in\mathbb N$.
\end{definition}

Note that a field is essentially the same as a $\sigma_r$-structure that satisfies~$T_f$. A field satisfies~$T_{f,p}$ precisely if it has characteristic~$p$ (where $p$ must be zero or prime).

\begin{proposition}\label{prop:pos-to-zero}
If a $\sigma_r$-theory is satisfied by fields of arbitrarily large positive characteristic, then it is satisfied by some field of characteristic zero.
\end{proposition}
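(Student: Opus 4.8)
The plan is to use the compactness theorem together with the observation that "having characteristic zero" is captured by infinitely many sentences, while "having some positive characteristic" cannot be axiomatised by finitely many of them. Concretely, suppose $\mathsf S$ is a $\sigma_r$-theory that is satisfied by fields of arbitrarily large positive characteristic. I would form the theory
\begin{equation*}
\mathsf S'=\mathsf S\cup T_{f,0}=\mathsf S\cup\{\overline n\neq 0\,|\,n\geq 1\},
\end{equation*}
and aim to show that $\mathsf S'$ is satisfiable; any model of $\mathsf S'$ is then a field of characteristic zero satisfying $\mathsf S$, as desired.

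By the compactness theorem (Theorem~\ref{thm:compactness}, or its uncountable version Theorem~\ref{thm:compactness-uncountable} if one does not wish to worry about the cardinality of $\sigma_r$, though here $\sigma_r$ is finite), it suffices to check that every finite subset $\Theta_0\subseteq\mathsf S'$ has a model. Given such a finite $\Theta_0$, only finitely many of the axioms $\overline n\neq 0$ lie in $\Theta_0$, so there is a bound $N$ with $\Theta_0\subseteq\mathsf S\cup\{\overline n\neq 0\,|\,1\leq n\leq N\}$. By hypothesis I may pick a field $K$ satisfying $\mathsf S$ whose characteristic $p$ is positive and larger than $N$ (here I use "arbitrarily large positive characteristic"). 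In such a field $\overline n\neq 0$ holds for all $1\leq n<p$, in particular for all $1\leq n\leq N$, so $K\vDash\Theta_0$. Thus every finite subset of $\mathsf S'$ is satisfiable, and compactness yields a model $\mathcal M\vDash\mathsf S'$.

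It remains to note that $\mathcal M$ really is a field of characteristic zero: it satisfies $T_f\subseteq\mathsf S$ (assuming, as is implicit, that $\mathsf S$ at least includes or entails the field axioms — if not, one should phrase the statement as producing a $\sigma_r$-structure of characteristic zero satisfying $\mathsf S$, or simply add $T_f$ to $\mathsf S'$, which changes nothing since the finite subsets are still satisfied by the fields $K$ above), and it satisfies $\overline n\neq 0$ for every $n\geq 1$, which is exactly the condition of characteristic zero. This completes the argument. I do not expect any genuine obstacle here; the only point requiring a little care is the bookkeeping that a finite subset of $\mathsf S'$ mentions only finitely many characteristic-witnessing axioms, so that a single field of sufficiently large prime characteristic handles it — this is the standard "finitely many constraints, pick the characteristic large enough" move, and it is entirely routine.
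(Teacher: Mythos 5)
Your proof is correct and takes essentially the same route as the paper: form $\mathsf S\cup T_{f,0}$, observe that a finite subset involves only finitely many sentences $\overline n\neq 0$, and satisfy it with a field of sufficiently large positive characteristic obtained from the hypothesis, then invoke compactness. One small remark: the caveat at the end about whether $\mathsf S$ includes the field axioms is unnecessary, because $T_{f,0}$ is by definition an \emph{extension of $T_f$} (not merely the set $\{\overline n\neq 0\mid n\geq 1\}$), so $\mathsf S\cup T_{f,0}$ already forces any model to be a field of characteristic zero regardless of what $\mathsf S$ contains.
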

\begin{proof}
Write~$T$ for the theory in question. We must show that $T_{f,0}\cup T$ is satisfi\-able. By assumption, $T_{f,p}\cup T$ is satisfiable for arbitrarily large~$p$. We can thus conclude by compactness (see Theorem~\ref{thm:compactness}).
\end{proof}

With respect to the following definition, it is instructive to observe that a single sentence can refer to all polynomials of some fixed degree. At the same time, we cannot quantify over the exponents of a polynomial, since our variables range over field elements rather than natural numbers. As usual, we agree that $\times$ binds stronger than~$+$, and we omit parentheses in sums with several summands.

\begin{definition}\label{def:acf}
For a $\sigma_r$-term~$t$ and $n\in\mathbb N$, we declare that $t^n$ denotes~$1$ when we have $n=0$ and that it coincides with $t^{n-1}\times t$ otherwise. Let $\mathsf{ACF}$ be the $\sigma_r$-theory that extends $T_f$ by the sentences
\begin{equation*}
\forall a_0\ldots\forall a_n\exists x\,\left(x^{n+1}+a_n\times x^n+\ldots+a_0\times x^0=0\right)
\end{equation*}
for all~$n\in\mathbb N$. When~$p$ is zero or prime, we let $\mathsf{ACF}_p$ be the union of~$T_{f,p}$ and $\mathsf{ACF}$.
\end{definition}

We point out that a field satisfies~$\mathsf{ACF}$ precisely if it is algebraically closed, i.\,e., if every non-constant polynomial has a root in the field. Even though the following proof relies on some algebra, its general idea should be understandable without many prerequisites.

\begin{theorem}
The theory $\mathsf{ACF}_p$ is complete for each~$p$ that is prime or zero.
\end{theorem}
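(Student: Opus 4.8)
The plan is to apply the \L{}o\'s-Vaught test (Theorem~\ref{thm:los-vaught-test}). Two things must be verified: that $\mathsf{ACF}_p$ has no finite models, and that it is $\kappa$-categorical for a suitable infinite~$\kappa$. The first point is immediate: an algebraically closed field cannot be finite, since if $k=\{b_1,\ldots,b_m\}$ then the polynomial $(x-b_1)\cdots(x-b_m)+1$ has no root, contradicting the axioms of $\mathsf{ACF}$. Since the signature $\sigma_r$ is finite, the set of $\sigma_r$-formulas is countable, so it suffices to exhibit \emph{some} uncountable $\kappa$ for which $\mathsf{ACF}_p$ is $\kappa$-categorical; the test then applies with that $\kappa$.

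For the categoricity, I would invoke the classical algebraic fact that an algebraically closed field of characteristic~$p$ is determined up to isomorphism by its \emph{transcendence degree} over its prime field (which is $\mathbb{Q}$ when $p=0$ and $\mathbb{F}_p$ when $p$ is prime). Concretely: given two algebraically closed fields $K,L$ of characteristic~$p$ with the same transcendence degree, one picks transcendence bases $B\subseteq K$ and $C\subseteq L$ of the same cardinality, fixes a bijection $B\to C$, extends it to an isomorphism of the purely transcendental subfields generated by $B$ and $C$ over the prime field, and then extends further to an isomorphism $K\cong L$ using the uniqueness of algebraic closure. Now I would choose $\kappa$ to be an uncountable set (e.g.\ one equinumerous with $\mathbb{R}$, or more abstractly any fixed uncountable cardinal). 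If $K$ is an algebraically closed field of characteristic~$p$ whose universe is equinumerous with $\kappa$, then since the prime field is countable and each element of $K$ is algebraic over the prime field adjoined to a finite subset of a transcendence basis, a counting argument (using Exercise~\ref{ex:card-seqs}, in particular that $\kappa$ is equinumerous with the set of finite sequences from $\kappa$, and that the algebraic closure of a countable-or-larger field has the same cardinality) forces the transcendence basis itself to be equinumerous with~$\kappa$. Hence any two such fields have transcendence bases of the same cardinality and are therefore isomorphic, giving $\kappa$-categoricity.

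The main obstacle is that the categoricity argument leans on a genuine piece of field theory that lies outside the formal development of the excerpt: the existence and well-definedness of transcendence degree, the fact that isomorphisms of the prime field lift to isomorphisms of the algebraic closure, and the cardinality bookkeeping relating the size of an algebraically closed field to that of a transcendence basis. In a self-contained treatment one would need to either cite these facts from algebra or reprove them; the excerpt explicitly signals ("its general idea should be understandable without many prerequisites") that the intent is to cite them. So in the writeup I would state the transcendence-degree classification as a known algebraic result, do the cardinality count carefully with reference to Exercise~\ref{ex:card-seqs} and the Schr\"oder-Bernstein theorem (Exercise~\ref{ex:schroeder-bernstein}), conclude $\kappa$-categoricity for an uncountable $\kappa$, observe the no-finite-models condition, and then quote Theorem~\ref{thm:los-vaught-test} to finish. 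A final remark worth including: since $\mathsf{ACF}_p$ is complete, for any $\sigma_r$-sentence $\varphi$ either $\mathsf{ACF}_p\vDash\varphi$ or $\mathsf{ACF}_p\vDash\neg\varphi$, which is exactly the leverage needed in the next step toward the Ax-Grothendieck theorem (transferring a statement between characteristic~$0$ and characteristic~$p$).
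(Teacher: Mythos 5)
Your proposal is correct and follows essentially the same route as the paper: rule out finite models with the polynomial $1+\prod_{a\in k}(x-a)$, apply the \L{}o\'s-Vaught test, and prove uncountable categoricity by picking transcendence bases over the prime field, using the cardinality results of Exercise~\ref{ex:card-seqs} to show the bases are equinumerous to the fields (hence to each other), and then citing the Steinitz classification of algebraically closed fields by characteristic and transcendence degree. The only cosmetic difference is that you make explicit the observation that any uncountable $\kappa$ works because $\sigma_r$ is finite, which the paper leaves implicit.
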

\begin{proof}
Let us first note that no finite field~$k$ is algebraically closed, because the~poly\-nomial~$1+\prod_{a\in k}(x-a)$ cannot have a solution in~$k$. Due to the \L{}o\'s-Vaught test (see Theorem~\ref{thm:los-vaught-test}), is suffices to show that two algebraically closed fields~$K_0$ and $K_1$ of characteristic~$p$ are isomorphic when they are uncountable and equinumerous.

Let $\mathbb F_p$ stand for~$\mathbb Q$ when $p$ is zero and for the unique field with $p$ elements when $p$ is prime. Just like any field of characteristic~$p$, each of our fields~$K_i$ is a field extension of~$\mathbb F_p$. We pick a transcendence basis $B_i\subseteq K_i$ of each extension. This means, first, that no polynomial equation with coefficients in~$\mathbb F_p$ and multiple variables has a solution in~$B_i$. Secondly, it means that any element of~$K_i$ is a zero of some non-zero polynomial with coefficients~in~$\mathbb F_p(B_i)$. Since each polynomial does only have countably many zeros, $K_i$ is equinumerous to~$\mathbb F_p(B_i)$, by Exercise~\ref{ex:card-seqs}. The elements of~$\mathbb F_p(B_i)$ are quotients of polynomials, which can be seen as terms over a signature with the elements of $\mathbb F_p\cup B_i$ as constants. It follows that $B_i$ is equinumerous to~$\mathbb F_p(B_i)$ and hence to~$K_i$ (note that $B_i$ cannot be finite as~$K_i$~is uncountable). We can conclude that $B_0$ and $B_1$ are equinumerous. The desired isomorphism between $K_0$ and $K_1$ is now obtained via a theorem of Steinitz, which says that an algebraically closed field is uniquely determined by its characteristic and transcendence degree~\cite{steinitz} (see also~\cite[Section~VI.1]{hungerford-algebra}).
\end{proof}

In the following exercie, the familiar notion of basis for vector spaces replaces the notion of transcendence basis that occurs in the previous proof.

\begin{exercise}
Axiomatize the theory of infinite $k$-vector spaces in a language that contains (amongst others) a unary function symbol~$m_\lambda$ for scalar multiplication with each element~$\lambda$ of the field~$k$. Then show that this theory is complete.
\end{exercise}

We derive the following transfer principle.

\begin{corollary}
If a $\sigma_r$-theory is satisfied by algebraically closed fields of arbitrarily large positive characteristic, it is satisfied by the field~$\mathbb C$ of complex numbers.
\end{corollary}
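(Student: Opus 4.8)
The plan is to reduce the statement to the completeness of $\mathsf{ACF}_0$ together with the fact that $\mathbb C$ is an algebraically closed field of characteristic zero. Write $T$ for the $\sigma_r$-theory in question. Every algebraically closed field of positive characteristic $p$ that satisfies $T$ also satisfies $\mathsf{ACF}$ (being algebraically closed) and $T_{f,p}$, so the theory $\mathsf{ACF}\cup T$ is satisfied by fields of arbitrarily large positive characteristic. Hence Proposition~\ref{prop:pos-to-zero}, applied to $\mathsf{ACF}\cup T$ in place of the theory appearing there, yields a field $K$ of characteristic zero with $K\vDash\mathsf{ACF}\cup T$; in other words $K\vDash\mathsf{ACF}_0\cup T$, recalling that $\mathsf{ACF}_0=T_{f,0}\cup\mathsf{ACF}$.

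Next I would invoke the theorem proved above that $\mathsf{ACF}_p$ is complete, applied with $p=0$. Fix any sentence $\varphi\in T$. Since $K\vDash\mathsf{ACF}_0$ and $K\vDash\varphi$, we cannot have $\mathsf{ACF}_0\vDash\neg\varphi$, so completeness forces $\mathsf{ACF}_0\vDash\varphi$. The field $\mathbb C$ is algebraically closed (by the fundamental theorem of algebra) and has characteristic zero, so $\mathbb C\vDash\mathsf{ACF}_0$ and therefore $\mathbb C\vDash\varphi$. As $\varphi$ was an arbitrary element of $T$, we conclude $\mathbb C\vDash T$, which is the assertion of the corollary.

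The only ingredient that lies outside the logical development of these notes is the input $\mathbb C\vDash\mathsf{ACF}_0$, i.e.\ the fundamental theorem of algebra; everything else is a routine application of compactness (through Proposition~\ref{prop:pos-to-zero}) and of the completeness of $\mathsf{ACF}_0$. I do not expect a genuine obstacle: the argument follows the standard transfer-principle pattern, and the case of an infinite theory $T$ is handled by running the completeness argument one sentence at a time, so that no uniformity in $T$ is required.
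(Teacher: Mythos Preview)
Your proof is correct and follows essentially the same route as the paper: apply Proposition~\ref{prop:pos-to-zero} to the theory $\mathsf{ACF}\cup T$ to obtain a model of $\mathsf{ACF}_0\cup T$, then use completeness of $\mathsf{ACF}_0$ sentence by sentence to get $\mathsf{ACF}_0\vDash T$, and conclude via $\mathbb C\vDash\mathsf{ACF}_0$.
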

\begin{proof}
Write $\mathsf T$ for the theory in question. Given that $\mathsf{ACF}_p\cup T$ is satisfiable for arbitrarily large~$p$, we can use Proposition~\ref{prop:pos-to-zero} to infer that $\mathsf{ACF}_0\cup T$ is satisfiable. So we have $\mathsf{ACF}_0\nvDash\neg\varphi$ for any formula~$\varphi$ in~$\mathsf T$. Since $\mathsf{ACF}_0$ is complete by the previous theorem, we get $\mathsf{ACF}_0\vDash\mathsf T$. The result follows due to~$\mathbb C\vDash\mathsf{ACF}_0$.
\end{proof}

Any function~$f:K^m\to K^n$ corresponds to an $n$-tuple of functions $f_i:K^m\to K$ with $f(\mathbf a)=(f_1(\mathbf a),\ldots,f_n(\mathbf a))$. We say that~$f$ is polynomial if each~$f_i$ is given by a polynomial in $m$ variables. As mentioned before, the following result can be generalized from~$\mathbb C^n$ to algebraic varieties.

\begin{theorem}[Ax-Grothendieck Theorem~\cite{ax-finite-fields,grothendieck-EGA4.3}]
Let $f:\mathbb C^n\to\mathbb C^n$ be a polynomial function for some~$n\in\mathbb N$. If $f$ is injective, then it is surjective.
\end{theorem}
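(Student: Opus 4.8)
The plan is to reduce the statement over $\mathbb C$ to the analogous statement over the algebraic closures of finite fields, where it can be verified directly by a finiteness argument. The key observation is that the Ax-Grothendieck property is expressible in the language of rings. For a fixed $n$ and a fixed degree bound $d$, the assertion ``every injective polynomial map $K^n\to K^n$ in which each component has degree at most $d$ is surjective'' can be written as a single $\sigma_r$-sentence $\varphi_{n,d}$: one universally quantifies over the coefficients of the $n$ component polynomials (there are finitely many monomials of degree at most $d$ in $n$ variables), and then writes ``(injectivity) $\to$ (surjectivity)'', where injectivity is $\forall \mathbf x\forall\mathbf y\,(f(\mathbf x)=f(\mathbf y)\to\mathbf x=\mathbf y)$ and surjectivity is $\forall\mathbf z\exists\mathbf x\,(f(\mathbf x)=\mathbf z)$, with $f(\mathbf x)=\mathbf y$ abbreviating the conjunction of the $n$ polynomial equations. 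Thus the theorem for a fixed $n$ amounts to $\mathbb C\vDash\varphi_{n,d}$ for every $d\in\mathbb N$.

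By the transfer principle established in the previous corollary, it suffices to show that $\varphi_{n,d}$ holds in algebraically closed fields of arbitrarily large positive characteristic; in fact I would show it holds in $\overline{\mathbb F_p}$ for every prime $p$. So fix $p$ and an injective polynomial map $f:\overline{\mathbb F_p}^n\to\overline{\mathbb F_p}^n$ with all components of degree at most $d$, and let $\mathbf b\in\overline{\mathbb F_p}^n$ be an arbitrary target point. Let $k\subseteq\overline{\mathbb F_p}$ be the subfield generated by the finitely many coefficients of $f$ together with the coordinates of $\mathbf b$; since each coefficient and each $b_i$ is algebraic over $\mathbb F_p$, the field $k$ is a finite extension of $\mathbb F_p$, hence finite. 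Now consider the chain of finite fields $k=k_0\subseteq k_1\subseteq k_2\subseteq\cdots$ obtained by taking $k_m$ to be the (unique) subfield of $\overline{\mathbb F_p}$ with $|k|^{m}$-many... more carefully: take any increasing sequence of finite subfields $k_m$ of $\overline{\mathbb F_p}$ with $k_0=k$ and $\overline{\mathbb F_p}=\bigcup_m k_m$ (this is possible because $\overline{\mathbb F_p}$ is the union of its finite subfields and any two finite subfields lie in a common finite one). Each $k_m$ is closed under $f$ because the coefficients of $f$ lie in $k\subseteq k_m$ and $k_m$ is a subring; so $f$ restricts to a map $f_m:k_m^n\to k_m^n$. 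This restriction is injective, being a restriction of the injective map $f$. But $k_m^n$ is a finite set, and an injective self-map of a finite set is surjective. Hence $f_m$ is surjective; in particular, since $\mathbf b\in k_0^n\subseteq k_m^n$, there is $\mathbf a\in k_m^n\subseteq\overline{\mathbb F_p}^n$ with $f(\mathbf a)=\mathbf b$. As $\mathbf b$ was arbitrary, $f$ is surjective, so $\overline{\mathbb F_p}\vDash\varphi_{n,d}$.

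Since this holds for every prime $p$, the sentence $\varphi_{n,d}$ is satisfied by algebraically closed fields of arbitrarily large positive characteristic, and therefore by $\mathbb C$. As $n$ and $d$ were arbitrary, every injective polynomial map $\mathbb C^n\to\mathbb C^n$ is surjective.

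I do not expect a serious obstacle here; the one point that needs a little care is making the reduction to a \emph{first-order} statement precise, namely checking that for fixed $n$ and $d$ the injectivity-implies-surjectivity assertion really is a single $\sigma_r$-sentence $\varphi_{n,d}$ with the coefficients bound by universal quantifiers — this works because fixing the degree bound fixes the (finite) number of monomials, so the coefficients can be named by finitely many variables. The other ingredient, that $\overline{\mathbb F_p}$ is the directed union of its finite subfields, is standard field theory. Everything else is either the already-proved transfer corollary or the elementary pigeonhole fact that an injection from a finite set to itself is a bijection.
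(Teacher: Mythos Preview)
Your proof is correct and follows the same overall strategy as the paper: express the property as a first-order sentence, invoke the transfer corollary to reduce to algebraically closed fields of positive characteristic, and then exploit the finiteness of the subfields of~$\overline{\mathbb F_p}$ together with the pigeonhole principle.

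The one genuine difference lies in how the passage from finite subfields to~$\overline{\mathbb F_p}$ is organised. The paper observes that each~$\varphi_{n,d}$ is a $\forall\exists$-sentence, notes that~$\overline{\mathbb F_p}=\bigcup_k\mathbb F_{p^{k!}}$ is a union of a chain of finite fields each satisfying~$\varphi_{n,d}$, and then appeals to the general preservation of $\forall\exists$-sentences under unions of chains (Proposition~\ref{prop:chains}(a)). You instead argue directly: given~$f$ and a target~$\mathbf b$, pass to the finite subfield generated by the coefficients and the coordinates of~$\mathbf b$, and apply pigeonhole there. Your route is more elementary and self-contained; the paper's route illustrates the model-theoretic machinery just developed. (Incidentally, the chain $k_0\subseteq k_1\subseteq\cdots$ you introduce is not actually needed in your argument---the single field~$k=k_0$ already suffices, since~$\mathbf b\in k^n$ and $f$ restricts to~$k^n$.)
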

\begin{proof}
For each~$m\in\mathbb N$, there is a $\forall\exists$-sentence~$\varphi_m$ over the signature~$\sigma_r$ (cf.~the paragraph before Proposition~\ref{prop:chains}) such that a field~$K$ satisfies~$\varphi_m$ precisely if every injective function~$f:K^n\to K^n$ that is given by polynomials of total degree at most~$m$ is also surjective. To see this, note that we can quantify over the poly\-nomials of fixed degree by quantifying over coefficients, as in Definition~\ref{def:acf}. If we write $p_i(x_1,\ldots,x_n)$ for suitable polynomial terms (with additional free variables as coefficients), the sentence~$\varphi_m$ can be given as the universal closure of
\begin{multline*}
\forall x_1\ldots\forall x_n\forall y_1\ldots\forall y_n\big(\textstyle\bigwedge_{i=1}^n p_i(x_1,\ldots,x_n)=p_i(y_1,\ldots,y_n)\to\textstyle\bigwedge_{i=1}^n x_i=y_i\big)\\
\to\forall z_1\ldots\forall z_n\exists x_1\ldots\exists x_n\big(\textstyle\bigwedge_{i=1}^n z_i=p_i(x_1,\ldots,x_n)\big).
\end{multline*}
We now consider the theory~$T=\{\varphi_m\,|\,m\in\mathbb N\}$. Our task is to show that this theory is satisfied by~$\mathbb C$. Due to the previous corollary, it is enough if we prove that it is satisfied by algebraically closed fields of positive characteristic.

Let us recall that there is a (necessarily unique) field~$\mathbb F_q$ with $q\in\mathbb N$ elements precisely if~$q$ has the form~$p^k$ for a positive integer~$k$ and a prime~$p$, which must be the characteristic of~$\mathbb F_q$. We note that $\mathbb F_q$ is not given by $\mathbb Z/q\mathbb Z$ with addition and multiplication modulo~$q$ in general, even though it is when~$q$ itself is prime. The field $\mathbb F_{p^k}$ is isomorphic to a (necessarily unique) subfield of~$\mathbb F_{p^m}$ precisely if $k$ divides~$m$. We shall assume that $F_{p^{k!}}$ is always realized as a subfield of~$F_{p^{(k+1)!}}$. The algebraic closure of~$\mathbb F_p$ can then be given as the increasing union
\begin{equation*}
\overline{\mathbb F_p}=\textstyle\bigcup_{k\in\mathbb N\backslash\{0\}}\mathbb F_{p^{k!}}.
\end{equation*}
We note that this is a union over a chain in the sense of Definition~\ref{def:chain}. The~above theory~$T$ is satisfied by each of the finite fields~$\mathbb F_{p^{k!}}$, because an injection from a finite set to itself must clearly be surjective. Given that $T$ consists of $\forall\exists$-sentences, we can use Proposition~\ref{prop:chains} to conclude that~$T$ is satisfied by the algebraically closed field~$\overline{\mathbb F_p}$ of characteristic~$p$, as needed.
\end{proof}

In the previous proof, we have exploited the fact that any injection from a finite set to itself must be surjective. Of course, it is also true that any surjection from a finite set to itself must be injective. As the following exercise shows, however, we do not get a corresponding result about polynomials over~$\mathbb C$.

\begin{exercise}
Find a polynomial function $f:\mathbb C\to\mathbb C$ that is surjective but not injective. Which part of the previous proof breaks down when injectivity and surjectivity are interchanged?
\end{exercise}

\section{Epilogue: Set Theory}\label{sect:set-theory}

This final section gives a brief overview of set theory. In the preceding sections on proof theory, computability and model theory, the guiding principle was to present one reasonably advanced and interesting result from each area in detail. To represent set theory, the author would have liked to prove the relative consistency of the axiom of choice, but this turned out to go beyond the scope of this lecture. In the view of the author, it would not do set theory justice to present some basic constructions without a proper application. For this reason, we content ourselves with a short overview and reserve details for a separate lecture on set theory.

Very informally, the universe of sets is generated by the recursive clause that any collection of sets is a set. This clause is problematic since it is not quite clear what a collection should be if it is not a set, in which case the clause is circular. Nevertheless, the given clause can guide our intuition to a certain extent. In particular, it suggests that there is an empty set~$\emptyset$, which then gives rise to another set~$\{\emptyset\}$. More generally, we can represent the natural numbers as sets by stipulating~$0:=\emptyset$ and $n+1:=n\cup\{n\}$ (observe $1=\{\emptyset\}$). Let us also note that the ordered pair of sets $x$ and $y$ can be represented by the set $\langle x,y\rangle:=\{\{x\},\{x,y\}\}$, as one can show that $\langle x,y\rangle=\langle x',y'\rangle$ entails $x=x'$ and~$y=y'$. By the familiar constructions, we obtain sets that represent the integers, the rationals and the reals. Ultimately, all the usual objects of mathematics can be construed as sets. We note that everything is built from the empty set, i.\,e., that no urelements are needed.

One might think that it should be possible to form the set~$r:=\{x\,|\,x\notin x\}$, which is supposed to consist of all sets that are not members of themselves. However, the definition of~$r$ entails that $r\in r$ is equivalent to~$r\notin r$. This is a contradiction, which is known as Russel's paradox. To avoid the latter and to make the notion of set precise, mathematicians have developed axiom systems for set theory. By far the best-known is Zermelo-Fraenkel set theory ($\mathsf{ZF}$). One writes $\mathsf{ZFC}$ for the extension of~$\mathsf{ZF}$ by the axiom of choice~($\mathsf{AC}$), which asserts that any set~$x$ with $\emptyset\notin x$ admits a function~$f$ with domain~$x$ such that we have $f(v)\in v$ for all~$v\in x$. To explain how Russel's paradox is excluded, we first note that $\forall x\,(x\notin x)$ is provable in~$\mathsf{ZF}$. Hence the~$r$ from the paradox would be the collection of all sets. A possible analysis of the paradox is that this collection is too big to be a set. One usually says that it is a proper class. In general, the collection~$\{x\,|\,\varphi(x)\}$ of all sets~$x$ that validate some formula~$\varphi$ is not a set according to~$\mathsf{ZFC}$. Instead, the latter allows to form~$\{x\in a\,|\,\varphi(x)\}$ as a subset of a given set~$a$. For $r_0:=\{x\in a\,|\,x\notin x\}$, the previous argument does not lead to a contradiction but only to the conclusion that we have~$r_0\notin a$. In view of G\"odel's incompleteness theorems, we cannot really prove the consistency of~$\mathsf{ZFC}$. Nevertheless, extensive scrutiny by logicians and 100 years of successful use in mathematics have led to a wide consensus that~$\mathsf{ZFC}$ provides a trustworthy foundation.

In comparison with first-order arithmetic (see Section~\ref{subsect:PA}), the notion of set is far more powerful in two respects. First, set theory includes infinite objects. Secondly, it seems very hard to find a sentence in the language of first-order arithmetic that is unprovable in Peano arithmetic~($\mathsf{PA}$) and has the intuitive appeal of an axiom (cf.~Isaacson's thesis in~\cite{incurvati-isaacson,smith-isaacson}). On the other hand, $\mathsf{ZF}$ is much stronger than~$\mathsf{PA}$ and consists of axioms (organised in just eight axiom schemes) that are rather plausible explications of the idea of a set. It is also attractive that the signature of~$\mathsf{ZF}$ contains only one symbol besides equality, namely the binary relation symbol~$\in$ for set membership. Whether the enormous strength of $\mathsf{ZF}$ is only an advantage can be debated. The point here is that $\mathsf{ZF}$ is much stronger than needed for most of mathematical practice, so that other frameworks will often allow for a finer analysis (see the results on Goodstein sequences in Section~\ref{sect:goodstein} and the discussion of reverse mathematics in the introduction to Section~\ref{sect:computability}). In any case, it is an extremely impressive and important achievement that $\mathsf{ZFC}$ provides a single standard of proof that is accepted across mathematics.

It is common in set theory to write $\omega$ for~$\mathbb N$, which is considered as the smallest infinite number (cf.~the paragraph after Definition~\ref{def:omega^omega}). If we define the next number as $\omega+1:=\omega\cup\{\omega\}$ and continue in a similar fashion, we obtain the so-called ordinals. In the context of ordinals, we also write $<$ for~$\in$, which is a well order (and in particular linear) on each ordinal and on the class of ordinals as a whole. A cardinal is defined as an ordinal that is not equinumerous to any smaller ordinal. For example, $\omega$ is a cardinal while $\omega+1$ is not. In $\mathsf{ZF}$ one can show that any well order is isomorphic to a unique ordinal. Under the axiom of choice, any set~$x$ is equinumerous to a (clearly unique) cardinal~$|x|$. A set $x$ can never be equinumerous to its power set~$\mathcal P(x)$, i.\,e., to the set of all subsets of~$x$. To see this, note that $z:=\{y\in x\,|\,y\notin f(y)\}$ cannot lie in the image of~$f:x\to\mathcal P(x)$, since $f(y)=z$ would make $y\in z$ and $y\notin z$ equivalent. It follows that the cardinals form an unbounded subclass of the ordinals. The theory of well orders admits an extremely elegant formulation in terms of ordinals. Arguably, our formulation of the \L{}o\'s-Vaught test (see Theorem~\ref{thm:los-vaught-test}) can be made more natural with the help of cardinals.

The continuum hypothesis~($\mathsf{CH}$) is the statement that there is no cardinal~$\kappa$ such that we have $\omega<\kappa<|\mathcal P(\omega)|$, i.\,e., that~$\mathbb R$ is equinumerous with any uncountable subset of itself. It was proposed by Georg Cantor, who is commonly credited with the creation of set theory. The first of the 23 problems that David Hilbert famously presented at the International Congress of Mathematicians in~1900 asks for a proof or a refutation of the continuum hypothesis. We now know that neither is possible on the basis of~$\mathsf{ZFC}$. Indeed, G\"odel showed in~1938 that we have $\mathsf{ZFC}\nvdash\neg\mathsf{CH}$ as well as $\mathsf{ZF}\nvdash\neg\mathsf{AC}$, assuming that~$\mathsf{ZF}$ is consistent. Under the same assumption, Paul Cohen proved in 1963 that we have $\mathsf{ZFC}\nvdash\mathsf{CH}$ and $\mathsf{ZF}\nvdash\mathsf{AC}$. Cohen received a fields medal for this work, in which the method of forcing is introduced. The latter has been extremely important for the further development of set theory. To name one early application of mathematical interest, we note that Kaplansky's conjecture on homomorphisms between Banach algebras is independent of~$\mathsf{ZFC}$ (see~\cite{dales-woodin-kaplansky}).

To argue for the truth or falsity of statements that are independent of~$\mathsf{ZFC}$, set theorists have looked for additional axioms. The latter can have an intrinsic or an extrinsic justification, i.\,e., they can be convincing because they explicate our intuitive conception of sets or because they have fruitful and unifying consequences for the field. Some of the fascinating results of this search for new axioms are discussed in Peter Koellner's survey on `Large Cardinals and Determinacy'~\cite{koellner-determinacy}. We also refer to survey articles by Joan Bagaria~\cite{bagaria-intro} (for a more comprehensive introduction to set theory and further pointers to the literature), Laura Crosilla~\cite{crosilla-constructive-zf} (for constructive set theory) and M.~Randall Holmes~\cite{holmes-alternative-set-theories} (for other set theories).

\bibliographystyle{amsplain}
\bibliography{Intro-Math-Log_Freund}

\newcommand{\noopsort}[1]{}
\providecommand{\bysame}{\leavevmode\hbox to3em{\hrulefill}\thinspace}
\providecommand{\MR}{\relax\ifhmode\unskip\space\fi MR }
\providecommand{\MRhref}[2]{%
  \href{http://www.ams.org/mathscinet-getitem?mr=#1}{#2}
}
\providecommand{\href}[2]{#2}
\begin{thebibliography}{10}

\bibitem{ackermann28}
Wilhelm Ackermann, \emph{Zum {H}ilbertschen {A}ufbau der reellen {Z}ahlen},
  Mathematische Annalen \textbf{99} (1928), 118--133.

\bibitem{afrw-goodstein}
Juan~P. Aguilera, Anton Freund, Michael Rathjen, and Andreas Weiermann,
  \emph{Ackermann and {G}oodstein go functorial}, Pacific Journal of
  Mathematics \textbf{313} (2021), no.~2, 251--291.

\bibitem{avigad-morris}
Jeremy Avigad and Rebecca Morris, \emph{Character and object}, The Review of
  Symbolic Logic \textbf{9} (2016), no.~3, 480--510.

\bibitem{ax-finite-fields}
James Ax, \emph{The elementary theory of finite fields}, Annals of Mathematics
  (second series) \textbf{88} (1968), no.~2, 239--271.

\bibitem{bagaria-intro}
Joan Bagaria, \emph{Set theory}, The Stanford Encyclopedia of Philosophy
  (Edward~N.\ \mbox{Zalta} and Uri Nodelman, eds.), spring ed., 2023,
  \url{https://plato.stanford.edu/archives/spr2023/entries/set-theory/}.

\bibitem{beth-definability}
Evert Beth, \emph{On {P}adoa's method in the theory of definition},
  Indagationes mathematicae \textbf{15} (1953), 330--339.

\bibitem{buss-introduction-98}
Samuel~R. Buss, \emph{An introduction to proof theory}, Handbook of Proof
  Theory (S.~Buss, ed.), Elsevier, 1998, pp.~1--78.

\bibitem{cichon83}
E.~A. Cichon, \emph{A short proof of two recently discovered independence
  results using recursion theoretic methods}, Proceedings of the American
  Mathematical Society \textbf{87} (1983), 704--706.

\bibitem{copeland-church-turing}
{B. Jack} Copeland, \emph{The {C}hurch-{T}uring thesis}, The Stanford
  Encyclopedia of Philosophy (Edward~N.\ \mbox{Zalta}, ed.), summer ed., 2020,
  \url{https://plato.stanford.edu/archives/sum2020/entries/church-turing/}.

\bibitem{craig-interpolation}
William Craig, \emph{Three uses of the {H}erbrand-{G}entzen theorem in relating
  model theory and proof theory}, The Journal of Symbolic Logic \textbf{22}
  (1957), no.~3, 269--285.

\bibitem{crosilla-constructive-zf}
Laura Crosilla, \emph{Set theory: Constructive and intuitionistic {ZF}}, The
  Stanford Encyclopedia of Philosophy (Edward~N.\ \mbox{Zalta}, ed.), summer
  ed., 2020,
  \url{https://plato.stanford.edu/archives/sum2020/entries/set-theory-constructive/}.

\bibitem{dales-woodin-kaplansky}
{H. Garth} Dales and {W. Hugh} Woodin, \emph{An introduction to independence
  for analysts}, London Mathematical Society Lecture Note Series, vol. 115,
  Cambridge University Press, Cambridge, 1987.

\bibitem{fo-history}
William Ewald, \emph{The emergence of first-order logic}, The Stanford
  Encyclopedia of Philosophy (Edward~N.\ \mbox{Zalta}, ed.), spring ed., 2019,
  \url{https://plato.stanford.edu/archives/spr2019/entries/logic-firstorder-emergence/}.

\bibitem{feferman-ultrafilter}
Solomon Feferman, \emph{Some applications of the notions of forcing and generic
  sets}, Fundamenta Mathematicae \textbf{56} (1964), no.~3, 325--345.

\bibitem{first-course}
Anton Freund, \emph{Unprovability in mathematics: A first course on ordinal
  analysis}, 2021, \url{https://arxiv.org/abs/2109.06258}.

\bibitem{second-course}
\bysame, \emph{Impredicativity and trees with gap condition: A second course on
  ordinal analysis}, 2022, \url{https://arxiv.org/abs/2204.09321}.

\bibitem{friedman-rm}
Harvey Friedman, \emph{Some systems of second order arithmetic and their use},
  Proceedings of the International Congress of Mathematicians, Vancouver 1974
  (Ralph~Duncan James, ed.), vol.~1, Canadian Mathematical Congress, 1975,
  pp.~235--242.

\bibitem{friedman-robertson-seymour}
Harvey Friedman, Neil Robertson, and Paul Seymour, \emph{Metamathematics of the
  graph minor theorem}, Logic and Combinatorics (Stephen Simpson, ed.),
  Contemporary Mathematics, vol.~65, American Mathematical Society, 1987,
  pp.~229--261.

\bibitem{gentzen-schliessen}
Gerhard Gentzen, \emph{Untersuchungen \"uber das logische {S}chlie{\ss}en {I}},
  Mathematische Zeitschrift \textbf{39} (1934), no.~2, 176--210.

\bibitem{goedel-completeness}
Kurt G\"odel, \emph{Die {V}ollst\"andigkeit der {A}xiome des logischen
  {F}unktionenkalk\"uls}, Monatshefte f\"ur Mathematik \textbf{37} (1930),
  no.~1, 349--360.

\bibitem{goedel-incompleteness}
\bysame, \emph{{\"U}ber formal unentscheidbare {S}\"atze der {P}rincipia
  {M}athematica und verwandter {S}ysteme {I}}, Monatshefte f\"ur {M}athematik
  und {P}hysik \textbf{38} (1931), 173--198.

\bibitem{finite-model-theory-applic}
Erich Gr\"adel, Phokion Kolaitis, Leonid Libkin, Maarten Marx, Joel Spencer,
  Moshe Vardi, Yde Venema, and Scott Weinstein, \emph{Finite model theory and
  its applications}, Texts in Theoretical Computer Science, Springer, Berlin
  and Heidelberg, 2007.

\bibitem{grothendieck-EGA4.3}
Alexander Grothendieck, \emph{El\'ements de g\'eom\'etrie alg\'ebrique: {IV}.
  {E}tude locale des sch\'emas et des morphismes de sch\'emas, {T}roisi\`eme
  partie}, Publications Math\'ematiques de l'IHES \textbf{28} (1966), 5--255.

\bibitem{Grzegorczyk}
Andrzej Grzegorczyk, \emph{Some classes of recursive functions}, Rozprawy
  Matematyczne \textbf{4} (1953), 1--45.

\bibitem{hajek91}
Petr H{\'a}jek and Pavel Pudl{\'a}k, \emph{Metamathematics of first-order
  arithmetic}, Perspectives in Mathematical Logic, Springer, Berlin, 1993.

\bibitem{hodges-stanford}
Wilfrid Hodges, \emph{First-order model theory}, The Stanford Encyclopedia of
  Philosophy (Edward~N.\ \mbox{Zalta}, ed.), winter ed., 2018,
  \url{https://plato.stanford.edu/archives/win2018/entries/modeltheory-fo/}.

\bibitem{holmes-alternative-set-theories}
{M. Randall} Holmes, \emph{Alternative axiomatic set theories}, The Stanford
  Encyclopedia of Philosophy (Edward~N.\ \mbox{Zalta}, ed.), winter ed., 2021,
  \url{https://plato.stanford.edu/archives/win2021/entries/settheory-alternative/}.

\bibitem{phil-math-stanford}
Leon Horsten, \emph{Philosophy of mathematics}, The Stanford Encyclopedia of
  Philosophy (Edward~N.\ \mbox{Zalta} and Uri Nodelman, eds.), spring ed.,
  2023,
  \url{https://plato.stanford.edu/archives/spr2023/entries/philosophy-mathematics/}.

\bibitem{hungerford-algebra}
Thomas Hungerford, \emph{Algebra}, Graduate Texts in Mathematics, vol.~73,
  Springer, New York, 1974.

\bibitem{incurvati-isaacson}
Luca Incurvati, \emph{Too naturalist and not naturalist enough: Reply to
  {H}orsten}, Erkenntnis \textbf{69} (2008), no.~2, 261--274.

\bibitem{jeroslow-goedel}
Robert Jeroslow, \emph{Redundancies in the {H}ilbert-{B}ernays derivability
  conditions for {G}\"odel's second incompleteness theorem}, The Journal of
  Symbolic Logic \textbf{38} (1973), no.~3, 359--367.

\bibitem{low-basis}
Carl {Jockusch, Jr.} and Robert Soare, \emph{{$\Pi^0_1$} classes and degrees of
  theories}, Transactions of the American Mathematical Society \textbf{173}
  (1972), 33--56.

\bibitem{kirby-paris82}
Laurie Kirby and Jeff Paris, \emph{Accessible independence results for {P}eano
  arithmetic}, Bulletin of the London Mathematical Society \textbf{14} (1982),
  285--293.

\bibitem{kleene-nf}
Stephen Kleene, \emph{Recursive predicates and quantifiers}, Transactions of
  the American Mathematical Society \textbf{53} (1943), no.~1, 41--73.

\bibitem{koellner-determinacy}
Peter Koellner, \emph{Large cardinals and determinacy}, The Stanford
  Encyclopedia of Philosophy (Edward~N.\ \mbox{Zalta}, ed.), spring ed., 2014,
  \url{https://plato.stanford.edu/archives/spr2014/entries/large-cardinals-determinacy/}.

\bibitem{kohlenbach-higher-types}
Ulrich Kohlenbach, \emph{Foundational and mathematical uses of higher types},
  Reflections on the Foundations of Mathematics: Essays in Honor of {S}olomon
  {F}eferman (W.~Sieg, R.~Sommer, and C.~Talcott, eds.), Lecture Notes in
  Logic, Cambridge University Press, 2002, pp.~92--116.

\bibitem{kohlenbach-proof-mining}
\bysame, \emph{Applied {P}roof {T}heory. {P}roof {I}nterpretations and their
  {U}se in {M}athematics}, Springer Monographs in Mathematics, Springer,
  Berlin, 2008.

\bibitem{kohlenbach-gazette}
\bysame, \emph{Herbrand's theorem and extractive proof theory}, La Gazette des
  math\'ematiciens \normalfont{(Soci\'et\'e Math\'ematique de France)}
  \textbf{118} (2008), 29--41.

\bibitem{loebs-theorem}
Martin L\"ob, \emph{Solution of a problem of {L}eon {H}enkin}, The Journal of
  Symbolic Logic \textbf{20} (1955), no.~2, 115--118.

\bibitem{los-categorical}
Jerzy \L{}o\'s, \emph{On the categoricity in power of elementary deductive
  systems and some related problems}, Colloquium Mathematicum \textbf{3}
  (1954), 58--62.

\bibitem{los-ultraproduct}
\bysame, \emph{Quelques remarques, th\'eor\`emes et probl\`emes sur les classes
  d\'efinissables d'alg\`ebres}, Mathematical interpretation of formal systems,
  Studies in logic and the foundations of mathematics, North-Holland, 1955,
  pp.~98--113.

\bibitem{loewenheim}
Leopold L\"owenheim, \emph{{\"U}ber {M}\"oglichkeiten im {R}elativkalk\"ul},
  Mathematische Annalen \textbf{76} (1915), no.~4, 447--470.

\bibitem{phil-math-practice}
Paolo Mancosu (ed.), \emph{The philosophy of mathematical practice}, Oxford
  University Press, 2008.

\bibitem{mints-PR}
Grigori Mints, \emph{Quantifier-free and one-quantifier systems}, Journal of
  Soviet Mathematics \textbf{1} (1972), 71--84.

\bibitem{orevkov}
Vladimir Orevkov, \emph{Lower bounds for increasing complexity of derivations
  after cut elimination}, Journal of Soviet Mathematics \textbf{20} (1982),
  2337--2350.

\bibitem{parsons70}
Charles Parsons, \emph{On a number theoretic choice schema and its relation to
  induction}, Intuitionism and Proof Theory. Proceedings of the Summer
  Conference at Buffalo N.Y.\ 1968 (Akiko Kino, John Myhill, and Richard~E.\
  Vesley, eds.), North-Holland, 1970, pp.~459--473.

\bibitem{rathjen-realm}
Michael Rathjen, \emph{The realm of ordinal analysis}, Sets and Proofs
  ({S.\,B.} Cooper and {J.\,K.} Truss, eds.), Cambridge University Press, 1999,
  pp.~219--279.

\bibitem{rathjen-goodstein}
\bysame, \emph{Goodstein's theorem revisited}, Gentzen's centenary: The quest
  for consistency (Reinhard Kahle and Michael Rathjen, eds.), Springer, Berlin,
  2015, pp.~229--242.

\bibitem{rathjen-sieg-stanford}
Michael Rathjen and Wilfried Sieg, \emph{Proof theory}, The Stanford
  Encyclopedia of Philosophy (Edward~N.\ \mbox{Zalta}, ed.), fall 2020 ed.,
  \url{https://plato.stanford.edu/archives/fall2020/entries/proof-theory/}.

\bibitem{rautenberg-introduction}
Wolfgang Rautenberg, \emph{A concise introduction to mathematical logic}, 3
  ed., Springer, New York, 2010.

\bibitem{schwichtenberg71}
Helmut Schwichtenberg, \emph{Eine {K}lassifikation der
  $\varepsilon_0$-rekursiven {F}unktionen}, Zeitschrift f\"ur mathematische
  Logik und Grundlagen der Mathematik \textbf{17} (1971), 61--74.

\bibitem{simpson09}
{Stephen G.} Simpson, \emph{Subsystems of second order arithmetic},
  Perspectives in Logic, Cambridge University Press, 2009.

\bibitem{skolem}
Thoralf Skolem, \emph{Logisch-kombinatorische {U}ntersuchungen \"uber die
  {E}rf\"ullbarkeit oder {B}eweisbarkeit mathematischer {S}\"atze nebst einem
  {T}heoreme \"uber dichte {M}engen}, Videnskapsselskapet Skrifter, I.
  Matematisk-naturvidenskabelig Klasse \textbf{4} (1920), 1--36.

\bibitem{smith-isaacson}
Peter Smith, \emph{Ancestral arithmetic and {I}saacson's thesis}, Analysis
  \textbf{68} (2008), no.~1, 1--10.

\bibitem{soare-computability}
Robert Soare, \emph{Turing computability. {T}heory and applications}, Theory
  and Applications of Computability, Springer, Berlin and Heidelberg, 2016.

\bibitem{statman}
Richard Statman, \emph{Lower bounds on {H}erbrand's theorem}, Proceedings of
  the American Mathematical Society \textbf{75} (1979), no.~1, 104--107.

\bibitem{steinitz}
Ernst Steinitz, \emph{Algebraische {T}heorie der {K}\"orper}, Journal f\"ur die
  reine und angewandte Mathematik \textbf{137} (1910), 167--309.

\bibitem{tait-style}
William Tait, \emph{Normal derivability in classical logic}, The Syntax and
  Semantics of Infinitary Languages (Jon Barwise, ed.), Lecture Notes in Logic,
  vol.~72, Springer, 1968.

\bibitem{takeuti-proof-theory-ed1}
Gaisi Takeuti, \emph{Proof theory}, Studies in Logic and the Foundations of
  Mathematics, vol.~81, North-Holland, 1975.

\bibitem{tarski36}
Alfred Tarski, \emph{Der {W}ahrheitsbegriff in den formalisierten {S}prachen},
  Studia Philosophica \textbf{1} (1936), 261--405.

\bibitem{vaught-categorical}
Robert Vaught, \emph{Applications to the {L}\"owenheim-{S}kolem-{T}arski
  theorem to problems of completeness and decidability}, Indagationes
  Mathematicae \textbf{16} (1954), 467--472.

\bibitem{wainer70}
Stanley~S. Wainer, \emph{A {C}lassification of the {O}rdinal {R}ecursive
  {F}unctions}, Archiv f\"ur mathe\-matische Logik und Grundlagenforschung
  \textbf{13} (1970), 136--153.

\bibitem{wainer-hardy}
\bysame, \emph{Ordinal recursion, and a refinement of the extended
  {G}rzegorczyk hierarchy}, The Journal of Symbolic Logic \textbf{37} (1972),
  no.~2, 281--292.

\bibitem{zach19}
Richard Zach, \emph{Hilbert's {P}rogram}, The Stanford Encyclopedia of
  Philosophy (Edward~N.\ \mbox{Zalta}, ed.), fall ed., 2019,
  \url{https://plato.stanford.edu/archives/fall2019/entries/hilbert-program/}.

\end{thebibliography}

\end{document}